\newtheorem{thmx}{Theorem}
\title{A graphical category for higher modular operads}
\author[P. Hackney]{Philip Hackney}
\address{Department of Mathematics\\ University of Louisiana at Lafayette\\ Lafayette, LA 70504-3568 USA}
\email{philip@phck.net} 
\urladdr{http://phck.net}
\thanks{The first author acknowledges the support of Australian Research Council Discovery Project grant DP160101519.}
\author[M. Robertson]{Marcy Robertson}
\address{School of Mathematics and Statistics \\ The University of Melbourne \\ Melbourne, Victoria, Australia}\email{marcy.robertson@unimelb.edu.au}
\author[D. Yau]{Donald Yau}
\address{Department of Mathematics\\
The Ohio State University at Newark\\
Newark, OH \\ USA}
\email{dyau@math.ohio-state.edu}
\date{December 31, 2019}
\begin{document}

\begin{abstract}
We present a homotopy theory for a weak version of modular operads whose compositions and contractions are only defined up to homotopy.
This homotopy theory takes the form of a Quillen model structure on the collection of simplicial presheaves for a certain category of undirected graphs.
This new category of undirected graphs, denoted $\graphicalcat$, plays a similar role for modular operads that the dendroidal category $\Omega$ plays for operads.
We carefully study properties of $\graphicalcat$, including the existence of certain factorization systems.
Related structures, such as cyclic operads and stable modular operads, can be similarly treated using categories derived from $\graphicalcat$.
\end{abstract}

\maketitle

A modular operad, as introduced by Getzler and Kapranov \cite{MR1601666}, is a kind of cyclic operad equipped with self-compositions of operations.
That is, it is an algebraic structure consisting of a sequence of objects $P(n)$ (in a symmetric monoidal category), indexed by nonnegative integers $n$, together with families of `composition operations' $P(n) \otimes P(m) \to P(n+m-2)$ and `contraction operations' $P(n) \to P(n-2)$.
The canonical example is when $P(n)$ is the moduli space of Riemann surfaces with $n$ marked points (see \cite[II.5.6]{mss}). 
This paper, along with its companion \cite{modular_paper_two}, centers around a new category of graphs that permits a Segalic approach to the study of modular operads. 

The main goal of the present paper is to propose a precise definition for up-to-homotopy modular operads and provide a homotopy theory for such objects. 
Why might one pursue such a program?
One motivation comes from the work of Mann and Robalo who show, by passing to correspondences in derived stacks, that the operad of stable curves of genus zero acts on any smooth projective complex variety (see \cite[Theorem 1.1.2]{MannRobalo:BACGWT}).
This allows them to construct (in genus zero) Gromov--Witten invariants on the derived category of the variety in question. 
We anticipate that this work will be used in the study of higher genus version of that result; see Remark 1.2.1 of \cite{MannRobalo:BACGWT}, which is prefigured in the ordinary case in \cite[11.2]{Barannikov:MOBVG}. 

An additional motivation comes from work of the second author with Horel and Boavida de Brito on subgroups of the profinite Grothendieck--Teichm\"uller group. 
They conjecture that the homotopy automorphisms of a profinite completion of Getzler--Kapronov's modular operad $\overline{\mathcal{M}}_{*,*}$ \cite{MR1601666} will be isomorphic to the group $\Lambda$ which is a subgroup of the profinite Grothendieck--Teichm\"uller group which still contains the absolute Galois group \cite[Theorem A]{MR1752293}. 
The profinite completion of $\overline{\mathcal{M}}_{*,*}$ cannot be a strict modular operad as the profinite completion is often not product preserving \cite[Proposition 3.9]{BoavidaHorelRobertson:OGZCGTG}. 
The material in this paper and its companion \cite{modular_paper_two} provides the homotopy-theoretic framework required for this project. 

Around half of this paper is devoted to the introduction of a modular graphical category $\graphicalcat$ and a study of its properties.
The objects of this category are undirected, connected graphs with loose ends, while morphisms are given by `blowing up' vertices of the source into subgraphs of the target in a way that reflects iterated operations in a modular operad.
The category $\graphicalcat$ is actually a proper subcategory of the category of Feynman graphs studied by Joyal and Kock in \cite{JOYAL2011105}.
The restriction we make is partly to disallow `duplication of variables' from appearing in morphisms.\footnote{This follows the theory of algebras over operads, which generally can model types of algebras where each variable term appears exactly once in the defining equations.}
As in our earlier work on graph categories \cite{hrybook,hry_factorizations,hry_cyclic}, which made similar restrictions in other contexts, this bears fruit.
Namely, the weak factorization system that exists on the category of Joyal and Kock becomes an orthogonal factorization system on $\graphicalcat$, and, moreover, there is a generalized Reedy structure on $\graphicalcat$.
These two facts constitute the main theorems of the first half.

The heart of this paper is Section~\ref{section simplicial presheaves on U}, where we investigate the homotopy theory of simplicial $\graphicalcat$-presheaves.
Roughly, such a presheaf $X$ will be said to satisfy the \emph{Segal condition} if the value of $X$ at a graph $G$ is determined up to homotopy by the value of $X$ at each of the vertices of $G$.
If, additionally, the value of $X$ at an edge is contractible, then we say that $X$ is a \emph{Segal modular operad}.\footnote{This terminology is chosen to be consonant with `Segal operad' from \cite{bh}.}
\begin{thmx}\label{model structure intro}
The category of simplicial $\graphicalcat$-presheaves admits a model structure whose fibrant objects are the Segal modular operads.
\end{thmx}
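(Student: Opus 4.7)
The plan is to realize the desired model structure as a left Bousfield localization of a well-chosen base model structure on simplicial $\graphicalcat$-presheaves. For the base, I would use the generalized Reedy model structure produced from the generalized Reedy structure on $\graphicalcat$ established in the first half of the paper (combined with the Kan--Quillen structure on simplicial sets); this is combinatorial and left proper, so by the Smith--Barwick localization theorem it admits a left Bousfield localization at any set of maps. The injective model structure would work equally well as a starting point, but the Reedy version makes the diagrams appearing in the Segal condition automatically Reedy cofibrant, which is what lets the required colimits be interpreted homotopically.

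Next I would identify an explicit set $S$ of maps whose $S$-local objects, among the base-fibrant ones, are precisely the Segal modular operads. Split $S$ into two families. For every graph $G$ of $\graphicalcat$, include a \emph{Segal core} inclusion $\mathrm{Sc}[G] \hookrightarrow \graphicalcat[-,G]$, where $\mathrm{Sc}[G]$ is the union, inside the representable, of the images of the vertex inclusions $\graphicalcat[-,\star_v] \to \graphicalcat[-,G]$ for each vertex $v$ of $G$, glued along the edge inclusions $\graphicalcat[-,e] \to \graphicalcat[-,G]$. Locality with respect to these forces $X(G)$ to be the homotopy limit of the $X(\star_v)$ over the diagram of edges of $G$, which is exactly the Segal condition. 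Second, include the terminal map $\graphicalcat[-,e] \to \ast$ out of the single-edge representable; locality here forces $X(e)$ to be contractible. By construction an $S$-local, base-fibrant object is exactly a Segal modular operad, and the characterization of fibrant objects in the localized structure then falls out of the general Bousfield localization machinery.

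The main obstacle will be showing that the Segal cores $\mathrm{Sc}[G]$ are well-defined monomorphisms into $\graphicalcat[-,G]$ and, more importantly, that $S$-locality genuinely captures the intended homotopy-coherent Segal condition rather than a strict analogue. This is where the two structural results advertised in the first half of the paper do the essential work. The orthogonal factorization system on $\graphicalcat$ is needed to show that the vertex and edge inclusions into $G$ behave as honest subobjects and fit together into a legitimate diagram whose colimit computes $\mathrm{Sc}[G]$; the generalized Reedy structure is needed to guarantee that this diagram is Reedy cofibrant, so that its colimit is a homotopy colimit and $\mathrm{Sc}[G] \hookrightarrow \graphicalcat[-,G]$ is a cofibration in the base model structure. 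Once these combinatorial facts are in hand, identifying the fibrant objects reduces to the formal statement that fibrancy in a left Bousfield localization is base-fibrancy together with $S$-locality, which by design yields precisely the Segal modular operads.
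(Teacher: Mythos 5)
Your overall strategy coincides with the paper's: the model structure is obtained as a left Bousfield localization of the generalized Reedy model structure on $\sSet^{\graphicalcat^{\oprm}}$ at the Segal core inclusions together with one additional map forcing contractibility at the edge, and the essential technical input is the Reedy cofibrancy of the Segal cores (Proposition~\ref{proposition segal core is cofibrant}), which is what lets one replace the derived mapping spaces $\map^h(-,X)$ by the simplicial ones when identifying the local objects. (The paper gets the existence of the localization from left properness and cellularity via Hirschhorn rather than from combinatoriality, but that choice is immaterial.)

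There is, however, one concrete error in your set $S$: you localize at the terminal map $\rgc[\exceptionaledge] \to \ast$ and assert that locality forces $X_{\exceptionaledge}$ to be contractible. It does not. Locality with respect to a map $A \to B$ requires $\map^h(B,X) \to \map^h(A,X)$ to be an equivalence, so locality at $\rgc[\exceptionaledge] \to \ast$ requires $\map^h(\ast,X) \to X_{\exceptionaledge}$ to be an equivalence, i.e.\ it identifies $X_{\exceptionaledge}$ with the derived global sections of $X$ --- a genuinely different condition. (For the nerve of a one-colored modular operad in sets, $X_{\exceptionaledge}$ is a point while $\map(\ast,X)$, the set of compatible families of elements over all graphs, can easily be empty.) The map you want is $\varnothing \to \rgc[\exceptionaledge]$, as in the paper, since $\map^h(\varnothing,X)\simeq \ast$ and $\map^h(\rgc[\exceptionaledge],X)\simeq X_{\exceptionaledge}$ once cofibrancy is in place. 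With that substitution your argument is the paper's argument. A smaller remark: you do not actually need $\segalcore[G]\to\rgc[G]$ to be a monomorphism or a cofibration; cofibrancy of $\segalcore[G]$ and of the representables already suffices to compute the relevant homotopy function complexes and hence to characterize the fibrant objects.
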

If $X$ is a Segal modular operad, then after passing to the homotopy category of spaces one has an honest (unital, symmetric) modular operad.
Indeed, the two types of operations from the first paragraph can be found by working with those connected graphs that have precisely one internal edge.
On the other hand, there is a strict analogue of the Segal condition, and in the companion paper \cite{modular_paper_two} we prove the following nerve theorem.
It shows that the \emph{strict} Segal condition gives a characterization of (colored) modular operads.
\begin{thmx}\label{nerve theorem paper one intro}
Each graph freely generates a (colored) modular operad.
This process gives a functor $\graphicalcat \to \csm$ which induces a fully-faithful functor $N : \csm \to \Set^{\graphicalcat^{\oprm}}$.
The essential image of $N$ consists precisely of those presheaves which satisfy a strict Segal condition.
\end{thmx}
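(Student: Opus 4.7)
The plan is to construct the free-forgetful adjunction between $\graphicalcat$ and $\csm$ and then identify the fully-faithful image of the nerve with those presheaves satisfying a strict Segal condition. I would proceed in four steps.

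First, I would define the free functor $F \colon \graphicalcat \to \csm$. Given a graph $G$, the free colored modular operad $F(G)$ has color set the edges of $G$, and for each vertex $v$ a single generating operation whose profile is the set of half-edges incident to $v$. Functoriality on the blow-up morphisms of $\graphicalcat$ is the main content: a morphism $G \to H$ specifies, for each vertex $v$ of $G$, a connected sub-graph of $H$ that ``fills in'' $v$; this sub-graph encodes an iterated composition-and-contraction of the generators of $F(H)$. The associativity, unit, and equivariance axioms of modular operads ensure that this formal iterated operation is well-defined (independent of the order of composition and contraction), and that composition of morphisms in $\graphicalcat$ is compatible with composition in $\csm$.

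Second, I would set $N(P)(G) = \csm(F(G), P)$, and unwind this to identify a section with an edge-coloring of $G$ together with a compatible operation of $P$ at each vertex. The strict Segal condition is then the natural candidate: $X(G)$ is the limit over the ``corolla diagram'' of $G$, whose pieces are the one-vertex star sub-graphs glued together along their common edges. By construction, $N(P)(G)$ satisfies this condition, since a compatible family of operations at the corollas of $G$ is exactly the data of an edge-coloring plus a vertex-labeling.

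Third, I would establish fully-faithfulness by showing that the counit $F N(P) \to P$ is an isomorphism. This reduces to the fact that $P$ is determined by its underlying collection together with its composition and contraction structure maps: the first is recovered from $N(P)$ on corollas, while the structure maps are recovered from $N(P)$ on the one-edge graphs (two corollas joined by an edge, and single corollas with a loop contracted). Finally, for the essential image, I would show that any $X$ satisfying the strict Segal condition assembles into a colored modular operad whose colors are $X(\text{edge})$, whose operations are $X(\text{corolla})$, and whose compositions/contractions are given by the values of $X$ on the appropriate two-corolla and looped-corolla graphs; then verify $N$ of this operad is canonically isomorphic to $X$.

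The main obstacle lies in the last half of step three: reconstructing the modular operad structure from a Segal presheaf requires producing the $\Sigma$-equivariance and verifying the modular operad axioms (associativity of composition, the Boardman-Vogt-type interaction of composition with contraction, and equivariance). Each axiom should follow from identifying two a priori different decompositions of a small graph (with at most three vertices or a single loop) and applying the Segal isomorphism to compare their values in $X$. The orthogonal factorization system and generalized Reedy structure on $\graphicalcat$ established earlier in the paper are the natural tools to reduce these verifications to a finite, tractable list of small graphs; making sure the bookkeeping of colors and the involutive structure on half-edges lines up in each case is where the real combinatorial work sits.
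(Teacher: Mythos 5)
This paper does not actually prove Theorem~\ref{nerve theorem paper one intro}: the statement is explicitly deferred to the companion paper \cite{modular_paper_two}, so there is no proof here to compare yours against line by line. That said, your overall architecture --- a free functor $F\colon\graphicalcat \to \csm$, the induced nerve $N(P)_G = \csm(F(G),P)$, fully-faithfulness via the counit of the realization--nerve adjunction, and identification of the essential image by reassembling a modular operad from the values of $X$ at $\exceptionaledge$ and at the stars $\medstar_v$ --- is the standard shape of such nerve theorems and is consistent with what the authors indicate they do.

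Two of your steps, however, assume away exactly the points that make this theorem delicate for modular operads. First, the colors of $F(G)$ cannot be the plain edge set $E(G)$: the modular operads here have \emph{involutive} color sets, and $F(G)$ must be colored by the arcs $A(G)$ with the involution $i$ (the introduction and Remark~\ref{remark comparison sc} are explicit that involutive colors are the point of working with $\graphicalcat$ rather than $\Xi$, and Example~\ref{example difference between tree cats} shows the arc/edge distinction genuinely changes hom-sets). Since $\graphicalcat(\exceptionaledge, G)\cong A(G)$ and $\aut(\exceptionaledge)\cong C_2$, a strict Segal presheaf reconstructs to an operad whose involutive color set is $X_{\exceptionaledge}$; with plain edge-colorings your $N$ would not be fully faithful on involutions and its image would not match the Segal presheaves. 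Second, your assertion that ``the associativity, unit, and equivariance axioms of modular operads ensure that this formal iterated operation is well-defined'' presupposes that the free modular operad construction exists and is well behaved, but the paper points out (in the remark following Definition~\ref{def C zero}) that the Joyal--Kock monad is \emph{not well-defined} in arity zero, because contracting units produces nodeless loops, which are not Feynman graphs; dealing with this is the reason $\egc$ exists and is a substantive part of any actual proof of the theorem, not a routine verification. Finally, fully-faithfulness is not quite the statement that ``$P$ is determined by its structure maps'': one must show that the counit of the adjunction between the left Kan extension of $F$ along the Yoneda embedding and $N$ is an isomorphism, i.e.\ that the canonical colimit of free modular operads indexed by the category of elements of $N(P)$ collapses to $P$. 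Your step three exhibits the right generators but gives no argument that the relations also match, which is where the graph-substitution combinatorics (Construction~\ref{construction graph sub} and Proposition~\ref{proposition: image}) would have to enter.
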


An attentive reader may have noticed that there is no notion of \emph{genus} for operations in the modular operads discussed above.
In the original definition \cite{MR1601666} of modular operad, the graded objects $P = \{P(n)\}_{n\geq 0}$ had an additional genus grading as $P(n) = \{P(n,g)\}_{g > 1 - \frac{n}{2}}$.
The composition operations are to be interpreted as additive on genus, while the contraction operations increase genus by one.
Of course we also have applications in mind where it is beneficial to keep track of this geometric information, so we provide a variant of $\graphicalcat$ whose objects are \emph{stable graphs}.
There are analogues of Theorem~\ref{model structure intro} and Theorem~\ref{nerve theorem paper one intro} for the category of stable graphs.

If $F : \Rr \to \Ss$ is a functor between small categories and $\modelcat$ is a bicomplete category, then there is a restriction functor $F^* : \modelcat^{\Ss} \to \modelcat^{\Rr}$ that has both adjoints (given by left and right Kan extension).
Suppose further that these diagram categories have model category structures.
As $F^*$ is both a left adjoint and a right adjoint, one would like to know whether or not $F^*$ is left Quillen, right Quillen, or neither.
For example, if $\modelcat$ is itself a model category and the two diagram categories both have the injective model structure (with cofibrations and weak equivalences defined to be levelwise), then it is immediate that $F^*$ is left Quillen.
In \cite{Barwick:OLRMCLRBL,hv15}, this question was considered when $F$ is a Reedy functor between (strict) Reedy categories.
Barwick classified those Reedy functors $F$ so that $F^*$ is left Quillen (resp.~right Quillen) for \emph{every} model category $\modelcat$.
A natural question is whether this classification can be adapted to the setting of generalized Reedy categories \cite{bm_reedy}.

In Section~\ref{section simply connected} we show that Barwick's characterization does not extend in the obvious way to the setting of generalized Reedy categories, by means of an explicit counterexample.
Our observation arose out of a careful comparison of the present paper to \cite{hry_cyclic}.
In that paper, we introduced a category $\Xi$ of undirected trees for the purposes of studying higher cyclic operads.
On the other hand, one could consider the subcategory $\graphicalcat_{\cycrm}$ of $\graphicalcat$ on the simply connected graphs.
There is a functor $\graphicalcat_{\cycrm} \to \Xi$, but it is not an equivalence and constitutes our counterexample.
The key difference between the two categories is the type of colored cyclic operads they can be used to model. 
The color sets of cyclic operads in \cite{hry_cyclic} do not have any additional structure, whereas in other settings \cite{MR3189430,DrummondColeHackney:DKHCO,Shulman:2CDCPMA} the color sets will come with an involution. 
Indeed, in \cite{modular_paper_two}, our modular operads have involutive color sets, so one should expect $\graphicalcat_{\cycrm}$ to model cyclic operads with involution.

\subsection*{Further directions and related work}
In this paper and its companion, we focus on the Segal condition for understanding (higher) modular operads.
There is another natural possibility, which is to consider the inner Kan condition for $\graphicalcat$-presheaves. 
This is currently being investigated by Michelle Strumilla as part of her PhD thesis.

Recently, it was shown by Ward that the operad governing modular operads is Koszul \cite[Theorem 3.10]{Ward:MPGH} (in the setting of groupoid-colored operads).
This important result opens the door to effective treatments of strongly homotopy modular operads, in the style of the strongly homotopy operads of van der Laan \cite{VanderLaan:CKDSHO}.
It is natural to ask about the relation of strongly homotopy modular operads to higher modular operads based on the category $\graphicalcat$.
In particular, it would be interesting to know if there is an analogue of Le Grignou's result \cite{MR3626563} (generalizing earlier work of Faonte \cite{MR3607208} in the context of $A_\infty$-categories) which says that every strictly unital, strongly homotopy (colored) operad yields an inner Kan dendroidal set \cite[\S7]{mw}.

\subsection*{Acknowledgments}
We thank Joachim Kock for influential conversations, and thank Sophie Raynor for carefully explaining her work to us.
We also thank Pedro Boavida de Brito, Daniel Davis, Gabriel C. Drummond-Cole, David Gepner, Geoffroy Horel, Marco Robalo, and members of the Centre of Australian Category Theory for helpful discussions, interest, and encouragement.

\section{Graphs and the category \texorpdfstring{$\graphicalcat$}{U}} \label{section:graphical category}

All graphs in this paper are undirected and are allowed to have `loose ends,' that is, it is not necessary for both ends (or either end) of an edge to touch a vertex. 
One possible concise definition for such a graph (compare \cite[\S 2]{MR1113284}) is a pair $(X,V)$ where $X$ is a space (more precisely, a locally finite, one-dimensional CW complex), $V$ is a finite set of points of $X$, and $X\setminus V$ is a one-manifold (without boundary) having only a finite set of connected components. 
Components of $X\setminus V$ are the edges of the graph, and elements of $V$ are the vertices.
Thus we may have loops divorced from any vertex (those components of $X\setminus V$ homeomorphic to $S^1$), edges loose at one end (those with one missing limit point in $X$), and free floating edges (components of $X$ homeomorphic to $(0,1)$ which contain no vertices).

\[
\includegraphics[scale=.5]{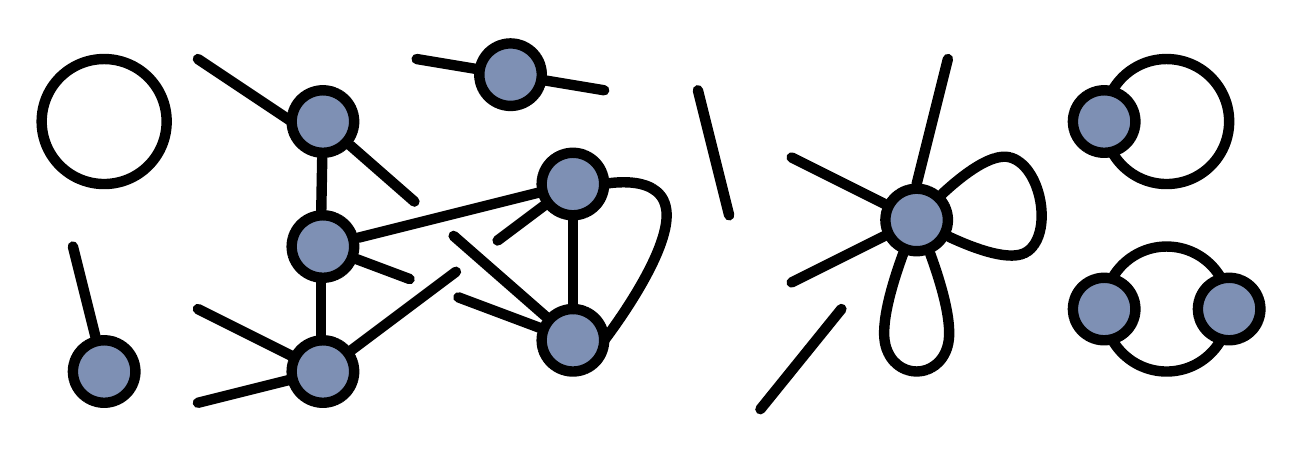}
\]

We now give some basic definitions. 
An \emph{arc} of a graph is an edge together with a chosen orientation.
Thus for any graph there a set $A$ of arcs, which comes equipped with a free involution $i$ given by reversing orientation. 
There is a partially-defined function $t: A \nrightarrow V$ which takes an arc to the vertex it points towards (that is, if $f : (0,1) \to X$ parametrizes the oriented edge $a \not\approx S^1$, then $t(a) = \lim_{x \to 1} f(x)$); we write $D \subseteq A$ for the domain of $t$. 
For each vertex $v\in V$, there is a corresponding neighborhood $\nbhd(v) = t^{-1}(v) \subseteq D \subseteq A$\index{$\nbhd(v)$} consisting of arcs which point towards $v$.

\begin{remark}\label{remark nondetermination boundary}
It is important to note that knowledge of $A$, $V$, $i$, and $t$ does not allow us to reconstruct the original graph. 
The only point of ambiguity is when $a$ and $ia$ are both in the complement of the domain of $t$; we cannot tell if the associated edge is $S^1$ or $(0,1)$.
One way to account for this difference is by considering the \emph{boundary}
$\eth(G)\subseteq A \setminus D$ of the graph.
Concretely, $\eth(G)$ may be identified with the set of ends (as in Definition 1 of \cite{HughsRanicki:EC}) of $X$; in the one-dimensional setting one can consider a free compactification $\bar X$ of $X$ and then we have $\eth(G)$ is in bijection with the discrete space $\bar X \setminus X$.
Abstractly, the boundary $\eth(G)$ is a subset of the complement of $D$ so that $i\eth(G) \subseteq D \amalg \eth(G)$ and $iD \setminus D \subseteq \eth(G)$.
\end{remark}

\begin{example}\label{examples geometric}
Let us give geometric descriptions of several important graphs.
\begin{itemize}
\item The \emph{exceptional edge}, denoted $\exceptionaledge$, is the graph where $X$ is the open interval $(0,1)$ and $V = \varnothing$.
\item The \emph{nodeless loop} is the graph where $X$ is the circle $S^1$ and $V = \varnothing$.
\item 
Let $n \geq 0$ be an integer.
The $n$-star $\medstar_n$ is the graph $(X,V)$ where $V = \{0 \} \subseteq \mathbb{C}$ and 
\[ X = V \cup \{ r e^{p2\pi i} \mid 0\leq r < 1 \text{ and $pn$ is an integer with } 1 \leq pn \leq n\} \subseteq \mathbb{C}. \]
Equipping $X$ with the usual topology, we then have that $X\setminus V$ is homeomorphic to $n$ copies of $(0,1)$. 
\item 
Let $n\geq 0$ be an integer.
The linear graph $L_n$ has $X = (0,1)$ and 
\[
	V = \left\{ \frac{1}{n+1} , \frac{2}{n+1}, \dots, \frac{n}{n+1} \right\}.
\]
In particular, $L_0$ is the exceptional edge.
\item 
Let $n\geq 0$ be an integer.
There is a graph with $X = S^1$ and 
\[
V = \{ e^{p2\pi i} \mid pn\in \mathbb{Z} \text{ and } 1 \leq pn \leq n\} \subseteq S^1;
\]
the case $n=0$ recovers the nodeless loop.

\end{itemize}
\end{example}

Henceforth, we will use combinatorial definitions of graphs, of which there are several competing definitions \cite{batanin-berger,JOYAL2011105,MR1113284,yj15}.
In this paper we primarily use Definition~\ref{definition jk graphs}, due to Joyal and Kock, as it is both extremely simple and also allows us to express the notion of \'etale map (Definition~\ref{definition etale}).
In light of Remark~\ref{remark nondetermination boundary}, we see that this definition does not capture those graphs where $X\setminus V$ has some $S^1$ components (we will return to this issue in \S\ref{subsection: egc}).
With that provisio, all of these combinatorial graph definitions are equivalent (Proposition 15.2, Proposition 15.6, and Proposition 15.8 of \cite{batanin-berger}), so we may use them interchangeably.

\begin{definition}[Feynman graphs \cite{JOYAL2011105}]\label{definition jk graphs}
A \emph{graph} $G$ is a diagram of finite sets\index{$A=A(G)$}\index{$D=D(G)$}\index{$V=V(G)$}\index{$s: D \to A$}\index{$i : A \to A$}
\[
\begin{tikzcd}
	A \arrow[loop left, "i"] & \lar[swap]{s} D \rar{t} & V
\end{tikzcd}
\]
where $i$ is a fixedpoint-free involution and $s$ is a monomorphism.\footnote{To ensure that we have a \emph{set} of graphs, insist that all of the sets $A, D, V$ are taken to be subsets of some fixed infinite set.} 
We will nearly always consider $D$ as a subset of $A$, and suppress the natural inclusion function $s: D \subseteq A$ from the notation.
\begin{enumerate}
	\item The \emph{boundary} of such a graph is the set $\eth(G) = A \setminus sD$\index{$\eth(G)$}.
	\item An \emph{edge} is just an $i$-orbit $[a,ia]$, and we write $E = A / i$ for the set of edges.
	\item An \emph{internal edge} is an edge of the form $[sd,sd']$ where $d,d'\in D$.
\end{enumerate}

\end{definition}

Let us translate the geometric descriptions from Example~\ref{examples geometric} into this setting.

\begin{example}\label{examples combinatorial}
If $Z$ is a set, write $2Z$\index{$2Z$} for the set
\[
	\{ z, z^\dagger \mid z\in Z\} \cong Z \amalg Z
\]
together with the evident involution.
We consider $Z$ as a subset of $2Z$, and write $Z^\dagger$\index{$Z^\dagger$} for its complement.
\begin{itemize}
\item 
The exceptional edge, $\exceptionaledge$\index{$\exceptionaledge$}, is the graph with $A = 2\{*\}$ and $V=D=\varnothing$.
As this graph is so important in what follows, we will give special names to its arcs and write $A = \eearcs$.\index{$\eearcs$}
\item 
The nodeless loop is not expressable in the Feynman graph formalism, as we would want to take $A = 2\{*\}$, $V=D=\varnothing$, but have an empty boundary. 
We will return to the nodeless loop in Definition~\ref{def C zero}. 
\item
The $n$-star $\medstar_n$\index{$\medstar_n$} has $V$ a one-point set, $D = \{ 1, \dots, n\}$, and $A = 2D$.
The function $s : D \to A = 2D$ is just the subset inclusion.
See Figure~\ref{figure medstar five}.
\item
The linear graph $L_n$\index{$L_n$} has $A = 2\{0,\dots, n\}$, $D= A \setminus \{0, n^\dagger\}$, $V = \{1,\dots, n\}$, $t(k) = k$ for $1\leq k \leq n$, and $t(k^\dagger) = k+1$ for $0\leq k \leq n-1$.
Each vertex neighborhood is of the form $\nbhd(k+1) = \{ k^\dagger, k+1 \}$ and the boundary is $\eth(L_n) = \{ 0, n^\dagger\}$.
\item
For the loop with $n$ vertices, we must suppose that $n>0$.
Let $V= \{1, \dots, n\}$ and $A = D = 2V$.
The target function is given by $t(k) = k$ for $1\leq k \leq n$, $t(k^\dagger) = k+1$ for $1\leq k \leq n-1$, and $t(n^\dagger) = 1$.
For vertex neighborhoods, we have $\nbhd(1) = \{ n^\dagger, 1\}$ and otherwise $\nbhd(k+1) = \{ k^\dagger, k+1 \}$.
\end{itemize}
\end{example}

\begin{figure}
\begin{tikzpicture}
\node [plain] (v) {$v$};
\node [above=.5cm of v] () {$\medstar_5$};
\draw [-] (v) to +(-.5cm,-.6cm); 
\draw [-] (v) to +(.5cm,-.6cm);
\draw [-] (v) to +(0cm,-.7cm);
\draw [-] (v) to +(-.5cm,.6cm); 
\draw [-] (v) to +(.5cm,.6cm);

\node [plain, right=3 cm of v ] (w) {$v$};
\node [above=.5cm of w] () {$\nbhd(v)$};
\draw [<-] (w) to node[swap, outer sep=-2pt]{\scriptsize{$5$}} +(-.5cm,-.6cm); 
\draw [<-] (w) to node[ outer sep=-2pt]{\scriptsize{$3$}} +(.5cm,-.6cm);
\draw [<-] (w) to node[ outer sep=-2pt]{\scriptsize{$4$}} +(0cm,-.7cm);
\draw [<-] (w) to node[ outer sep=-2pt]{\scriptsize{$1$}} +(-.5cm,.6cm); 
\draw [<-] (w) to node[swap, outer sep=-2pt]{\scriptsize{$2$}} +(.5cm,.6cm);

\end{tikzpicture}
\caption{The graph $\medstar_5$ with $\nbhd(v)=\{1,2,3,4,5\}$ and $\eth(\medstar_5)=\{1^{\dagger},2^{\dagger},3^{\dagger},4^{\dagger},5^{\dagger}\}.$ }\label{figure medstar five}

\end{figure}

It is especially convenient to have, for each graph $G$, a collection of stars.
Each such star is isomorphic to a $\medstar_n$ from the previous definition.
See Figure~\ref{figure stars examples} for a concrete example. 

\begin{definition}[Stars associated to graphs]\label{definition vertex star}
Suppose that $G$ is a graph.
We tweak the definition of $\medstar_n$ from Example~\ref{examples combinatorial} as follows:
\begin{itemize}
\item 
Let $\medstar_G$\index{$\medstar_G$} be the one-vertex graph with $A = 2\eth(G)$ and $D= \eth(G)^\dagger$.
Notice that $\eth(\medstar_G) = A \setminus D = \eth(G)$ and that the neighborhood of the unique vertex is $D = \eth(G)^\dagger$.
\item Suppose that $v$ is a vertex of $G$ and let $\nbhd(v)$ be its neighborhood in $G$.
We let $\medstar_v$\index{$\medstar_v$} denote the graph with $V = \{ v \}$, $D = \nbhd(v)$, and $A = 2\nbhd(v)$.
The boundary of $\medstar_v$ is $\nbhd(v)^\dagger \subseteq 2\nbhd(v)$.
\end{itemize}
\end{definition}

\begin{figure}[htb]
\labellist
\small\hair 2pt
 \pinlabel {$1$} [ ] at 132 92
 \pinlabel {$2$} [ ] at 77 106
 \pinlabel {$3$} [ ] at 56 81
 \pinlabel {$4$} [ ] at 85 62
 \pinlabel {$5$} [ ] at 102 61
 \pinlabel {$6$} [ ] at 101 18
 \pinlabel {$7$} [ ] at 67 34
 \pinlabel {$8$} [ ] at 15 43
 \pinlabel {$G$} [B] at 85 0
 \pinlabel {$1$} [Bl] at 221 91
 \pinlabel {$1^\dagger$} [Bl] at 221 72
 \pinlabel {$8^\dagger$} [Bl] at 221 40
 \pinlabel {$8$} [Bl] at 221 21
 \pinlabel {$\medstar_G$} [B] at 224 0
 \pinlabel {$2^\dagger$} [Bl] at 284 91
 \pinlabel {$2$} [Bl] at 284 72
 \pinlabel {$3$} [Bl] at 284 40
 \pinlabel {$3^\dagger$} [Bl] at 284 21
 \pinlabel {$\medstar_w$} [B] at 287 0
 \pinlabel {$4$} [ ] at 360 77
 \pinlabel {$4^\dagger$} [ ] at 372 91
 \pinlabel {$5$} [ ] at 369 56
 \pinlabel {$5^\dagger$} [ ] at 386 43
 \pinlabel {$6$} [ ] at 348 45
 \pinlabel {$6^\dagger$} [ ] at 336 31
 \pinlabel {$7$} [ ] at 338 66
 \pinlabel {$7^\dagger$} [ ] at 338 91
 \pinlabel {$\medstar_v$} [B] at 358 0
 \pinlabel {$w$} [ ] at 53 102
 \pinlabel {$v$} [ ] at 89 37
\endlabellist
\centering
\includegraphics[scale=0.7]{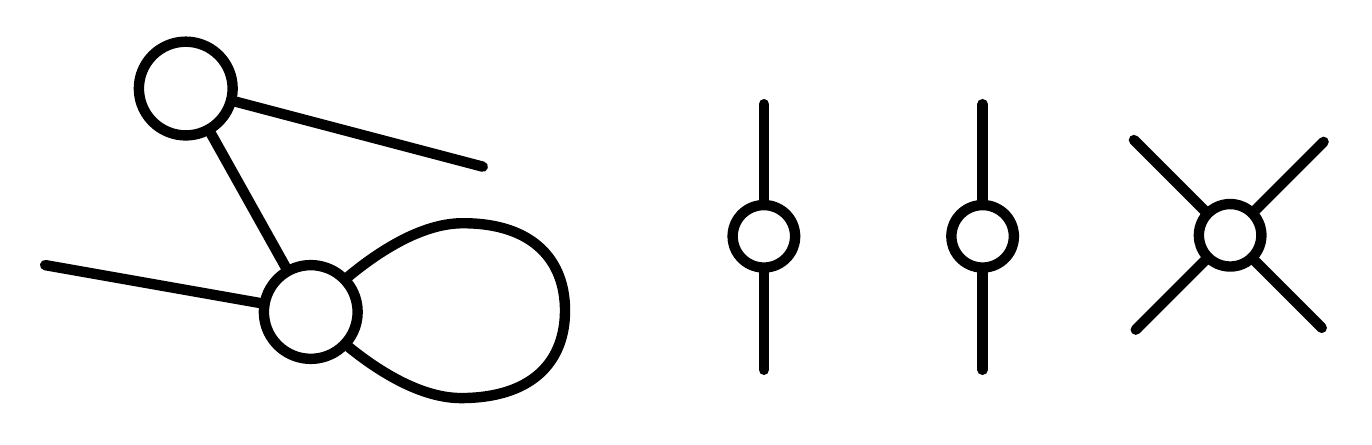}
\caption{An example of the stars associated to a graph $G$, where the involution on $A(G)$ interchanges $2n$ and $2n-1$ for $n=1,2,3,4$.}
\label{figure stars examples}
\end{figure}

\subsection{\'Etale maps as natural transformations}

\begin{definition}
\label{definition I and natural trans}
Let $\mathscr{I}$\index{$\mathscr{I}$} denote the category with three objects and three generating arrows, of shape $\begin{tikzcd}[column sep=small] \bullet \arrow[loop left] & \lar \bullet \rar & \bullet. \end{tikzcd}$
Each graph $G$ from Definition~\ref{definition jk graphs} is a functor from $\mathscr{I}$ into finite sets so that the leftward arrow is sent to a monomorphism and the generating endomorphism is sent to a free involution.
As we are considering graphs as functors, there is an obvious notion of graph map: a natural transformation of functors.
\end{definition}

Feynman graphs thus span a full subcategory of $\finset^{\mathscr{I}}$, and we will consider two subcategories (Definition~\ref{definition etale} and Definition~\ref{def: embedding}) in this subsection.
Our ultimate graph morphisms, given in Definition~\ref{def: graphical map}, will not be morphisms in the functor category $\finset^{\mathscr{I}}$.
Indeed, in that definition vertices are not sent to vertices, but rather to `subgraphs' of the codomain.
Nevertheless, by viewing graphs as functors, we can make the following definition of connectedness (which coincides with usual topological connecteness of an associated geometric version of the graphs).
\begin{definition}
\label{definition connected}
A graph $G$ is \emph{connected} if it cannot be written as a nontrivial coproduct in the functor category $\finset^{\mathscr{I}}$.
\end{definition}

\begin{construction}[The graph $G\setminus X$]\label{construction: subtracting edges}
	Suppose that $G$ is a graph and $X \subseteq E$ is a set of edges.
	Recall that in the current formalism, an edge $e$ is an orbit of the involution $i$ on $A$.
	We form a new graph $G'$ as follows, which in the future we will denote by $G\setminus X$.
	The set of vertices of $G'$ coincides with the set of vertices of $G$.
	The set of arcs of $G'$, denoted $A'$, is the $i$-closed subset of $A$ consisting of those arcs that do not appear in any edge $e$ in the set $X$.
	In symbols,
	\[
	A' = \left\{ a \in A \mid \forall e \in X, a \notin e \right\} \subseteq A.
	\]
	The involution on $A'$ is just the restriction of that on $A$.
	Finally $D' = D\cap A'$.
\end{construction}

\begin{definition}\label{definition: edge subtraction}
	Suppose that $G$ is a graph.
	\begin{itemize}
		\item If $Y \subseteq A$ is a set of arcs, temporarily let $\bar Y \subseteq E$ be the collection of edges containing elements of $Y$. 
		That is, 
		\[
		\bar Y = \{ e \in E \mid \exists a \in Y \text{ with } a\in e \}.
		\]
		We define $G\setminus Y$ to be the graph $G \setminus \bar Y$ from Construction~\ref{construction: subtracting edges}.
		\item If $G$ is any graph with boundary $\eth(G) = A\setminus D$, write $\core(G)$\index{$\core(G)$} for the graph $G\setminus \eth(G)$.
	\end{itemize}
\end{definition}

\begin{example}\label{examples of monomorphisms of diagrams}
	Suppose that $G$ is any graph and $X\subseteq E$. 
	Then the inclusion $G\setminus X \to G$ is a natural transformation. 
	Similarly, we have natural transformations $G\setminus Y \to G$ (when $Y\subseteq A$) and $\core(G) \to G$ as well.
Often (specifically, when $i\eth(G) \cap \eth(G) = \varnothing$), the latter map fits into a square expressing $G$ as a pushout of graphs (in the functor category $\finset^{\mathscr{I}}$); see Construction~\ref{construction core pushout}. 
\end{example}

The following definition is also due to Joyal and Kock \cite{JOYAL2011105}.

\begin{definition}[\'Etale maps]\label{definition etale}
A natural transformation $G\to G'$ is said to be \emph{\'etale} if the right-hand square of
\begin{equation*}
\begin{tikzcd}
	A \arrow[loop left, "i"]\dar & \lar[swap]{s} D\dar \rar{t} & V\dar \\
	A' \arrow[loop left, "i'"] & \lar[swap]{s'} D' \rar{t'} & V'
\end{tikzcd}
\end{equation*}
is a pullback.
\end{definition}

\begin{example} 
Let us give some simple examples of maps which are not \'etale.
\begin{itemize}
\item Consider the map $\core(\medstar_n) \to \medstar_n$ described in Example~\ref{examples of monomorphisms of diagrams}.  
This becomes the map $\medstar_0 \to \medstar_n$ \[
	\begin{tikzcd}
	\varnothing \dar & \lar \varnothing \dar \rar{t} & \{v\}\dar \\
\{1,\dots,n, 1^{\dagger},\dots,n^{\dagger}\}  & \lar \{1,\dots, n\} \rar{t'} & \{v\}
	\end{tikzcd}\] 
which is not \'etale unless $n=0$.
\item More generally, suppose that $G$ is a connected graph. Then $\core(G) \to G$ is \'etale if and only if $G$ is the exceptional edge or $\eth(G)$ is empty. If $G$ is not connected, then $\core(G) \to G$ splits as a sum over the set of connected components of $G$, and is thus \'etale if and only if each summand is \'etale.
\item Let $L_n$ denote the linear graph with $n$ vertices. 
If $n >0$, then there are no \'etale maps from $L_n$ to a graph with no vertices. 
\end{itemize}
\end{example}

\begin{definition}[Embeddings]\label{def: embedding}
Suppose that $G$ and $G'$ are connected graphs.
An \emph{embedding} $G\to G'$ is an \'etale map (Definition~\ref{definition etale}) where $V\to V'$ is a monomorphism.
If $G'$ is a connected graph, write $\bigembeddings(G')$\index{$\bigembeddings(G)$} for the collection of embeddings $G \to G'$ (in particular, $G$ is also connected).
\end{definition}

Since an embedding is, in particular, \'etale, the function $D\to D'$ is also a monomorphism.  
It may be the case, however, that $A \to A'$ is not.
We will specify exactly when this can happen in Lemma~\ref{lemma embedding not monomorphism}.

\begin{example}
\label{example vertices as embeddings}
We can consider $V$ as a subset of $\bigembeddings(G)$. 
Indeed, for each vertex $v\in V$ we have the associated star $\medstar_v$ from Definition~\ref{definition vertex star} which has a single vertex and $2\nbhd(v) = \nbhd(v) \amalg \nbhd(v)^\dagger$ as its set of arcs.
We write
\[
	\iota_v : \medstar_v \to G
\]
\index{$\iota_v : \medstar_v \to G$}
for the \'etale map 
\[ \begin{tikzcd}
\nbhd(v) \amalg \nbhd(v)^\dagger  \dar & \nbhd(v) \lar[hook'] \rar  \dar[hook] & \{v\} \dar[hook] \\
A & D \lar \rar["t"] & V.
\end{tikzcd} \]
The left-hand map in this diagram is just the inclusion $\nbhd(v) \to D \to A$ on the first component, while the second component (which is forced by compatibility with the involutions) sends $a^\dagger$ to $ia$. 
As the right-hand map is a monomorphism, $\iota_v$ is an embedding.
\end{example}

\begin{example}[Contracted star] \label{corolla_subgraph_contracted_corolla}
Suppose that we take $\medstar_5$ from Figure~\ref{figure medstar five} and identify $4^\dagger$ with $5$ (and likewise $5^\dagger$ with $4$).
That is, we consider the graph $G$ with one vertex, set of arcs  $A=\{1, 2,  3, 4,  5, 1^{\dagger}, 2^{\dagger},  3^{\dagger}\}$ and $D = \{ 1, 2, 3, 4, 5\}$. 
The involution is the same as that for $\medstar_5$, except that $i(4) = 5$ (and $i(5) = 4$).
Thus there is one internal edge $e=[4,5]$.
The natural embedding $\iota_v : \medstar_5 \to G$ is not injective on arcs.
\[
\begin{tikzpicture}
\node [plain] (v) {$v$};
\node [above=.5cm of v] () {$\medstar_5$};
\draw [-] (v) to +(-.5cm,-.6cm); 
\draw [-] (v) to +(.5cm,-.6cm);
\draw [-] (v) to +(0cm,-.7cm);
\draw [-] (v) to +(-.5cm,.6cm); 
\draw [-] (v) to +(.5cm,.6cm);
\node[empty, right=.2cm of v] (p){};

\node[plain, right=3.5cm of v] (w) at (0,0) {$v$};
\node [above=.5cm of w] () {$G$};
\draw [-, out=70, in=-70, loop] (w) to node{\scriptsize{e}} ();
\draw [-] (w) to +(-.5cm,.6cm); 
\draw [-] (w) to +(-.5cm,-.6cm); 
\draw [-] (w) to +(0cm,-.7cm);
\node[empty, left=.2cm of w] (q){};
\draw [black, ->, line width=1pt, shorten >=0.03cm] (p) to (q);

\end{tikzpicture}
\]
\end{example}

In the previous example we showed how to connect up two boundary edges and still have an embedding.
This can be done in the reverse direction, namely by starting with a graph and cutting at some internal edge.
In Lemma~\ref{lemma embedding not monomorphism} we give a general statement about embeddings that are not monomorphisms, and we see that they always come from such internal edge cuttings.
The reader should contrast this example with Construction~\ref{construction: subtracting edges}, where edges are deleted entirely.

\begin{example}\label{snipping example}
We describe a family of embeddings obtained by ``snipping'' a single edge.
Let $G$ be a connected graph and let $e=[a,ia]$ be a chosen internal edge in $G$.
We let $G_e$ denote the graph obtained from $G$ by snipping $e$.
Explicitly, we define $G_e$ to have the same set of vertices as $G$ and set of arcs $A \amalg \{ b, c \}$.
The involution $i_e$ on $G_e$ is given by $i_e (a) = b$ and $i_e(ia) = c$, while the rest of the structure remains the same.
This results in one of two cases: either $G_e$ is connected or not.  

\begin{itemize}
\item[Case 1] If the graph $G_e$ is no longer connected, this means that $e$ was an edge between two distinct vertices $u$ and $v$, with $a\in\nbhd(u)$ and $ia\in\nbhd(v)$.
Moreover, there are no other edges connecting $u$ and $v$.
Snipping $e$ thus results in two embeddings $f_u:G_u\rightarrow G$ and $f_v:G_v\rightarrow G$ which include the connected graph $G_u$, the half containing $u$, (respectively, $G_v$) into $G$.
Note that the embedding $f_u:G_{u}\rightarrow G$ is injective on vertices and arcs; all additional graph structure on $G_u$ is that of $G$.
The same is true for $G_v$.  

\[
\begin{tikzpicture}

\node [plain] (v) {$u$};
\node [above=.5cm of v] () {$G_u$};
\draw [-] (v) to +(-.5cm,-.8cm); 
\draw [-] (v) to node[near start, outer sep=-2pt]{\scriptsize{$ia$}} node[near end, outer sep=-2pt]{\scriptsize{$$}} +(0 cm,.8cm);
\draw [-] (v) to +(0cm,-.9cm); 
\draw [-, out=40, in=-40, loop] (v) to node[swap, outer sep=-2pt]{\scriptsize{$$}}  ();

\node[right=.6 cm of v] (q) {};

\node [plain, right=3cm of v] (s2) {$v$};
\node [plain, below=.8cm of s2] (w) {$u$};
\node [above=.5cm of s2] () {$G$};
\draw [-] (w) to +(-.5cm,-.8cm); 
\draw [-] (w) to +(0 cm,.8cm);
\draw [-] (w) to +(0cm,-.9cm); 
\draw [-, out=40, in=-40, loop] (w) to node[swap, outer sep=-2pt]{\scriptsize{$$}}  ();

\draw [-] (s2) to node[near start, outer sep=-2pt]{\scriptsize{$a$}} node[near end, outer sep=-2pt]{\scriptsize{$ia$}} (w);

\draw [-] (s2) to  +(-.5cm,-.8cm); 
\draw [-] (s2) to +(.5cm,-.8cm);
\draw [-] (s2) to +(-.5cm,.8cm); 
\draw [-] (s2) to +(.5cm,.8cm);
\node[left=.5 cm of w] (x) {};

\draw [black, ->, line width=1pt, shorten >=0.03cm, bend right] (q) to (x);

\end{tikzpicture}
\]

\item[Case 2] In the second case, we still have a connected graph after snipping $e$.
This means that the edge $e$ was part of a cycle.
This creates one embedding $G_e\rightarrow G$ where $G_e$ has two more arcs than $G$.
Specifically, if $a\in\nbhd(u)$ and $ia\in\nbhd(v)$ (allowing for the possibility that $u=v$), then we add a pair of arcs which disconnect the edge.
In the picture below, $G$ contains the new arcs $b$ and $c$ and the embedding $f$ takes $b$ to $ia$ and $c$ to $a$. 
\[
\begin{tikzpicture}

\node [plain] (s1) {$v$};
\node [plain, below=.8cm of s1] (w1) {$w$};
\node[empty, right=.2 cm of v](q){};
\node[empty, below=.2 cm of q](r){};
\node[empty, right= 1.5 cm of r](s){};
\draw [black, ->, line width=1pt, shorten >=0.03cm] (r) to (s);

\draw [-, bend right =50] (s1) to  node[black, very near end, swap]{\scriptsize{$a'$}} node[black, very near start,swap]{\scriptsize{$ia'$}}  (w1);

\draw [-] (s1) to node[black, very near end]{\scriptsize{$b$}} node[black, very near start]{\scriptsize{$a$}} +(.5cm,.8cm);
\draw [-] (w1) to node[black, very near end, swap]{\scriptsize{$c$}} node[black, very near start,swap]{\scriptsize{$ia$}}+(.5cm,-.8cm);
\node [plain, right=3cm of s1] (s2) {$v$};
\node [plain, below=.8cm of s2] (w) {$w$};

\draw [-, bend right =50] (s2) to  node[black, very near end, swap]{\scriptsize{$ia'$}} node[black, very near start,swap]{\scriptsize{$a'$}}  (w);
\draw [-, bend left=50] (s2) to node[black, very near end]{\scriptsize{$ia$}} node[black, very near start]{\scriptsize{$a$}}  (w);
\end{tikzpicture}
\]

\end{itemize}
\end{example}

\subsection{Graph substitution}
A key construction when dealing with graphs with loose ends and operadic structures is that of graph substitution.
Suppose that we are given a graph $G$, a collection of graphs $H_v$ indexed by the vertices of $G$, and specified
bijections $i\nbhd(v) \cong \eth(H_v)$.
Then we can form a new graph $G\{H_v\}$\index{$G\{H_v\}$} by a process of \emph{graph substitution}, where we replace each vertex $v$ by the graph $H_v$, identifying the edges at the boundary of $H_v$ with the edges incident to the vertex $v$ in $G$.
A detailed treatment may be found in the combinatorial settings in \cite[Ch.~5]{yj15} and \cite[\S 13]{batanin-berger}.

Intuitively, one sees that there should be a canonical identification of the vertices of $G\{H_v\}$ with $\coprod_{v\in G} V(H_v)$ (using the shorthand $v\in G$ for $v\in V(G)$), that all internal edges in $H_v$ become internal edges in $G\{H_v\}$, and that $\eth(G\{H_v\}) \cong \eth(G)$. 
Theorem 5.32 and Lemma 5.31 in \cite{yj15} tell us that graph substitution is \emph{associative} and \emph{unital}.
Unitality means that 
\[
\medstar_G\{G\}\cong G \cong G\{\medstar_v\};
\]
here $\medstar_G$ and $\medstar_v$ are as in Definition~\ref{definition vertex star} and the bijections needed to define the graph substitutions are the identity on $\eth(G)$ and in the the bijections $i\nbhd(v) \to \nbhd(v)^\dagger$, respectively.
Associativity asserts that 
\[
\left[G\{H_v\}\right] \{I^v_u\} \cong G\left\{ H_v\{I^v_u\}\right\}
\]
where the $I^v_u$ are graphs indexed on $u \in V(H_v)$ and with bijections left implicit.

\begin{remark}
\label{remark jk not closed under graph sub}
The collection of graphs from Definition~\ref{definition jk graphs} is not closed under graph substitution operations. 
Indeed, if $G$ is the loop with one vertex from Example~\ref{examples combinatorial} and $\exceptionaledge$ is the exceptional edge, then $G\{ \exceptionaledge \}$ should be the nodeless loop.
As we will only be dealing with \emph{connected} graphs (see Definition~\ref{definition connected}) for the remainder of this paper, this example is the main one we need to worry about (since graph substitution can be done one vertex at a time).
In working with disconnected graphs in generality, the result of a graph substitution may have many nodeless loops even when the original graphs involved have none.
\end{remark}

For Proposition~\ref{proposition: image} and Lemma~\ref{lemma: image factorization}, it is helpful to have an explicit description of graph substitution for Feynman graphs.
The remainder of this section is a little more difficult than what surrounds it, so it is recommended that most readers skip ahead to Section~\ref{section embeddings and boundaries} for now, carrying with them the preceding intuitive discussion and referring back as needed.
The following description is inspired by \cite[\S1.5]{Kock:GHP}.

\begin{construction}[Graph substitution]
\label{construction graph sub}
Suppose that $G$ is a graph where each component of $G$ contains at least one vertex, and let $E_i$ be its set of internal edges (outside of this situation, we must modify the two coequalizers below, adding in to the middle terms those components of $G$ that lack vertices). 
For each edge $e\in E_i$, choose an ordering $e = [x_{e}^1, x_{e}^2]$ for the set of arcs comprising $e$. 
We can exhibit $G$ as a coequalizer (in the diagram category $\finset^{\mathscr{I}}$)
\[ \begin{tikzcd}
\coprod\limits_{e \in E_i} \exceptionaledge \rar[shift left, "\outeredge"] \rar[shift right, "\inneredge" swap] & \coprod\limits_{v\in V} \medstar_v \rar & G,
\end{tikzcd} \]
\index{$\outeredge,\inneredge$}
where the map on the right is $\coprod_v \iota_v$.
\begin{itemize}
	\item $\outeredge$ is the coproduct of maps $\exceptionaledge \to \medstar_{tx_e^1}$ with $\outeredge_e(\edgemajor) = (x_e^1)^\dagger \in \eth(\medstar_{tx_e^1})$ and $\outeredge_e(\edgeminor) = x_e^1 \in D(\medstar_{tx_e^1})$;
	\item $\inneredge$ is the coproduct of maps $\exceptionaledge \to \medstar_{tx_e^2}$ with $\inneredge_e(\edgeminor) = (x_e^2)^\dagger \in \eth(\medstar_{tx_e^2})$ and $\inneredge_e(\edgemajor) = x_e^2 \in D(\medstar_{tx_e^2})$.
\end{itemize}
Now suppose we are given graphs $H_v$ and isomorphisms $m_v$ from $i(\nbhd(v)) \subseteq A(G)$ to $\eth(H_v)$.
We then have induced maps $\tilde \outeredge$ and $\tilde \inneredge$, where
\begin{itemize}
	\item $\tilde \outeredge$ is the coproduct of maps $\tilde \outeredge_e : \exceptionaledge \to H_{tx_e^1}$ with $\tilde \outeredge_e(\edgemajor) = m_{tx_e^1}(ix_e^1) \in \eth(H_{tx_e^1})$, 
	\item $\tilde \inneredge$ is the coproduct of maps $\tilde \inneredge_e : \exceptionaledge \to H_{tx_e^2}$ with 
	 $\tilde \inneredge_e(\edgeminor) = m_{tx_e^2}(ix_e^2) \in \eth(H_{tx_e^2})$.
\end{itemize}
We can then form the coequalizer 
\[ \begin{tikzcd}
\coprod\limits_{e \in E_i} \exceptionaledge \rar[shift left, "\tilde \outeredge"] \rar[shift right, "\tilde \inneredge" swap] & \coprod\limits_{v\in V} H_v \rar["\pi"] & K.
\end{tikzcd} \]
One can check that this object $K\in \finset^{\mathscr{I}}$ is always graph in the sense of Definition~\ref{definition jk graphs}.
Since colimits in $\finset^{\mathscr{I}}$ are computed levelwise, it is immediate that $V(K) = \coprod_{v\in V(G)} V(H_v)$ and $D(K) = \coprod_{v\in V(G)} D(H_v)$.
Further, the graph substitution $G\{H_v\}$ is represented by $K$ as long as $G\{H_v\}$ \emph{can be represented by Feynman graphs}.
When all of the graphs $G$ and $H_v$ are connected, this is the case except when $G$ is a loop with $n$ vertices (Example~\ref{examples combinatorial}), and all of the $H_v$ are edges.
\end{construction}

To distinguish between the various involutions, we will write $i_v$ for the involution on the graph $H_v$.
Let us analyze some of the structure of $A(K)$ by studying the preimages of certain elements.
We have three situations whose behavior follows readily from the coequalizer description.

\begin{enumerate}[label={({\Alph*})},ref={\Alph*}]
\item  \label{graph sub enumerate A}
Suppose that $e$ is an internal edge of $G$ between vertices $v$ and $w$ (which may be equal). If $H_v$ and $H_w$ are not edges, then
\[
\pi^{-1} \pi (\tilde \outeredge_e(\edgemajor)) = \{ \tilde \outeredge_e(\edgemajor), \tilde \inneredge_e(\edgemajor) \} 
\,\text{ and }\, 
\pi^{-1} \pi (\tilde \outeredge_e(\edgeminor)) = \{ \tilde \outeredge_e(\edgeminor), \tilde \inneredge_e(\edgeminor) \}. 
\]
\item  \label{graph sub enumerate B}
If $[d,i_vd]$ is an internal edge of $H_v$, then $\pi^{-1} \pi (d) = \{ d \}$.
\item  \label{graph sub enumerate C}
If $x\in \eth(G) \cap i \nbhd(v)$ and $H_v$ is not an edge, then 
\[
\pi^{-1} \pi (m_vx) = \{ m_vx \} 
\,\text{ and }\, 
\pi^{-1} \pi (i_v(m_vx)) = \{ i_v(m_vx) \}. 
\]
\end{enumerate}

We now show how to recover a standard, intuitive fact about graph substitution in an elementary way from Construction~\ref{construction graph sub}.
While reading the proof, note that $\eth(G) \to \eth(K)$ is always defined and injective, even when $K$ does not represent $G\{H_v\}$.
Indeed, it is only in showing surjectivity that we must impose particular constraints on $G$ and $H_v$ (so that $G\{H_v\}$ is not a nodeless loop).

\begin{lemma}
\label{lemma boundaries}
The function which sends $x\in \eth(G)$ to $\pi (m_{tix} x) \in A(K)$ constitutes a bijection between $\eth(G)$ and $\eth(K)$.
\end{lemma}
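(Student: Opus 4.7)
The plan is to exploit the explicit description of $\pi$-preimages given in cases \ref{graph sub enumerate A}, \ref{graph sub enumerate B}, \ref{graph sub enumerate C} just before the statement. First I would verify well-definedness: for $x \in \eth(G)$, the standing assumption that each component of $G$ contains a vertex ensures $ix \in D(G)$, so $tix$ is defined and $x = i(ix) \in i\nbhd(tix)$, whence $m_{tix}(x) \in \eth(H_{tix})$ is meaningful. Because colimits in $\finset^{\mathscr{I}}$ are levelwise and $D(\exceptionaledge) = \varnothing$, we have $D(K) = \coprod_v D(H_v)$, with inclusion into $A(K)$ realised by $\pi$. Since $m_{tix}(x)$ lies in $\eth(H_{tix})$, hence not in $D(H_{tix})$, and case \ref{graph sub enumerate C} gives that its $\pi$-preimage is the singleton $\{m_{tix}(x)\}$, the image lies in $\eth(K)$ rather than $D(K)$.

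For injectivity, suppose $\pi(m_{tix}(x)) = \pi(m_{tiy}(y))$. Case \ref{graph sub enumerate C} gives singleton preimages in $\coprod A(H_v)$, so disjointness of the coproduct forces $tix = tiy =: v$, and then bijectivity of $m_v : i\nbhd(v) \to \eth(H_v)$ yields $x = y$. For surjectivity, given $a \in \eth(K)$, pick a lift $\tilde a \in A(H_v)$ in the coproduct. If $\tilde a \in D(H_v)$, then $a = \pi(\tilde a) \in D(K)$, contrary to $a \in \eth(K)$; hence $\tilde a \in \eth(H_v)$, and $m_v$ furnishes a unique $y \in i\nbhd(v)$ with $m_v(y) = \tilde a$. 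One then checks that $y$ lies in $\eth(G)$: otherwise $[y, iy]$ would be an internal edge $e$ of $G$, and applying case \ref{graph sub enumerate A} with $x_e^1 = iy$, $x_e^2 = y$ identifies $\pi(\tilde a)$ with $\pi(i_{ty}(m_{ty}(iy)))$, a second representative that, for non-edge $H_{ty}$, lies in $D(H_{ty})$ and therefore forces $a \in D(K)$, a contradiction. Setting $x := y$ then produces the required preimage with $tix = v$ and $m_{tix}(x) = \tilde a$.

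The main obstacle is the degenerate situation where one of the graphs $H_v$ involved is the exceptional edge $\exceptionaledge$, since the preimage analyses \ref{graph sub enumerate A} and \ref{graph sub enumerate C} are explicitly stated under the hypothesis that the relevant $H_v$ is not an edge. This case has to be handled by a separate small computation tracing the coequalizer identifications through the arcs $\edgemajor, \edgeminor$ of $\exceptionaledge$; substituting $\exceptionaledge$ at a vertex $v$ effectively chains together the two arcs at $v$, and one checks directly that the assignment $x \mapsto \pi(m_{tix}(x))$ transports correctly under this chaining. Alternatively, since the graphs $H_v = \exceptionaledge$ only contribute to edge-gluings without introducing new arcs up to $\pi$, one may reduce to the generic case by pre-simplifying the graph substitution.
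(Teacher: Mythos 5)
Your argument follows the same strategy as the paper's: both proofs run on the preimage analysis \eqref{graph sub enumerate A}--\eqref{graph sub enumerate C}, showing that boundary arcs of $G$ land in $\eth(K)$ because their $\pi$-preimages avoid $\coprod_v D(H_v) = D(K)$, and proving surjectivity by lifting $y \in \eth(K)$ into $\coprod_v \eth(H_v)$ and ruling out the possibility that the lift comes from an internal edge of $G$. The one substantive difference is how the degenerate case $H_v = \exceptionaledge$ is treated. You correctly identify that \eqref{graph sub enumerate A} and \eqref{graph sub enumerate C} are only stated for non-edge $H_v$ and defer this to ``a separate small computation,'' but be aware that this is where the real difficulty sits: when several adjacent vertices all receive exceptional edges, the coequalizer identifications chain together along a path, so a single element of $A(K)$ can have an arbitrarily long preimage and the singleton/two-element preimage counts you rely on fail. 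The paper sidesteps this by proving the lemma only when $H_v = \medstar_v$ for all but one vertex $v_0$ (so at most one exceptional edge is ever substituted, and the ``trick'' of producing a second preimage element can be played at most once), remarking that the general case follows by induction, a path argument, or Construction~\ref{construction xA}. Your second proposed remedy --- pre-simplifying the substitution --- is essentially this induction, and is the cleanest way to close the gap; note also that the fully degenerate case ($G$ a loop with all $H_v$ edges, where $K$ is a nodeless loop) must be excluded for surjectivity, which your generic-case argument never encounters but your deferred case would.
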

\begin{proof}
This is easy to see in the case when there is a vertex $v_0$ such that $H_v = \medstar_v$ (and $m_v$ is the evident map) for $v \neq v_0$.
We prove this only in that case.
For the general case, one can either make an inductive argument from this case, introduce a variation on this proof involving paths, or appeal to something like Construction~\ref{construction xA} that we will need later. 
The reader who is familiar with \cite{yj15} will note that this lemma is essentially contained in the proof of Lemma 5.10 there. 

We first show that if $x\in \eth(G) \cap i \nbhd(u)$, then $\pi (m_u x)$ is in $\eth(K)$. 
If $\pi^{-1} \pi (m_u x)$ has a single element, then $\pi (m_u x)$ is in $\eth(K)$ since $D(K) \cong \coprod_v D(H_v)$.
Since $x$ is not part of an internal edge of $G$, we know that $m_ux$ is not of the form $\tilde \outeredge_e(\edgemajor)$ or $\tilde \inneredge_e(\edgeminor)$.
Suppose that the set $\pi^{-1} \pi (m_u x)$ has more than one element. 
Then there exists an internal edge $e$ of $G$ with $m_u(x)$ equal to $\tilde \outeredge_e (\edgeminor)$ or $\tilde \inneredge_e(\edgemajor)$; without loss of generality, we assume that we are in the former case. 
We then have that both $m_u x = \tilde \outeredge_e (\edgeminor)$ and $i_u(m_u x) = \tilde \outeredge_e (\edgemajor)$ are in $\eth(H_u)$, hence $H_u$ is an edge and $u = v_0$.
We now know that 
\begin{equation}\label{arc inclusions that should be equalities}
\begin{aligned}
	\{ m_ux, \tilde \inneredge_e(\edgeminor) \} &\subseteq \pi^{-1} \pi (m_u x) \\
	\{ i_u(m_ux), \tilde \inneredge_e(\edgemajor) \} &\subseteq \pi^{-1} \pi (i_u(m_u x)),
\end{aligned}
\end{equation}
and furthermore the sets $\pi^{-1} \pi (m_u x)$ and $\pi^{-1} \pi (i_u(m_u x))$ are disjoint since the involution on $A(K)$ is free.
Every other element of $\coprod_{v \in G} A(H_v)$ is accounted for in \eqref{graph sub enumerate A},\eqref{graph sub enumerate B},\eqref{graph sub enumerate C}, (that is, each other $y$ in this set satisfies $|\pi^{-1}\pi (y)| \leq 2$) so the inclusions in \eqref{arc inclusions that should be equalities} are actually equalities.
Since neither of the two elements of $\pi^{-1} \pi (m_u x)$ is in $\coprod_{v\in G} D(H_v)$, it follows that $\pi(m_u x)$ is not in $D(K)$. 
Hence $\pi(m_ux) \in \eth(K)$.

On the other hand, if $y\in \eth(K)$ then 
\begin{equation}\label{pi inv y inclusion}
\pi^{-1}(y) \subseteq \coprod_v \eth(H_v) = \coprod_v m_v(i \nbhd(v)).\end{equation}
Suppose that $m_u(x) \in \pi^{-1}(y)$ with $x\notin \eth(G)$.
Then $e=[x,ix]$ is an internal edge of $G$.
Without loss of generality about the ordering of the arcs of this internal edge, we have $\tilde \outeredge_e(\edgemajor) = m_w(ix)$ and $\tilde \inneredge_e(\edgeminor) = m_u(x)$, where $w = tx$.
Since $\tilde \outeredge_e(\edgeminor)$ is in $\pi^{-1}(y)$, we know by \eqref{pi inv y inclusion} that $\tilde \outeredge_e(\edgeminor) = i_w m_w(ix)$ is an element of $\eth(H_w)$.
Thus $H_w$ must be an edge, so $w=v_0$.
We cannot pull off this same trick twice, so $m_w^{-1}(i_wm_w(ix))$ is in $\eth(G)$ unless $w=u$.
If $w=u$, then we are in the situation where $G$ is the loop with one node and $H_w$ is the exceptional edge, which is explicitly disallowed.
Hence there is an $m_v(x) \in \pi^{-1}(y)$ with $x\in \eth(G)$.
\end{proof}

Note in particular that $A(K)$ is isomorphic, as a set, to $\eth(G) \amalg \coprod_v D(H_v)$.
The involution on this set can be defined directly (in the case when $K$ represents $G\{H_v\}$) using the involutions on $G$ and $H_v$, the bijections $m_v$, and the function $\mathbf{x}$ from Construction~\ref{construction xA}.
We will never explicitly need this fact in this paper.

\subsection{Embeddings and boundaries}
\label{section embeddings and boundaries}
We now go deeper in our study of embeddings.
Our key result is Proposition~\ref{proposition: embedding uniqueness} which tells us to which extent embeddings are determined by the images of their boundaries.

\begin{lemma}\label{lemma: boundary injective}
	Suppose that $f: G \to G'$ is in $\bigembeddings(G')$ and $\eth(G) = A \setminus D$. 
	Then the composite $\eth(G) \hookrightarrow A \xrightarrow{f} A'$ is a monomorphism. 
\end{lemma}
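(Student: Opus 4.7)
The plan is a proof by contradiction. Suppose $x, y \in \eth(G)$ satisfy $f(x) = f(y)$ but $x \neq y$; applying $i'$ gives $f(ix) = f(iy)$ as well. As a preliminary reduction, $y \neq ix$: otherwise $f(x) = f(y) = f(ix) = i'f(x)$ would be a fixed point of the free involution $i'$. Symmetrically $x \neq iy$, and combined with the freeness of $i$ and the assumption $x \neq y$, this shows that $x, ix, y, iy$ are four pairwise distinct elements of $A$.

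The main case is $ix, iy \in D$. The étale pullback $D \cong V \times_{V'} D'$ exhibits $f_D : D \to D'$ as the base change of $f_V : V \to V'$ along $t'$, so since $f_V$ is a monomorphism (by the definition of embedding), so is $f_D$. The equality $f_A(ix) = f_A(iy)$ restricts to $f_D(ix) = f_D(iy)$ via the commutative left square, which forces $ix = iy$, contradicting the distinctness established above.

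The remaining case is when, say, $iy \notin D$. Then both $y$ and $iy$ lie in $A \setminus D = \eth(G)$, so $[y, iy]$ is a free floating edge of $G$ (neither arc points at a vertex, so this edge sits in its own connected component in $\finset^{\mathscr{I}}$). Since $G$ is connected and the Feynman graph formalism of Definition~\ref{definition jk graphs} excludes nodeless loops (see Remark~\ref{remark nondetermination boundary}), $G$ must be the exceptional edge $\exceptionaledge$. But then $D(G) = \varnothing$, immediately contradicting the subcase $ix \in D$; and if also $ix \notin D$, then $A(G)$ has only two elements, contradicting the existence of four distinct arcs $x, ix, y, iy$.

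The main obstacle is this free-floating-edge case: the essential observation is that, in the Feynman graph formalism, connectedness of $G$ forces any free floating edge to constitute all of $G$, namely $\exceptionaledge$, whose two-element boundary is already ruled out by the preliminary reduction. Everything else reduces to the fact that monomorphisms are stable under base change applied to the étale pullback square, together with the bookkeeping required to reduce to the situation where both $ix$ and $iy$ belong to $D$.
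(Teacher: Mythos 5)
Your proof is correct and follows essentially the same route as the paper's: both arguments come down to the facts that $f$ restricts to a monomorphism $D\to D'$ (by pulling back $V\to V'$ along the \'etale square), that $f$ commutes with the free involutions, and that for a connected $G\neq\exceptionaledge$ every boundary arc has its involute in $D$ (your free-floating-edge observation), with the exceptional edge handled separately via freeness of $i'$. The paper packages this as a direct factorization of $\eth(G)\to A'$ through the composite of monomorphisms $\eth(G)\xrightarrow{i}D\xrightarrow{f}D'\xrightarrow{i'}A'$ rather than as a contradiction, but the content is identical.
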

\begin{proof}
If $G$ is the exceptional edge with $A = \eth(G) = \eearcs$ (see Example~\ref{examples combinatorial}), then $f(\edgemajor) \neq i'(f(\edgemajor)) = f(\edgeminor)$ so $\eth(G)=A \to A'$ is injective.
For the remaining cases, simply notice that the indicated map is the following composite.
\[ \begin{tikzcd}[column sep=small, row sep=small]
\eth(G) \dar \rar[hook, "i"] & D \dar \rar[hook, "f"] & D' \dar \rar[hook, "i'"] & A'\\
A \rar["i", "\cong" swap] & A & A' \arrow[ur,"i'", bend right, "\cong" swap]
\end{tikzcd} \]
\end{proof}
In the preceding proof, we have relied on connectivity of $G$ to ensure that $i(\eth G) \subseteq D$; the only time this does not happen for connected graphs is when $G$ is the exceptional edge.

It is useful to isolate a subset of $A(G')$ that is isomorphic to $\eth(G)$ via $f$.
We will extend the following definition to the more general setting of `graphical maps' in Definition~\ref{def boundary of graphical map}.
\begin{definition}[Boundary of an embedding] 
\label{def boundary of embedding}
If $f: G\to G'$ is an embedding, we write $\eth(f) \subset A'$\index{$\eth(f)$} for the image of the (injective) function $f|_{\eth(G)} : \eth(G) \to A'$.
\end{definition}

\begin{lemma}
\label{lemma embedding not monomorphism}
Suppose that $f: G \to G'$ is in $\bigembeddings(G')$. 
If $a_1 \neq a_2$ are elements of $A$ such that $f(a_1) = f(a_2)$, then one of the following two situations holds:
\begin{enumerate}
	\item $a_1, ia_2 \in \eth(G)$ and $ia_1, a_2 \in D$, or \label{first_condition_embed_not_mono}
	\item $a_1, ia_2 \in D$ and $ia_1, a_2 \in \eth(G)$. \label{second_condition_embed_not_mono}
\end{enumerate}
In particular, since $f|_{\eth(G)}$ is injective, $|f^{-1}(a)| \leq 2$ for every $a\in A'$.
\end{lemma}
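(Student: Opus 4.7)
The strategy rests on two injectivity facts about the embedding $f$: the restriction $f|_D : D \to D'$ is injective because the right-hand square in Definition~\ref{definition etale} is a pullback and $V \to V'$ is a monomorphism by hypothesis of Definition~\ref{def: embedding}, so $D \to D'$ is a pullback of a mono. Meanwhile, $f|_{\eth(G)} : \eth(G) \to A'$ is injective by Lemma~\ref{lemma: boundary injective}. Since $A = D \amalg \eth(G)$ as a set, any pair of distinct arcs $a_1 \neq a_2$ sharing an image under $f$ must sit on opposite sides of this decomposition. After a possible relabeling that swaps the roles of $a_1$ and $a_2$, we may assume $a_1 \in \eth(G)$ and $a_2 \in D$.

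The plan is then to locate $ia_1$ and $ia_2$. Applying the $\mathscr{I}$-naturality $f \circ i = i' \circ f$ to $f(a_1)=f(a_2)$ yields $f(ia_1) = f(ia_2)$, and since $i$ is free these two arcs are again distinct. By the same dichotomy, one of them lies in $\eth(G)$ and the other in $D$. To conclude case~\ref{first_condition_embed_not_mono}, I must rule out $ia_1 \in \eth(G)$.

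The main obstacle—though short to dispatch—is this exclusion. If both $a_1$ and $ia_1$ belonged to $\eth(G) = A \setminus D$, then the edge $[a_1, ia_1]$ would touch no vertex, so it would constitute its own connected component (in the sense of Definition~\ref{definition connected}). By the connectedness of $G$ required in Definition~\ref{def: embedding}, $G$ would have to be the exceptional edge $\exceptionaledge$. But then $D = \varnothing$, contradicting $a_2 \in D$. Hence $ia_1 \in D$, which in turn forces $ia_2 \in \eth(G)$, giving case~\ref{first_condition_embed_not_mono}. Had we instead relabeled at the start with $a_1 \in D$ and $a_2 \in \eth(G)$, the same argument yields case~\ref{second_condition_embed_not_mono}.

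The final assertion $|f^{-1}(a)| \leq 2$ is then immediate: any preimage of $a$ contributes at most one element from $D$ and at most one from $\eth(G)$ by the two injectivity facts recalled at the outset.
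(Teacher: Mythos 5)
Your proof is correct and follows essentially the same route as the paper's: both rest on the injectivity of $f|_D$ and $f|_{\eth(G)}$ together with the decomposition $A = D \amalg \eth(G)$, and then apply the involution and the same dichotomy, using connectedness to exclude $ia_1 \in \eth(G)$. The only cosmetic difference is that the paper dispatches the case $G = \exceptionaledge$ up front, whereas you fold it into the component argument; the substance is identical.
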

\begin{proof}
If $G = \exceptionaledge$, then $A \to A'$ is a monomorphism and the statement is vacuously true.
We thus suppose that $G\neq \exceptionaledge$.
Since $f: D \to D'$ is a monomorphism, at least one of $a_1, a_2$ is not in $D$.
Similarly, since $f: \eth(G) \to A'$ is a monomorphism by Lemma~\ref{lemma: boundary injective}, at least one of $a_1, a_2$ is not in $\eth(G)$.
Since $A = D \amalg \eth(G)$, there are $j\neq k$ in $\{1,2\}$ so that $a_j \in D$ and $a_k \in \eth(G)$.

Since $a_k \in \eth(G)$ and $G\neq \exceptionaledge$, we know that $ia_k \in D$.
Further, we have $f(ia_1) = f(ia_2)$ with $ia_1 \neq ia_2$, so by the first paragraph we have $ia_j \in \eth(G)$.
Case \eqref{first_condition_embed_not_mono} occurs when $(j,k) = (2,1)$, while case \eqref{second_condition_embed_not_mono} occurs when $(j,k) = (1,2)$.
\end{proof}

We next wish to consider a diagram of embeddings of the form 
\[ \begin{tikzcd}
G \rar[shift left, "h"] \rar[shift right, "k" swap] & G' \rar{f} & G'' 
\end{tikzcd} \]
with $fh = fk$.
Since $f$ need not be a monomorphism in $\finset^{\mathscr{I}}$, one wouldn't expect to deduce that $h=k$.
Indeed, we have the following counterexample.

\begin{example}
\label{example edge mono failure}
Consider the three graphs $G, G', G''$ in Figure~\ref{fig: edge mono failure}.
\begin{figure}
\labellist
\small\hair 2pt
 \pinlabel {$0$} [l] at 18 77
 \pinlabel {$i0$} [l] at 18 42
 \pinlabel {$1$} [l] at 91 92
 \pinlabel {$i1$} [l] at 91 68
 \pinlabel {$2$} [l] at 91 40
 \pinlabel {$i2$} [l] at 91 16
 \pinlabel {$i3$} [r] at 155 70
 \pinlabel {$3$} [r] at 155 38
\endlabellist
\centering
\includegraphics[scale=0.7]{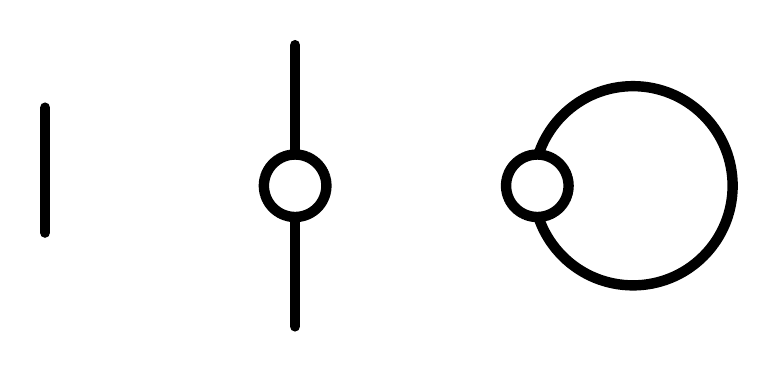}
\caption{Graphs $G, G', G''$ for Example~\ref{example edge mono failure}}
\label{fig: edge mono failure}
\end{figure}
Let $h, k : G \to G'$ and $f: G' \to G''$ be the embeddings uniquely specified by $h(0) = 1$, $k(0) = 2$, and $f(1) = f(2) = 3$.
Then $fh = fk$, but $h\neq k$.
\end{example}

The main issue in the previous example was that $G$ was the exceptional edge.
Indeed, we have the following.

\begin{lemma}
\label{lemma mono-like property}
Suppose that
\[ \begin{tikzcd}
G \rar[shift left, "h"] \rar[shift right, "k" swap] & G' \rar{f} & G'' 
\end{tikzcd} \]
is a diagram of embeddings, with $G, G', G''$ connected graphs and $G \neq \exceptionaledge$.
If $fh = fk$, then $h = k$.
\end{lemma}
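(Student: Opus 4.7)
The plan is to verify $h = k$ separately on the three pieces of data: vertices, darts (elements of $D$), and boundary arcs. Since $A(G) = D(G) \sqcup \eth(G)$, agreement on all three sets (together with agreement on $V$) determines the natural transformation completely, so this suffices.

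For the vertex component: by definition of embedding, $f_V : V(G') \to V(G'')$ is a monomorphism, so $f_V h_V = f_V k_V$ immediately yields $h_V = k_V$. For the dart component: since $f$ is \'etale (Definition~\ref{definition etale}), the right-hand square in the defining diagram is a pullback, so $f_D$ is the pullback of $t'' : D(G'') \to V(G'')$ along the monomorphism $f_V$. Monomorphisms are stable under pullback, so $f_D$ is also a monomorphism, and therefore $f_D h_D = f_D k_D$ gives $h_D = k_D$.

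The main step is matching $h$ and $k$ on $\eth(G)$. Let $a \in \eth(G)$. Because $G$ is connected and $G \neq \exceptionaledge$, the graph $G$ contains at least one vertex, and so no edge $[a, ia]$ can lie entirely in $\eth(G)$: such an edge would form a free-floating connected component isomorphic to $\exceptionaledge$, violating connectedness. Hence $ia \in D$, and by the dart step we already have $h(ia) = k(ia)$. Since $h$ and $k$ are natural transformations in $\finset^{\mathscr{I}}$, they intertwine the involutions, so
\[
i' h(a) = h(ia) = k(ia) = i' k(a),
\]
and applying $i'$ once more yields $h(a) = k(a)$, as desired.

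I do not expect a serious obstacle: the whole argument is an unpacking of the definitions together with a single connectivity observation. The one point deserving explicit care is the claim that $a \in \eth(G)$ forces $ia \in D$ for connected $G \neq \exceptionaledge$, since this is precisely where both hypotheses are used and precisely where Example~\ref{example edge mono failure} breaks down.
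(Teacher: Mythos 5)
Your proof is correct and follows essentially the same route as the paper: injectivity of $f$ on $D'$ (which the paper records just after Definition~\ref{def: embedding} via the same pullback observation you make) gives agreement on darts, and the key step — that $a\in\eth(G)$ forces $ia\in D$ when $G$ is connected and not $\exceptionaledge$, so equivariance transfers agreement to the boundary — is exactly the paper's argument. Your explicit check of the vertex component via injectivity of $f$ on $V'$ is a small completeness point the paper leaves implicit.
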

\begin{proof}
We have a commutative diagram
\[ \begin{tikzcd}
D \rar[shift left, "h"] \rar[shift right, "k" swap] & D' \rar{f} & D'' 
\end{tikzcd} \]
with $f : D' \to D''$ a monomorphism, so $h$ and $k$ are identical on $D$.
We must show that they agree for elements in $A \setminus D = \eth(G)$.
Let $a \in \eth(G)$.
Since $G$ is connected and not the exceptional edge, we know $ia \in D$.
Thus we have the middle equality in $h(a) = h(iia) = ih(ia) = ik(ia) = k(iia) = k(a)$, so $h=k$ on $\eth(G)$.
\end{proof}

\begin{proposition}\label{proposition: embedding uniqueness}
	Suppose that $f_1 : G_1 \to G$ and $f_2: G_2 \to G$ are in $\bigembeddings(G)$ with neither $G_1$ nor $G_2$ the exceptional edge. 
	If $\eth(f_1) = \eth(f_2)$, then there is a unique isomorphism $z: G_1 \to G_2$ so that $f_1 = f_2z$.
	The same statement is true if both $G_1$ and $G_2$ are the exceptional edge $\exceptionaledge$.
\end{proposition}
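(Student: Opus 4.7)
My plan is to handle uniqueness and the exceptional-edge case quickly, then reduce the main existence claim to showing that $V_1 := f_1(V(G_1))$ and $V_2 := f_2(V(G_2))$ coincide as subsets of $V(G)$. Uniqueness of $z$ is immediate from Lemma~\ref{lemma mono-like property} applied to $f_2 z = f_2 z'$, using $G_1 \neq \exceptionaledge$ (isomorphisms are embeddings). When $G_1 = G_2 = \exceptionaledge$, each $f_i$ is determined by an involution-equivariant injection $\eearcs \to A(G)$ with the same two-element image $\eth(f_i)$, so there is a unique involution-equivariant bijection $z : \exceptionaledge \to \exceptionaledge$ making the triangle commute.

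In the main case, write $S := \eth(f_1) = \eth(f_2)$. The key step is $V_1 = V_2$. For any $v \in V_i$ and $d \in \nbhd_G(v)$, the \'etale pullback lifts $d$ to a unique $\tilde d \in \nbhd_{G_i}(f_i^{-1}v)$; a case split on whether $i_i \tilde d$ lies in $D(G_i)$ or $\eth(G_i)$ yields the equivalence ``$id \notin S$'' $\iff$ ``$id \in D(G)$ with $t(id) \in V_i$''. Hence $V_i$ is closed under the rule ``if $v \in V_i$ and $id \notin S$, include $t(id)$'', and connectedness of $G_i$ identifies $V_i$ with the smallest such closed set containing any chosen base vertex (any path in $G_i$ descends under $f_i$ to a sequence of applications of the rule). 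Provided $S \neq \emptyset$, the same lifting applied to boundary arcs produces a common base vertex $v_0 \in t(iS) \subseteq V_1 \cap V_2$, so $V_1$ and $V_2$ both coincide with the closure of $v_0$. If $S = \emptyset$, then each $G_i$ has empty boundary, so the pullback makes $V_i$ closed under \emph{all} neighbors of its vertices, and connectedness of $G$ forces $V_1 = V(G) = V_2$.

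Once $V_1 = V_2$, the \'etale pullback delivers $f_1(D(G_1)) = t^{-1}(V_1) = f_2(D(G_2))$, and the injectivity statements from \'etale maps (on $D$ and $V$) together with Lemma~\ref{lemma: boundary injective} (on $\eth$) produce unique bijections $z_V$, $z_D$, $z_\eth$ satisfying $f_2 \circ z_\bullet = f_1|_\bullet$. Combining $z_D$ and $z_\eth$ yields $z_A : A(G_1) \to A(G_2)$, and these assemble into a candidate $z : G_1 \to G_2$. The only subtle compatibility to check is with the involution: for $a \in A(G_1)$, both ``$i_1 a \in D(G_1)$'' and ``$i_2 z_A(a) \in D(G_2)$'' translate via the $f_i$ into the single condition $if_1(a) \in t^{-1}(V_1)$, so they agree, and injectivity of $f_2$ on $D(G_2)$ and on $\eth(G_2)$ then forces $z_A(i_1 a) = i_2 z_A(a)$. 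Compatibility with $t$ is analogous. The main obstacle is the equality $V_1 = V_2$, which hinges on combining connectedness of the $G_i$ with the shared boundary data $S$; everything else is routine bookkeeping with the injectivity guarantees of \'etale maps and embeddings.
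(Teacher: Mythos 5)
Your overall strategy is genuinely different from the paper's (which deletes the boundary edges, reduces to the empty-boundary case via Lemma~\ref{lemma: empty boundary}, and reassembles $z$ from the pushout of Construction~\ref{construction core pushout}), and most of it goes through. However, the biconditional you lean on --- ``$id \notin S$ iff $id \in D(G)$ with $t(id) \in V_i$'' --- is false, and its failure matters at exactly one point. The paper's Example~\ref{corolla_subgraph_contracted_corolla} is a counterexample: for $\iota_v : \medstar_5 \to G$ one has $S = \{1^\dagger, 2^\dagger, 3^\dagger, 4, 5\}$, and taking $d=4$ the arc $id = 5$ lies in $S$ while also lying in $D(G)$ with $t(5) = v \in V_1$. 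Only the implication ``$id \notin S \Rightarrow id \in D(G)$ and $t(id) \in V_i$'' holds in general. That direction does suffice for your identification $V_1 = V_2$: closure of $V_i$ under the rule uses only it, and in the path-descent you only need the converse for arcs $id = f_i(i_i\tilde a)$ coming from \emph{internal} edges $[\tilde a, i_i\tilde a]$ of $G_i$, where $id \in S$ is ruled out by Lemma~\ref{lemma embedding not monomorphism} (a boundary arc $b$ with $f_i(b) = f_i(i_i\tilde a)$ would force $\tilde a \in \eth(G_i)$). But this extra argument is needed and is not what you wrote.

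The step that genuinely fails as written is the involution-compatibility of $z_A$. You claim that ``$i_1 a \in D(G_1)$'' and ``$i_2 z_A(a) \in D(G_2)$'' both translate into the single condition ``$if_1(a) \in t^{-1}(V_1)$''; in the example above this translation is wrong (take $a = 4$, so $i_1 a = 4^\dagger \in \eth(\medstar_5)$ even though $if_1(a) = 5 \in t^{-1}(V_1)$). The dichotomy you actually need --- for $a \in D(G_1)$, $i_1 a \in \eth(G_1)$ if and only if $i_2 z_D(a) \in \eth(G_2)$ --- is true, but must be proved via Lemma~\ref{lemma embedding not monomorphism}: if $i_1 a \in \eth(G_1)$ then $if_1(a) \in S = f_2(\eth(G_2))$, so some $b \in \eth(G_2)$ satisfies $f_2(b) = f_2(i_2 z_D(a))$; were $i_2 z_D(a)$ in $D(G_2)$, the lemma would force $z_D(a) \in \eth(G_2)$, contradicting $z_D(a) \in D(G_2)$. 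With this patch (and the corrected justification of the path-descent) your argument is complete; the exceptional-edge case and the uniqueness via Lemma~\ref{lemma mono-like property} are fine.
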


It may be the case that $\eth(f_1) = \eth(f_2)$ but $G_1 \not \cong G_2$. For instance, in Example~\ref{example edge mono failure} we have $\eth(f) = \{ i3, 3\} = \eth(fk)$ but $G\not\cong G'$.
The following lemma addresses the empty boundary case of Proposition~\ref{proposition: embedding uniqueness}, which will be key in proving the general case.

\begin{lemma}\label{lemma: empty boundary}
	Suppose that $f: G' \to G$ is in $\bigembeddings(G)$ and the boundary of $G'$ is empty. 
	Then $f$ is an isomorphism. 
\end{lemma}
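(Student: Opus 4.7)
The plan is to exhibit the image of $f$ as a coproduct summand of $G$ in the functor category $\finset^{\mathscr{I}}$, and then to use connectedness of $G$ to rule out the complementary summand. The hard step will be pinning down the $i$-closure of this image; after that, the coproduct decomposition and the conclusion are essentially forced.

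I would first identify the image of $f$. Since $\eth(G') = \varnothing$ we have $A(G') = D(G')$. The embedding hypothesis makes $V(G') \to V(G)$ injective, so by the étale pullback square $D(G') \to D(G)$ is injective, and hence so is $A(G') \to A(G)$. Writing $W$ for the image of $V(G')$ in $V(G)$ and setting $T := \bigcup_{v \in W} \nbhd(v) \subseteq D(G)$, the pullback description identifies the image of $A(G') \to A(G)$ with $T$. Naturality of $f$ with respect to the involutions gives $iT \subseteq T$, hence $iT = T$; combined with $T \subseteq D(G)$ this means that every edge of $G$ adjacent to a vertex in $W$ is an internal edge between two vertices of $W$.

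Next I would assemble a coproduct decomposition $G \cong G_1 \amalg G_2$ in $\finset^{\mathscr{I}}$, where $G_1$ has vertices $W$ and arcs $T$, with structure inherited from $G$. The complementary piece $G_2$ is a well-defined subfunctor because the complement of an $i$-closed subset is again $i$-closed, and because an element of $D(G) \setminus T$ cannot have its $t$-image in $W$ (otherwise it would belong to $T$). Since $G'$ is connected with empty boundary, $V(G')$ must be nonempty — otherwise $G'$ is the empty graph, which is not connected in the sense of Definition~\ref{definition connected} — so $G_1$ is nonempty, and connectedness of $G$ then forces $G_2 = \varnothing$. Therefore $W = V(G)$ and $T = A(G) = D(G)$, so the three components of $f$ are bijections and $f$ is an isomorphism.

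The main obstacle I anticipate is purely technical: verifying $i$-closure of $T$ from naturality of $f$ and then checking that both sides of the coproduct decomposition are genuine subfunctors of $G$ in the Feynman-graph sense. Once these bookkeeping points are handled, connectedness of $G$ does the rest immediately.
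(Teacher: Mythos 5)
Your proof is correct. The underlying mechanism is the same one the paper exploits --- the hypothesis $\eth(G')=\varnothing$ gives $A'=D'$, which together with equivariance of $f$ and the \'etale pullback square shows that the image of $f$ is closed under the involution, hence under passing to adjacent vertices --- but you package it differently. The paper argues pointwise by contradiction: assuming $V'\to V$ is not surjective, connectedness of $G$ produces an edge joining a vertex $f(v')$ in the image to a vertex $v$ outside it; the arc of that edge pointing at $f(v')$ has a preimage $d'\in D'$, and since $A'=D'$ the arc $i'd'$ also lies in $D'$, so $f(t'i'd')=v$ lies in the image after all. You instead exhibit the image $(W,T)$ as a coproduct summand of $G$ in $\finset^{\mathscr{I}}$ and invoke Definition~\ref{definition connected} literally. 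Your route requires the extra bookkeeping of verifying that both pieces are subfunctors, which you handle correctly: $T=t^{-1}(W)$ is $i$-closed because it is the equivariant image of the $i$-closed set $A'$, it lands in $D$, and its complement is then automatically $i$-closed with $t$ mapping $D\setminus T$ into $V\setminus W$. In exchange it buys a cleaner finish: the paper must still argue separately that surjectivity of $V'\to V$ gives bijectivity of $D'\to D$ and then that connectedness of $G$ forces $D=A$, whereas in your decomposition the single conclusion $G_2=\varnothing$ delivers $W=V$ and $T=D=A$ all at once.
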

\begin{proof}
By assumption, the inclusion $D' \subseteq A'$ is an equality.
	Since $G'$ is not empty, $V' \neq \varnothing$.
	Suppose that $V' \to V$ is not surjective; then there exists a pair $v',v$ with $f(v')$ connected to $v\notin f(V')$. 
	Write $a \in \nbhd(f(v'))$ and $i(a) \in \nbhd(v)$ for the two orientations of the connecting edge.
	By the \'etale condition for $f$, there is a unique $d' \in D'$ with $f(d') = a$ and $t'(d') = v'$.
	Notice that $ia = if(d') = f(i'd')$. 
	Since $A' = D'$, $\tilde v = t'(i'd') \in V'$ is defined, and, further, $f(\tilde v) = f(t'i'd') = tf(i'd') = t(ia) = v$. 
	This is a contradiction, hence $V' \to V$ is surjective.
	Now $V' \to V$ is an bijection, so the \'etale condition implies that $D' \to D$ is a bijection as well. 
	Connectedness of $G$ ensures $D = A$.
\end{proof}

The following construction will help reduce the proof of Proposition~\ref{proposition: embedding uniqueness} to the special case from Lemma~\ref{lemma: empty boundary}.

\begin{construction}[Determination by core and boundary]
\label{construction core pushout}
As in Definition~\ref{definition I and natural trans}, we write $\mathscr{I}$ for the category $\begin{tikzcd}[column sep=small] \bullet \arrow[loop left] & \lar \bullet \rar & \bullet \end{tikzcd}$.
Given a connected graph $H \neq \exceptionaledge$, we have a pushout diagram in $\finset^{\mathscr{I}}$
\begin{equation}\label{diagram: core pushout}
\begin{tikzcd}
	\coprod\limits_{\eth(H)} \medstar_0 \rar \dar & \coprod\limits_{\eth(H)} \medstar_1 \dar \\
	\core(H) \rar[hook] & H,
\end{tikzcd}\end{equation}
where $\core(H) = H \setminus \eth(H)$ is from Definition~\ref{definition: edge subtraction}; note that $\core(H)$ and $H$ have an identical set of vertices.
Here, the top map is induced from the unique morphism $\medstar_0 \to \medstar_1$ on each component.
The left vertical map, at the component $a\in \eth(H)$, sends the unique vertex of $\medstar_0$ to the vertex $tia$.
For the diagram to commute, we know precisely what the right vertical map must do on vertices.
At the component $a\in \eth(H)$, the right vertical map sends the unique boundary arc of $\medstar_1$ to $a$.
Notice that the vertical maps need not be monomorphisms in the diagram category $\finset^{\mathscr{I}}$, and that the maps in the diagram are \'etale only when $\eth(H) = \varnothing$.
\end{construction}

\begin{proof}[Proof of Proposition \ref{proposition: embedding uniqueness}]
It is very simple that the isomorphism $z$, if it exists, is unique: since $f_1 : V_1 \to V$ and $f_2: V_2 \to V$ are monomorphisms, there is at most one map $z: V_1 \to V_2$ with $f_2z=f_1$.
A similar argument holds for uniqueness of $D_1 \to D_2$ and $\eth(G_1) \to \eth(G_2)$, which then implies uniqueness for $A_1 = \eth(G_1) \cup D_1 \to A_2 = \eth(G_2) \cup D_2$.

Notice immediately that we have isomorphisms 
\begin{equation}\label{equation: z on B}
\eth(G_1) \xrightarrow{f_1} \eth(f_1) = \eth(f_2) \xleftarrow{f_2} \eth(G_2),
\end{equation} 
which determines $z$ on $\eth(G_1)$.
At this point we can assume that $\eth(G_1)$ is nonempty, as when $\eth(G_1)$ is empty, Lemma \ref{lemma: empty boundary} implies that $f_1$ and $f_2$ are both isomorphisms.
Further, if both $G_1$ and $G_2$ are isomorphic to the exceptional edge $\exceptionaledge$, then $\eth(G_1) = A(G_1)$ and $\eth(G_2) = A(G_2)$, so \eqref{equation: z on B} gives the isomorphism $z$.

We assume for the remainder of the proof that $G_1 \neq \exceptionaledge \neq G_2$ and $\eth(G_1) \neq \varnothing \neq \eth(G_2)$.
By Lemma~\ref{lemma embedding not monomorphism} and Construction~\ref{construction core pushout} we have the outer commutative diagram
\[\begin{tikzcd}
	\coprod\limits_{\eth(G_1)} \medstar_0 \arrow[dd] \arrow[rr, "\cong", "z|_{\eth(G_1)}" swap] & & \coprod\limits_{\eth(G_2)} \medstar_0 \arrow[dd] \\ 
	& {\color{purple} G'} \dar[hook, color=purple]
	\\
	\core(G_1) \rar \arrow[ur, dashed, color=purple, "f_1'"] & G \setminus \eth(f_1) & \core(G_2) \lar \arrow[ul, dashed, color=purple, "f_2'" swap]
\end{tikzcd}\]
in $\finset^{\mathscr{I}}$.
Letting $G'$ be the connected component of $G \setminus \eth(f_1) = G \setminus \eth(f_2)$ (as in Definition~\ref{definition: edge subtraction}) which contains the vertex $f_1(tia)$ for some $a\in \eth(G_1)$, we have induced diagram maps ${\color{purple}f_j'} : \core(G_j) \to G'$ for $j=1,2$. 
These maps are \'etale, in fact, the bottom maps they factor are \'etale. 
To see this, note that we have an induced bijection
\[
\begin{tikzcd} 
t^{-1}(v) \setminus i \eth(G_j) \dar[hook] \rar[color = purple, dashed,"\cong"] & t^{-1}(f_j(v)) \setminus i\eth(f_j) \dar[hook]
\\
	t^{-1}(v) \rar["\cong"] & t^{-1}(f_j(v))
\end{tikzcd}
\]
for every vertex $v \in V_j$. 
Since $f_1$ and $f_2$ were embeddings, so too are the \'etale maps $f_1'$ and $f_2'$.

It follows from Lemma~\ref{lemma: empty boundary} that $f_1'$ and $f_2'$ are isomorphisms.
Since \eqref{diagram: core pushout} in Construction~\ref{construction core pushout} is a pushout, we obtain an isomorphism $z : G_1 \to G_2$ making the appropriate diagram commute.
\end{proof}

The collection $\bigembeddings(G)$ is rather flabby, with many uniquely isomorphic elements.
Let us rectify this.
\begin{definition}[Small set of embeddings]
Write $\embeddings(G)$\index{$\embeddings(G)$} for the quotient of $\bigembeddings(G)$ by the relation $f \sim h$ if there is an isomorphism $z$ so that $f=hz$.  	
\end{definition}
By Proposition~\ref{proposition: embedding uniqueness}, the isomorphism $z$ witnessing $f\sim h$ is unique.

\begin{example}
Let $G$ be the loop with two vertices (Example~\ref{examples combinatorial}).
Then 
\[ A(G) = \nbhd(1) \amalg \nbhd(2) = \{2^\dagger,1\} \amalg \{1^\dagger, 2\}\] 
has four elements.
There exist embeddings $f : H \to G$ if and only if $H$ is isomorphic to $\exceptionaledge\cong L_0$, $L_1$, $L_2$, or $G$. 
We use the notation for arcs from Example~\ref{examples combinatorial}.
\begin{itemize}
	\item If $H = L_n$ is a linear graph $L_0$, $L_1$, or $L_2$, then there are exactly four embeddings $f : L_n \to G$.
	Each such embedding is determined by where it sends $0\in \eth(L_n)$ (or any chosen arc in $A(L_n)$).
	The arc $f(0^\dagger)$ is determined since $f$ commutes with the involution.
	If $n > 0$, then the vertex $f(1)$ is given since $f(1) = f(t(0^\dagger)) = tf(0^\dagger)$; this determines the arc $f(1)$ since the neighborhood of the vertex $f(1)$ is the set of arcs $\{ f(0^\dagger), f(1) \}$, and so on.
	\item If $H=G$, then there are again four embeddings $G \to G$.
	Each such embedding is determined by where it sends some chosen arc, and each such embedding is an isomorphism (as in Lemma~\ref{lemma: empty boundary}).
\end{itemize}
We have exhibited sixteen elements in the infinite set $\bigembeddings(G)$, though every other $f: H \to G$ arises from one of these sixteen by fixing an isomorphism between $H$ and an element of the set $\{ L_0, L_1, L_2, G \}$.
All of these embeddings are injective on arcs except for those with domain $L_2$.

The set $\embeddings(G)$ has just seven elements.
Each of the embeddings $L_n \to G$ is isomorphic to precisely one of the others.
The class of $L_0 \to G$ is determined by which edge is hit.
The class of $L_1 \to G$ is determined by which vertex is hit.
The class of $L_2 \to G$ is determined by which edge of $G$ is hit twice.
Finally, each of the four automorphisms of $G$ are isomorphic (over $G$) to the identity automorphism.
\end{example}

\subsection{Definition of graphical maps}
In order to phrase certain `non-overlap' conditions for embeddings into a fixed graph, it is convenient to work in the free commutative monoid on a vertex set $V$.
For a finite set $S$, the free commutative monoid $\mathbb{N} S$ is isomorphic, as a monoid, to $\mathbb{N}^S$ but we write elements as $\sum_{s\in S} n_s s$ where each $n_s \in \mathbb{N}$.
We consider the power set $\wp(S)$\index{$\mathbb{N}S, \wp(S)$} as a subset of $\mathbb{N}S$, consisting of those elements with $n_s \leq 1$ for every $s\in S$.

\begin{definition}[Vertex sum $\varsigma$]
Given any \'etale map $f: G'\to G$, there is a corresponding element $ \sum_{v\in V'} f(v) \in \mathbb{N}V$ in the free commutative monoid on $V$.
The assignment of an \'etale map to its corresponding sum is invariant under isomorphisms in the domain. 
Denote by $\varsigma : \embeddings(G) \to \mathbb{N}V$\index{$\varsigma$} the map that sends an embedding $f: G' \to G$ to $\sum_{v\in V'} f(v)$.
Since we are only working with embeddings, we have $\varsigma(f) \leq \sum_{v\in V} v$, that is, $\varsigma$ lands in the power set $\wp(V) \subseteq \mathbb{N}V$. 
\end{definition}

\begin{definition}\label{def: graphical map}
	A \emph{graphical map} $\varphi: G \to G'$ consists of the following data:
	\begin{itemize}
		\item A map of involutive sets $\varphi_0 : A \to A'$ 
		\item A function $\varphi_1 : V \to \embeddings(G')$
	\end{itemize}
These data should satisfy three conditions. 
\begin{enumerate}[label={({\roman*})},ref={\thetheorem.\roman*}]
	\item The inequality $\sum_{v\in V} \varsigma(\varphi_1(v)) \leq \sum_{w\in V'} w$ holds in $\mathbb{N}V'$. \label{graphical map defn: no double vertex covering}
	\item For each $v$, we have a (necessarily unique) bijection making the diagram
	\[
		\begin{tikzcd}
			\nbhd(v) \rar{i} \dar[dashed, "\cong"]  & A \dar{\varphi_0} \\
			\eth(\varphi_1(v)) \rar[hook] & A'
		\end{tikzcd}
	\]
	commute, where the top map $i$ is the restriction of the involution on $A$.\label{graphical map defn: boundary compatibility}
	\item If the boundary of $G$ is empty, then there exists a $v$ so that $\varphi_1(v)$ is not an edge. \label{graphical map defn: collapse condition}
\end{enumerate}
\end{definition}

Figure~\ref{fig:example morphism} is a visual representation of a graphical map, where $\varphi_0$ is left implicit and where the red circles represent the images under $\varphi_1$ of the vertices of $G$. 
\begin{figure}[t]
\labellist
\small\hair 2pt
 \pinlabel {$G$} [B] at 95 428
 \pinlabel {$G'$} [B] at 443 428
 \pinlabel {{\color{blue}$\varphi$}} [ ] at 257 345
\endlabellist
\centering
\includegraphics[scale=0.4]{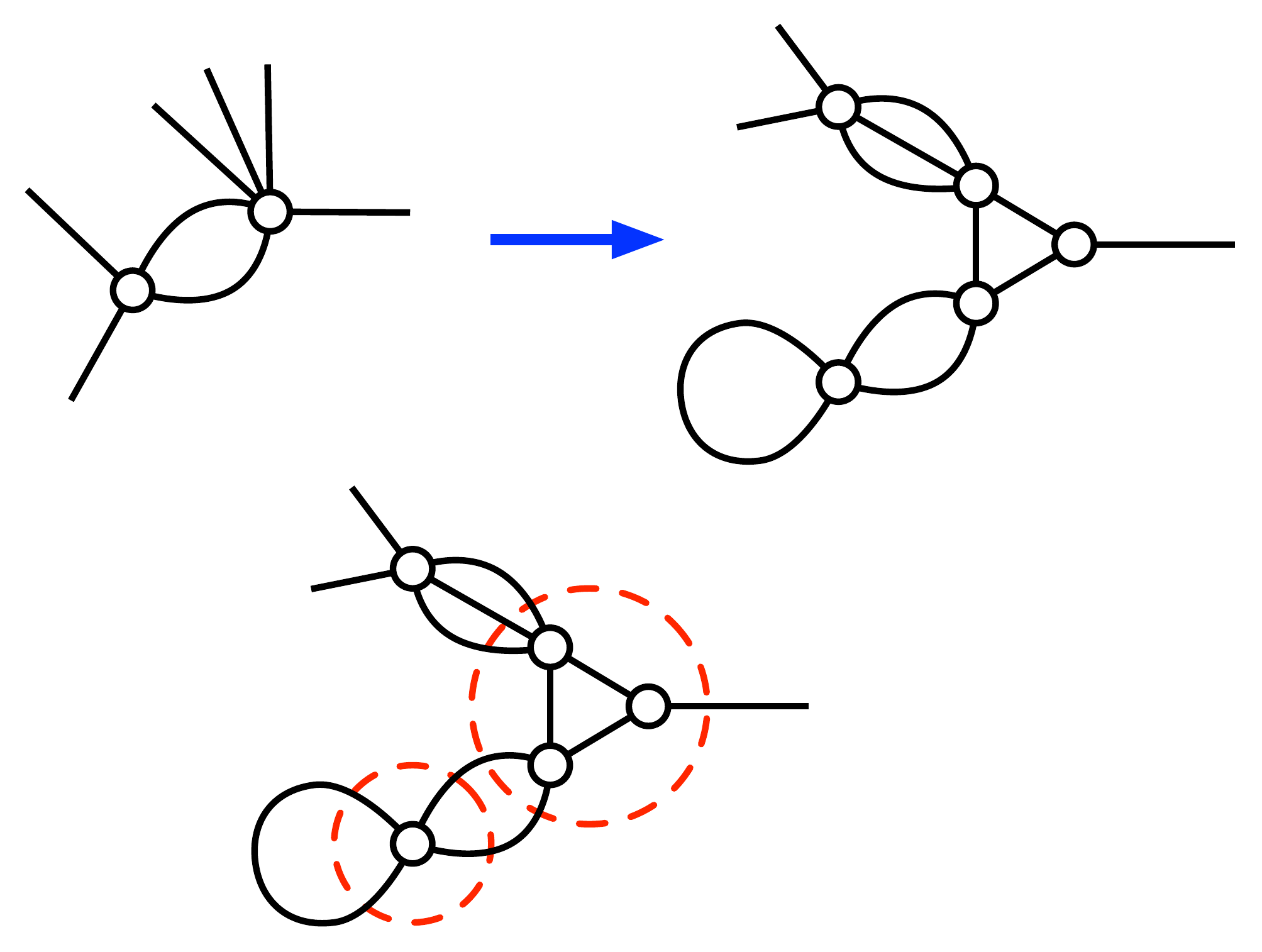}
\caption{A graphical map $\varphi : G \to G'$}
\label{fig:example morphism}
\end{figure}

\begin{remark}
We will often have need to refer to a particular element of $\bigembeddings(G')$ representing $\varphi_1(v)$\index{$\varphi_1(v), \varphi_v$}. 
We will always write $\varphi_v \in \bigembeddings(G')$ for a fixed such choice with $[\varphi_v] = \varphi_1(v) \in \embeddings(G')$.
Typically, the domain of $\varphi_v$ will be denoted by $H_v$.
\end{remark}

	The final condition of Definition~\ref{def: graphical map} is about avoiding \emph{collapse}. 
	It is only relevant if $G$ is of a particular form, that is, if $G$ is a single loop containing some (bivalent) vertices. 
	For example, if $G$ is the loop with one vertex $\begin{tikzpicture}
	\draw[line width=1pt] (0,0) circle (.9ex);
	\fill (-.9ex,0) circle (.4ex);
\end{tikzpicture} $ and $G'$ is the exceptional edge, then there is a pair $(\varphi_0, \varphi_1)$ from $G$ to $G'$ where $\varphi_0$ is a bijection and which satisfies \eqref{graphical map defn: no double vertex covering} and \eqref{graphical map defn: boundary compatibility} but not \eqref{graphical map defn: collapse condition}.

\begin{remark}[The graph category of Joyal \& Kock]
\label{remark jk category}
There is a related notion of morphism of connected graphs in \cite{JOYAL2011105}, but based on \'etale maps between connected graphs, rather than embeddings.
Joyal and Kock do not include the conditions \eqref{graphical map defn: no double vertex covering} and \eqref{graphical map defn: collapse condition} in their definition.
Further, condition \eqref{graphical map defn: boundary compatibility} is modified to reflect that \'etale maps need not be injective on boundaries.
This yields a category of connected graphs $\jkgraphcat$\index{$\jkgraphcat$}, and each graphical map in the sense of Definition~\ref{def: graphical map} is a morphism in $\jkgraphcat$.
\end{remark}

We have an ample supply of graphical maps: the embeddings.
Let us take a look at how this works.
As a precursor to Definition~\ref{graphical map composition}, we also indicate how to compose an arbitrary graphical map with an embedding.

\begin{definition}[Embeddings and restriction] 
\label{def embeddings and restrictions}
Every embedding $f: G \to G'$ in $\bigembeddings(G')$ determines a graphical map via $f: A \to A'$ and the composite
\[ \begin{tikzcd}
V \rar{f} & V' \rar[hook] & \embeddings(G'),
\end{tikzcd} \]
that is, $v \mapsto \iota_{fv}$ as in Example~\ref{example vertices as embeddings}.
We still call this graphical map `$f$.'
\begin{enumerate}
	\item If $\varphi: G \to G'$ is a graphical map and $f \in \bigembeddings(G)$, then $\varphi|_f$\index{$\varphi|_f, f\circ \varphi$} is the graphical map from the domain of $f$ to $G'$ defined by $(\varphi|_f)_0 = \varphi_0 f$ and $(\varphi|_f)_1 = \varphi_1 f$.
	\item Likewise, suppose $\varphi: G \to G'$ is a graphical map and $f : G' \to G''$ is an embedding.
	Define a new graphical map $f\circ \varphi$ with $(f \circ \varphi)_0 = f \varphi_0$ and $(f \circ \varphi)_1$ is the composite 
	\[
		V \xrightarrow{\varphi_1} \embeddings(G') \xrightarrow{f\circ (-)} \embeddings(G'').
	\]
\end{enumerate}

\end{definition}

It is relatively easy to see that $\varphi|_f$ is a graphical map.
For \eqref{graphical map defn: collapse condition}, note that if the domain of $\varphi|_f$ has empty boundary, then $f$ is an isomorphism by Lemma~\ref{lemma: empty boundary}.
In the next proposition we check that $f\circ \varphi$ is a graphical map.
Notice if $\varphi$ comes from an embedding $h$, then $\varphi|_f$ comes from the embedding $h\circ f$ and the map $f\circ \varphi$ comes from the embedding $f\circ h$.

\begin{proposition}
The pair of functions $(f\circ \varphi)_0, (f\circ \varphi)_1$ from Definition~\ref{def embeddings and restrictions} constitute a graphical map.
\end{proposition}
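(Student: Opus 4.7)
My plan is to verify the three axioms \eqref{graphical map defn: no double vertex covering}, \eqref{graphical map defn: boundary compatibility}, and \eqref{graphical map defn: collapse condition} in turn, exploiting the fact that embeddings compose nicely and that $f$ is injective on vertices and boundary arcs of the various $\varphi_v$.

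First, I would record the basic observation that if $\varphi_v : H_v \to G'$ is a representative of $\varphi_1(v) \in \embeddings(G')$, then the composite $f \circ \varphi_v : H_v \to G''$ is an embedding. Indeed, the composition of two pullback squares is a pullback, so $f \circ \varphi_v$ is \'etale, and the composite $V(H_v) \to V' \to V''$ is a monomorphism since both legs are. Moreover this composite represents $(f\circ \varphi)_1(v) \in \embeddings(G'')$, and it is well-defined on the quotient by Proposition~\ref{proposition: embedding uniqueness}.

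For \eqref{graphical map defn: no double vertex covering}, I would note that since $f: V' \hookrightarrow V''$ is a monomorphism, the induced map $\mathbb{N}V' \to \mathbb{N}V''$ is order-preserving and sends $\sum_{w\in V'} w$ into $\sum_{w'' \in V''} w''$. Then
\[
\sum_{v\in V} \varsigma((f\circ \varphi)_1(v)) \;=\; f_*\!\!\sum_{v\in V} \varsigma(\varphi_1(v)) \;\leq\; f_* \!\!\sum_{w\in V'} w \;\leq\; \sum_{w''\in V''} w'',
\]
where the middle inequality is \eqref{graphical map defn: no double vertex covering} for $\varphi$.

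For \eqref{graphical map defn: boundary compatibility}, which I expect to be the main step, I would fix $v \in V$ and chase the relevant arcs. Since $f \circ \varphi_v$ is an embedding, Lemma~\ref{lemma: boundary injective} says $\eth(H_v) \hookrightarrow A(G'')$ is injective, so $f$ restricted to $\eth(\varphi_v) \subseteq A(G')$ is a bijection onto $\eth(f\circ \varphi_v) = f(\eth(\varphi_v))$. Composing the bijection $\nbhd(v) \xrightarrow{\cong} \eth(\varphi_v)$ from \eqref{graphical map defn: boundary compatibility} for $\varphi$ with this restriction of $f$ yields a bijection $\nbhd(v) \xrightarrow{\cong} \eth(f\circ \varphi_v)$. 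Commutativity with the involution-composite $\nbhd(v) \xrightarrow{i} A \xrightarrow{f \varphi_0} A''$ follows by pasting the square for $\varphi$ at $v$ onto the outer square
\[
\begin{tikzcd}
\eth(\varphi_v) \rar[hook] \dar["f"'] & A' \dar["f"] \\
\eth(f\circ \varphi_v) \rar[hook] & A'',
\end{tikzcd}
\]
which commutes by construction.

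Finally, \eqref{graphical map defn: collapse condition} is immediate: if $\eth(G) = \varnothing$, pick $v$ with $\varphi_1(v)$ not an edge, so $H_v \neq \exceptionaledge$; then $(f\circ \varphi)_1(v) = [f \circ \varphi_v]$ also has domain $H_v$ and so is not an edge. The main subtlety throughout is simply that $f$ need not be injective on all arcs; confining attention to the boundary sets of the various $\varphi_v$, where injectivity does hold by Lemma~\ref{lemma: boundary injective} applied to the embedding $f\circ \varphi_v$, is what makes the argument go through cleanly.
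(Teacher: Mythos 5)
Your proof is correct and follows essentially the same route as the paper: condition \eqref{graphical map defn: no double vertex covering} via injectivity of $f$ on vertices, condition \eqref{graphical map defn: boundary compatibility} by pasting the square for $\varphi$ onto the square relating $\eth(\varphi_v)$ and $\eth(f\circ\varphi_v)$ using Lemma~\ref{lemma: boundary injective}, and condition \eqref{graphical map defn: collapse condition} immediately from the corresponding condition for $\varphi$. Your explicit check that $f\circ(-)$ descends to $\embeddings(G')$ is a small bonus the paper leaves implicit, but it does not change the argument.
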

\begin{proof}
For each vertex $v$, pick a representative $(\varphi_v : H_v \to G') \in \bigembeddings(G')$ for $\varphi_1(v)$.
Notice that 
\[
	\sum_{v\in V} \varsigma(f\circ \varphi_v) = \sum_{v\in V} \sum_{u\in H_v} f \varphi_v(u) = f \sum_{v\in V} \sum_{u\in H_v} \varphi_v(u) = f  \sum_{v\in V} \varsigma(\varphi_v) \leq f \sum_{w\in V'} w.
\]
Since $f$ is a injective on vertices, this last term is less than or equal to $\sum_{x \in V''} x$ in $\mathbb{N} V''$, so \eqref{graphical map defn: no double vertex covering} holds.
Using Lemma~\ref{lemma: boundary injective}, commutativity of the diagram 
\[ \begin{tikzcd}
& \nbhd(v) \rar{i} \arrow[dl, bend right] \dar{\cong} & A \dar{\varphi_0} \\
\eth(H_v) \rar["\varphi_v", "\cong" swap] \arrow[dr, bend right, "\cong"] & \eth(\varphi_v) \rar[hook] \dar{f} & A' \dar{f} \\
& \eth(f\circ \varphi_v) \rar[hook] & A''
\end{tikzcd} \]
shows that \eqref{graphical map defn: boundary compatibility} holds for $f\circ \varphi$.
Condition \eqref{graphical map defn: collapse condition} for $f\circ \varphi$ follows immediately from this condition for $\varphi$.
\end{proof}

\begin{lemma}\label{lemma degen implies not injective}
	If $\varphi: G \to G'$ is a graphical map and $\varphi_1(v)$ is an edge for some $v$, then $\varphi_0$ is not injective.
\end{lemma}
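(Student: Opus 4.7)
The plan is to exploit the boundary compatibility axiom \eqref{graphical map defn: boundary compatibility} together with the fact that $\varphi_0$ commutes with the involutions on $A$ and $A'$. First I would pick a representative $\varphi_v : \exceptionaledge \to G'$ of the class $\varphi_1(v)$. Since $\eth(\exceptionaledge) = \eearcs$ has two elements and an embedding is injective on boundaries (Lemma~\ref{lemma: boundary injective}), the boundary $\eth(\varphi_v) \subseteq A'$ is a two-element set of the form $\{x, i'x\}$, where $x = \varphi_v(\edgemajor)$. The bijection in \eqref{graphical map defn: boundary compatibility} then forces $\nbhd(v)$ to have exactly two elements, say $a_1, a_2$, and forces $\{\varphi_0(ia_1), \varphi_0(ia_2)\} = \{x, i'x\}$. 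Since $\varphi_0$ is equivariant with respect to the involutions, $\varphi_0(a_k) = i'\varphi_0(ia_k)$, so $\{\varphi_0(a_1), \varphi_0(a_2)\}$ is likewise equal to $\{x, i'x\}$.

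Next I would split into two cases according to whether $ia_1$ equals $a_2$. In the generic case $ia_1 \neq a_2$, the four arcs $a_1, a_2, ia_1, ia_2$ are pairwise distinct (using that $i$ is free and that $a_1 \neq a_2$), but the function $\varphi_0$ sends all four of them into the two-element set $\{x, i'x\}$, so $\varphi_0$ cannot be injective and we are done.

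The main obstacle is the remaining case $ia_1 = a_2$, where $[a_1, a_2]$ is a loop at $v$. Here $\varphi_0$ might only be identifying two arcs with two arcs, so I need an extra argument to rule this case out. The idea is to use condition \eqref{graphical map defn: collapse condition}: since $\nbhd(v) = \{a_1, ia_1\}$ already exhausts both orientations of the unique edge incident to $v$, no further edge of $G$ is incident to $v$, and connectedness of $G$ forces $V = \{v\}$ and $G$ to be precisely the loop with one vertex. But then $\eth(G) = A \setminus D = \varnothing$, so \eqref{graphical map defn: collapse condition} demands some vertex whose image under $\varphi_1$ is not an edge; as $v$ is the only vertex and $\varphi_1(v)$ is an edge by hypothesis, we reach a contradiction. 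This eliminates the degenerate case and completes the argument.
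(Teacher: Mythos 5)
Your proof is correct, and it takes a cleaner route than the paper's. The paper first isolates the single-vertex case (where $G$ must be $L_1\cong\medstar_2$, since the one-vertex loop is excluded by \eqref{graphical map defn: collapse condition}) and settles it by the same counting you use ($|A|=4$ but the image of $\varphi_0$ has two elements); it then handles graphs with more vertices by examining a neighbor $w$ of $v$ and splitting again according to whether $\varphi_1(w)$ is or is not an edge, in each case exhibiting an explicit pair of distinct arcs with the same image. You instead argue entirely locally at $v$: condition \eqref{graphical map defn: boundary compatibility} forces all four of $a_1,a_2,ia_1,ia_2$ into the two-element orbit $\eth(\varphi_1(v))=\{x,i'x\}$, so pigeonhole finishes unless these four arcs degenerate to two, i.e.\ $a_2=ia_1$; connectedness then pins $G$ down as the one-vertex loop, which \eqref{graphical map defn: collapse condition} forbids. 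Both arguments lean on the same two axioms, but yours dispenses with the adjacency case analysis entirely, which is a genuine simplification; the paper's version has the mild side benefit of producing, in each case, a named pair of arcs that are identified, which is occasionally convenient to point to elsewhere.
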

\begin{proof}
By \eqref{graphical map defn: boundary compatibility} we know that if $\varphi_1(v)$ is an edge, then $\nbhd(v)$ has order two. 
Let us first address the cases when $G$ has a single vertex.
The case where $G$ is the loop with one node (Example~\ref{examples combinatorial}) is disallowed by \eqref{graphical map defn: collapse condition}, so we must be in the case when $G$ is isomorphic to the linear graph $L_1 \cong \medstar_2$.
Then $|A| = 4$, while only two elements of $A'$ are in the image of $\varphi_0$, so the result follows.

Now suppose that $v$ and $w$ are adjacent, distinct vertices of $G$ and $\varphi_1(v)$ and $\varphi_1(w)$ are both edges. 
	Write $\nbhd(v) = \{ a , b \}$ and $\nbhd(w) = \{ ia, c \}$. 
	If $b = ic$ then $G$ is a loop with two vertices and the map violates \eqref{graphical map defn: collapse condition}. 
	Thus $b\neq ic$. 
	By assumption that $\varphi_1(v)$ and $\varphi_1(w)$ are edges, we have $\varphi_0(a) = i\varphi_0(b)$ and $\varphi_0(ia) = i\varphi_0(c)$.
	Since $\varphi_0$ commutes with $i$, this implies that $\varphi_0(b) = \varphi_0(ic)$, so $\varphi_0$ is not injective.

Finally, suppose that $w$ and $v$ are adjacent vertices of $G$, $\varphi_1(v)$ is an edge, and $\varphi_1(w)$ is not an edge.
Write $\nbhd(v) = \{a,b\}$ with $ia\in \nbhd(w)$ (that is, so that $[a,ia]$ is an edge between $v$ and $w$).
	We know that $ia\neq b$ since $v\neq w$, yet $\varphi_0(b) = i\varphi_0(a) = \varphi_0(ia)$. 
	Thus $\varphi_0$ is not injective.
\end{proof}

\begin{theorem}\label{theorem: injective determination at zero}
	Suppose that $\varphi, \psi : G \to G'$ are graphical maps with $\varphi_0 =\psi_0$. 
	If $\varphi_0$ is injective,
	then $\varphi = \psi$.
\end{theorem}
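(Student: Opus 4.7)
The plan is to show that $\varphi_1(v) = \psi_1(v)$ in $\embeddings(G')$ for every vertex $v\in V$, since the hypothesis $\varphi_0 = \psi_0$ already settles agreement of the arc data. Fix representative embeddings $\varphi_v, \psi_v \in \bigembeddings(G')$ of $\varphi_1(v)$ and $\psi_1(v)$ respectively; the task is to produce a (necessarily unique) isomorphism between their domains that commutes with these embeddings.

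First I would rule out the degenerate case. By Lemma~\ref{lemma degen implies not injective}, if $\varphi_1(v)$ (or $\psi_1(v)$) were an edge for some $v$, then $\varphi_0$ (respectively $\psi_0 = \varphi_0$) would fail to be injective, contradicting the hypothesis. Hence neither $\varphi_v$ nor $\psi_v$ is the exceptional edge $\exceptionaledge$, placing us squarely in the first case of Proposition~\ref{proposition: embedding uniqueness}.

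Next I would invoke condition \eqref{graphical map defn: boundary compatibility} for both $\varphi$ and $\psi$. This forces
\[
\eth(\varphi_v) \;=\; \varphi_0(i\nbhd(v)) \;=\; \psi_0(i\nbhd(v)) \;=\; \eth(\psi_v)
\]
as subsets of $A'$, where the middle equality uses $\varphi_0 = \psi_0$. Now Proposition~\ref{proposition: embedding uniqueness} applies directly, yielding a unique isomorphism $z_v$ between the domains of $\varphi_v$ and $\psi_v$ with $\varphi_v = \psi_v \circ z_v$. This exhibits $\varphi_1(v) = \psi_1(v)$ in the quotient $\embeddings(G')$.

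Combining the two coincidences $\varphi_0 = \psi_0$ and $\varphi_1 = \psi_1$ gives $\varphi = \psi$. No step here looks especially resistant: the only subtlety is remembering to exclude the exceptional-edge case of Proposition~\ref{proposition: embedding uniqueness} before applying it, and that exclusion is precisely what injectivity of $\varphi_0$ buys us via Lemma~\ref{lemma degen implies not injective}.
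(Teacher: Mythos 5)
Your proposal is correct and follows the paper's own argument exactly: use Lemma~\ref{lemma degen implies not injective} (in contrapositive form) to rule out the edge case, use condition \eqref{graphical map defn: boundary compatibility} together with $\varphi_0 = \psi_0$ to identify the boundaries $\eth(\varphi_1(v)) = \eth(\psi_1(v))$, and conclude via Proposition~\ref{proposition: embedding uniqueness}. The only difference is that you spell out the intermediate steps a bit more fully than the paper does.
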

\begin{proof}
	By the second condition for graphical map, for each $v$ we have $\eth(\varphi_1(v)) = \eth(\psi_1(v))$.
	By the contrapositive of the previous lemma, we know that $\varphi_1(v)$ and $\psi_1(v)$ are not edges, so by Proposition \ref{proposition: embedding uniqueness} we have $\varphi_1(v) = \psi_1(v)$.
\end{proof}

We now show how graph substitution is related to graphical maps.

\begin{proposition}\label{proposition: image}
	Suppose that $\varphi: G \to G'$ is a graphical map, and write $\varphi_v : H_v \hookrightarrow G'$ for an embedding representing $\varphi_1(v)$.
	Then there is an embedding $k : G\{H_v\} \hookrightarrow G'$ which factors all of the embeddings $\varphi_v$. 
\end{proposition}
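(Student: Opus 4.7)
The plan is to build the embedding $k$ via the universal property of the coequalizer presentation of graph substitution from Construction~\ref{construction graph sub}. Specifically, I will show that the coproduct map $\coprod_v \varphi_v : \coprod_v H_v \to G'$ coequalizes the parallel pair $\tilde\outeredge, \tilde\inneredge : \coprod_e \exceptionaledge \rightrightarrows \coprod_v H_v$, so that it descends to a natural transformation $k : G\{H_v\} \to G'$; the factorization statement $k \circ (H_v \hookrightarrow G\{H_v\}) = \varphi_v$ is then automatic. Note that condition~\eqref{graphical map defn: collapse condition} ensures that $G\{H_v\}$ actually lives in $\finset^{\mathscr{I}}$ as a Feynman graph, since the only excluded case is when $G$ is a loop and every $H_v$ is an edge.

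The key preparation is to fix the identifications $m_v : i\nbhd(v) \xrightarrow{\cong} \eth(H_v)$ required to define the substitution. By condition~\eqref{graphical map defn: boundary compatibility}, there is a unique bijection $b_v : \nbhd(v) \xrightarrow{\cong} \eth(\varphi_v)$ with $\iota \circ b_v = \varphi_0 \circ i$ (where $\iota$ is the inclusion into $A'$); composing with the inverse of $\varphi_v|_{\eth(H_v)} : \eth(H_v) \xrightarrow{\cong} \eth(\varphi_v)$ (a bijection by Lemma~\ref{lemma: boundary injective}) and with $i^{-1}$ yields an $m_v$ satisfying $\varphi_v \circ m_v = \varphi_0$ on $i\nbhd(v)$. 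With these choices in hand, the coequalizer check reduces to verifying, for each internal edge $e=[x_e^1,x_e^2]$ with $v_j = tx_e^j$, the two equalities $\varphi_{v_1}(m_{v_1}(ix_e^1)) = \varphi_{v_2}(i_{v_2}m_{v_2}(ix_e^2))$ and $\varphi_{v_1}(i_{v_1}m_{v_1}(ix_e^1)) = \varphi_{v_2}(m_{v_2}(ix_e^2))$. Both follow at once from the identity $\varphi_v \circ m_v = \varphi_0|_{i\nbhd(v)}$, equivariance of $\varphi_0$ and $\varphi_v$ with respect to the involutions, and the fact that $ix_e^1 = x_e^2$.

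It remains to show that $k$ is an embedding. \emph{Injectivity on vertices:} a vertex of $G\{H_v\}$ has the form $u \in V(H_v)$ and is sent to $\varphi_v(u)$; if $k(u) = k(u')$ with $u \in V(H_v)$, $u' \in V(H_{v'})$, then $v=v'$ forces $u=u'$ since $\varphi_v$ is an embedding, while $v\neq v'$ is ruled out by condition~\eqref{graphical map defn: no double vertex covering}, which says exactly that the sets $\varphi_v(V(H_v))$ are pairwise disjoint subsets of $V'$. \emph{\'Etale:} given $u \in V(H_v) \subseteq V(G\{H_v\})$, observation~\eqref{graph sub enumerate B} in the excerpt implies that the arcs of $D(H_v)$ are not identified by $\pi$, and since the vertex sets of distinct $H_v$'s are disjoint, no other arc of $G\{H_v\}$ points at $u$; hence $\nbhd_{G\{H_v\}}(u) = \nbhd_{H_v}(u)$, and the \'etale condition for $k$ at $u$ reduces to the \'etale condition for $\varphi_v$ at $u$. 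I expect the main obstacle to be the careful bookkeeping of involutions and boundary identifications in the coequalizer verification; once that is organized, the embedding check is routine from properties already established in the paper, and the edge-case behaviours (some $H_v$ being an edge, or $G = \exceptionaledge$ so that the coproduct is empty) are handled harmlessly by the existing conditions.
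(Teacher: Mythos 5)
Your proposal is correct and follows essentially the same route as the paper's proof: both define the bijections $m_v$ from condition~\eqref{graphical map defn: boundary compatibility} so that $\varphi_v\circ m_v = \varphi_0$ on $i\nbhd(v)$, verify that $\coprod_v\varphi_v$ coequalizes $\tilde\outeredge,\tilde\inneredge$ by equivariance, and deduce that $k$ is an embedding from the \'etale property of the coequalizer presentation together with condition~\eqref{graphical map defn: no double vertex covering}. Your treatment of the \'etale check and of the degenerate cases ($G=\exceptionaledge$, collapse condition) is slightly more explicit than the paper's, but the argument is the same.
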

\begin{proof}
If $G$ is the exceptional edge, then $\varphi$ is already an embedding from $G = G\{ \, \}$ to $G'$. Suppose $V$ is nonempty.
The isomorphisms $m_v: i(\nbhd(v)) \to \eth(H_v)$ are defined, using \eqref{graphical map defn: boundary compatibility}, so that $\varphi_v(m_v(x)) = \varphi_0(x)$.
Since $G$ and all $H_v$ are connected, so is $G\{H_v\}$ \cite[Proposition 6.12]{yj15}.
Consider the diagram whose top line is from Construction~\ref{construction graph sub}.
\[ \begin{tikzcd}
\coprod\limits_{e \in E_i} \exceptionaledge \rar[shift left, "\tilde \outeredge"] \rar[shift right, "\tilde \inneredge" swap] & 
\coprod\limits_{v\in V} H_v \rar["\pi"] \arrow[d, "\coprod \varphi_v" swap] & K \arrow[dl,dashed, "k"] \\
&  G'
\end{tikzcd} \]
We have
\begin{align*}
\varphi_{tx_e^1} \left( \tilde \outeredge_e (a) \right) &= \varphi_{tx_e^1} \left( m_{tx_e^1}(ix_e^1) \right) = \varphi_0 (ix_e^1) \\
\varphi_{tx_e^2} \left( \tilde \inneredge_e(a^\dagger) \right) &= \varphi_{tx_e^2} \left( m_{tx_e^2}(ix_e^2) \right) = \varphi_0 (ix_e^2) = \varphi_0(x_e^1);
\end{align*}
since all maps are equivariant we have $(\coprod \varphi_v) \tilde \outeredge = (\coprod \varphi_v) \tilde \inneredge$, thus $k : K \to G'$ exists.
The map $k$ is automatically \'etale, and further we have that $k$ is injective as a map from $V(K) = \coprod_v V(H_v)$ to $V(G')$ by \eqref{graphical map defn: no double vertex covering}.
Thus $k$ is an embedding.
\end{proof}
This proof shows that the following is well-defined (that is, does not depend on the choice of $\varphi_v \in \bigembeddings(G')$ representing $\varphi_1(v) \in \embeddings(G')$).
\begin{definition}[Image of a graphical map] \label{definition image}
If $\varphi : G \to G'$ is a graphical map, then the embedding $k : G\{H_v\} \to G'$ from Proposition~\ref{proposition: image} represents an element $\image(\varphi) \in \embeddings(G')$\index{$\image(\varphi)$} called the \emph{image of $\varphi$}.
\end{definition}

\begin{remark}[Image of an embedding]\label{remark image of embedding}
Notice that if $\varphi : G\to G'$ is coming from an embedding $f$, then we can actually take $f$ itself as a representative for $\image(\varphi)$.
Indeed, $\varphi_1(v)$ can be represented by $\iota_{f(v)} : \medstar_{f(v)} \to G'$ where $\medstar_{f(v)} \cong \medstar_v$, and $G\{ \medstar_{f(v)} \} \cong G\{\medstar_v \} \cong G$.
\end{remark}

\begin{lemma}\label{lemma: image factorization}
	Suppose $\varphi: G \to G'$ is a graphical map, and let $k  : G\{ H_v \} \hookrightarrow G'$ represent its image.
	Then there exists a graphical map $\varphi' : G \to G\{ H_v \}$ so that $\varphi_0'$ is a bijection on boundaries and
	\[ \begin{tikzcd}
	G \rar{\varphi} \dar{\varphi'} & G' \\
	G \{ H_v \} \arrow[ur, hook, "k" swap]
	\end{tikzcd} \]
	commutes.
\end{lemma}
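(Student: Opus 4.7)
The plan is to let $\varphi_1'(v)$ be the class of the canonical embedding $\jmath_v : H_v \hookrightarrow K := G\{H_v\}$ arising from the coequalizer presentation of Construction~\ref{construction graph sub} (as $H_v \hookrightarrow \coprod_w H_w \xrightarrow{\pi} K$), and to define $\varphi_0' : A(G) \to A(K)$ by assembling the bijections $m_v : i\nbhd(v) \xrightarrow{\cong} \eth(H_v)$ through $\pi$. Explicitly---assuming $G \neq \exceptionaledge$, since the exceptional case is trivial ($V(G) = \varnothing$ forces $K = G$ and one takes $\varphi' = \mathrm{id}$)---set
\[ \varphi_0'(a) = \pi(m_{t(ia)}(a)) \text{ for } a \in \eth(G), \qquad \varphi_0'(a) = \pi(i_{t(a)} m_{t(a)}(ia)) \text{ for } a \in D. \]
That $\jmath_v$ is genuinely an embedding is immediate from the identifications $V(K) = \coprod_w V(H_w)$ and $D(K) = \coprod_w D(H_w)$, which supply both the vertex monomorphism and the pullback square for \'etaleness.

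The first step is to check that $\varphi_0'$ is $i_K$-equivariant. The only nontrivial case is an internal edge $[a,ia]$ of $G$ with both halves in $D$, where the required identity is $\pi(m_{t(ia)}(a)) = \pi(i_{t(a)} m_{t(a)}(ia))$; this is precisely the coequalizer identification $\pi\tilde\outeredge_e = \pi\tilde\inneredge_e$ read off at the $\edgeminor$ component (with the ordering $x_e^1 = a$, $x_e^2 = ia$). I expect this to be the main technical point, as the explicit coequalizer description of Construction~\ref{construction graph sub} is genuinely needed here, not just abstract properties of graph substitution.

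Next, verify the three graphical-map axioms for $\varphi'$. Condition~\eqref{graphical map defn: no double vertex covering} holds with equality, since $\varsigma(\jmath_v) = \sum_{u \in V(H_v)} u$ and $V(K) = \coprod_v V(H_v)$. Condition~\eqref{graphical map defn: boundary compatibility} holds because $\eth(\jmath_v) = \pi(\eth(H_v))$, and the bijection $\nbhd(v) \to \eth(\jmath_v)$ sending $a$ to $\pi(m_v(ia))$ coincides with $a \mapsto \varphi_0'(ia)$ by direct inspection (using whichever of the two formulas above applies to $ia$). Condition~\eqref{graphical map defn: collapse condition} is inherited from $\varphi$: if $\eth(G) = \varnothing$ then some $H_v$ is not an edge by \eqref{graphical map defn: collapse condition} for $\varphi$, hence $\jmath_v$ is not an edge.

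Finally, that $\varphi_0'$ is a bijection on boundaries is exactly Lemma~\ref{lemma boundaries} applied to $\varphi_0'|_{\eth(G)}$. For commutativity $k \circ \varphi' = \varphi$, the equation $k \circ \jmath_v = \varphi_v$ holds by the very construction of $k$ in Proposition~\ref{proposition: image}, yielding $k \circ \varphi_1' = \varphi_1$. For $k \circ \varphi_0' = \varphi_0$, combine $k \circ \pi|_{H_v} = \varphi_v$ with the identity $\varphi_v \circ m_v = \varphi_0|_{i\nbhd(v)}$ from \eqref{graphical map defn: boundary compatibility} for $\varphi$, checking the two clauses in the definition of $\varphi_0'$ separately (the $D$ case additionally invoking $i$-equivariance of $\varphi_0$ and $\varphi_v$).
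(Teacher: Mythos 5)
Your proposal is correct and follows essentially the same route as the paper: define $\varphi_1'(v)$ via $\pi|_{H_v}$, assemble $\varphi_0'$ from the bijections $m_v$ through $\pi$, verify the three axioms, and invoke Lemma~\ref{lemma boundaries} for the boundary bijection. Your case split for $\varphi_0'$ (by $\eth(G)$ versus $D$ rather than by $i\nbhd(v)$) differs only cosmetically, and agrees with the paper's formula on internal edges precisely because of the coequalizer identification you isolate; your explicit checks of $i$-equivariance and of $\jmath_v$ being an embedding are points the paper leaves implicit.
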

\begin{proof}
Once again we exclude the simple case when $G= \exceptionaledge$ (in which case $k= \varphi$ and $\varphi' = \id$), and reuse the notation from Construction~\ref{construction graph sub} and the proof of Proposition~\ref{proposition: image}.
Let $\varphi_v'$ be the composite $H_v \hookrightarrow \coprod_{v\in V} H_v \xrightarrow{\pi} K$; we have $k\varphi_v' = \varphi_v$.
Write $\varphi_1'(v) = [\varphi'_v] \in \embeddings(K)$.
Define $\varphi_0' : A = D \cup iD \to A(K)$ by
\[
\varphi_0' (a) = 
\begin{cases}
\varphi_v'(m_v(a))& \text{if }a\in i(\nbhd(v)) \\
i\varphi_v'(m_v(ia)) & \text{if $ia\in \eth(G)$ and $a\in \nbhd(v)$.}
\end{cases}
\]
Then if $a\in i(\nbhd(v))$ we have $k\varphi_0' (a) = k\varphi_v'(m_v(a)) = \varphi_v(m_v(a)) = \varphi_0(a)$, while if $ia\in \eth(G)$ and $a\in \nbhd(v)$ we have $k\varphi_0' (a) = ki\varphi_v'(m_v(ia)) = i\varphi_v(m_v(ia)) = i\varphi_0(ia) = \varphi_0(a)$.
Thus we have established that $\varphi = k \varphi'$, assuming that $\varphi'$ is a graphical map.

Let us now show that $\varphi'$ is a graphical map. 
Condition \eqref{graphical map defn: no double vertex covering} follows since $\coprod \varphi_v' : \coprod V(H_v) \to V(K)$ is a bijection, hence a monomorphism.
The composite of bijections
\[ \begin{tikzcd}
\nbhd(v) \rar{i} & i(\nbhd(v)) \rar{m_v} & \eth(H_v) \rar{\varphi_v'} & \eth(\varphi_v')
\end{tikzcd} \]
satisfies $\varphi_v' m_v i = \varphi_0' i$, so \eqref{graphical map defn: boundary compatibility} holds.
Finally, \eqref{graphical map defn: collapse condition} holds for $\varphi'$ by the corresponding condition for $\varphi$.

The graphical map $\varphi'$ is a bijection on boundaries by Lemma~\ref{lemma boundaries}.
\end{proof}

Suppose that $\varphi : G \to G'$ is any graphical map. 
By the previous lemma and Lemma \ref{lemma: boundary injective} we know that $\varphi_0|_{\eth(G)}$ is injective.
We extend the definition of $\eth$, given in Definition~\ref{def boundary of embedding}, from embeddings to arbitrary graphical maps.
\begin{definition}[Boundary of a graphical map] \label{def boundary of graphical map}
	If $\varphi : G \to G'$ is any graphical map, let $\eth(\varphi) \cong \eth(G)$\index{$\eth(\varphi)$} be the subset $\varphi_0(\eth(G)) \subseteq A'$.
\end{definition}

In Definition~\ref{graphical map composition} we will explain how to compose two graphical maps.
The following is key to ensuring that composition is well-defined. 

\begin{lemma}
\label{lemma image restriction defined}
Let $\varphi : G\to G'$ be a graphical map, and let $f : G_1 \to G$ and $h : G_2 \to G$ be embeddings.
If $z$ is an isomorphism with $f = hz$, 
then $\image(\varphi|_f) = \image(\varphi|_h)$ in $\embeddings(G')$.
\end{lemma}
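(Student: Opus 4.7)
The plan is to unpack the definition of image and exploit the functoriality of graph substitution under isomorphisms of the underlying graph, with the substitution data held fixed. First, fix once and for all representatives $\varphi_w : H_w \to G'$ in $\bigembeddings(G')$ for each $\varphi_1(w)$ with $w \in V(G)$. By Definition~\ref{def embeddings and restrictions}, $(\varphi|_f)_1(v) = \varphi_1(f(v))$ is represented by $\varphi_{f(v)}$, while $(\varphi|_h)_1(u)$ is represented by $\varphi_{h(u)}$. Since $f = hz$ and $z$ is an isomorphism, the induced map $z : V(G_1) \to V(G_2)$ is a bijection and for every $v \in V(G_1)$ we have $H_{f(v)} = H_{h(z(v))}$ with $\varphi_{f(v)} = \varphi_{h(z(v))}$.

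Next, applying Proposition~\ref{proposition: image} produces embeddings
\[
k_f : G_1\{H_{f(v)}\} \to G' \qquad \text{and} \qquad k_h : G_2\{H_{h(u)}\} \to G'
\]
representing $\image(\varphi|_f)$ and $\image(\varphi|_h)$, respectively. Because $z$ is an isomorphism and the substitution data on $G_1$ is obtained from that on $G_2$ by reindexing vertices via $z$ (with identical component graphs $H_{f(v)} = H_{h(z(v))}$ and identical gluing bijections $m_v$ inherited from \eqref{graphical map defn: boundary compatibility}), $z$ lifts to an isomorphism $\tilde z : G_1\{H_{f(v)}\} \to G_2\{H_{h(u)}\}$. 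Concretely, after choosing an ordering of the arcs of each internal edge of $G_2$ and transporting it along $z^{-1}$ to an ordering on the internal edges of $G_1$, the coequalizer presentations from Construction~\ref{construction graph sub} for the two substitutions become identical, which supplies $\tilde z$ canonically.

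Finally, I would verify that $k_h \tilde z = k_f$. From the construction in the proof of Proposition~\ref{proposition: image}, $k_f$ is the unique map out of the coequalizer induced by the family $\{\varphi_{f(v)}\}_{v \in V(G_1)}$, and $k_h \tilde z$ is the map induced by $\{\varphi_{h(z(v))}\}_{v \in V(G_1)}$; since these two families coincide pointwise, the two induced maps agree. Thus $\tilde z$ is an isomorphism over $G'$ witnessing $[k_f] = [k_h]$ in $\embeddings(G')$.

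The main obstacle is the bookkeeping needed to produce $\tilde z$ and confirm it intertwines the embeddings into $G'$; neither step is conceptually deep, but one must be careful to transport the choice of arc orderings along $z$ so that the two coequalizer diagrams become literally identical, after which the conclusion is automatic.
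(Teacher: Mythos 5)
Your proof is correct and follows essentially the same route as the paper: both arguments identify the two coequalizer presentations from Construction~\ref{construction graph sub} via the isomorphism induced by $z$ on the indexing sets (vertices and internal edges), obtain an induced isomorphism $K_1 \to K_2$ of the substituted graphs, and check it commutes with the two induced embeddings into $G'$. Your extra care about transporting the arc orderings along $z$ is a reasonable elaboration of a point the paper leaves implicit.
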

In other words, the function $\bigembeddings(G) \to \embeddings(G')$ which sends $f$ to $\image(\varphi|_f)$ factors through $\embeddings(G)$.
Of course, $\varphi|_f$ need not be equal to $\varphi|_h$ (they need not even have the same domain), so $\varphi|_\ell$ is not defined for $\ell \in \embeddings(G)$.
Despite that fact, we will still use the notation $\image(\varphi|_{\ell})$ when $\ell \in \embeddings(G)$.
\begin{proof}
In the proof of Proposition~\ref{proposition: image} we represented these images as coming the universal property of coequalizers.
Consider the following diagram.
The map $k_1$ represents $\image(\varphi|_f)$ while the map $k_2$ represents $\image(\varphi|_h)$.
The isomorphisms on the left come from $z$ applied to the indexing sets for the coproducts. 
\[ \begin{tikzcd}
\coprod\limits_{e \in E_i(G_1)} \exceptionaledge \rar[shift left] \rar[shift right]  \arrow[dd, "\cong"] & 
\coprod\limits_{v\in V(G_1)} H_{fv} \arrow[rr,"\pi_1"] \arrow[dr, "\coprod \varphi_{fv}" description] \arrow[dd, "\cong"]  & & K_1 \arrow[dl,dashed, "k_1"] \arrow[dd,dotted] \\
& &  G' \\
\coprod\limits_{e \in E_i(G_2)} \exceptionaledge \rar[shift left] \rar[shift right]  & \coprod\limits_{w\in V(G_2)} H_{hw}  \arrow[rr,"\pi_2"] \arrow[ur, "\coprod \varphi_{hw}" description] & & K_2 \arrow[ul,dashed, "k_2"]  \\
\end{tikzcd} \]
Then $K_1 \to K_2$ is an isomorphism as well, and we see that $k_1$ and $k_2$ represent the same element of $\embeddings(G')$.
\end{proof}

\begin{definition}[Composition in $\graphicalcat$]
\label{graphical map composition}
If $\varphi : G \to H$ and $\psi: H \to K$ are two graphical maps, define $(\psi \circ \varphi)_0$ on $A$ and $(\psi \circ \varphi)_1$ on $V$ by
\begin{align*}
	(\psi \circ \varphi)_0(a) &= \psi_0 (\varphi_0(a)) \in A(K) \\
	(\psi \circ \varphi)_1(v) &= \image(\psi|_{\varphi_1(v)}) \in \embeddings(K).
\end{align*}
\end{definition}
In light of Lemma~\ref{lemma image restriction defined}, $(\psi \circ \varphi)_1$ is a well-defined function.
We must still verify that $\psi \circ \varphi$ is a graphical map when both $\varphi$ and $\psi$ are; this will occur in the proof of Theorem~\ref{theorem graphicalcat is a category}. 
Before doing that, we should address the following potential inconsistency: we've already defined composition when one of $\varphi$ or $\psi$ is an embedding.

\begin{remark}[Composition with embeddings] 
Let $\varphi:  G\to G'$ be a graphical map with chosen embeddings $\varphi_v : H_v \to G'$ representing $\varphi_1(v)$.
Let us compare the composition from Definition~\ref{graphical map composition} with previously mentioned compositions with embeddings from Definition~\ref{def embeddings and restrictions}.
\begin{itemize}
	\item If $f \in \bigembeddings(G)$ is an embedding, we defined $(\varphi|_f)_1(w)$ to be $\varphi_1(f(w))$, which is represented by $\varphi_{f(w)} : H_{f(w)} \to G'$.
	On the other hand, regarding $f$ as a graphical map, we know that $f$ sends $w$ to $[\iota_{f(w)}] \in \embeddings(G)$.
	But $\image(\varphi|_{\iota_v} : \medstar_v \to G')$ is represented by $\varphi_v : \medstar_v \{ H_v \} = H_v \to G'$, so $(\varphi \circ f)_1(w)$ is also represented by $\varphi_{f(w)}$.
	Thus $\varphi|_f = \varphi \circ f$.
	\item Suppose that $f$ is an embedding with domain $G'$.
	In Definition~\ref{def embeddings and restrictions} we declared $(f\circ \varphi)_1(v)$ to be represented by $f \circ \varphi_v$.
	On the other hand, in Definition~\ref{graphical map composition} we said that $(f\circ \varphi)_1(v)$ should be represented by $\image(f|_{\varphi_v})$.
	The graphical map $f|_{\varphi_v}$ comes from the embedding $f\circ \varphi_v$, so by Remark~\ref{remark image of embedding} we have that $\image(f|_{\varphi_v})$ is also represented by $f\circ \varphi_v$.
	Thus there is no ambiguity about what we mean by $f\circ \varphi$.
\end{itemize}
\end{remark}

The following two lemmas will be used in verifying that $\psi\circ\varphi$ is a graphical map in the proof of Theorem~\ref{theorem graphicalcat is a category}.

\begin{lemma} \label{lemma image vartheta}
	If $\varphi : G \to G'$ is a graphical map and $\image \varphi$ is its image, then $\varsigma(\image \varphi) = \sum_{v\in V} \varsigma(\varphi_1(v))$.
\end{lemma}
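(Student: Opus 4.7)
The plan is to directly unpack both sides of the claimed equality using the construction of the image from Proposition~\ref{proposition: image} and the description of the vertex set of a graph substitution. Fix, for each $v \in V$, a representative $\varphi_v : H_v \to G'$ of $\varphi_1(v) \in \embeddings(G')$, and let $k : G\{H_v\} \hookrightarrow G'$ be the embedding representing $\image(\varphi)$. By Proposition~\ref{proposition: image} (and the proof of Lemma~\ref{lemma image restriction defined}), $k$ factors each $\varphi_v$ through the canonical map $H_v \hookrightarrow \coprod_v H_v \xrightarrow{\pi} G\{H_v\}$; that is, the restriction of $k$ to $V(H_v) \subseteq V(G\{H_v\})$ agrees with $\varphi_v : V(H_v) \to V'$.

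Next, I would recall from Construction~\ref{construction graph sub} that $V(G\{H_v\}) = \coprod_{v\in V} V(H_v)$. Then by the definition of $\varsigma$ applied to the embedding $k$,
\[
\varsigma(\image \varphi) \;=\; \sum_{u \in V(G\{H_v\})} k(u) \;=\; \sum_{v \in V} \sum_{u \in V(H_v)} k(u) \;=\; \sum_{v\in V} \sum_{u \in V(H_v)} \varphi_v(u),
\]
where the last equality uses $k|_{V(H_v)} = \varphi_v|_{V(H_v)}$. The inner sum is precisely $\varsigma(\varphi_v) = \varsigma(\varphi_1(v))$ (using that $\varsigma$ is invariant under choice of representative, since isomorphisms in the domain of an embedding reindex the vertex sum trivially). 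This yields the desired identity.

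The only subtle point is the well-definedness: one must check that the resulting equation does not depend on the choice of $\varphi_v$ representing $\varphi_1(v)$. This is immediate since a different choice $\varphi_v'$ differs from $\varphi_v$ by precomposition with an isomorphism $z : H_v' \to H_v$, which is a bijection on vertex sets, so $\sum_u \varphi_v(u) = \sum_{u'} \varphi_v'(u')$; and similarly the image embedding $k$ is only determined up to a unique isomorphism over $G'$, under which the induced sum $\sum_{u} k(u)$ is preserved. Beyond this bookkeeping, the argument is a direct calculation with no essential obstacle; I don't expect any hard step.
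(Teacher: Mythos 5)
Your argument is correct and is essentially identical to the paper's proof: both fix representatives $\varphi_v : H_v \to G'$, use that each $\varphi_v$ factors as $H_v \hookrightarrow G\{H_v\} \xrightarrow{k} G'$, identify $V(G\{H_v\})$ with $\coprod_v V(H_v)$, and regroup the vertex sum. The extra remarks on independence of choices are fine but already covered by the well-definedness of $\image$ and $\varsigma$ established earlier in the paper.
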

\begin{proof}
Write $\varphi_v : H_v \to G'$ for an embedding representing $\varphi_1(v)$, and let $k : G\{H_v\} \to G'$ be the associated embedding representing $\image(\varphi)$.
Each $\varphi_v$ factors as $H_v \hookrightarrow G\{H_v\} \xrightarrow{k} G'$.
Identifying the vertex set of $G\{H_v\}$ with the disjoint union of the vertex sets of the $H_v$, we have 
\[
\sum_{v\in G} \varsigma(\varphi_v) = \sum_{v\in G} \sum_{w\in H_v} \varphi_v(w)  = \sum_{w\in G\{H_v\}} k(w) = \varsigma(k).
\]\end{proof}

\begin{lemma}\label{lemma: image and boundary}
	If $f \in \bigembeddings(G)$ is an embedding and $\varphi : G \to G'$ is a graphical map, then we have a commutative diagram
	\[ \begin{tikzcd}
	\eth(f) \rar[hook] \dar["\cong"] & A \dar{\varphi_0} \\
	\eth(\image(\varphi|_f)) \rar[hook] & A'
	\end{tikzcd} \]
	whose left map is a bijection.
\end{lemma}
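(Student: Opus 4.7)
The plan is to unravel the definitions and then appeal to Lemma~\ref{lemma boundaries}. Let $G_0$ denote the domain of $f$, and for each vertex $w$ of $G_0$ fix a representative $\varphi_{f(w)} : H_{f(w)} \to G'$ of $\varphi_1(f(w)) \in \embeddings(G')$; by the construction in Definition~\ref{def embeddings and restrictions}, the same family represents $(\varphi|_f)_1$. Proposition~\ref{proposition: image} applied to $\varphi|_f$ then produces an embedding $k : G_0\{H_{f(w)}\} \to G'$ representing $\image(\varphi|_f)$, assembled from the coequalizer of Construction~\ref{construction graph sub}.

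The left-hand bijection in the square is manufactured as the composite of three known bijections: the inverse of $f|_{\eth(G_0)} : \eth(G_0) \to \eth(f)$ from Definition~\ref{def boundary of embedding}, the bijection $\eth(G_0) \to \eth(G_0\{H_{f(w)}\})$ supplied by Lemma~\ref{lemma boundaries}, and the bijection $\eth(G_0\{H_{f(w)}\}) \to \eth(k) = \eth(\image(\varphi|_f))$ coming from the fact that $k$ is an embedding. All three factors are bijections, so the composite is one as well.

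To check commutativity, pick $a \in \eth(f)$ and write $a = f(a_0)$ with $a_0 \in \eth(G_0)$. In the degenerate case $G_0 = \exceptionaledge$, the map $\varphi|_f$ is itself an embedding (as observed in the proof of Proposition~\ref{proposition: image}), hence $k = \varphi|_f$ and commutativity is immediate. Otherwise $G_0$ has a vertex, $ia_0 \in D(G_0)$, and we may set $w_0 = t(ia_0)$, so that $a_0 \in i\nbhd(w_0)$; condition \eqref{graphical map defn: boundary compatibility} applied to $\varphi|_f$ at $w_0$ supplies the bijection $m_{w_0} : i\nbhd(w_0) \to \eth(H_{f(w_0)})$ satisfying $\varphi_{f(w_0)}(m_{w_0}(x)) = \varphi_0(f(x))$. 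The explicit formula from Lemma~\ref{lemma boundaries} sends $a_0$ to $\pi(m_{w_0}(a_0))$, and then $k$ (which agrees with $\varphi_{f(w_0)}$ on the $H_{f(w_0)}$-summand after passing through $\pi$) carries this to $\varphi_{f(w_0)}(m_{w_0}(a_0)) = \varphi_0(f(a_0)) = \varphi_0(a)$, matching the result of the top-right composite $\eth(f) \hookrightarrow A \xrightarrow{\varphi_0} A'$. I anticipate no serious obstacle: the content is essentially that the bijection of Lemma~\ref{lemma boundaries} is natural with respect to the embedding $k$ produced by Proposition~\ref{proposition: image}, and the only mild subtlety is separating out the exceptional-edge case, where no vertex $w_0$ is available.
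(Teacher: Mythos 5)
Your proof is correct, but it takes a genuinely more hands-on route than the paper's. The paper deduces the lemma in three lines from Lemma~\ref{lemma: image factorization}: writing $\varphi|_f = k\circ\alpha$ with $\alpha$ a bijection on boundaries gives the set equality $\eth(\image(\varphi|_f)) = \eth(\varphi|_f) = (\varphi|_f)_0(\eth(G'')) = \varphi_0(\eth(f))$, and then two-out-of-three for bijections applied to $\eth(G'')\xrightarrow{f}\eth(f)\xrightarrow{\varphi_0}\eth(\varphi|_f)$ (the first map and the composite are bijections, by Lemma~\ref{lemma: boundary injective} for $f$ and for $\varphi|_f$ respectively) shows that $\varphi_0|_{\eth(f)}$ is a bijection onto the right target. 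You bypass Lemma~\ref{lemma: image factorization} entirely and instead rebuild the left-hand bijection from Construction~\ref{construction graph sub}, Lemma~\ref{lemma boundaries}, and the coequalizer description of $k$ in Proposition~\ref{proposition: image}, then verify commutativity by chasing $a=f(a_0)$ through $\pi$ and $m_{w_0}$. Since the paper's Lemma~\ref{lemma: image factorization} itself invokes Lemma~\ref{lemma boundaries} to get boundary-bijectivity of $\alpha$, both arguments rest on the same foundation; yours re-derives at the element level what the paper extracts formally, which costs length but makes the identification of the bijection completely explicit. One small point to tighten: in the case $G_0=\exceptionaledge$ you only assert commutativity, but the lemma also asks that the left map be a bijection; surjectivity is automatic since $\eth(\image(\varphi|_f))=\eth(\varphi|_f)=\varphi_0(\eth(f))$, and injectivity on the two-element orbit $\eth(f)=\{b,ib\}$ holds because the involution on $A'$ is free, so $\varphi_0(b)\neq i\varphi_0(b)=\varphi_0(ib)$.
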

\begin{proof}
Let $G''$ be the domain of $f$.
By Lemma~\ref{lemma: image factorization}, we know that 
\[
	\eth(\image(\varphi|_f)) = \eth(\varphi|_f) = (\varphi|_f)_0 (\eth(G'')).
\]
By definition of $\varphi|_f$, this is equal to $\varphi_0(f(\eth(G'')) = \varphi_0 (\eth(f))$.
The composition and the first map in 
\[
	\eth(G'') \xrightarrow{f} \eth(f) \xrightarrow{\varphi_0} \eth(\varphi|_f)
\]
are isomorphisms, hence $\varphi_0 : \eth(f) \to \eth(\varphi|_f)$ is an isomorphism.
\end{proof}

\begin{theorem}
\label{theorem graphicalcat is a category}
The graphical maps from Definition \ref{def: graphical map} assemble into a category $\graphicalcat$.
The objects of $\graphicalcat$ are the connected graphs (excluding nodeless loops).
\end{theorem}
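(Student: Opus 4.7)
The plan is to verify three things: that the binary operation $\psi\circ\varphi$ from Definition~\ref{graphical map composition} does land in graphical maps (i.e., satisfies the three axioms of Definition~\ref{def: graphical map}); that the graphical map induced from the identity embedding $\id_G \in \bigembeddings(G)$ via Definition~\ref{def embeddings and restrictions} is a two-sided unit; and that composition is associative.

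For condition \eqref{graphical map defn: no double vertex covering} of $\psi \circ \varphi$, I apply Lemma~\ref{lemma image vartheta} to each graphical map $\psi|_{\varphi_v}$ to obtain $\varsigma((\psi\circ\varphi)_1(v)) = \sum_{u\in V(H_v)} \varsigma(\psi_1(\varphi_v(u)))$; summing over $v$ and using condition \eqref{graphical map defn: no double vertex covering} for $\varphi$ (which makes $\coprod_v V(H_v)\to V(H)$ injective on vertex sets) reduces matters to condition \eqref{graphical map defn: no double vertex covering} for $\psi$. Condition \eqref{graphical map defn: boundary compatibility} is then checked by pasting the boundary square for $\varphi$ at $v$ onto the identification supplied by Lemma~\ref{lemma: image and boundary} applied to the embedding $\varphi_v$ and the graphical map $\psi$.

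Condition \eqref{graphical map defn: collapse condition} is the subtlest. Suppose $\eth(G)=\varnothing$. Proposition~\ref{proposition: image} gives an embedding $k : G\{H_v\} \hookrightarrow H$ representing $\image(\varphi)$, and Lemma~\ref{lemma boundaries} yields $\eth(G\{H_v\}) = \eth(G) = \varnothing$. Hence $\psi|_{k} : G\{H_v\} \to K$ is a graphical map whose source has empty boundary, so \eqref{graphical map defn: collapse condition} applied to it produces a vertex $w \in V(G\{H_v\}) = \coprod_v V(H_v)$ with $\psi_1(k(w))$ not an edge. If $w\in V(H_{v_0})$, the class $(\psi\circ\varphi)_1(v_0) = \image(\psi|_{\varphi_{v_0}})$ is represented by an embedding whose domain is the substitution $H_{v_0}\{H'_u : u\in V(H_{v_0})\}$, which inherits a vertex from the factor $H'_w$ and is therefore not an edge.

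The unit axiom follows from unitality of graph substitution ($\medstar_v\{H_v\}\cong H_v$) together with Remark~\ref{remark image of embedding}, as previewed in the remark following Definition~\ref{graphical map composition}. Associativity will be the main obstacle: the two composites $((\chi\circ\psi)\circ\varphi)_1(v)$ and $(\chi\circ(\psi\circ\varphi))_1(v)$ must be realized, via Proposition~\ref{proposition: image}, as embeddings into the final target whose domains are three-layered graph substitutions of the respective forms $H_v\{H'_u\{H''_x\}\}$ and $H_v\{H'_u\}\{H''_y\}$. These domains agree by associativity of graph substitution (Theorem 5.32 of \cite{yj15}), and the resulting embedding classes into the common target coincide by Proposition~\ref{proposition: embedding uniqueness}, since their boundaries agree (both are determined by $\chi_0\psi_0\varphi_0$ restricted to $\eth(G)$). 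The principal technical nuisance here is tracking representatives of embedding classes cleanly through the three layers via Lemma~\ref{lemma image restriction defined}.
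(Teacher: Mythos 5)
Your verification that $\psi\circ\varphi$ satisfies the three axioms of Definition~\ref{def: graphical map} is essentially the paper's argument: condition \eqref{graphical map defn: no double vertex covering} via Lemma~\ref{lemma image vartheta} and injectivity of $\coprod_v V(H_v)\to V(H)$, condition \eqref{graphical map defn: boundary compatibility} by pasting with Lemma~\ref{lemma: image and boundary}, and condition \eqref{graphical map defn: collapse condition} by passing through the embedding $G\{H_v\}\hookrightarrow H$, whose source has empty boundary by Lemma~\ref{lemma boundaries} and which is therefore an isomorphism by Lemma~\ref{lemma: empty boundary} (this is the fact implicitly powering your appeal to \eqref{graphical map defn: collapse condition} for $\psi|_k$). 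Where you genuinely diverge is in handling units and associativity: the paper simply observes that every graphical map is a morphism of the Joyal--Kock category $\jkgraphcat$ of Remark~\ref{remark jk category} and that the composition law is the one inherited from there, so associativity and unitality come for free once closure under composition is established. You instead prove them internally, via unitality and associativity of graph substitution together with Proposition~\ref{proposition: embedding uniqueness} to identify the two three-layered representatives. That route is sound and has the virtue of being self-contained (it does not presuppose that $\jkgraphcat$ is a category), at the cost of the representative-chasing you acknowledge; note also one small imprecision: the boundaries of the two vertex-level classes $((\chi\circ\psi)\circ\varphi)_1(v)$ and $(\chi\circ(\psi\circ\varphi))_1(v)$ are, by condition \eqref{graphical map defn: boundary compatibility}, both equal to the composite arc map applied to $i(\nbhd(v))$, not to its restriction to $\eth(G)$ --- but since composition of the arc maps is strictly associative, the intended conclusion that Proposition~\ref{proposition: embedding uniqueness} applies (in either the non-edge case or the both-edges case, which are distinguished consistently because the two domains are isomorphic by associativity of graph substitution) is correct.
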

\begin{proof}
The graphical maps that we have defined are all maps in the category $\jkgraphcat$ from \cite[\S 6]{JOYAL2011105}.
The composition is identical to that in $\jkgraphcat$, so the result follows as long as we can show that $\psi \circ \varphi$ from Definition~\ref{graphical map composition} is a graphical map.
Throughout, we let $\varphi_v : J_v \to H$ be an embedding which represents $\varphi_1(v)$.

We have, using Lemma~\ref{lemma image vartheta},
\begin{multline*}
	\sum_{v\in G} \varsigma((\psi \circ \varphi)_1(v)) 
	=
	\sum_{v\in G} \varsigma(\image (\psi|_{\varphi_1(v)})) 
	= 
	\sum_{v\in G} \sum_{w\in J_v} \varsigma((\psi|_{\varphi_v})_1(w)) \\
	=
	\sum_{v\in G} \sum_{w\in J_v} \varsigma((\psi|_{\varphi_v})_1(w))
	\leq  
	\sum_{u \in H} \varsigma(\psi_1(u)),
\end{multline*}
where the inequality is because $G\{J_v\} \hookrightarrow H$ is an embedding.
Since \eqref{graphical map defn: no double vertex covering} holds for $\psi_1$, this element is less than or equal to $\sum_{x\in K} x$.
Thus \eqref{graphical map defn: no double vertex covering} holds for $(\psi \circ \varphi)_1$.

To see that \eqref{graphical map defn: boundary compatibility} holds, note that we have 
\[
	\begin{tikzcd}
		\nbhd(v) \rar{i} \dar["\cong"]  & A(G) \dar{\varphi_0} \\
		\eth(\varphi_1(v)) \rar[hook] \dar["\cong"] & A(H) \dar{\psi_0} \\
		\eth(\image(\psi|_{\varphi_1(v)})) \rar[hook] & A(K)
	\end{tikzcd}
\]
where on the left-hand side, the bottom map is a bijection by Lemma \ref{lemma: image and boundary} and the top is a bijection by \eqref{graphical map defn: boundary compatibility}. 

For \eqref{graphical map defn: collapse condition}, suppose that $G$ has empty boundary.
By Proposition \ref{proposition: image}, there is an embedding $G \{ J_v \} \hookrightarrow H$ representing $\image \varphi$; since the boundary of $G\{ J_v \}$ is empty, Lemma \ref{lemma: empty boundary} implies this embedding is an isomorphism.
By \eqref{graphical map defn: collapse condition} applied to $\psi$, there is a vertex $w \in H$ with $\psi_1(w)$ not an edge; using the isomorphism of $H$ and $G\{J_v\}$, there exists a $v\in G$ so that $w \in J_v$.
Then $(\psi \circ \varphi)_1(v) = \image (\psi|_{\varphi_v})$ cannot be an edge, since $\psi_1(w)$ factors through it.
\end{proof}

\section{Factorization of graphical maps} \label{reedy}

Now that we've defined the category $\graphicalcat$, we exhibit two (orthogonal) factorization systems on it.
Recall that a factorization system on a category $\mathbf{C}$ consists of two classes of maps $L$ and $R$ (the left class and right class, respectively), each containing all isomorphisms and closed under composition.
The defining property is that every morphism $f$ of $\mathbf{C}$ factors as $f = r\ell$ with $\ell \in L$ and $r\in R$, and this factorization is unique up to unique isomorphism.
In Theorem~\ref{theorem orthogonal factorization system} we exhibit such a factorization system on $\graphicalcat$ whose right class is consists of all of the embeddings.
The left class of this factorization system consists of `active' maps, which play a major role in this paper and its companion \cite{modular_paper_two}.

The second factorization system we will deal with actually has more structure: we show that $\graphicalcat$ is a dualizable generalized Reedy category in the sense of Berger and Moerdijk \cite{bm_reedy}. 
This fact, established in Section~\ref{subsection reedy}, gives us Quillen model structures on categories of presheaves.
We will exploit this in Section~\ref{section simplicial presheaves on U} when developing model categories for Segal modular operads.

\subsection{Active maps and their properties}

Recall that a wide subcategory of a category $\mathbf{C}$ is a subcategory which contains all objects of $\mathbf{C}$.  

\begin{definition}[Active maps]\label{def: active}
A graphical map $\varphi: G\rightarrow G'$ is called \emph{active} if $\varphi_0$ is a bijection on boundaries, that is, if we have a commutative square
\[ \begin{tikzcd}
\eth(G) \rar[dashed, "\cong"] \dar[hook] & \eth(G') \dar[hook] \\
A \rar{\varphi_0} & A'
\end{tikzcd} \]
whose top map is an isomorphism.
We have two wide subcategories of $\graphicalcat$:
\begin{itemize}
\item $\graphicalcat_{\actrm}$\index{$\graphicalcat_{\actrm}$} is the subcategory of $\graphicalcat$ consisting of \emph{active maps}
\item $\graphicalcat_{\embrm}$\index{$\graphicalcat_{\embrm}$} is the subcategory consisting of \emph{embeddings} (Definition~\ref{def embeddings and restrictions}).
\end{itemize}
\end{definition}

In this subsection, we study the interactions of these two classes of maps, with the aim of showing that they constitute an orthogonal factorization system on $\graphicalcat$ (Theorem~\ref{theorem orthogonal factorization system}).
In the next subsection, we will use this fact to help establish a generalized Reedy structure on $\graphicalcat$.

\begin{lemma}\label{lemma: active maps from edge are iso}
	If $\varphi : \exceptionaledge \to G$ is an active map, then $\varphi$ is an isomorphism.
\end{lemma}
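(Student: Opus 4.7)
The plan is to use the fact that $\exceptionaledge$ has no vertices, so activity controls everything through $\varphi_0$. I would show that $\eth(G)$ consists of a single involution orbit sitting outside $D(G)$, which forces $G$ itself to be $\exceptionaledge$ by connectedness.

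First I would note that, since $V(\exceptionaledge) = \varnothing$, the function $\varphi_1$ is empty and conditions \eqref{graphical map defn: no double vertex covering} and \eqref{graphical map defn: boundary compatibility} of Definition~\ref{def: graphical map} are vacuous, as is \eqref{graphical map defn: collapse condition} since $\eth(\exceptionaledge) = \eearcs$ is nonempty. So the data of $\varphi$ reduces to an $i$-equivariant map $\varphi_0 : \eearcs \to A(G)$, and activeness means this is a bijection onto $\eth(G)$.

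Setting $a = \varphi_0(\edgemajor)$, equivariance gives $\varphi_0(\edgeminor) = ia$, and since the involution on $A(G)$ is free, $a \neq ia$. The key observation is then that $\{a, ia\} = \eth(G) = A(G) \setminus D(G)$, so neither arc of the edge $[a,ia]$ lies in $D(G)$; in other words, this edge is ``free floating.'' More formally, the subdiagram of $G$ given by arcs $\{a,ia\}$ (with empty $D$ and $V$ parts) is an $i$-closed subfunctor isomorphic to $\exceptionaledge$, and its complement in $G$ is again a subfunctor. These two subfunctors exhibit $G$ as a coproduct in $\finset^{\mathscr{I}}$. By Definition~\ref{definition connected}, this coproduct must be trivial; since the $\exceptionaledge$-summand is nonempty, the complement is empty and $G \cong \exceptionaledge$.

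With both source and target identified as $\exceptionaledge$, the map $\varphi_0$ is an $i$-equivariant bijection between two-element sets, and $\varphi_1$ is the identity on $\varnothing$. Hence $\varphi$ is an isomorphism in the functor category and therefore in $\graphicalcat$. I do not anticipate any real obstacle here; the only care needed is to articulate the ``free floating edge forces $G = \exceptionaledge$'' step through the connectedness definition rather than informal geometric intuition.
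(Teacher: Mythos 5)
Your proof is correct and rests on the same key observation as the paper's: activeness forces $\varphi_0(\edgeminor)=i\varphi_0(\edgemajor)$ to lie in $\eth(G)$, so both arcs of a single edge are boundary arcs. The paper packages this as a contradiction (a connected $G$ with a vertex would have some $b\in\nbhd(v)$ with $ib\in\eth(G)$, forcing $b\in\eth(G)$ as well), whereas you split off $\{a,ia\}$ as a free-floating $\exceptionaledge$-summand and invoke Definition~\ref{definition connected} directly; the two arguments are interchangeable and equally valid.
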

\begin{proof}
Consider the exceptional edge $\exceptionaledge$ from Example~\ref{examples combinatorial} and write $\eearcs$ for the set of arcs.
Suppose that $\varphi$ is not an isomorphism.
Then since $\varphi$ is active and $G$ is connected, the graph $G$ has a vertex $v$ together with an arc $b\in \nbhd(v)$ so that $ib \in \eth(G) = \varphi_0 \eearcs$.
Without loss of generality, we may assume that $ib = \varphi_0(\edgemajor).$ Then $b = \varphi_0(\edgeminor)$ is in $\eth(G)$.
This contradicts our assumption that $b \in \nbhd(v)$ and thus $\varphi$ must be an isomorphism.  
\end{proof}

\begin{proposition}
	A map $\varphi: G \to G'$ is active if and only if $\image \varphi$ is an isomorphism.
\end{proposition}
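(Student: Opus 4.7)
The plan is to deduce the proposition from the factorization provided by Lemma~\ref{lemma: image factorization}. Recall that Lemma gives, for any graphical map $\varphi : G \to G'$, a factorization $\varphi = k \circ \varphi'$, where $k : G\{H_v\} \to G'$ is an embedding representing $\image(\varphi)$ and $\varphi' : G \to G\{H_v\}$ is a graphical map whose underlying arc map $\varphi_0'$ is a bijection on boundaries. Hence the boundary behavior of $\varphi$ is entirely controlled by that of $k$: the restriction $\varphi_0|_{\eth(G)}$ factors as
\[
\eth(G) \xrightarrow{\;\varphi_0'\;} \eth(G\{H_v\}) \xrightarrow{\;k_0\;} A(G'),
\]
with the first map already a bijection onto $\eth(G\{H_v\})$.

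For the easy ($\Leftarrow$) direction, I assume $\image(\varphi)$ is an isomorphism, so $k$ is an isomorphism; in particular $k_0$ restricts to a bijection $\eth(G\{H_v\}) \xrightarrow{\cong} \eth(G')$. Composing with $\varphi_0'$ shows that $\varphi_0|_{\eth(G)} : \eth(G) \to \eth(G')$ is a bijection, so $\varphi$ is active.

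For the ($\Rightarrow$) direction, I assume $\varphi$ is active. Then $\varphi_0|_{\eth(G)} : \eth(G) \xrightarrow{\cong} \eth(G')$, and since $\varphi_0'|_{\eth(G)}$ is already a bijection onto $\eth(G\{H_v\})$, it follows that $k_0$ restricts to a bijection $\eth(G\{H_v\}) \to \eth(G')$; equivalently, $\eth(k) = \eth(G')$. Now the identity $\id_{G'}$ is an embedding with $\eth(\id_{G'}) = \eth(G')$, so $k$ and $\id_{G'}$ share a boundary. Applying Proposition~\ref{proposition: embedding uniqueness} supplies a (unique) isomorphism $z : G\{H_v\} \to G'$ with $k = \id_{G'} \circ z = z$, so $k$ is an isomorphism and $\image(\varphi)$ is represented by an isomorphism.

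The one subtlety—and the only real obstacle—is that Proposition~\ref{proposition: embedding uniqueness} requires either both domains to be the exceptional edge or neither to be. I therefore need to verify that the mixed cases cannot occur under the active hypothesis. If $G' = \exceptionaledge$, then $V(G') = \varnothing$, so the embedding $k$ forces $V(G\{H_v\}) = \varnothing$, and connectedness (together with our standing exclusion of the nodeless loop) gives $G\{H_v\} = \exceptionaledge$. Conversely, if $G\{H_v\} = \exceptionaledge$, then $\eth(k)$ is a two-element set $\{a, ia\}$; activeness forces $\eth(G') = \{a, ia\}$, and since both arcs of this edge lie in the boundary the edge is detached from every vertex of $G'$, so connectedness (again using that $G'$ is not the nodeless loop) forces $V(G') = \varnothing$ and hence $G' = \exceptionaledge$. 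In both mixed scenarios we are pushed back into the allowed case, so Proposition~\ref{proposition: embedding uniqueness} applies and the proof is complete.
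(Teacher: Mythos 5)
Your proof is correct and follows essentially the same route as the paper's: both directions rest on the factorization $\varphi = k\varphi'$ from Lemma~\ref{lemma: image factorization} together with Proposition~\ref{proposition: embedding uniqueness} applied to $f_1 = \image\varphi$ and $f_2 = \id_{G'}$. Your explicit verification that the mixed exceptional-edge cases cannot occur is in fact slightly more careful than the paper's argument, which handles $G = \exceptionaledge$ via Lemma~\ref{lemma: active maps from edge are iso} but does not explicitly address the possibility that $G\{H_v\} = \exceptionaledge$ while $G \neq \exceptionaledge$.
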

\begin{proof}
	The reverse implication follows because $G$ and $G\{H_v\}$ share a boundary.
	For the forward implication, if $G = \exceptionaledge$, then Lemma~\ref{lemma: active maps from edge are iso} implies $\varphi$ is an isomorphism hence $\image \varphi$ is as well. 
	If  $G \neq \exceptionaledge$ this is a special case of Proposition \ref{proposition: embedding uniqueness} using $f = \image \varphi$ and $h = \id_{G'}$.
\end{proof}

The following proposition follows immediately from the definition of active maps.

\begin{proposition}\label{proposition: kappa and active maps}
	If $\varphi$ is an active map and $\psi$ is any other graphical map so that $\psi \circ \varphi$ is defined, then $\eth(\psi) = \eth(\psi \circ \varphi)$. \qed 
\end{proposition}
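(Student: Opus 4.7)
The proof is essentially an unwinding of definitions, so my plan is to simply chase the relevant subsets of arc sets through the definitions.

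Let me write $\varphi : G \to H$ and $\psi: H \to K$, so that $\psi \circ \varphi : G \to K$. Recall from Definition~\ref{def boundary of graphical map} that for any graphical map $\alpha$ with domain having arc set $A$, we have $\eth(\alpha) = \alpha_0(\eth(\mathrm{dom}\,\alpha)) \subseteq A(\mathrm{cod}\,\alpha)$. Recall from Definition~\ref{graphical map composition} that $(\psi \circ \varphi)_0 = \psi_0 \circ \varphi_0$ on arcs.

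First, I would compute $\eth(\psi \circ \varphi)$ directly from the definition: it equals $(\psi \circ \varphi)_0(\eth(G)) = \psi_0(\varphi_0(\eth(G)))$. Next I would use that $\varphi$ is active: by Definition~\ref{def: active}, the restriction of $\varphi_0$ to $\eth(G)$ is a bijection onto $\eth(H)$, so $\varphi_0(\eth(G)) = \eth(H)$. Substituting, $\psi_0(\varphi_0(\eth(G))) = \psi_0(\eth(H)) = \eth(\psi)$.

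I expect no real obstacles here since everything is set-theoretic manipulation of the defining formulas; the content of the statement is entirely captured by the fact that an active map induces equality of boundary images rather than just an injection. The only thing to be careful about is not confusing $\eth$ applied to graphs (a subset of the source arc set) with $\eth$ applied to graphical maps (a subset of the target arc set), and making sure the composition formula $(\psi \circ \varphi)_0 = \psi_0 \circ \varphi_0$ is exactly what's used rather than the more subtle $(\psi \circ \varphi)_1$ formula, which plays no role in the boundary computation.
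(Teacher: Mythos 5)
Your argument is correct and is exactly the computation the paper has in mind: the paper marks this proposition with an immediate \qed, noting it "follows immediately from the definition of active maps," and your chain $\eth(\psi\circ\varphi) = \psi_0(\varphi_0(\eth(G))) = \psi_0(\eth(H)) = \eth(\psi)$ is that omitted unwinding.
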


\begin{definition}[{Wide subcategories of $\graphicalcat_{\actrm}$}]
\label{def wide subcats act}
Consider the following two subcategories:
\begin{enumerate} 
\item A map $\varphi$ is in $\graphicalcat^{-}$\index{$\graphicalcat^{-}$} if and only if $\varphi_0$ is surjective and $\varphi$ is active.
\item $\graphicalcat^{+}_{\actrm}$\index{$\graphicalcat^{+}_{\actrm}$} is the subcategory of $\graphicalcat_{\actrm}$ consisting of those active maps with $\varphi_0$ injective.
\end{enumerate} 
\end{definition}

We will later need another wide subcategory $\graphicalcat^+ \subseteq \graphicalcat$ (see Definition~\ref{almost injective def}), but we postpone its introduction until we have done some preliminary work. 
The following remark is not essential in what follows, but it describes a collection of generators for the category $\graphicalcat$ as well as certain important subcategories. 

\begin{remark}[Cofaces and codegeneracies]\label{remark cofaces codegens}
One can consider, for each of the subcategories $\graphicalcat_{\embrm}$, $\graphicalcat^{+}_{\actrm}$, and $\graphicalcat^{-}$, those non-isomorphisms $\varphi$ so that if $\varphi = \varphi^1 \circ \varphi^2$ (with all three morphisms in the subcategory), then either $\varphi^1$ or $\varphi^2$ is an isomorphism.
Such maps are called outer coface maps, inner coface maps, and codegeneracy maps, respectively.
To allow us to be a little more concrete about what these maps are, recall that, for a generic map $\varphi : G \to G'$ we write $\varphi_v : H_v \to G'$ for an embedding representing $\varphi_1(v)$. 
There are two kinds of embeddings which are outer cofaces: those embeddings where $G'$ has exactly one more internal edge than $G$, and maps from the exceptional edge into a star.
Inner cofaces and codegeneracies, which are all active, come with a distinguished vertex $w\in V(G)$ and have the property that $H_v \cong \medstar_v$ whenever $v \neq w$.
Such a map is an inner coface just when $H_w$ has exactly one internal edge, and is a codegeneracy just when $H_w$ is the exceptional edge.
The outer cofaces generate $\graphicalcat_{\embrm}$, the inner cofaces generate $\graphicalcat^{+}_{\actrm}$, and the codegeneracies generate $\graphicalcat^{-}$.
A coface map is just a map which is either an inner or outer coface, and the coface maps generate the category $\graphicalcat^{+}$ from Definition~\ref{almost injective def}.
\end{remark}

\begin{theorem}\label{theorem_factorization}
Given a graphical map $\varphi: G\rightarrow G'$, there is a factorization 
\[
\begin{tikzcd} 
G \dar[swap]{\varphi^-} \rar{\varphi} \arrow[dr, "\alpha"] & G' \\
G_1\rar[swap]{\delta} & G_2 \uar[swap]{\image \varphi}
\end{tikzcd}\]
 in which $\varphi^-$ is in $\graphicalcat^{-}$, $\delta$ is in $\graphicalcat^{+}_{\actrm}$, and the factorization $\varphi = (\image \varphi)\alpha$ was constructed in Lemma \ref{lemma: image factorization}. 
\end{theorem}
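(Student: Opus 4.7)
The plan is to build on Lemma \ref{lemma: image factorization}, which already produces the factorization $\varphi = (\image \varphi) \circ \alpha$ in which $\alpha : G \to G_2 := G\{H_v\}$ is active. Thus the task reduces to factoring this active map $\alpha$ as $\alpha = \delta \circ \varphi^-$ with $\varphi^- \in \graphicalcat^-$ and $\delta \in \graphicalcat^+_{\actrm}$, after which $\varphi = (\image \varphi) \circ \delta \circ \varphi^-$ gives the diagram in the statement.

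To construct the intermediate graph $G_1$, I would take the (epi, mono) factorization $A(G) \twoheadrightarrow A_1 \hookrightarrow A(G_2)$ of $\alpha_0$ in involutive sets, and set $W := \{w \in V(G) : H_w \cong \exceptionaledge\}$ together with $V_1 := V(G) \setminus W$. Since $V(H_w) = \varnothing$ for $w \in W$, one has $V(G_2) = \coprod_{v \in V_1} V(H_v)$, so I can define $G_1$ to have arc set $A_1$ (with involution inherited from $G_2$), vertex set $V_1$, and $D(G_1) := A_1 \cap D(G_2)$, with target map $t_1 : D(G_1) \to V_1$ given by composing $t_2$ with the projection $V(G_2) \twoheadrightarrow V_1$. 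Connectedness of $G_1$ descends from that of $G$; the only way $G_1$ could fail to lie in $\graphicalcat$ would be for it to be a nodeless loop, which requires $V_1 = \varnothing$ and $\eth(G_1) = \varnothing$, forcing every vertex of $G$ into $W$ and $\eth(G) = \varnothing$---this is ruled out by condition \eqref{graphical map defn: collapse condition} applied to $\alpha$.

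Next, I define $\delta : G_1 \to G_2$ by letting $\delta_0$ be the inclusion $A_1 \hookrightarrow A(G_2)$ and $\delta_1(v) := \alpha_1(v)$ for $v \in V_1$; this is active because $\eth(G_2) \subseteq A_1$ (since $\alpha$ is active, so $\alpha_0(\eth(G)) = \eth(G_2)$), and $\delta_0$ is injective by construction. I define $\varphi^- : G \to G_1$ by letting $\varphi^-_0$ be the surjection $A(G) \twoheadrightarrow A_1$, setting $\varphi^-_1(v) := [\iota_v]$ for $v \in V_1$, and, for $w \in W$ with $\nbhd(w) = \{a, b\}$, letting $\varphi^-_1(w)$ be the class of the embedding $\exceptionaledge \to G_1$ onto the edge formed by $\varphi^-_0(a)$ and $\varphi^-_0(b)$; one verifies directly from the formulas in Lemma \ref{lemma: image factorization} that these two arcs are involution partners in $A_1$. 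The map $\varphi^-_0$ is surjective by construction, and $\varphi^-$ is active because $\varphi^-_0|_{\eth(G)}$ is a bijection onto $\eth(G_2) = \eth(G_1)$. Conditions \eqref{graphical map defn: no double vertex covering} and \eqref{graphical map defn: collapse condition} for both maps follow from the corresponding conditions for $\alpha$.

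Finally, the equality $\alpha = \delta \circ \varphi^-$ on arcs is automatic from the factorization of $\alpha_0$ through $A_1$; on vertices, for $v \in V_1$ we have $\image(\delta|_{\iota_v}) = [\delta \circ \iota_v] = \alpha_1(v)$ via Remark \ref{remark image of embedding}, while for $w \in W$ both sides are represented by an edge embedding in $G_2$ whose boundary arcs are pinned down by $\alpha_0$ on $\nbhd(w)$ (unique up to isomorphism by Proposition \ref{proposition: embedding uniqueness}). The main obstacle I anticipate is verifying the boundary-compatibility condition \eqref{graphical map defn: boundary compatibility} for $\varphi^-$, particularly at the collapsed vertices $w \in W$: one must carefully unpack how $\alpha_0$ acts on $\nbhd(w)$ via the two cases of Lemma \ref{lemma: image factorization} in order to exhibit the required bijection $\nbhd(w) \cong \eth(\varphi^-_1(w))$ compatible with the restriction of $\varphi^-_0$.
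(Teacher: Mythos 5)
Your proposal is correct and follows essentially the same route as the paper: both proofs reduce to factoring the active map $\alpha$ from Lemma~\ref{lemma: image factorization} by collapsing exactly the vertices $w$ with $\varphi_1(w)$ an edge, so that $\varphi^-$ is the resulting surjective-on-arcs active map and $\delta$ is the remaining active map with injective $\delta_0$. The only difference is bookkeeping: the paper realizes the intermediate graph as the graph substitution $G_1 = G\{\exceptionaledge\}_{v\in I}$ and obtains $\delta$ for free from associativity of substitution ($G_1\{H_v\}_{v\in I'} = G\{H_v\}_{v\in V} = G_2$), whereas you construct the same $G_1$ by hand as the image of $\alpha_0$ (equivalently $\mathbf{x}A$ from Construction~\ref{construction xA}) and verify the axioms directly, including the boundary-compatibility check at collapsed vertices, which indeed goes through via condition~\eqref{graphical map defn: boundary compatibility} for $\alpha$.
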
 

\begin{proof} 
We must factor $\alpha$. 
Let $I = \{ v \in V \mid \varphi_1(v) \text{ is an edge}\}$ and let $I'$ be its complement in $V$.
Define $G_1 = G\{ \exceptionaledge \}_{v\in I}$ and let $\varphi^-$ be the evident map; note that $\varphi^-_0$ is surjective.
The set $I'$ may be identified with the set of vertices of $G_1$ via $\varphi^-$.
For each $v\in I'$, let $\delta_v$ be the embedding guaranteed by Proposition~\ref{proposition: image}, that is, so that 
\[
	H_v \xrightarrow{\delta_v} G_2 = G\{ H_v\} \xrightarrow{\image \varphi} G'
\]
is equal to $\varphi_v$.
Note that $G_1 \{ H_v \}_{v \in I'}  = G\{ \exceptionaledge \}_{v\in I}\{ H_v \}_{v \in I'} = G\{H_v \}_{v\in V } =  G_2 $ and this determines the map $\delta : G_1 \to G_2$.
The graphical map $\delta$ is active and $\delta_1(v) = [\delta_v]$ is not an edge for any $v \in G_1$.
It follows that $\delta_0$ is injective. 
\end{proof} 

We now employ a technical construction which supports the proof of Lemma~\ref{lemma mono-like property active}.
We recommend skipping Construction~\ref{construction xA} and Lemma~\ref{lemma xA mono} in an initial reading, by focusing on the case where $\varphi_0$ is injective.  

\begin{construction}\label{construction xA}
Let $\varphi : G \to G'$ be a graphical map.
Define $m = m^\varphi : A \to A$\index{$m^\varphi, \mathbf{x}$} by
\[
	ma = \begin{cases}
	a & \text{if $a\in \eth(G)$,} \\
	a & \text{if $a \in D$ and $\varphi_1(ta)$ is not an edge} \\
	a' & \text{if $a\in D$, $\varphi_1(ta)$ is an edge, and $\nbhd(ta) = \{a, ia'\}$.}
	\end{cases}
\]
For each $a$, there exists a $k > 0$ so that $m^k(a) = m^{k+1}(a)$.
This occurs since $A$ is finite and \eqref{graphical map defn: collapse condition} holds.
Write $\mathbf{x} = \mathbf{x}^\varphi : A \to A$ for the stabilization of $m$, that is, $\mathbf{x}a = m^ka$ for some $k$ and $m\mathbf{x}a = \mathbf{x}a$.
One can actually choose $k$ uniformly by taking, for instance, $k= |A|$.

Suppose that $\psi : G \to G'$ is another graphical map so that $\varphi_1(v)$ is an edge if and only if $\psi_1(v)$ is an edge.
Then $m^\varphi = m^\psi$, hence $\mathbf{x}^\varphi = \mathbf{x}^\psi$.
\end{construction}

If $\varphi$ is any graphical map, then $\varphi_0(a) = \varphi_0(ma)$, and it follows that $\varphi_0(a) = \varphi_0(\mathbf{x}a)$. 
Suppose that $\varphi^-$ is the graphical map appearing in Theorem~\ref{theorem_factorization}.
Notice that if $(\varphi^-)_0(a) = (\varphi^-)_0(a')$, then $\mathbf{x}a = \mathbf{x}a'$.
One interpretation of Construction~\ref{construction xA} is that it provides a preferred section to the surjective function $(\varphi^-)_0$.

\begin{lemma}
\label{lemma xA mono}
If $\varphi : G\to G'$ is active, then the restriction 
\[
	\varphi_0|_{\mathbf{x}A} : \mathbf{x}A \hookrightarrow A \xrightarrow{\varphi_0} A'
\]
is a monomorphism.
\end{lemma}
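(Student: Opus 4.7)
The plan is to use the factorization from Theorem~\ref{theorem_factorization} to reduce the problem to analyzing the simpler map $\varphi^-$, then appeal to a concrete description of $(\varphi^-)_0$ in terms of graph substitution. By Theorem~\ref{theorem_factorization}, I would write $\varphi = (\image\varphi) \circ \delta \circ \varphi^-$ with $\varphi^- \in \graphicalcat^-$ and $\delta \in \graphicalcat^+_{\actrm}$. Since $\varphi$ is active, the preceding proposition gives that $\image \varphi$ is an isomorphism, while $\delta_0$ is injective by definition of $\graphicalcat^+_{\actrm}$. Consequently $\varphi_0(a) = \varphi_0(b)$ if and only if $(\varphi^-)_0(a) = (\varphi^-)_0(b)$, so it suffices to prove that $(\varphi^-)_0|_{\mathbf{x}A}$ is a monomorphism. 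Setting $I = \{v \in V : \varphi_1(v) \text{ is an edge}\}$, the construction of $\varphi^-$ in Theorem~\ref{theorem_factorization} ensures $\varphi_1(v)$ is an edge if and only if $(\varphi^-)_1(v)$ is an edge, so Construction~\ref{construction xA} yields $\mathbf{x}^\varphi = \mathbf{x}^{\varphi^-}$; in particular, $\mathbf{x}A$ is the set of $m$-fixed points, namely the disjoint union $\eth(G) \sqcup \{ a \in D : ta \notin I \}$.

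Next, I would unpack $(\varphi^-)_0 : A(G) \to A(G_1)$, where $G_1 = G\{\exceptionaledge\}_{v\in I}$. Two observations suffice. First, since $\varphi^-$ is active, it restricts to a bijection $\eth(G) \to \eth(G_1)$. Second, the vertex set of $G_1$ is $V \setminus I$, and the graph substitution of Construction~\ref{construction graph sub} gives a canonical identification $D(G_1) = \coprod_{v \notin I} \nbhd_G(v)$, since substituting $\medstar_v$ at a non-contracted vertex does not alter its neighborhood. Chasing the formula for $(\varphi^-)_0$ from Lemma~\ref{lemma: image factorization} then shows that for $a \in \nbhd_G(v)$ with $v \notin I$, one has $(\varphi^-)_0(a) = a$ under this identification. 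Consequently $(\varphi^-)_0$ restricts to a bijection $\{a \in D : ta \notin I\} \to D(G_1)$.

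Since $A(G_1) = \eth(G_1) \sqcup D(G_1)$ as a disjoint union, the two pieces of $\mathbf{x}A$ land in disjoint subsets of $A(G_1)$, and on each piece the restriction of $(\varphi^-)_0$ is bijective, yielding injectivity of the restriction to all of $\mathbf{x}A$. The main obstacle is the second observation above: one must carry out a case analysis on the formula in Lemma~\ref{lemma: image factorization}, splitting on whether $ia$ lies in $\eth(G)$ or in the neighborhood of some other vertex (which itself may or may not lie in $I$), and in each case trace through the bijections $m_v$ involved in graph substitution to verify that $a$ is sent to itself under the canonical identification $\nbhd_G(v) \subseteq D(G_1)$.
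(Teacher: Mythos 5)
Your argument is correct, but it takes a genuinely different route from the paper's. The paper proves the lemma directly from the axioms of a graphical map: given $a_1,a_2\in\mathbf{x}A$ with $\varphi_0(a_1)=\varphi_0(a_2)$, the boundary case is dispatched by activeness, and for $a_k\in D\cap\mathbf{x}A$ one observes that $\varphi_1(ta_k)$ is not an edge (this is exactly what membership in $\mathbf{x}A$ buys), so $t\varphi_0(a_k)$ lies in the image of the embedding representing $\varphi_1(ta_k)$; condition \eqref{graphical map defn: no double vertex covering} then forces $ta_1=ta_2$ and condition \eqref{graphical map defn: boundary compatibility} forces $a_1=a_2$. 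That argument is short, self-contained, and avoids the graph-substitution machinery entirely. Your route---reducing to $\varphi^-$ via Theorem~\ref{theorem_factorization} and then reading off $(\varphi^-)_0$ from the coequalizer description---buys a cleaner conceptual picture (on $\mathbf{x}A$, the map $\varphi_0$ is the canonical identification $\mathbf{x}A\cong A(G_1)$ followed by an injection), at the cost of deferring the real work to a case-chase through Construction~\ref{construction graph sub} that you acknowledge but do not fully execute. One caveat you should address explicitly: your reduction leans on the injectivity of $\delta_0$, which Theorem~\ref{theorem_factorization} asserts with only the phrase ``it follows that $\delta_0$ is injective''; the most tempting justification of that assertion is Lemma~\ref{lemma xA mono} itself (applied to $\delta$, for which $\mathbf{x}A=A$), which would make your argument circular. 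It can be justified independently from the coequalizer description of $G_1\{H_v\}$ together with Lemma~\ref{lemma: boundary injective} and the disjointness of the vertex sets $V(H_v)$ in $G_2$, but you need to say so, since otherwise the logical order of the two statements is in doubt.
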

\begin{proof}
Suppose that $a_1, a_2$ are elements of $\mathbf{x}A$ such that $\varphi_0(a_1) = \varphi_0(a_2)$.
If $a_1$ and $a_2$ are both elements of $\eth(G)$, then $a_1 = a_2$ since $\varphi$ is active.

If $a_k \in D \cap \mathbf{x}A$ write $f_k : H_k \hookrightarrow G'$ for an embedding representing $\varphi_1(ta_k)$; we know that $H_k$ is not the exceptional edge.
Letting $i\ell \in \eth(H_k)$ be the unique element with $f_k(i\ell) = \varphi_0(ia_k)$ (by \eqref{graphical map defn: boundary compatibility}), we know $t\ell \in V(H_k)$ is defined since $H_k \neq \exceptionaledge$.
We thus have $\varphi_0(a_k) \in D'$ with $v_k = t\varphi_0(a_k) =t f_k(\ell) = f_k(t\ell)$ in the image of $f_k$.

Now, if $a_1$ is an element of $D \cap \mathbf{x}A$, then $a_2$ is as well.
If $a_2$ was in $\eth(G)$, then $\varphi_0(a_2)$ would be in $\eth(G')$, but in the previous paragraph we showed that $\varphi_0(a_1)$ is an element of $D'$.
We are thus left to address the situation where $a_1, a_2 \in D \cap \mathbf{x}A$.
We have $v_1 = v_2 \in f_1(V(H_1)) \cap f_2(V(H_2))$, so by \eqref{graphical map defn: no double vertex covering} we have $ta_1 = ta_2$.
Now $a_1,a_2 \in \nbhd(ta_1) = \nbhd(ta_2)$, so by \eqref{graphical map defn: boundary compatibility}, $\varphi_0(a_1) = \varphi_0(a_2)$ implies $a_1=a_2$.
\end{proof}

On a first reading of the following lemma, we recommend focusing on the case when $\varphi_0$ is already injective, so that $m(a) = a = \mathbf{x}(a)$ for all $a$ and Lemma~\ref{lemma xA mono} is trivial.  
\begin{lemma}
\label{lemma mono-like property active}
Suppose $\varphi, \psi : G \to G'$ are in $\graphicalcat_{\actrm}$ and $f: G' \to G''$ is in $\graphicalcat_{\embrm}$ with $f \varphi = f \psi$.
Then $\varphi = \psi$.
\end{lemma}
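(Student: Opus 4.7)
The plan is to first establish $\varphi_0 = \psi_0$ as functions $A \to A'$, and then to deduce $\varphi_1 = \psi_1$. The principal obstacle is that $f_0$ can collapse pairs of arcs in $A'$ (Lemma~\ref{lemma embedding not monomorphism}), so $f\varphi_0 = f\psi_0$ does not instantly imply $\varphi_0 = \psi_0$; I will control this by pinning down where $\varphi_0$ and $\psi_0$ actually take values. The first observation is that, for each vertex $v \in V$, the class $\varphi_1(v)$ is an edge if and only if $\psi_1(v)$ is. Indeed, $(f \circ \varphi)_1(v) = [f \circ \varphi_v]$ and $(f \circ \psi)_1(v) = [f \circ \psi_v]$ in $\embeddings(G'')$, so the equality $f\varphi = f\psi$ yields an isomorphism between the domains of $\varphi_v$ and $\psi_v$; one of these domains is $\exceptionaledge$ exactly when the other is.

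Consequently the functions $m^\varphi$ and $m^\psi$ from Construction~\ref{construction xA} coincide; write $m$ and $\mathbf{x}$ for the common function and its stabilization, and set $B = \mathbf{x}(A)$. The set $B$ decomposes as $\eth(G) \sqcup \{a \in D : \varphi_1(ta) \text{ is not an edge}\}$. I will prove $\varphi_0|_B = \psi_0|_B$ by case analysis. For $b \in \eth(G)$, activity places both $\varphi_0(b)$ and $\psi_0(b)$ into $\eth(G')$, so Lemma~\ref{lemma: boundary injective} applied to $f$ forces $\varphi_0(b) = \psi_0(b)$. For $b \in B \cap D$, the representative embedding $\varphi_{tb} : H \to G'$ has $H \neq \exceptionaledge$ connected, so $i\eth(H) \subseteq D(H)$; hence condition \eqref{graphical map defn: boundary compatibility} gives $\varphi_0(b) = i\varphi_0(ib) \in i\eth(\varphi_1(tb)) \subseteq D'$, and likewise $\psi_0(b) \in D'$. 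With both values in $D'$, the two alternatives of Lemma~\ref{lemma embedding not monomorphism} (each requiring one value in $\eth(G')$ and one in $D'$) are impossible, forcing $\varphi_0(b) = \psi_0(b)$. Since $\varphi_0$ and $\psi_0$ are $m$-invariant -- a direct check using the edge-collapse clause of $m$ -- the equality propagates to all of $A$: $\varphi_0(a) = \varphi_0(\mathbf{x}a) = \psi_0(\mathbf{x}a) = \psi_0(a)$.

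For the vertex data, condition \eqref{graphical map defn: boundary compatibility} combined with $\varphi_0 = \psi_0$ produces $\eth(\varphi_1(v)) = \varphi_0(i\nbhd(v)) = \psi_0(i\nbhd(v)) = \eth(\psi_1(v))$ for every $v$. By the first step, $\varphi_1(v)$ and $\psi_1(v)$ share the same edge-or-not status: if both are edges, they are determined by their common pair of boundary arcs and coincide in $\embeddings(G')$; otherwise, Proposition~\ref{proposition: embedding uniqueness} supplies the unique isomorphism of domains witnessing $\varphi_1(v) = \psi_1(v)$. The main hurdle throughout is arranging the case analysis so that Lemma~\ref{lemma embedding not monomorphism}'s mixed $\eth(G')/D'$ obstruction is avoided at each application: activity covers this on $\eth(G)$, while condition \eqref{graphical map defn: boundary compatibility} (combined with non-edge-ness) covers it on the internal part of $B$.
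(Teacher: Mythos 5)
Your proof is correct and follows essentially the same route as the paper's: establish that $\varphi_1(v)$ and $\psi_1(v)$ have the same edge-status, reduce to the stabilized set $\mathbf{x}A$ from Construction~\ref{construction xA}, use Lemma~\ref{lemma embedding not monomorphism} together with activity to force $\varphi_0=\psi_0$, and finish with Proposition~\ref{proposition: embedding uniqueness}. The only real difference is organizational: where the paper argues by contradiction and invokes Lemma~\ref{lemma xA mono} to locate the offending arc in $\eth(G)$, you directly pin down that both maps send $\eth(G)$ into $\eth(G')$ and $\mathbf{x}A\cap D$ into $D'$ (inlining the computation from the proof of Lemma~\ref{lemma xA mono}), which also lets you absorb the $G=\exceptionaledge$ case instead of treating it separately.
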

\begin{proof}
If $G$ is the exceptional edge, then the active maps $\varphi$ and $\psi$ are isomorphisms by Lemma~\ref{lemma: active maps from edge are iso}.
Any embedding $\exceptionaledge \to G''$ is a monomorphism (in $\finset^{\mathscr{I}}$), so the fact that $f \varphi = f \psi$ implies that the embeddings $\varphi$ and $\psi$ are equal.
For the remainder of the proof we only consider the case when $G \neq \exceptionaledge$.

Since $f$ is an embedding, for each vertex $v$, we have $\varphi_1(v)$ is an edge if and only if $(f\varphi)_1(v)$ is an edge, and similarly for $\psi$.
Since $f\varphi = f\psi$, we thus have $\varphi_1(v)$ is an edge if and only if $\psi_1(v)$ is an edge.
This implies that the functions $\mathbf{x}^\varphi$, $\mathbf{x}^\psi$ from Construction~\ref{construction xA} are equal; we simply write $\mathbf{x}$ for this function. 

We wish to show that $\varphi_0 = \psi_0$.
Suppose that $a$ is an arc of $G$ with $\varphi_0(a) \neq \psi_0(a)$; we will show that this leads to a contradiction. 
Since $\varphi_0(a) = \varphi_0(\mathbf{x}a)$ and $\psi_0(a) = \psi_0(\mathbf{x}a)$, we may replace $a$ by $\mathbf{x}a$ and assume $a\in \mathbf{x}A$.
Since $f\varphi_0(a) = f\psi_0(a)$, we apply Lemma~\ref{lemma embedding not monomorphism} to the arcs $\varphi_0(a)$ and $\psi_0(a)$ of $G'$.
One of these arcs is in $\eth(G')$ while the other is in $D'$.
Without loss of generality, assume that $\varphi_0(a) \in \eth(G')$ and $\psi_0(a) \in D'$, the other situation is symmetric.
Since $\varphi_0|_{\mathbf{x}A} : \mathbf{x}A \to A'$ is a monomorphism (Lemma~\ref{lemma xA mono}), $\eth(G)$ is a subset of $\mathbf{x}A$, and $\varphi_0|_{\eth(G)} : \eth(G) \to \eth(G')$ is a bijection, we know that $a\in \eth(G)$.
But then $\psi_0(a) \in \eth(G')$ since $\psi$ is active.
Since $D' \cap \eth(G') = \varnothing$, this is impossible.
Thus $\varphi_0(a) = \psi_0(a)$ for every $a\in A$.

We now turn to vertices.
Since $\varphi_0 = \psi_0$, we know by \eqref{graphical map defn: boundary compatibility} that $\eth(\varphi_1(v)) = \eth(\psi_1(v))$ for every vertex $v$. 
But $\varphi_1(v)$ is an edge if and only if $\psi_1(v)$ is an edge, so we can apply Proposition~\ref{proposition: embedding uniqueness} to deduce that $\varphi_1(v) = \psi_1(v)$ for every vertex $v$. Thus $\varphi_1 = \psi_1$.
\end{proof}

We next show that it may be the case that two graphical maps with common domain and codomain are distinct despite being identical on arcs.
This is similar to behavior that occurs for the wheeled properadic graphical category $\Gamma_\circlearrowright$ from \cite{hrybook,hry_factorizations}, where we only have determination by edge maps for the identity of isomorphisms \cite[Lemma 3.9]{hry_factorizations}. 
This behavior is either not present or can be avoided for other types of graphs, for example in the situation of the dendroidal category $\Omega$ of \cite{mw}, the unrooted tree category $\Xi$ of \cite{hry_cyclic}, and the properadic graphical category $\Gamma$ (see \cite[Corollary 6.62]{hrybook}).

\begin{example} \label{example: active map} 
Active maps are \emph{not} necessarily determined by what they do on arcs.
As an example, consider graphical maps from the loop with two vertices to the loop with one vertex, $G$.
Any graphical map between graphs without boundary is automatically active.
\[
\begin{tikzpicture}
\draw (-1, 0.3) arc (170:10:1);
\node[plain] at (-1,0) (v){$v_1$};
\node[plain] at (1,0) (w){$v_2$};
\draw (-1, -0.3) arc (-170:-10:1);

\node[empty] at (-1,.8)(a){\scriptsize{$a$}};
\node[empty] at (1,.8)(ia){\scriptsize{$ia$}};

\node[empty] at (-1,-.8)(b){\scriptsize{$b$}};
\node[empty] at (1,-.8)(ib){\scriptsize{$ib$}};

\node[empty] at (1.5,0.5) (q){};
\node[empty] at (1.5,-0.5) (q2){};
\draw (5, 0.3) arc (165:-160:1);
\node[plain] at (5,0) (v2){$w$};

\node[empty] at (5,.8)(c){\scriptsize{$c$}};
\node[empty] at (5,-.8)(ic){\scriptsize{$ic$}};

\node[empty] at (4.5,0.5) (p){};
\node[empty] at (4.5,-0.5) (p2){};

\draw [black, ->, line width=1pt, shorten >=0.03cm] (q) to (p);
\draw [black, ->, line width=1pt, shorten >=0.03cm] (q2) to (p2);

\end{tikzpicture}
\]
The set $\embeddings(G)$ has only three elements: each of the graphs $L_0 \cong \exceptionaledge$, $L_1 \cong \medstar_2$, and $G$ admit exactly two embeddings into $G$.
All other embeddings into $G$ are isomorphic to one of these six.
For each of the source graphs $H \in \{L_0, L_1, G\}$, the two embeddings $H \to G$ are isomorphic using the unique nontrivial automorphism of $H$, hence give rise to a single element $[H\to G]$ in $\embeddings(G)$.

Let us exhibit two maps $\varphi, \psi$ where $\varphi_0 = \psi_0$ is the function sending $a,ib$ to $c$ and $ia, b$ to $ic$.
On vertices, we can declare that 
\[
	\begin{gathered}
		\varphi_1(v_1) = [\iota_w : \medstar_w \hookrightarrow G] = \psi_1(v_2) \\
		\varphi_1(v_2) = [\exceptionaledge \to G] = \psi_1(v_1).
	\end{gathered}
\]
Thus $\varphi_0 = \psi_0$ but $\varphi \neq \psi$.
\end{example}

\begin{proposition}\label{proposition valence not two}
	Suppose that $\varphi, \psi : G \to G'$ are active maps and $\varphi_0 = \psi_0$.
	Further, assume that one of the following two conditions holds:
\begin{itemize}
\item the boundary of $G$ is nonempty, or
\item there exists a vertex $v$ so that neither $\varphi_1(v)$ nor $\psi_1(v)$ is an edge.
\end{itemize}
Then $\varphi = \psi$.
\end{proposition}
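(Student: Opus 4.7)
The strategy is to reduce to proving $I_\varphi = I_\psi$, where $I_\eta := \{v \in V : \eta_1(v) \text{ is an edge}\}$ for $\eta \in \{\varphi,\psi\}$. Granting this equality, the desired conclusion $\varphi = \psi$ follows quickly: for $v \in I_\varphi = I_\psi$, both $\varphi_1(v)$ and $\psi_1(v)$ are edges of $G'$ determined by their common boundary $\varphi_0(i\nbhd(v))$, so they agree; for $v \notin I_\varphi \cup I_\psi$, they are non-edge embeddings sharing the same boundary, hence equal by Proposition~\ref{proposition: embedding uniqueness}.

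Suppose for contradiction that $I_\varphi \neq I_\psi$, and pick (by symmetry) $v_0 \in I_\varphi \setminus I_\psi$. Then $\nbhd(v_0) = \{a_0, b_0\}$ is bivalent, and $b_0 \neq ia_0$: a self-loop at $v_0$ combined with connectedness of $G$ would force $G$ to be the one-vertex loop of Example~\ref{examples combinatorial}, in which case $\varphi_1(v_0)$ being an edge violates \eqref{graphical map defn: collapse condition}. Condition \eqref{graphical map defn: boundary compatibility} applied to the edge $\varphi_1(v_0)$ yields $\varphi_0(a_0) = \varphi_0(ib_0)$, hence $\psi_0(a_0) = \psi_0(ib_0)$. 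Lemma~\ref{lemma xA mono} applied to $\psi$ (using $\mathbf{x}^\psi(a_0) = a_0$ since $v_0 \notin I_\psi$) then forces $\mathbf{x}^\psi(ib_0) = a_0$. Tracing the $m^\psi$-iteration yields a sequence $ib_0 = y_0, y_1, \ldots, y_N = a_0$ with $y_{j+1} = m^\psi(y_j)$, each $y_j \in D$ satisfying $t(y_j) \in I_\psi$ for $j < N$; a symmetric application gives $\mathbf{x}^\psi(ia_0) = b_0$.

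The central geometric step is to show that the vertices $\{v_0, t(y_0), t(y_1), \ldots, t(y_{N-1})\}$ and the edges $[b_0, ib_0], [y_1, iy_1], \ldots, [y_N, iy_N]$ assemble into an isolated cycle $L$ in $G$. Each vertex on $L$ is bivalent ($v_0$ by virtue of lying in $I_\varphi$, the others by lying in $I_\psi$), and unwinding the defining recursion $\nbhd(t(y_j)) = \{y_j, iy_{j+1}\}$ shows that both arcs at each vertex are accounted for among the edges of $L$. Hence $L$ is closed under adjacency in $G$, and by connectedness $L = G$. This forces $\eth(G) = \varnothing$ together with $V(G) \subseteq \{v_0\} \cup I_\psi$.

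Each branch of the hypothesis now yields a contradiction: if $\eth(G) \neq \varnothing$, this directly conflicts with $\eth(G) = \varnothing$; if there exists $w \in V$ with $w \notin I_\varphi \cup I_\psi$, then $w \in V(G) \setminus I_\psi \subseteq \{v_0\}$ forces $w = v_0$, contradicting $v_0 \in I_\varphi$. The hard part will be the geometric step—particularly ruling out repetitions in the iteration path so that $L$ is a genuine simple cycle. A potential coincidence $t(y_i) = t(y_j)$ with $i < j$ would, by the bivalency constraint $\nbhd(t(y_i)) = \{y_i, iy_{i+1}\} = \{y_i, y_j\}$, force $y_j = iy_{i+1}$; unwinding subsequent iterations then runs the path in reverse back to $b_0 = \mathbf{x}^\psi(ib_0) = a_0$, contradicting $a_0 \neq b_0$.
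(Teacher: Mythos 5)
Your proof is correct, and it takes a genuinely different route from the paper's, even though both arguments at bottom trace a chain of bivalent vertices collapsed to edges. The paper introduces the set $T$ of vertices collapsed by \emph{neither} map, defines a distance function $\gamma$ to $\eth(G)\cup T$ (well-defined under either hypothesis), takes a $\gamma$-minimal element of the symmetric difference, and walks a shortest path outward; the terminal contradiction is either with \eqref{graphical map defn: no double vertex covering} (two distinct vertices of $G$ whose $\psi_1$-images share a vertex of $G'$) or with activeness of $\psi$. You instead start from an arbitrary $v_0$ in the symmetric difference and lean on Lemma~\ref{lemma xA mono}: from $\psi_0(a_0)=\varphi_0(a_0)=\varphi_0(ib_0)=\psi_0(ib_0)$ and the fact that $a_0$ is $\mathbf{x}^\psi$-fixed, the $m^\psi$-orbit of $ib_0$ is forced to terminate at $a_0$, so $G$ is a closed chain of bivalent vertices all lying in $\{v_0\}\cup I_\psi$ with $\eth(G)=\varnothing$, and both hypotheses are contradicted outright. (The paper does not use Construction~\ref{construction xA} in this proposition; it reserves it for Lemma~\ref{lemma mono-like property active}.) One small remark: your concern about repetitions in the iteration path is not actually needed — the closure-under-adjacency argument already yields $\eth(G)=\varnothing$ and $V(G)=\{v_0\}\cup\{t(y_j)\}$ whether or not the walk is simple — though your sketch ruling out repetitions is also sound. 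Your approach buys a cleaner global conclusion ($G$ is literally forced to be a loop of collapsed vertices) at the cost of invoking the technical Lemma~\ref{lemma xA mono}; the paper's version is more self-contained but pays for it with the two-case analysis at the end of the path.
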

Since only bivalent vertices may be sent to edges, the second condition is automatic whenever $G$ has a vertex which is not bivalent.
At least one of the two conditions is guaranteed to be satisfied unless $G$ is the loop with $n$ vertices (Definition~\ref{examples combinatorial}).
At least one of the two conditions holds whenever $\varphi, \psi \in \graphicalcat^+_{\actrm}$.
\begin{proof}
	Notice that for each $v \in G$, we have $\eth(\varphi_1(v)) = \eth(\psi_1(v)) \subseteq A'$ by \eqref{graphical map defn: boundary compatibility}. 
	We would like to apply Proposition \ref{proposition: embedding uniqueness} to infer that $\varphi_1(v) = \psi_1(v)$, but it might be the case that one of these is an edge while the other isn't.
	We will show that the given conditions imply this never happens, whence the statement follows.
	Let $S_1 = \{ v \, | \, \varphi_1(v) \text{ is an edge}\}$, $S_2 = \{ v \, | \, \psi_1(v) \text{ is an edge}\}$ and $S = (S_1 \setminus S_2) \cup (S_2 \setminus S_1)$. 
	Our goal is to show that $S$ is empty.

	To this end, let $T = V \setminus (S_1 \cup S_2)$ be the set of vertices $v$ so that neither $\varphi_1(v)$ nor $\psi_1(v)$ is an edge; in particular, every vertex which is not in $T$ must be bivalent.
	Let $\gamma : V \to \mathbb{N} \cup \{-1\}$ be the function that sends a vertex to its distance from the boundary or $T$ (so $\gamma(v) = -1$ if and only if $v$ is in $T$).
	By hypothesis, at least one of $T$ and $\eth(G)$ is nonempty, so $\gamma$ is well-defined.
	Let $\gamma|_S: S \to \mathbb{N}$ be the restriction.
	
	Let $v \in S$ be minimal with respect to $\gamma|_S$, say $\gamma(v) = n$.
	Without loss of generality, assume $\varphi_1(v)$ is an edge and $\psi_1(v)$ is not an edge.
Consider a path $v=w_n, w_{n-1}, \dots, w_0$ with $\gamma(w_k) = k$ and $\nbhd(w_k) = \{ ia_{k+1}, a_k \}$.
	If $ia_0 \notin \eth(G)$, write $w_{-1} \in T$ for the vertex with $ia_0 \in \nbhd(w_{-1}).$
Write $f_k : H_k \hookrightarrow G'$ for an embedding representing $\psi_1(w_k)$.

We first note that $\varphi_1(w_k)$ is an edge for $ 0 \leq k < n$.
Otherwise, we would have that $w_k \notin S_1$, so by minimality of $n$ we would have $w_k \notin S_2$. Thus $w_k \notin S_1\cup S_2$, that is, $w_k \in T$. 
Hence $\gamma(k) = -1 < k$, a contradiction.

Since $\varphi_1(w_k)$ is an edge for $0\leq k 
\leq n$ (that is, $w_0, \dots, w_n \in S_1$), we have $\varphi_0(ia_{k+1}) = i\varphi_0(a_k) = \varphi_0(ia_k)$ for $0 \leq k \leq n$.
So we get $\varphi_0(ia_{n+1}) = \varphi_0(ia_n) = \cdots = \varphi_0(ia_0)$.

If $w_{-1} \in T$ exists, that is, if $ia_0 \notin \eth(G)$, then we are in a situation where both $\psi_1(v) = \psi_1(w_n)$ and $\psi_1(w_{-1})$ are not edges, that is, that $H_n \neq \exceptionaledge \neq H_{-1}$.
It follows that $\psi_0(ia_{n+1}) \in \nbhd(f_n u_n)$ for some vertex $u_n \in H_n$ and $\psi_0(ia_0) \in \nbhd(f_{-1} u_{-1})$ for some vertex $u_{-1} \in H_{-1}$.
But of course $\psi_0(ia_{n+1}) = \varphi_0(ia_{n+1}) = \varphi_0(ia_0) = \psi_0(ia_0)$, so $f_n u_n = f_{-1} u_{-1}$, contradicting \eqref{graphical map defn: no double vertex covering}.

	We are now in the situation where $\psi_1(w_k)$ is an edge for $0 \leq k < n$ and  $ia_0 \in \eth(G)$. 
	But then $\psi_0(ia_0) = \psi_0(ia_{n+1})  \in \nbhd(f_nu_n)$, so $\psi_0(ia_{0}) \notin \eth(G')$. 
	This contradicts the assumption that $\psi$ is active.
\end{proof}

\begin{lemma}\label{lemma_factorization_good}
Suppose $\varphi : G\rightarrow G'$ is a graphical map in $\graphicalcat_{\actrm}$.
If $\varphi$ has two decompositions $\varphi= \gamma \alpha = \delta \beta$ where $\gamma, \delta \in \graphicalcat^{+}_{\actrm}$ and $\alpha,\beta \in \graphicalcat^{-}$ 
then there is an isomorphism $z : H\rightarrow K$ making the following diagram commute: 
\[\begin{tikzcd}[sep=small] & H \arrow[dd, "z" description] \arrow[dr,"\gamma"] & \\
G\arrow[ur, "\alpha"]\arrow[dr, "\beta" swap] & & G'\\
& K  \arrow[ur, "\delta" swap] &
\end{tikzcd}\] 
\end{lemma}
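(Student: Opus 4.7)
The plan is to first identify the vertices of $G$ that $\varphi$ collapses to edges---a set intrinsic to $\varphi$---and then use this invariance to rigidify the construction of $z$. Define
\[
I_\alpha := \{ v \in V(G) \mid \alpha_1(v) \text{ is an edge}\},
\]
with analogous $I_\beta, I_\varphi$. I will establish $I_\alpha = I_\varphi = I_\beta$. If $v \in I_\alpha$, then by Definition~\ref{graphical map composition} $\varphi_1(v) = \image(\gamma|_{\alpha_1(v)})$ is represented by a graphical map with domain $\exceptionaledge$, whose image (using $\exceptionaledge\{\,\} = \exceptionaledge$) is again an edge in $\embeddings(G')$; hence $v \in I_\varphi$. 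Conversely, if $v \notin I_\alpha$, then because $\alpha \in \graphicalcat^-$ is a composite of codegeneracies, $\alpha_1(v)$ is represented by a star $\iota_u : \medstar_u \hookrightarrow H$ for some $u \in V(H)$; hence $\varphi_1(v) = \gamma_1(u)$, and since $\gamma \in \graphicalcat^+_{\actrm}$ has $\gamma_0$ injective, the contrapositive of Lemma~\ref{lemma degen implies not injective} forces $\gamma_1(u)$ to not be an edge. The same argument applied to $\delta\beta = \varphi$ gives $I_\beta = I_\varphi$; write $I$ for this common set.

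Next I construct $z$. By Construction~\ref{construction xA} the function $\mathbf{x} := \mathbf{x}^\alpha = \mathbf{x}^\beta$ depends only on $I$, and Lemma~\ref{lemma xA mono} shows both $\alpha_0|_{\mathbf{x}A}$ and $\beta_0|_{\mathbf{x}A}$ are injective; they are also surjective onto $A(H)$ (resp.\ $A(K)$) since $\alpha_0$ and $\beta_0$ are surjective and factor through $\mathbf{x}$. Set
\[
z_0 := \beta_0|_{\mathbf{x}A} \circ (\alpha_0|_{\mathbf{x}A})^{-1} : A(H) \xrightarrow{\cong} A(K),
\]
which by construction satisfies $z_0 \alpha_0 = \beta_0$. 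On vertices, Proposition~\ref{proposition: image} together with the activity of $\alpha$ yields a canonical bijection $V(H) \cong V(G) \setminus I$, pairing each $u \in V(H)$ with the unique $v \notin I$ for which $\alpha_1(v) = [\iota_u]$; similarly for $\beta$, inducing a bijection $\bar z_1 : V(H) \cong V(K)$. Define $z_1(u) := [\iota_{\bar z_1(u)}] \in \embeddings(K)$. The pair $z := (z_0, z_1)$ is a graphical map: \eqref{graphical map defn: boundary compatibility} follows by chasing the two boundary-compatibility squares for $\alpha$ and $\beta$ through $z_0$, while \eqref{graphical map defn: no double vertex covering} and \eqref{graphical map defn: collapse condition} transfer directly from their counterparts for $\alpha$ and $\beta$. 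Swapping the roles of $\alpha$ and $\beta$ provides a two-sided inverse to $z$, so $z : H \to K$ is an isomorphism, and $z\alpha = \beta$ by construction.

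Finally, from $\gamma\alpha = \varphi = \delta\beta = \delta z \alpha$, taking arc components and using the surjectivity of $\alpha_0$ gives $\gamma_0 = (\delta z)_0$. Since $\gamma_0$ is injective ($\gamma \in \graphicalcat^+_{\actrm}$), Theorem~\ref{theorem: injective determination at zero} yields $\gamma = \delta z$, completing the diagram. The main obstacle will be the verification in the second paragraph that $(z_0, z_1)$ truly assembles into a graphical map, and in particular checking \eqref{graphical map defn: boundary compatibility} at each vertex of $H$; this amounts to a careful diagram chase linking the boundary-compatibility data of $\alpha$ and $\beta$ via the common arc function $\mathbf{x}$, whose canonical dependence on $I$ alone is precisely what makes the construction of $z$ well-defined.
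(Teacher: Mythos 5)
Your proof is correct, and its overall architecture matches the paper's: first show that the set $I$ of vertices collapsed to edges is the same for $\alpha$, $\beta$, and $\varphi$; then build $z$ explicitly with $z\alpha=\beta$; then transfer $\gamma_0\alpha_0=\delta_0 z_0\alpha_0$ through the surjection $\alpha_0$ and upgrade $\gamma_0=(\delta z)_0$ to $\gamma=\delta z$. The interesting differences are in the middle and at the end. For $z_0$ the paper works locally, defining $z$ on each $\nbhd(v)$ via the chain of bijections through $\nbhd(\tilde v)$ coming from condition \eqref{graphical map defn: boundary compatibility} and then separately extending over the boundary; you instead get a single global formula $z_0=\beta_0|_{\mathbf{x}A}\circ(\alpha_0|_{\mathbf{x}A})^{-1}$ from the section $\mathbf{x}$ of Construction~\ref{construction xA} and Lemma~\ref{lemma xA mono}, which makes equivariance and $z_0\alpha_0=\beta_0$ immediate and also absorbs the degenerate case $H\cong\exceptionaledge$ (which the paper dispatches separately via Lemma~\ref{lemma: active maps from edge are iso}) into the general construction. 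The boundary-compatibility check you defer does go through: for $u\in V(H)$ corresponding to $\tilde v\notin I$, condition \eqref{graphical map defn: boundary compatibility} for $\alpha$ and $\beta$ at $\tilde v$ gives $\alpha_0(\nbhd(\tilde v))=\nbhd(u)$ and $\beta_0(\nbhd(\tilde v))=\nbhd(\bar z_1(u))$ bijectively, whence $z_0(\nbhd(u))=\nbhd(\bar z_1(u))$. At the final step you invoke Theorem~\ref{theorem: injective determination at zero} using injectivity of $\gamma_0$, where the paper uses Proposition~\ref{proposition valence not two}; both are valid, and yours is arguably the more economical citation since $\gamma\in\graphicalcat^{+}_{\actrm}$ hands you injectivity for free. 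One cosmetic quibble: justifying $\alpha_1(v)=[\iota_u]$ for $v\notin I$ by "$\alpha$ is a composite of codegeneracies" leans on Remark~\ref{remark cofaces codegens}, which the paper flags as inessential; the direct reason is that surjectivity of $\alpha_0$ forces each subgraph $H_v$ to have no internal edges, hence to be an edge or a star.
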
 

\begin{proof}
If $v$ is a vertex of $G$, then the following are equivalent:
\begin{itemize}
	\item $\varphi_1(v)$ is an edge,
	\item $\alpha_1(v)$ is an edge,
	\item $\beta_1(v)$ is an edge.
\end{itemize}
Thus $H$ is an edge if and only if $K$ is an edge. In this case we know that $\gamma$ and $\delta$ are isomorphisms by Lemma~\ref{lemma: active maps from edge are iso}, and we can take $z = \delta^{-1} \gamma$. 
From now on, suppose that $H$ and $K$ are not edges.

Since $\alpha_0$ is surjective, $\alpha_1(v)$ is either an edge or is $[\iota_w : \medstar_w \to H]$ for some $w\in V(H)$ (using the notation from Example~\ref{example vertices as embeddings}) for every vertex $v$ (and likewise for $\beta$).
Let us first define $z$ on vertices.
If $v$ is a vertex of $H$ let $\tilde v$ be the unique vertex of $G$ so that $\alpha_1(\tilde v)$ contains $v$.
Let $z(v)$ be the unique vertex in $\beta_1(\tilde v)$.
On $\nbhd(v)$, define $z$ via the following bijection
	\[
		\begin{tikzcd}
			\nbhd(v) & \eth(\alpha_1(\tilde v)) \lar["\cong" swap, "i"] & \nbhd(\tilde v) \lar["\cong" swap, "\alpha_0i"] \rar["\cong", "\beta_0i" swap] &  \eth(\beta_1(\tilde v)) \rar["\cong", "i" swap] & \nbhd(z(v)).
		\end{tikzcd}
	\]
Since $\alpha$ and $\beta$ are bijections on the boundary, we can extend $z$ to the boundary so that $z\alpha = \beta$.

At this point we know $z \alpha_0 = \beta_0$ and $\gamma_0 \alpha_0 = \delta_0 \beta_0$.
Since $\alpha_0$ is surjective, the fact that $\gamma_0 \alpha_0 = \delta_0 z \alpha_0$ implies that $\gamma_0 = \delta_0 z$.
By assumption, we know that $H$ contains at least one vertex.
For each vertex $v$ of $H$ we have $\gamma_1(v)$ and $(\delta z)_1(v)$ are not edges, so $\gamma = \delta z$ by Proposition~\ref{proposition valence not two}.
\end{proof}

\begin{proposition}\label{prop_factors_image_followed_by_outer_coface}
Every map in $\graphicalcat$ factors uniquely (up to unique isomorphism) as a map in $\graphicalcat_{\actrm}$ followed by a map in $\graphicalcat_{\embrm}$.
\end{proposition}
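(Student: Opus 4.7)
Existence is essentially contained in Lemma~\ref{lemma: image factorization}: it produces a factorization $\varphi = k \circ \varphi'$ where $k$ is an embedding representing $\image(\varphi)$ and $\varphi'$ is a graphical map whose arc function is a bijection on boundaries, i.e.\ $\varphi'$ is active by Definition~\ref{def: active}. Thus $k \in \graphicalcat_{\embrm}$ and $\varphi' \in \graphicalcat_{\actrm}$.

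For uniqueness, suppose $\varphi = f\alpha = f'\alpha'$ with $f : H \to G'$, $f' : H' \to G'$ embeddings and $\alpha : G \to H$, $\alpha' : G \to H'$ active. By Proposition~\ref{proposition: kappa and active maps}, $\eth(f) = \eth(f\alpha) = \eth(\varphi) = \eth(f'\alpha') = \eth(f')$. I would then apply Proposition~\ref{proposition: embedding uniqueness} to the pair $f, f'$ to obtain a unique isomorphism $z : H \to H'$ with $f'z = f$. From $f'(z\alpha) = f\alpha = \varphi = f'\alpha'$, together with the fact that both $z\alpha$ and $\alpha'$ are active (the former because $z$ is an isomorphism), Lemma~\ref{lemma mono-like property active} forces $z\alpha = \alpha'$. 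Uniqueness of $z$ is then immediate from Proposition~\ref{proposition: embedding uniqueness}.

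The main obstacle is verifying the hypothesis of Proposition~\ref{proposition: embedding uniqueness}: either both $H$ and $H'$ are isomorphic to $\exceptionaledge$, or neither is. My plan is to establish this through the image. Picking representatives $\alpha_v : H_v \to H$ and $\alpha'_v : H'_v \to H'$ for $\alpha_1(v)$ and $\alpha'_1(v)$, Remark~\ref{remark image of embedding} identifies both $f \circ \alpha_v$ and $f' \circ \alpha'_v$ as representatives of $\varphi_1(v) \in \embeddings(G')$, whence $H_v \cong H'_v$. Since $\alpha$ is active, the characterization of active maps via $\image$ (stated just before Proposition~\ref{proposition: kappa and active maps}) forces the embedding $G\{H_v\} \to H$ representing $\image(\alpha)$ to be an isomorphism, so $H \cong G\{H_v\}$; similarly $H' \cong G\{H'_v\}$. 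The compatible isomorphism $G\{H_v\} \cong G\{H'_v\}$ induced from $H_v \cong H'_v$ then gives $H \cong H'$ as graphs, settling the dichotomy (the degenerate case $G = \exceptionaledge$ is immediate from Lemma~\ref{lemma: active maps from edge are iso}, which forces $H \cong \exceptionaledge \cong H'$).
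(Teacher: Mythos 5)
Your proposal is correct and follows essentially the same route as the paper: existence from Lemma~\ref{lemma: image factorization}, then uniqueness via $\eth(f)=\eth(f')$ (Proposition~\ref{proposition: kappa and active maps}), Proposition~\ref{proposition: embedding uniqueness}, and Lemma~\ref{lemma mono-like property active}. The only (cosmetic) difference is how you settle the both-or-neither-exceptional-edge dichotomy: the paper observes directly that $\alpha_1(v)$ is an edge iff $\alpha'_1(v)$ is (since $f,f'$ are embeddings) and that an active map has exceptional-edge codomain iff every vertex goes to an edge, whereas you route through $H\cong G\{H_v\}$ and $H'\cong G\{H'_v\}$ — both rest on the same facts.
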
 

\begin{proof}
Let $\varphi : G\to G'$ be a graphical map.
The existence of such a factorization is guaranteed by Lemma~\ref{lemma: image factorization}.
Consider two such factorizations $\varphi = h\alpha = f\beta$ with $\alpha,\beta$ active maps and $f,h$ embeddings.
\[
\begin{tikzcd}
	G \rar{\beta} \dar{\alpha} \arrow[dr, "\varphi" description] & G_1 \dar{f} \\
	G_2 \rar{h} & G'
\end{tikzcd}
\]
By Proposition \ref{proposition: kappa and active maps}, $\eth(f) = \eth(f\beta) =  \eth(\varphi) = \eth(h\alpha) = \eth(h)$.
Since $f$ is an embedding, then, for a vertex $v$ of $G$, we have that $\beta_1(v)$ is an edge if and only if $\varphi_1(v)$ is an edge and similarly for $\alpha$. 
Thus $\beta_1(v)$ is an edge if and only if $\alpha_1(v)$ is an edge.
Since $\beta$ is active, the graph $G_1$ is the exceptional edge if and only if $\beta_1(v)$ is an edge for every $v$ and similarly for $G_2$.
Thus $G_1 = \exceptionaledge$ if and only if $G_2 = \exceptionaledge$.

By Proposition \ref{proposition: embedding uniqueness} there is a unique isomorphism $z$ with $fz = h$.
Now we have a diagram
\[
\begin{tikzcd}
	G \rar{\beta} \dar{\alpha} & G_1 \dar{f} \\
	G_2 \rar{h}\arrow[ur, "z" description] & G'
\end{tikzcd}
\]
where the outer square commutes, as does the lower triangle.
But then $fz\alpha = h\alpha = f\beta$.
Since $z\alpha$ and $\beta$ are active maps and $f$ is an embedding, we have $z\alpha = \beta$ by Lemma~\ref{lemma mono-like property active}.
\end{proof}

\begin{theorem}
\label{theorem orthogonal factorization system}
The category $\graphicalcat$ admits a factorization system with left class $\graphicalcat_{\actrm}$ and right class $\graphicalcat_{\embrm}$.
\end{theorem}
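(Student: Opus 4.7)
The plan is to verify the three requirements spelled out in the definition on page of Section~\ref{reedy}: that each class contains all isomorphisms, that each class is closed under composition, and that every morphism factors as $r \ell$ with $\ell$ active and $r$ an embedding, uniquely up to unique isomorphism. The last of these is already proven as Proposition~\ref{prop_factors_image_followed_by_outer_coface}, so the work reduces to the first two points.

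For stability under isomorphisms, I would argue as follows. If $\varphi : G \to G'$ is an isomorphism in $\graphicalcat$, then $\varphi_0$ is a bijection $A \to A'$ which restricts to a bijection $\eth(G) \to \eth(G')$ and $\varphi_1$ is a bijection of vertices. The former fact says $\varphi$ is active, and the latter combined with the fact that the $t$-square
\[
\begin{tikzcd}
D \dar \rar{t} & V \dar\\
D' \rar{t'} & V'
\end{tikzcd}
\]
is tautologically a pullback (both vertical maps are bijections) shows $\varphi$ is an embedding, so $\varphi \in \graphicalcat_{\actrm} \cap \graphicalcat_{\embrm}$.

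For closure under composition of embeddings, note that given composable embeddings $f : G \to G'$ and $h : G' \to G''$, pasting of pullback squares shows $hf$ is \'etale, and the composite $V \to V' \to V''$ is a monomorphism; so $\graphicalcat_{\embrm}$ is closed under composition. For active maps $\varphi : G \to G'$ and $\psi : G' \to G''$, Proposition~\ref{proposition: kappa and active maps} applied to the active map $\varphi$ gives $\eth(\psi \circ \varphi) = \eth(\psi)$, and since $\psi$ is active, $\psi_0$ restricts to a bijection $\eth(G') \to \eth(G'')$. Composing this with the bijection $\varphi_0|_{\eth(G)} : \eth(G) \to \eth(G')$ produced by activity of $\varphi$, and using that $(\psi \circ \varphi)_0 = \psi_0 \varphi_0$ by Definition~\ref{graphical map composition}, we conclude that $(\psi\circ\varphi)_0$ restricts to a bijection $\eth(G) \to \eth(G'')$. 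Hence $\psi \circ \varphi$ is active.

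With these closure and iso-containment facts in hand, the factorization and its uniqueness up to unique isomorphism are an immediate quotation of Proposition~\ref{prop_factors_image_followed_by_outer_coface}, completing the proof. I do not anticipate a real obstacle here: the hard technical content (uniqueness of the factorization, which rests on Lemma~\ref{lemma mono-like property active} and Proposition~\ref{proposition: embedding uniqueness}) has already been bundled into the earlier results, so Theorem~\ref{theorem orthogonal factorization system} is essentially a bookkeeping exercise that assembles these ingredients.
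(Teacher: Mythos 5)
Your proposal is correct and follows essentially the same route as the paper: the paper's proof likewise reduces everything to Proposition~\ref{prop_factors_image_followed_by_outer_coface} together with the observation that both classes contain all isomorphisms and are closed under composition (which the paper asserts without the explicit checks you supply), and then invokes a standard criterion for orthogonal factorization systems. The only difference is that you spell out the routine closure verifications, which is harmless.
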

\begin{proof}
By Proposition~\ref{prop_factors_image_followed_by_outer_coface} we know that any graphical map factors uniquely, up to unique isomorphism, as an active map followed by an embedding.
Further, the classes $\graphicalcat_{\actrm}$ and $\graphicalcat_{\embrm}$ are closed under composition and contain all isomorphisms.
Thus the conditions of \cite[Proposition 14.7]{AdamekHerrlichStrecker:ACCJC} are satisfied, and $(\graphicalcat_{\actrm}, \graphicalcat_{\embrm})$ is an orthogonal factorization system on $\graphicalcat$.
\end{proof}

\subsection{Reedy structure}
\label{subsection reedy}

A \emph{dualizable generalized Reedy structure} on a small category $\Rr$ consists of 
\begin{itemize} 
\item  wide subcategories $\Rr^+$  and $\Rr^{-}$, and 
\item  a degree function $\deg: \ob(\Rr) \rightarrow\mathbb{N}$
\end{itemize} 
satisfying five axioms from \cite[Definition 1.1]{bm_reedy}.
The subcategory $\Rr^+$ is commonly called the `direct category' and $\Rr^-$ the `inverse category.' 
Our goal for the remainder of the section is to prove Theorem~\ref{theorem reedy}, which asserts that the structure from Definition~\ref{defn graph cat reedy} constitutes a dualizable generalized Reedy structure on $\graphicalcat$.

\begin{definition}
\label{almost injective def}
We say a graphical map $\varphi : G \to G'$ is \emph{almost injective on edges} if in the decomposition $\varphi = f \varphi'$ (from Proposition~\ref{prop_factors_image_followed_by_outer_coface}), with $f$ an embedding and $\varphi'$ an active map, we have that the function $\varphi'_0$ is injective.
Write $\graphicalcat^{+}$\index{$\graphicalcat^{+}$} for the class of maps which are \emph{almost injective on edges}.
\end{definition}

It is clear that $\graphicalcat^{+}$ contains all isomorphisms, hence all identities.
This class is also closed under composition, so $\graphicalcat^+$ is a wide subcategory of $\graphicalcat$:
\begin{remark}[$\graphicalcat^+$ is subcategory]
Suppose that $f\alpha = \beta h$ with $f,h \in \graphicalcat_{\embrm}$ and $\alpha,\beta\in \graphicalcat_{\actrm}$.
Further, assume that $\beta \in \graphicalcat_{\actrm}^+$, that is, that $\beta_0$ is injective.
This implies that $\beta_1(w)$ is never an edge by Lemma~\ref{lemma degen implies not injective}, hence $(\beta h)_1(v)$ is never an edge.
Since $(\beta h)_1(v) = (f\alpha)_1(v)$ is not an edge for any vertex $v$ and $f$ is an embedding, we know that $\alpha_1(v)$ is never an edge.
This implies that the function $\mathbf{x}^{\alpha}$ from Construction~\ref{construction xA} is the identity, so by Lemma~\ref{lemma xA mono} we know $\alpha_0$ is a monomorphism.
It follows that $\graphicalcat^{+}$ is closed under composition, since if $\varphi= h \gamma$ and $\psi = k\beta$ are two composable morphisms (with specified factorizations), then $\psi \varphi = k\beta h \gamma = k f \alpha \gamma$.
\end{remark}

\begin{definition}[Generalized Reedy structure]
\label{defn graph cat reedy}
The categories $\graphicalcat^+$ and $\graphicalcat^-$ are as given in Definition~\ref{almost injective def} and Definition~\ref{def wide subcats act}, respectively.
For the latter, recall that a map $\varphi$ is in $\graphicalcat^{-}$ if and only if $\varphi_0$ is surjective and $\varphi$ is active.

Recall that an internal edge $e = [a,ia]$ is one in which $a,ia \in D$, and that $E_i \subseteq E$ denotes the set of internal edges.
For a graph $G$, define the \emph{degree of $G$} to be the sum of the number of vertices and the number of internal edges, that is \[ \deg(G)=|V|+ |E_i|.\] 
\end{definition}

\begin{proposition}\label{prop reedy one} 
Non-invertible morphisms in $\graphicalcat^+$ (respectively, $\graphicalcat^{-}$) raise (respectively, lower) the degree.
Isomorphisms in $\graphicalcat$ preserve the degree. 
\end{proposition}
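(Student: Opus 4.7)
The plan is to reduce to three building-block cases: isomorphisms, embeddings in $\graphicalcat_{\embrm}$, and active maps with injective $\varphi_0$ in $\graphicalcat^+_{\actrm}$. The latter two cover all maps in $\graphicalcat^+$ via the orthogonal factorization system of Theorem~\ref{theorem orthogonal factorization system}, and for $\graphicalcat^-$ I will use the decomposition into codegeneracies stated in Remark~\ref{remark cofaces codegens}. Isomorphisms trivially preserve $|V|$, $|D|$, and hence $|E_i|$, so they preserve the degree.

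For an embedding $f: G \to G'$, injectivity of $V \hookrightarrow V'$ together with the étale pullback gives an injection $D \hookrightarrow D'$, and internal edges of $G$ map injectively to internal edges of $G'$, whence $\deg(G) \leq \deg(G')$. In the equality case $|V|=|V'|$, $|E_i|=|E_i'|$, the étale condition upgrades $D \to D'$ to a bijection and the internal edges correspond bijectively. The crux is showing $f$ sends no boundary arc into $D'$: if $a \in \eth(G)$ had $f(a) \in D'$, then together with $f(ia) \in D'$ the pair $[f(a), f(ia)]$ would be an internal edge of $G'$, which under the bijection $E_i(G) \to E_i(G')$ must come from an internal edge $[d, id]$ of $G$, and then the bijection $D \to D'$ forces $ia \in \{d, id\}$, yielding $a \in D$, a contradiction. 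So $f$ restricts to a function $\eth(G) \to \eth(G')$, which is injective by Lemma~\ref{lemma: boundary injective}. Applying the identity $|D| = 2|E_i| + |\eth|$ valid for connected graphs $\neq \exceptionaledge$ to the equalities $|D|=|D'|$, $|E_i|=|E_i'|$ yields $|\eth(G)| = |\eth(G')|$, so $\eth(G) \to \eth(G')$ is a bijection. Combined with $V\to V'$ and $D\to D'$ bijective, $f$ is an isomorphism. The case $G = \exceptionaledge$ is immediate since then $\deg(G')=0$ forces $G'=\exceptionaledge$.

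For $\alpha: G \to G'$ in $\graphicalcat^+_{\actrm}$ with $G \neq \exceptionaledge$, Lemma~\ref{lemma degen implies not injective} ensures no $H_v$ is the exceptional edge, so $\deg(H_v) \geq 1$. Since $\alpha$ is active, $G' \cong G\{H_v\}$ via $\image(\alpha)$, and $|V(G')| = \sum_v |V(H_v)|$ from Construction~\ref{construction graph sub}; combined with $|D(G')| = \sum_v |D(H_v)|$ and the identities $|D| = 2|E_i| + |\eth|$ (applied to each $H_v$ and to $G'$) one gets $|E_i(G')| = \sum_v |E_i(H_v)| + |E_i(G)|$, so
\[ \deg(G') - \deg(G) = \sum_v (\deg(H_v) - 1) \geq 0, \]
with equality iff each $H_v$ is a star, which by axiom~\eqref{graphical map defn: boundary compatibility} forces $H_v = \medstar_v$ and $\alpha$ to be an isomorphism. (Lemma~\ref{lemma: active maps from edge are iso} covers $G = \exceptionaledge$.) For arbitrary $\varphi \in \graphicalcat^+$, factoring $\varphi = f\alpha$ as in Theorem~\ref{theorem orthogonal factorization system} combines the two inequalities, and equality throughout forces both $f$ and $\alpha$, hence $\varphi$, to be an isomorphism.

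For $\graphicalcat^-$: by Remark~\ref{remark cofaces codegens}, every non-invertible morphism is a composition of codegeneracies and isomorphisms. A codegeneracy collapses a single bivalent vertex $w$, merging the two edges at $w$ into one; a direct case analysis (on whether each of the edges at $w$ is internal or boundary in $G$) shows $|V|$ drops by one and $|E_i|$ drops by zero or one, the latter only when both edges at $w$ were boundary. In either case the degree strictly decreases, so any non-invertible composition strictly decreases the degree as well. The main anticipated obstacle is the equality case of the embedding argument, where turning numerical equalities into an honest isomorphism requires carefully ruling out identifications of boundary arcs with interior arcs of $G'$; this is exactly where the slightly delicate Lemma~\ref{lemma embedding not monomorphism}, in conjunction with the bijection $E_i(G) \to E_i(G')$, does the key work.
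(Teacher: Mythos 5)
Your overall strategy for $\graphicalcat^+$ matches the paper's: reduce via the active/embedding factorization and treat $\graphicalcat_{\embrm}$ and $\graphicalcat^+_{\actrm}$ separately. Your tactics in the equality cases differ, though, and are worth comparing. For embeddings, the paper shows directly that a non-isomorphism which is bijective on vertices must identify a boundary arc with (the inverse of) another boundary arc, creating a new internal edge in the target; you instead run the contrapositive through the counting identity $|D| = 2|E_i| + |\eth|$ to force $\eth(G)\to\eth(G')$ to be a bijection and hence $f$ an isomorphism. Both work; yours buys a clean numerical bookkeeping, the paper's a concrete witness for the strict inequality. Similarly, for $\graphicalcat^+_{\actrm}$ the paper simply observes that a non-isomorphism has some $H_v$ containing an internal edge, whereas your formula $\deg(G')-\deg(G) = \sum_v(\deg(H_v)-1)$ is a slicker global statement (and your derivation of $|E_i(G')| = |E_i(G)| + \sum_v|E_i(H_v)|$ from the counting identity is correct).

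Two points on the $\graphicalcat^-$ half. First, your argument rests on the assertion that codegeneracies generate $\graphicalcat^-$, citing Remark~\ref{remark cofaces codegens} — but that remark is stated without proof and is explicitly flagged as inessential, so you are leaning on an unestablished (though true) fact. The paper's direct argument is both shorter and self-contained: $\varphi_0$ surjective forces every $H_v$ to have no internal edges, hence at most one vertex, giving $\deg$ nonincreasing; non-invertibility forces some $H_v$ to be an edge, so $|V|$ strictly drops. You should either supply the generation argument (an easy induction on the number of collapsed vertices) or switch to the direct route. Second, your case analysis for a single codegeneracy has the labels reversed: $|E_i|$ drops by \emph{one} whenever at least one of the two edges at $w$ is internal (the two merge into one internal edge, or an internal and a boundary edge merge into a non-internal one), and drops by \emph{zero} precisely when both edges at $w$ are boundary (i.e.\ $G \cong L_1$). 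Your stated conclusion that the degree strictly decreases in every case survives this slip, since $|V|$ always drops by one and $|E_i|$ never increases, but the case bookkeeping as written is incorrect.
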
 
\begin{proof}
We first prove the statement about $\graphicalcat^+$.
In light of Proposition~\ref{prop_factors_image_followed_by_outer_coface}, it is enough to show that maps in $\graphicalcat_{\embrm}$ and $\graphicalcat_{\actrm}^+$ are nondecreasing on degree, and that non-isomorphisms in these wide subcategories are strictly increasing in degree.

Embeddings are injective on vertices and on the set of internal edges, so in particular are nondecreasing in degree. 
If $f : G \to G'$ is an embedding which is not a bijection on vertices, then because $f$ is nondecreasing on internal edges, $f$ is strictly increasing in degree.
Suppose that $f$ is an embedding which is not an isomorphism but is a bijection on vertices.
This cannot be the case if $G' = \exceptionaledge$.
Then $D\to D'$ is also a bijection (since $f$ is \'etale), hence $A\to A'$ is not a bijection.
The map $f: A \to A'$ is automatically a surjection, since if  $a' \in A' \setminus f(A) \subseteq \eth(G')$, then $ia' = f(d)$ for some $d \in D \cong D'$.
But then 
$a' = iia' = if(d) = f(i(d)) \in f(A),$ which is a contradiction.
Hence there exist $a_1 \neq a_2 \in A$ with $f(a_1) = f(a_2)$.
Since $D \to D'$ is injective, either $a_1 \in \eth(G)$ or $a_2 \in \eth(G).$ Without loss of generality, assume $a_1 \in \eth(G)$; by Lemma~\ref{lemma: boundary injective}, $a_2 \notin \eth(G)$.
Since $D \to D'$ is injective and $f(ia_1) = f(ia_2)$, we know $ia_2 \in \eth(G)$.
Thus in $G'$ we have created a new internal edge $[f(a_1), if(a_1)] = [f(a_2), if(a_2)]$ which does not come from an internal edge of $G$.
Thus $\deg(G) < \deg(G').$

Below we write $\varphi : G \to G'$ for a map in $\graphicalcat$ and $\varphi_v : H_v \hookrightarrow G'$ for a representative of $\varphi_1(v)$.

Suppose $\varphi$ is in $\graphicalcat_{\actrm}^+$. 
Since $\varphi_0$ is injective, $\varphi_1(v)$ is not an edge for any $v$ by Lemma~\ref{lemma degen implies not injective}.
Thus $\varphi$ is nondecreasing on number of vertices.
Also, $\varphi$ takes internal edges to internal edges.
If $\varphi$ is not an isomorphism, then there exists a vertex $v$ so that $H_v$ contains an internal edge.
Thus $\varphi$ is strictly increasing in internal edges, hence in degree. 

Now suppose $\varphi$ is in $\graphicalcat^-$.
Since $\varphi_0$ is surjective, for each $v$ we know that $H_v$ has no internal edges.
Thus $H_v$ has at most one vertex and $\varphi$ is nonincreasing in degree.
Suppose $\varphi$ is not invertible.
Then there exists a $v$ with $H_v \neq \medstar$; since $\varphi_0$ is surjective, $H_v$ has no internal edges.
It follows that $\varphi_1(v)$ is an edge, and $\varphi$ is strictly decreasing on number of vertices, hence on degree.
\end{proof}

\begin{proposition}\label{prop reedy three}
Every morphism $\varphi$ of $\graphicalcat$ factors as $\varphi = \varphi^+\varphi^-$ with $\varphi^+$ in $\graphicalcat^{+}$ and $\varphi^-$ in $\graphicalcat^-$, and this factorization is unique up to isomorphism. 
\end{proposition}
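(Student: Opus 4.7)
The plan is to assemble the desired factorization by concatenating the two factorization results already established: Proposition~\ref{prop_factors_image_followed_by_outer_coface} (active-then-embedding) and Theorem~\ref{theorem_factorization} (which further factors an active map through $\graphicalcat^-$ and $\graphicalcat^+_{\actrm}$).

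For existence, given $\varphi : G \to G'$, I will first apply Proposition~\ref{prop_factors_image_followed_by_outer_coface} to write $\varphi = f \alpha$ with $\alpha : G \to \tilde G$ active and $f : \tilde G \to G'$ an embedding. Then I will apply Theorem~\ref{theorem_factorization} to the active map $\alpha$, producing $\alpha = \delta \varphi^-$ with $\varphi^- \in \graphicalcat^-$ and $\delta \in \graphicalcat^+_{\actrm}$. Setting $\varphi^+ := f \circ \delta$ gives a candidate factorization $\varphi = \varphi^+ \varphi^-$. The key observation is that $f \circ \delta$ is already in active-then-embedding form, so by the uniqueness part of Proposition~\ref{prop_factors_image_followed_by_outer_coface} this \emph{is} the canonical active-then-embedding factorization of $\varphi^+$; since its active part $\delta$ has injective underlying arc function, we conclude $\varphi^+ \in \graphicalcat^+$ by definition.

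For uniqueness, suppose $\varphi = \varphi_1^+ \varphi_1^- = \varphi_2^+ \varphi_2^-$ are two such factorizations, with $\varphi_i^- : G \to H_i$. By the definition of $\graphicalcat^+$, each $\varphi_i^+$ admits an active-then-embedding factorization $\varphi_i^+ = f_i \delta_i$ with $f_i \in \graphicalcat_{\embrm}$ and $\delta_i \in \graphicalcat^+_{\actrm}$, say $\delta_i : H_i \to K_i$. Since active maps compose to active maps, $\delta_i \varphi_i^-$ is active, so we now have two active-then-embedding factorizations $\varphi = f_1 (\delta_1 \varphi_1^-) = f_2 (\delta_2 \varphi_2^-)$. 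Proposition~\ref{prop_factors_image_followed_by_outer_coface} provides a unique isomorphism $z : K_1 \to K_2$ with $f_2 z = f_1$ and $z (\delta_1 \varphi_1^-) = \delta_2 \varphi_2^-$. Since $\graphicalcat^+_{\actrm}$ is closed under composition with isomorphisms, $z \delta_1 \in \graphicalcat^+_{\actrm}$, and the equation $(z\delta_1) \varphi_1^- = \delta_2 \varphi_2^-$ exhibits two factorizations of the same active map through $\graphicalcat^-$ followed by $\graphicalcat^+_{\actrm}$. Lemma~\ref{lemma_factorization_good} then supplies an isomorphism $w : H_1 \to H_2$ satisfying $w \varphi_1^- = \varphi_2^-$ and $\delta_2 w = z \delta_1$. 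Finally, $\varphi_2^+ w = f_2 \delta_2 w = f_2 z \delta_1 = f_1 \delta_1 = \varphi_1^+$, so $w$ is the required isomorphism.

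I expect the only delicate point will be keeping the bookkeeping straight when stitching the two uniqueness statements together, in particular tracking the three auxiliary objects $\tilde G$, $K_i$, and $H_i$ and verifying that the isomorphism $z$ between the $K_i$'s can be absorbed into $\delta_2$ before invoking Lemma~\ref{lemma_factorization_good}. The existence half is entirely mechanical.
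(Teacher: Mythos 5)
Your proposal is correct and follows essentially the same route as the paper: existence via Theorem~\ref{theorem_factorization} (the paper invokes it once on $\varphi$ to get $\varphi = f\delta\varphi^-$ directly, whereas you first split off the embedding and then factor the active part, which amounts to the same thing), and uniqueness by stitching together Proposition~\ref{prop_factors_image_followed_by_outer_coface} and Lemma~\ref{lemma_factorization_good} in exactly the way the paper does. The bookkeeping you flag as delicate is handled identically in the paper's proof, so there is nothing to repair.
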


\begin{proof} 

It follows from Theorem~\ref{theorem_factorization} that we can factor every graphical map $\varphi=f \delta \varphi^-$ where $\varphi^- \in \graphicalcat^-$, $f$ is an embedding (hence in $\graphicalcat^+$), and $\delta\in\graphicalcat^+_{\actrm}$.
Setting $\varphi^+ = f \delta$, it remains to check that the decomposition is unique up to isomorphism.

Suppose that $\varphi = \alpha \beta$ with $\alpha \in \graphicalcat^+$ and $\beta \in \graphicalcat^-$. 
Factor $\alpha = h \gamma$ with $\gamma$ active and $h$ an embedding.
Since $\varphi = f (\delta \varphi^-) = h (\gamma \beta)$ are two factorizations into active followed by embedding, by Proposition~\ref{prop_factors_image_followed_by_outer_coface}, there is an isomorphism $z$ so that $fz = h$ and $\delta \varphi^- = z \gamma \beta$.
By Lemma~\ref{lemma_factorization_good}, there is an isomorphism $z'$ with $\delta z' = z \gamma$ and $z' \beta = \varphi^-$.
Further, $\varphi^+ z' = f\delta z' = fz\gamma = h\gamma = \alpha$.
\end{proof}

\begin{proposition}\label{prop reedy four}
If $\varphi: G \to G'$ is in $\graphicalcat^{-}$, $\theta \in\Iso(\graphicalcat)$, and $\theta  \varphi = \varphi$, then $\theta = \id_{G'}$.
Likewise, if $\varphi: G \to G'$ is in $\graphicalcat^{+}$, $\theta \in\Iso(\graphicalcat)$, and $\varphi\theta  = \varphi$, then $\theta = \id_{G}$. 
\end{proposition}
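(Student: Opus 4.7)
The plan is to treat the two halves separately, each time reducing to the arc-level function $\theta_0$ and then invoking Theorem~\ref{theorem: injective determination at zero} to upgrade an equality of arc-level functions to an equality of graphical maps.

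For the first statement, let $\varphi : G \to G'$ lie in $\graphicalcat^{-}$ and suppose $\theta \varphi = \varphi$ for an isomorphism $\theta : G' \to G'$. On arcs this reads $\theta_0 \varphi_0 = \varphi_0$. By definition of $\graphicalcat^{-}$, the function $\varphi_0 : A \to A'$ is surjective, so $\theta_0$ must be the identity on $A'$. Since both $\theta$ and $\id_{G'}$ are graphical maps with the same (injective) arc-level function, Theorem~\ref{theorem: injective determination at zero} gives $\theta = \id_{G'}$.

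For the second statement, the key step is to use the structure of $\graphicalcat^{+}$. Factor $\varphi = f \varphi'$ as in Definition~\ref{almost injective def}, with $f \in \graphicalcat_{\embrm}$ and $\varphi' \in \graphicalcat_{\actrm}^{+}$, so that $\varphi'_0$ is injective. The hypothesis $\varphi \theta = \varphi$ becomes $f(\varphi'\theta) = f\varphi'$. Since $\theta$ is an isomorphism, $\varphi' \theta$ is still an active map (active maps form a wide subcategory), so Lemma~\ref{lemma mono-like property active} applies with the common postcomposed embedding $f$ and yields $\varphi' \theta = \varphi'$. On arcs this gives $\varphi'_0 \theta_0 = \varphi'_0$, and injectivity of $\varphi'_0$ forces $\theta_0 = \id_{A(G)}$. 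A final application of Theorem~\ref{theorem: injective determination at zero} to $\theta$ and $\id_G$ concludes that $\theta = \id_G$.

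The only step that requires any thought is pushing past the embedding $f$ in the second part — one has to notice that $\varphi'\theta$ is again active, so that Lemma~\ref{lemma mono-like property active} is applicable; everything else is a clean cancellation followed by Theorem~\ref{theorem: injective determination at zero}. The first part is essentially immediate from the definition of $\graphicalcat^{-}$.
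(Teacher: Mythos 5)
Your proof is correct and follows essentially the same route as the paper: surjectivity of $\varphi_0$ plus Theorem~\ref{theorem: injective determination at zero} for the first half, and the factorization $\varphi = f\varphi'$ with cancellation of the embedding $f$ via the mono-like property for active maps, followed by injectivity of $\varphi'_0$, for the second. Your explicit observation that $\varphi'\theta$ is again active (so that Lemma~\ref{lemma mono-like property active} applies) is exactly the right justification for the cancellation step.
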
 

\begin{proof}
For the first statement, note that the source and target of $\theta$ are both $G'$.
The assumption implies that $\theta_0 \varphi_0 = (\id_{G'})_0 \varphi_0$, with $\varphi_0$ surjective.
Thus $\theta_0 = (\id_{G'})_0$.
By Theorem \ref{theorem: injective determination at zero}, $\theta = \id_{G'}$.

For the second statement,
note that the source and target of $\theta$ are both $G$.
Write $\varphi = f \varphi'$ with $f$ an embedding and $\varphi' \in \graphicalcat^+_{\actrm}$.
Then $f \varphi' \theta = \varphi \theta = \varphi = f\varphi'$.
By Lemma~\ref{lemma mono-like property}, we have $\varphi' \theta = \varphi'$.
But $\varphi'_0$ is injective, hence $\theta_0 = (\id_{G})_0$. 
By Theorem \ref{theorem: injective determination at zero}, $\theta = \id_{G}$.
\end{proof}

\begin{theorem}\label{theorem reedy}
With the structure from Definition~\ref{defn graph cat reedy}, the graphical category $\graphicalcat$ is a dualizable generalized Reedy category.
\end{theorem}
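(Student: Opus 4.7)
The plan is to unpack the definition of a dualizable generalized Reedy category and observe that the four preceding propositions (Propositions~\ref{prop reedy one},~\ref{prop reedy three}, and~\ref{prop reedy four}) already cover essentially every axiom. Following Berger--Moerdijk \cite{bm_reedy}, a dualizable generalized Reedy structure on $\graphicalcat$ consists of wide subcategories $\graphicalcat^+, \graphicalcat^- \subseteq \graphicalcat$ together with a degree function $\deg : \ob(\graphicalcat) \to \mathbb{N}$ satisfying: (i)~non-invertible maps in $\graphicalcat^+$ (resp.~$\graphicalcat^-$) strictly raise (resp.~lower) degree; (ii)~$\graphicalcat^+ \cap \graphicalcat^- = \Iso(\graphicalcat)$; (iii)~every morphism factors as $\varphi^+ \varphi^-$ uniquely up to isomorphism; (iv)~any isomorphism $\theta$ with $\theta\varphi = \varphi$ for $\varphi \in \graphicalcat^-$ is the identity, and dually for $\varphi \in \graphicalcat^+$.

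First I would remark that $\graphicalcat^+$ and $\graphicalcat^-$ are indeed wide subcategories: $\graphicalcat^-$ is wide by definition, while $\graphicalcat^+$ is wide by the remark following Definition~\ref{almost injective def} (it contains all isomorphisms and is closed under composition). Axiom~(i) is exactly the content of Proposition~\ref{prop reedy one}. Axiom~(iii) is Proposition~\ref{prop reedy three}. Axiom~(iv) and the dualizability condition together form Proposition~\ref{prop reedy four}.

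The only remaining axiom is~(ii). For the inclusion $\Iso(\graphicalcat) \subseteq \graphicalcat^+ \cap \graphicalcat^-$, note that any isomorphism $\varphi$ is active (it is a bijection on boundaries) with $\varphi_0$ both injective and surjective, so it lies in $\graphicalcat^+_{\actrm} \subseteq \graphicalcat^+$ and in $\graphicalcat^-$. For the converse, if $\varphi \in \graphicalcat^+ \cap \graphicalcat^-$ were not an isomorphism, then Proposition~\ref{prop reedy one} would force $\deg$ to strictly increase (via $\graphicalcat^+$) and strictly decrease (via $\graphicalcat^-$) along $\varphi$, a contradiction. Hence $\graphicalcat^+ \cap \graphicalcat^- = \Iso(\graphicalcat)$.

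I do not anticipate a genuine obstacle at this stage, since all the serious work --- the existence and uniqueness of the factorization (leaning on the orthogonal factorization system of Theorem~\ref{theorem orthogonal factorization system} together with Theorem~\ref{theorem_factorization} and Lemma~\ref{lemma_factorization_good}) and the two non-degeneracy axioms (leaning on Theorem~\ref{theorem: injective determination at zero} and Lemma~\ref{lemma mono-like property}) --- has been carried out in the preceding subsections. The proof of Theorem~\ref{theorem reedy} therefore reduces to a short bookkeeping argument citing these results in turn.
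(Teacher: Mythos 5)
Your proposal is correct and follows essentially the same route as the paper: cite Propositions~\ref{prop reedy one}, \ref{prop reedy three}, and \ref{prop reedy four} for all axioms except $\graphicalcat^{+} \cap \graphicalcat^{-} = \Iso(\graphicalcat)$, then verify that remaining identity directly. The only (harmless) divergence is in the inclusion $\graphicalcat^{+} \cap \graphicalcat^{-} \subseteq \Iso(\graphicalcat)$, where you use the degree-monotonicity of Proposition~\ref{prop reedy one} while the paper deduces it from the uniqueness of the factorization in Proposition~\ref{prop reedy three}; both arguments are valid.
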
 
\begin{proof}
In Proposition~\ref{prop reedy one}, Proposition~\ref{prop reedy three}, and Proposition~\ref{prop reedy four} we have shown all of the conditions of \cite[Definition 1.1]{bm_reedy} except for $\graphicalcat^{+} \cap \graphicalcat^{-} = \textnormal{Iso}(\graphicalcat)$.
The uniqueness of the decomposition in Proposition~\ref{prop reedy three} implies the inclusion from left to right.
It is also clear that any isomorphism is in both $\graphicalcat^-$ and $\graphicalcat^+$, concluding the proof.
\end{proof}

\section{Simplicial presheaves on \texorpdfstring{$\graphicalcat$}{U}} 
\label{section simplicial presheaves on U}

The purpose of this section is to describe categories of presheaves over the graphical category $\graphicalcat$ and give circumstances under which a presheaf over $\graphicalcat$ models an up-to-homotopy modular operad.
To do so we use the language of Quillen model categories and take \cite{hirschhorn} as our standard reference. We are mostly concerned with categories of presheaves over the graphical category $\graphicalcat$ into a cofibrantly-generated model category $\modelcat$.
Such categories always admit a projective model structure where weak equivalences and fibrations are defined entry-wise in $\modelcat$.
Since our graphical category is a dualizable generalized Reedy category, the category of presheaves over $\graphicalcat$ also admits a Reedy model structure in the sense of \cite{bm_reedy}.
We will study certain (left) Bousfield localizations of these model categories in Section~\ref{section:Reedy model categories}. 

\begin{notation}
\label{notation presheaves}
Let $\mathbf{C}$ be a category.
The category of \emph{$\graphicalcat$-presheaves} in $\mathbf{C}$ is the category of contravariant functors from $\graphicalcat$ to $\mathbf{C}$. 
We denote this category by $\mathbf{C}^{\graphicalcat^{\oprm}}$.
\begin{enumerate}
\item If $X$ is a $\graphicalcat$-presheaf in $\mathbf{C}$ write the evaluation of a presheaf $X$ at a graph $G\in\graphicalcat$ as $X_{G}$\index{$X_G$}. 

\item We write $\rgc[G] \in \Set^{\graphicalcat^{\oprm}}$\index{$\rgc[G]$} for the representable presheaf at a graph $G$, that is, $\rgc[G]_{H} =\graphicalcat(H,G)$ when $H$ is in $\graphicalcat$. 
\end{enumerate} 
\end{notation} 

The Yoneda Lemma says that a map  $x:\rgc[G]\rightarrow X$ in $\Set^{\graphicalcat^{\oprm}}$ is equivalent to an element $x\in X_{G}$. Every $X\in\Set^{\graphicalcat^{\oprm}}$ is, up to isomorphism, a colimit of representables  \[ X\cong \colim  \rgc[G],\] where the colimit is indexed by the maps $\rgc[G]\rightarrow X$. 

In Section~\ref{subsection reedy} we showed that $\graphicalcat$ admits the structure of a dualizable generalized Reedy category.
We now recall the basic definitions of the Berger--Moerdijk Reedy model structure on a diagram category $\modelcat^{\Rr}$ when $\Rr$ is a generalized Reedy category,
all of which can be found just before \cite[Theorem 1.6]{bm_reedy}.
Recall that a model category $\modelcat$ is $\Rr$-projective if for every $r\in \Rr$, the category $\modelcat^{\aut(r)}$ admits a model structure whose weak equivalences and fibrations are created in $\modelcat$ (that is, by forgetting the $\aut(r)$ actions).
This is the case, for instance, if $\modelcat$ is cofibrantly generated \cite[11.6.1]{hirschhorn}.  

\begin{definition}
\label{definition reedy model structure}
Suppose that $\Rr$ is a generalized Reedy category and $\modelcat$ is an $\Rr$-projective category (for instance, if $\modelcat$ is cofibrantly generated).
\begin{enumerate}
	\item If $r$ is an object of $\Rr$, then $\Rr^-(r)$\index{$\Rr^-(r), \Rr^+(r)$} is the category whose objects are maps of $\Rr^- \setminus \Iso(\Rr)$ with domain $r$.
	\item If $r$ is an object of $\Rr$, then $\Rr^+(r)$ is the category whose objects are maps of $\Rr^+ \setminus \Iso(\Rr)$ with codomain $r$.
\end{enumerate}
Let $X : \Rr \to \modelcat$ be a functor, and let $r$ be an object of $\Rr$.
\begin{enumerate}
	\item The $r$-th matching map is defined to be the map
	\[
		X_r \to \lim_{\Rr^-(r)} X = M_rX
	\]\index{$M_rX, L_rX$}
	whose codomain is the $r$-th matching object.
	\item The $r$-th latching map is defined to be the map
	\[
		L_r X = \colim_{\Rr^+(r)} X \to  X_r 
	\]
	whose domain is the $r$-th latching object.
\end{enumerate}
Finally, let $f: X \to Y$ be a map in $\modelcat^{\Rr}$.
\begin{enumerate}
	\item If the relative matching map $X_r\to M_rX \times_{M_rY} Y_r$ is a fibration in $\modelcat$ for every $r$, then $f$ is called a \emph{Reedy fibration}.
	\item If the relative latching map $X_r \cup_{L_rX} L_r Y \to Y_r$ is a cofibration in $\modelcat^{\aut(r)}$ for every $r$, then $f$ is called a \emph{Reedy cofibration}.
\end{enumerate}
\end{definition}

The theorem of Berger and Moerdijk asserts that $\modelcat^{\Rr}$ admits a model structure with these (co)fibrations and with levelwise weak equivalences.
If $\Rr$ happens to be `dualizable', then $\Rr^{\oprm}$ is also a generalized Reedy category, so $\modelcat^{\Rr^{\oprm}}$ will inherit such a Berger--Moerdijk Reedy model structure.
For concision, we refer to this model structure as the \emph{Reedy model structure} in what follows.
The most important special case for us is $\sSet^{\graphicalcat^{\oprm}}$ where $\sSet$ has the Kan--Quillen model structure.
We will say that a presheaf $X\in \sSet^{\graphicalcat^{\oprm}}$ is `Reedy fibrant' (respectively, `Reedy cofibrant') if it is fibrant (respectively, cofibrant) in the Reedy model structure. 

\subsection{The Segal core of a graph} 
\label{section segal core}
Suppose that $G$ is a graph containing at least one vertex, and recall the beginning of Construction~\ref{construction graph sub}, where we exhibited $G$ as a coequalizer (in the diagram category $\finset^{\mathscr{I}}$).
We can form a corresponding coequalizer in $\Set^{\graphicalcat^{\oprm}}$, 
\[ \begin{tikzcd}
\coprod\limits_{e \in E_i} \rgc[\exceptionaledge] \rar[shift left, "\outeredge"] \rar[shift right, "\inneredge" swap] & \coprod\limits_{v\in V} \rgc[\medstar_v] \rar & \segalcore[G],
\end{tikzcd} \]
and we call the target the \emph{Segal core}\index{$\segalcore[G]$} of $G$ (which should not be confused with the graph $\core(G)$ from Definition~\ref{definition: edge subtraction}).
It comes with a map $\segalcore[G] \to \rgc[G]$ induced by $\coprod \iota_v : \coprod_{v\in V} \rgc[\medstar_v] \to \rgc[G]$.
In the case when $G = \exceptionaledge$, we declare the map $\segalcore[G] \to \rgc[G]$ to be the identity map on $\rgc[G]$.

Notice that the object $\segalcore[G]$ does not depend upon the choices we made for the orderings of the internal edges of $G$.
Indeed, any two such choices yield isomorphic results via a unique isomorphism of coequalizer diagrams utilizing only the involution on $\exceptionaledge$.

\begin{remark}[Alternative description]
\label{remark alternate description of segal core}
Suppose that $G$ is a graph with at least one vertex, where we've made choices about orderings of each internal edge as in Construction~\ref{construction graph sub} (whose notation we freely use). 
Define a new category $\mathbf{C}^G$ with object set $E_i \amalg V$.
The non-identity morphisms in this category are precisely the set of arcs comprising the internal edges (that is, the set of arcs of $\core(G)$), so that an internal arc $x$ goes from the internal edge $[x]\in E_i$ associated to $x$ to the vertex $tx \in V$. 
There is a functor $\mathbf{C}^G \to \Set^{\graphicalcat^{\oprm}}$ such that
\begin{align*}
	x_e^1 : e \to tx_e^1 &\qquad\text{ maps to }\qquad \outeredge_e : \rgc[\exceptionaledge] \to \rgc[\medstar_{tx_e^1}] & \text{and}\\
x_e^2 : e \to tx_e^2 &\qquad\text{ maps to }\qquad \inneredge_e : \rgc[\exceptionaledge] \to \rgc[\medstar_{tx_e^2}].
\end{align*}
The colimit of this functor is $\segalcore[G]$.
\end{remark}

There is an inclusion $\Set^{\graphicalcat^{\oprm}} \hookrightarrow \sSet^{\graphicalcat^{\oprm}}$ coming from the inclusion $\Set \to \sSet$, and we use this to consider $\segalcore[G]$ and $\rgc[G]$ as objects in $\sSet^{\graphicalcat^{\oprm}}$. 

\begin{lemma}
\label{cofibration edges to star lemma}
As in Example~\ref{examples combinatorial}, let $\exceptionaledge$ have arc set $\eearcs$ and $\medstar_n$ have $D(\medstar_n) = \{ 1, \dots, n\}$, $\eth(\medstar_n) = \{ 1^\dagger, \dots, n^\dagger\}$. 
Let $h_k : {\exceptionaledge} \to \medstar_n$ be the embedding sending $\edgemajor$ to $k$.
If $S$ is any subset of $\{ 1, \dots, n\}$, then the map
\[
	\coprod_{k\in S} h_k \colon \coprod_{k\in S} \rgc[\exceptionaledge] \to \rgc[\medstar_n]
\]
is a cofibration in the generalized Reedy model structure on $\sSet^{\graphicalcat^{\oprm}}$
\end{lemma}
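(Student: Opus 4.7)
The plan is to factor the map as the composite
\[
\coprod_{k \in S} \rgc[\exceptionaledge] \hookrightarrow \coprod_{k=1}^{n} \rgc[\exceptionaledge] \xrightarrow{\coprod_{k} h_{k}} \rgc[\medstar_n]
\]
and show each factor is a Reedy cofibration; closure under composition then yields the result. Since all presheaves in sight are discrete (set-valued), the relative latching map at each graph $G$ is a map of discrete $\aut(G)$-sets, and such a map is a projective cofibration in $\sSet^{\aut(G)}$ exactly when it is injective with $\aut(G)$ acting freely on its complement. So in each case I only need to verify these two elementary properties.

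For the summand inclusion, the complement at $G$ is $\coprod_{k \notin S} \graphicalcat(G, \exceptionaledge)$ with $\aut(G)$ acting summand-wise by precomposition, so it suffices to show $\aut(G)$ acts freely on $\graphicalcat(G, \exceptionaledge)$ for each $G$. Every vertex of $G$ must be sent to the unique (edge) class in $\embeddings(\exceptionaledge)$, so the collapse condition of Definition~\ref{def: graphical map} forces $\eth(G) \neq \varnothing$ unless $G = \exceptionaledge$, and the boundary-compatibility condition forces every vertex of $G$ to be bivalent; hence $G \cong L_m$ for some $m \geq 0$. There are then exactly two graphical maps $L_m \to \exceptionaledge$, distinguished by orientation, and the nontrivial generator of $\aut(L_m) = \mathbb{Z}/2$ swaps them.

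Next I would treat the coproduct map $\coprod h_k$. Injectivity at $G$ follows because distinct $h_k$ and $h_{k'}$ have disjoint arc images $\{k, k^\dagger\}$ and $\{k', (k')^\dagger\}$, forcing $k = k'$ from any coincidence $h_k \alpha = h_{k'} \alpha'$, after which arc-injectivity of $h_k$ combined with Theorem~\ref{theorem: injective determination at zero} yields $\alpha = \alpha'$. A graphical map $\varphi \colon G \to \medstar_n$ lies outside the image of $\coprod h_k$ precisely when some $\varphi_1(v_0)$ is the star class: indeed, if every $\varphi_1(v)$ were an edge class then, by the involution relation across each internal edge and connectedness of $G$, all these classes would coincide in a single $k$, making $\varphi = h_k \alpha$. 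By the non-double-covering condition of Definition~\ref{def: graphical map}, such a complement $\varphi$ has at most one star-class vertex, so $v_0$ is unique; if $\sigma \in \aut(G)$ fixes $\varphi$ then $\sigma(v_0) = v_0$, and the bijection $\nbhd(v_0) \xrightarrow{\varphi_0 \circ i} \eth(\varphi_1(v_0))$ from the boundary-compatibility condition forces $\sigma$ to act as the identity on $\nbhd(v_0)$.

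The main obstacle will be propagating this pointwise fixing from $\nbhd(v_0)$ to the whole of $G$. Using $\sigma i = i \sigma$, once $\sigma$ fixes $a$ it also fixes $ia$; and for any non-star vertex $v_1 \neq v_0$ with $ia \in \nbhd(v_1)$, bivalence of $v_1$ together with the fact that $\sigma$ preserves its two-element neighborhood forces $\sigma$ to fix the remaining arc as well. Iterating along edge-paths in the connected graph $G$ gives $\sigma = \id_G$, concluding the free-action check and hence the lemma.
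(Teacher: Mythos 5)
Your combinatorial analysis of $\graphicalcat(G,\exceptionaledge)$ and $\graphicalcat(G,\medstar_n)$ and of the $\aut(G)$-actions on them is essentially sound, but there is a gap in the reduction you make at the outset. You correctly state that the condition to be verified is that each \emph{relative latching map} $X_G \cup_{L_GX} L_GY \to Y_G$ is injective with $\aut(G)$ acting freely on the complement of its image; what you then actually verify is injectivity of $X_G \to Y_G$ and freeness of the action on $Y_G \setminus X_G$. The freeness half does transfer, since the complement of the relative latching map is contained in $Y_G \setminus X_G$. The injectivity half does not: injectivity of $X_G \to Y_G$ does not by itself imply injectivity of the pushout $X_G \cup_{L_GX} L_GY \to Y_G$, because distinct elements of $L_GY$ could become identified in $Y_G$, or an element of $L_GY$ could collide with an element of $X_G$ without coming from $L_GX$. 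The principle you are implicitly using --- that a monomorphism of set-valued presheaves whose complement carries a free action is automatically a Reedy cofibration --- is the normal-monomorphism characterization, which is available for Eilenberg--Zilber categories but is nowhere established for $\graphicalcat$ in this paper. To complete your route you would need either to prove that $\graphicalcat$ is an EZ-category or to compute the latching objects $L_G\rgc[\exceptionaledge]$ and $L_G\rgc[\medstar_n]$ at \emph{every} $G$ and check injectivity of the relative latching maps directly. (A smaller slip: you invoke Theorem~\ref{theorem: injective determination at zero} to deduce $\alpha=\alpha'$ from $h_k\alpha=h_k\alpha'$, but that theorem requires $\alpha_0$ to be injective, which fails for $\alpha : L_m\to\exceptionaledge$ with $m\geq 1$; the conclusion is nonetheless immediate because $\embeddings(\exceptionaledge)$ is a singleton, so $\alpha_1$ carries no information.)

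The paper sidesteps latching objects entirely. It first reduces to $S=\{1,\dots,n\}$ using Reedy cofibrancy of $\rgc[\exceptionaledge]$, and then characterizes cofibrations by the left lifting property against acyclic Reedy fibrations. A lifting problem for $\coprod_k h_k$ against $X\to Y$ amounts to finding a vertex of $X_{\medstar_n}$ with prescribed image in $M_{\medstar_n}(X)\times_{M_{\medstar_n}(Y)}Y_{\medstar_n}$; since $\medstar_n$ has degree $1$ and the only degree-$0$ object is $\exceptionaledge$, the matching object $M_{\medstar_n}(Z)$ is just $\prod_{k=1}^n Z_{\exceptionaledge}$, and \cite[Proposition 5.7]{bm_reedy} guarantees the relative matching map of an acyclic Reedy fibration is an honest acyclic fibration of simplicial sets in degrees $\leq 1$, hence surjective on vertices. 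This only requires a computation at the single object $\medstar_n$, whereas the latching-object route imposes a condition at every object of $\graphicalcat$, which is why the paper's approach is substantially more economical here.
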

Note that the same statement then holds when $h_k$ is replaced by the embedding $h_k' = h_kz : {\exceptionaledge} \to \medstar_n$ sending $\edgemajor$ to $k^\dagger$ (where $z$ is the unique nontrivial automorphism of ${\exceptionaledge}$).
\begin{proof}
For the purposes of this proof, we can take $S= \{ 1, \dots, n \}$ since $\rgc[\exceptionaledge]$ is Reedy cofibrant.
Let $Z$ be any object of $\sSet^{\graphicalcat^{\oprm}}$.
Then the map
\begin{equation}\label{equation matching object Z}
	\prod_{k=1}^n Z_{h_k} \colon Z_{\medstar_n} \to \prod_{k=1}^n Z_{\exceptionaledge}
\end{equation}
is a concrete realization of the map $Z_{\medstar_n} \to M_{\medstar_n}(Z)$ to the matching object.
This occurs because $\medstar_n$ has degree 1, and the only degree 0 object in $\graphicalcat$ is $\exceptionaledge$.
We then have a splitting of categories (see Definition~\ref{definition reedy model structure})
\[
	(\graphicalcat^{\oprm})^{-}(\medstar_n) = \left[ \graphicalcat^{+}(\medstar_n) \right]^{\oprm} = \coprod_{k=1}^n \{ h_k \overset\cong\leftrightarrow h_{k}' \} \simeq \coprod_{k=1}^n \{ h_k \}
\]
where each groupoid $\{ h_k \overset\cong\leftrightarrow h_{k}' \}$ has two objects together with a unique isomorphism between them.
As we can use the discrete category on the right to compute the limit expressing the matching object, we see \eqref{equation matching object Z} does indeed model this matching map.

Suppose that $X \to Y$ is an acyclic Reedy fibration in $\sSet^{\graphicalcat^{\oprm}}$.
Diagrams 
\begin{equation}\label{diagram needing a lift} \begin{tikzcd}
\coprod\limits_{k=1}^n  \rgc[\exceptionaledge]  \rar \dar & X \dar \\ 
 \rgc[\medstar_n] \rar & Y
\end{tikzcd} \end{equation}
correspond to vertices $y\in Y_{\medstar_n}$ and $\ell \in \prod_{k=1}^n X_{\exceptionaledge}$ mapping to the same vertex of $\prod_{k=1}^n Y_{\exceptionaledge}$ in the diagram
\[ \begin{tikzcd}
X_{\medstar_n} \rar \dar & Y_{\medstar_n} \dar \\
\prod\limits_{k=1}^n X_{\exceptionaledge} \rar & \prod\limits_{k=1}^n Y_{\exceptionaledge}
\end{tikzcd} \]
whose vertical maps are induced by the $h_k$.
A lift for \eqref{diagram needing a lift} is the same thing as a vertex $x\in X_{\medstar_n}$ which maps to both $y$ and $\ell$. 

Since the only objects of $\graphicalcat$ of degree less than or equal to $1$ are edges and stars, Proposition 5.7 of \cite{bm_reedy} implies that 
\[
	X_{\medstar_n} \to M_{\medstar_n}(X) \times_{M_{\medstar_n}(Y)} Y_{\medstar_n}
\]
is an acyclic fibration of simplicial sets, hence surjective (in particular, on vertices).
By our calculation of $M_{\medstar_n}(Z)$ as \eqref{equation matching object Z}, we then know that every diagram \eqref{diagram needing a lift} admits a lift $\rgc[\medstar_n] \to X$.
Since $\coprod_{k=1}^n h_k$ lifts against all acyclic Reedy fibrations, it is a Reedy cofibration.
\end{proof}

As one can see from the proof of the preceding lemma, one does not expect these maps to be \emph{projective} cofibrations (that is, in the projective model structure, where acyclic fibrations are the levelwise acyclic fibrations). 
Therefore, we expect the projective version of the following proposition to be false in general.

\begin{proposition}
\label{proposition segal core is cofibrant}
If $G \in \graphicalcat$, then $\segalcore[G]$ is Reedy cofibrant in $\sSet^{\graphicalcat^{\oprm}}$.
\end{proposition}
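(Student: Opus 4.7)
The plan is to exhibit $\segalcore[G]$ as a pushout along a Reedy cofibration whose source and target are Reedy cofibrant coproducts of representables. Since Reedy cofibrations are closed under pushouts and composition, and since representables are Reedy cofibrant in a generalized Reedy category (a standard fact already invoked in the proof of Lemma~\ref{cofibration edges to star lemma}), this will identify $\varnothing \to \segalcore[G]$ as a Reedy cofibration. The case $G = \exceptionaledge$ is immediate, as then $\segalcore[G] = \rgc[G]$ is representable; I will assume $V(G)$ is nonempty from here on.

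Write $A := \coprod_{e \in E_i} \rgc[\exceptionaledge]$ and $B := \coprod_{v \in V} \rgc[\medstar_v]$; both are coproducts of representables and hence Reedy cofibrant. The coequalizer defining $\segalcore[G]$ can be rewritten as the pushout
\[
\begin{tikzcd}
A\amalg A \rar["{(\outeredge,\inneredge)}"] \dar["\nabla"'] & B \dar\\
A \rar & \segalcore[G],
\end{tikzcd}
\]
so it suffices to show that the top map is a Reedy cofibration.

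The summands of $A\amalg A$ are indexed by pairs $(e,k)$ with $e\in E_i$ and $k\in\{1,2\}$, and the $(e,k)$-th summand is sent into the summand $\rgc[\medstar_{tx_e^k}]$ of $B$. Regrouping by vertex yields
\[
(\outeredge,\inneredge) \;=\; \coprod_{v\in V}\Bigl(\coprod_{(e,k):\, tx_e^k=v} \rgc[\exceptionaledge] \longrightarrow \rgc[\medstar_v]\Bigr).
\]
For fixed $v$, the assignment $(e,k)\mapsto x_e^k$ injects the indexing set into $\nbhd(v) = D(\medstar_v)$, and the individual component embeddings are either of the form $h_{x_e^k}$ (when $k=2$, via $\inneredge_e$) or $h_{x_e^k}'$ (when $k=1$, via $\outeredge_e$) in the notation of Lemma~\ref{cofibration edges to star lemma}. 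Precomposing the latter summands with the nontrivial automorphism of $\exceptionaledge$ (which induces an isomorphism on $\rgc[\exceptionaledge]$) converts them into $h$-form embeddings, so Lemma~\ref{cofibration edges to star lemma} (together with the remark following it) applies to show each factor is a Reedy cofibration.

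Since a coproduct of Reedy cofibrations is a Reedy cofibration, $(\outeredge,\inneredge)$ is a Reedy cofibration; taking the pushout yields that $A\to\segalcore[G]$ is a Reedy cofibration, and composing with $\varnothing\to A$ produces the desired conclusion. The main obstacle is not conceptual but bookkeeping: one must carefully match the two components $\outeredge_e,\inneredge_e$ of the coequalizer with the hypotheses of Lemma~\ref{cofibration edges to star lemma}, which distinguishes between the $h_k$ and $h_k'$ forms.
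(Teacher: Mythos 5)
Your proof is correct and rests on the same two pillars as the paper's: Lemma~\ref{cofibration edges to star lemma} together with the closure of Reedy cofibrations under coproducts, pushouts, and composition. The genuine difference is in how the coequalizer is converted into a pushout. Writing $A = \coprod_{E_i}\rgc[\exceptionaledge]$ and $B = \coprod_{V}\rgc[\medstar_v]$ as you do, the paper treats the square with $\outeredge$ along the top and $\inneredge$ down the left as a pushout; strictly speaking that square commutes but is not a pushout (its pushout is $B\amalg_A B$, which duplicates the stars instead of gluing them along shared edges). You instead use the correct presentation of the coequalizer as the pushout of $B \leftarrow A\amalg A \to A$ along the fold map, which shifts the burden onto showing that the single combined map $(\outeredge,\inneredge)\colon A\amalg A \to B$ is a Reedy cofibration --- a genuinely stronger statement than the paper's claim that $\outeredge$ and $\inneredge$ are separately cofibrations, since a coproduct of two cofibrations $A\to B$ is a map $A\amalg A\to B\amalg B$, not $A\amalg A\to B$. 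Your verification of the stronger claim is right: the joint assignment $(e,k)\mapsto x_e^k$ is injective into $\coprod_v \nbhd(v)$ because the involution is free and distinct internal edges have disjoint arc sets, so after regrouping by vertex and absorbing the automorphism $z$ of $\exceptionaledge$ into the $h'$-type summands, each factor is exactly of the form covered by Lemma~\ref{cofibration edges to star lemma} and the remark following it. In short, your route reaches the same conclusion by the same key lemma, but is the more careful rendering of the colimit bookkeeping.
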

\begin{proof}
The map
\[
	\inneredge : \coprod_{e \in E_i} \rgc[\exceptionaledge] \to \coprod_{v\in V} \rgc[\medstar_v]
\]
is the coproduct (over $V$) of maps, each of which isomorphic to one from Lemma~\ref{cofibration edges to star lemma}.
This is because $\inneredge$ restricts to a monomorphism 
\[
	\coprod_{E_i} \{ \edgemajor \} \to \coprod_{v\in V} \nbhd(v),
\]
and for each $v$ we can consider $S = \inneredge(\coprod_{E_i} \{ \edgemajor \}) \cap \nbhd(v) \subseteq D(\medstar_v)$.
A similar argument (using $\edgeminor$ instead of $\edgemajor$) shows that $\outeredge$ is isomorphic to a coproduct of maps from Lemma~\ref{cofibration edges to star lemma}.
Hence both $\outeredge$ and $\inneredge$ are cofibrations.
As the pushout of a cofibration is a cofibration, all of the maps in the defining diagram for $\segalcore[G]$
\[ \begin{tikzcd}
\coprod\limits_{e \in E_i} \rgc[\exceptionaledge] \rar{\outeredge} \dar{\inneredge} & \coprod\limits_{v\in V} \rgc[\medstar_v] \dar \\
\coprod\limits_{v\in V} \rgc[\medstar_v] \rar & \segalcore[G]
\end{tikzcd} \]
are cofibrations, so $\varnothing \to \coprod_{E_i} \rgc[\exceptionaledge] \to \segalcore[G]$ is a composition of cofibrations.
\end{proof}

\subsection{A Segal model for up-to-homotopy modular operads}\label{section:Reedy model categories}
Recall the dualizable generalized Reedy structure on $\graphicalcat$ from Section~\ref{subsection reedy}.
The Reedy model structure on $\sSet^{\graphicalcat^{\oprm}}$ is simplicial with mapping objects $\map(X,Y) \in \sSet$.\index{$\map, \map^h$} 
We will also utilize homotopy function complexes, denoted by $\map^h(X,Y) \in \sSet$, and do not insist upon a particular model for these.

\begin{definition}[Segal modular operads]\label{defn mon segal obj}
Suppose that $X$ is an object of $\sSet^{\graphicalcat^{\oprm}}$.
\begin{itemize}
\item The presheaf $X$ will be called \emph{(weakly-) monochrome} if $X_{\exceptionaledge}$ is weakly contractible.
\item The presheaf $X$ is said to \emph{satisfy the (weak) Segal condition} if for each $G\in \graphicalcat$, the Segal map
\[ X_G = \map(\rgc[G], X) \rightarrow \map(\segalcore[G],X)\] 
is a weak equivalence of simplicial sets.
\end{itemize}
If $X$ is Reedy fibrant, monochrome, and satisfies the Segal condition, then we call $X$ a \emph{(monochrome) Segal modular operad}.
\end{definition} 
The purpose of this subsection is to point out that there is a model category whose fibrant objects are precisely the Segal modular operads.
In the companion paper \cite{modular_paper_two}, we give a precise definition of (colored) modular operads (called compact symmetric multicategories in \cite{JOYAL2011105}) and prove Theorem~\ref{nerve theorem paper one intro}.
Segal modular operads should be thought of as one-colored modular operads where all of the structure is only defined up to coherent homotopy.
At the end of Section~\ref{section: stable graphs}, we provide potential examples which should be adaptable to give non-strict examples of Definition~\ref{defn mon segal obj}.

\begin{remark}
\label{remark on nerve of modular operads}
If $X_{\exceptionaledge}$ is a point, instead of just being weakly equivalent to a point, then $\map(\segalcore[G], X)$ is isomorphic to the product
\[
	\prod_{v\in G} X_{\medstar_v} \cong \map(\segalcore[G], X),
\]
and the $v$th projection of the map from $X_G$ to this product is induced from $\iota_v : \medstar_v \to G$.
In this case, the Segal map being a weak equivalence tells us $X$ should be determined by its value on vertices.

Suppose that $P$ is a modular operad in $\sSet$ (in the sense of \cite{modular_paper_two}) whose color set has just one element.
If $X = NP$ is the \emph{nerve} of $P$, then $X_{\exceptionaledge}$ is a point and the Segal map $X_G \to \prod X_{\medstar_v}$ is an isomorphism for every $G$.
Both a precise construction of $N$ and a proof of this fact are provided in the companion paper \cite{modular_paper_two}.
Thus every one-colored modular operad gives rise to a monochrome presheaf that satisfies the Segal condition.
Note, however, that Reedy fibrancy requires more assumptions on the modular operad $P$.
\end{remark}

\begin{theorem}\label{theorem localized reedy model structure} 
The category $\sSet^{\graphicalcat^{\oprm}}$ admits a cofibrantly generated model structure whose fibrant objects are the Segal modular operads.
\end{theorem}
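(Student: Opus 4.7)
The strategy is to obtain the desired model structure as a left Bousfield localization of the Reedy model structure on $\sSet^{\graphicalcat^{\oprm}}$ at a set of maps chosen so that the local objects are precisely the Segal modular operads. Concretely, I will localize at the set
\[
S = \left\{ \segalcore[G] \hookrightarrow \rgc[G] \,\middle|\, G \in \graphicalcat \right\} \cup \left\{ \rgc[\exceptionaledge] \to \Delta[0] \right\},
\]
where $\Delta[0]$ is the terminal presheaf (with constant value a one-point simplicial set).

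First I would verify the hypotheses needed to apply Hirschhorn's localization theorem (\cite[Theorem 4.1.1]{hirschhorn}). The Reedy model structure on $\sSet^{\graphicalcat^{\oprm}}$ (constructed by Berger--Moerdijk and reviewed in Definition~\ref{definition reedy model structure}, made available here by Theorem~\ref{theorem reedy}) is cofibrantly generated since $\sSet$ is, and it is simplicial. Left properness follows from left properness of the Kan--Quillen model structure together with the fact that Reedy cofibrations are in particular levelwise cofibrations (monomorphisms), so pushouts along them preserve levelwise weak equivalences. Cellularity (or combinatoriality, either works) is inherited from $\sSet$ by standard arguments. With these in hand, the localization $L_S \sSet^{\graphicalcat^{\oprm}}$ exists, is again a cofibrantly generated simplicial model category, and its cofibrations agree with those of the Reedy structure.

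Next I would identify the fibrant objects. By Hirschhorn's theorem, an object $X$ is fibrant in $L_S \sSet^{\graphicalcat^{\oprm}}$ if and only if it is Reedy fibrant and $S$-local, i.e.\ each map in $S$ induces a weak equivalence on homotopy mapping spaces $\map^h(-, X)$. Here the key observation is that the domains and codomains of maps in $S$ are already Reedy cofibrant: representables $\rgc[G]$ are Reedy cofibrant (as for any generalized Reedy category), Segal cores are Reedy cofibrant by Proposition~\ref{proposition segal core is cofibrant}, and $\Delta[0]$ is Reedy cofibrant. Combined with Reedy fibrancy of $X$, the simplicial enrichment gives $\map^h(Y, X) \simeq \map(Y, X)$ for each such $Y$. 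Hence $S$-locality reduces to the requirement that $\map(\rgc[G], X) \to \map(\segalcore[G], X)$ is a weak equivalence for every $G$ (the Segal condition) and that $\map(\Delta[0], X) \to \map(\rgc[\exceptionaledge], X)$, i.e.\ $* \to X_{\exceptionaledge}$, is a weak equivalence (the monochrome condition). This is exactly the definition of a Segal modular operad.

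The main obstacle is really verifying cellularity/combinatoriality and left properness of the Reedy structure on $\sSet^{\graphicalcat^{\oprm}}$, since once those hold the Bousfield localization machine runs automatically and the identification of fibrant objects is immediate from the cofibrancy of Segal cores (Proposition~\ref{proposition segal core is cofibrant}). Both properties are standard consequences of the analogous properties of $\sSet$ and of the explicit description of (relative) Reedy (co)fibrations in Definition~\ref{definition reedy model structure}, but they should be stated explicitly with citations. Optionally, one can note that every representable is Reedy cofibrant (this is automatic for a generalized Reedy category by induction on degree, since the latching maps of representables are monomorphisms of presheaves of sets), which is needed both for the argument above and to make the Segal maps $\segalcore[G] \to \rgc[G]$ cofibrations between cofibrant objects.
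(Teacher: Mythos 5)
Your overall strategy---left Bousfield localization of the Reedy model structure at the Segal core inclusions together with one extra map enforcing monochromaticity---is exactly the paper's, and your treatment of the Segal core inclusions is correct. The problem is your choice of the extra map. The paper localizes at $\varnothing \to \rgc[\exceptionaledge]$; locality with respect to this map says that $\map^h(\rgc[\exceptionaledge],X) \to \map^h(\varnothing,X)$ is a weak equivalence, and since $\map(\varnothing,X)$ is a point this is precisely the condition that $X_{\exceptionaledge}$ be weakly contractible. You instead localize at $\rgc[\exceptionaledge]\to\Delta[0]$ with $\Delta[0]$ the \emph{terminal} presheaf. Locality with respect to that map asks that $\map^h(\Delta[0],X)\to X_{\exceptionaledge}$ be a weak equivalence, and $\map(\Delta[0],X)=\lim_{G} X_G$ is the limit of $X$ over the indexing category, not a point; your identification $\map(\Delta[0],X)=\ast$ conflates the terminal presheaf with the initial one. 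Thus your localization imposes the condition ``the (derived) limit of $X$ maps by a weak equivalence to $X_{\exceptionaledge}$,'' which is not the monochrome condition, and the fibrant objects of your localized structure are not identified with the Segal modular operads.

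A compounding issue is your claim that $\Delta[0]$ is Reedy cofibrant. It is not: its value at $\exceptionaledge$ is a point with trivial action of $\aut_{\graphicalcat}(\exceptionaledge)\cong C_2$, the latching object at $\exceptionaledge$ is empty, and $\varnothing\to\ast$ is not a projective cofibration in $\sSet^{C_2}$---this is exactly the mechanism used in the proof of Theorem~\ref{reedy theorem xi} to exhibit non-cofibrant objects. So even the reduction $\map^h(\Delta[0],X)\simeq\map(\Delta[0],X)$ is unavailable without first replacing $\Delta[0]$ cofibrantly. For the same reason, your parenthetical justification that representables are Reedy cofibrant (``the latching maps are monomorphisms of presheaves of sets'') is insufficient in the generalized Reedy setting: one needs the relative latching maps to be \emph{projective} cofibrations of $\aut$-objects, which requires freeness of the automorphism actions on the non-latching maps (this does hold for $\graphicalcat$, via Proposition~\ref{prop reedy four}, so the conclusion is correct). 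Replacing your second localizing map by $\varnothing \to \rgc[\exceptionaledge]$ repairs the argument and recovers the paper's proof, with the rest of your reasoning (existence of the localization, cofibrancy of the Segal cores via Proposition~\ref{proposition segal core is cofibrant}, and the identification of local objects) going through as written.
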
  

\begin{proof}
The Reedy model structure on $\sSet^{\graphicalcat^{\oprm}}$ is left proper and cellular by \cite[Theorem 7.2 \& Proposition 7.4]{hry_cyclic}.
Thus if $S$ is any set of maps, we may apply left Bousfield localization \cite[Theorem 4.1.1]{hirschhorn} to obtain a localized model structure $\mathscr{L}_S\sSet^{\graphicalcat^{\oprm}}$ with the same underlying category.
We specialize to the case when $S$ is the set of maps consisting of the Segal core inclusions $\segalcore[G] \hookrightarrow \rgc[G]$ as well as the unique map $\varnothing \to \rgc[\exceptionaledge]$. 
Here, we are using the inclusion $\Set^{\graphicalcat^{\oprm}} \hookrightarrow \sSet^{\graphicalcat^{\oprm}}$ to regard these set-valued presheaves as simplicial set-valued presheaves.

To complete the proof, we only need to characterize the fibrant objects in this localized model structure.
As with any left Bousfield localization, these are the objects $X$ so that $X$ is fibrant in the original model structure and $\map^h(s, X)$ is a weak equivalence of simplicial sets for every $s\in S$.
In other words, we must characterize those Reedy fibrant $X$ so that (for all $G$)
\begin{align*}
	\map^{h}(\rgc[G], X) &\to \map^h(\segalcore[G], X) \\
	\map^{h}(\rgc[\exceptionaledge], X) &\to \map^h(\varnothing, X)
\end{align*}
are weak equivalences of simplicial sets.

In any simplicial model category, if $A$ is cofibrant and $Z$ is fibrant, then $\map^h(A,Z)$ is weakly equivalent to $\map(A,Z)$ by \cite[Corollary 4.7]{DwyerKan:FCHA}. 
Note that $\rgc[G]$, $\varnothing$, and $\segalcore[G]$ are all cofibrant in $\sSet^{\graphicalcat^{\oprm}}$ (the last of these by Proposition~\ref{proposition segal core is cofibrant}), which is a simplicial model category.
Further, $\map(\rgc[G], X) = X_G$ for any presheaf $X$.
Rephrasing the condition for fibrancy in $\mathscr{L}_S\sSet^{\graphicalcat^{\oprm}}$ gives that $X$ is fibrant if and only if 
\begin{itemize}
	\item $X$ is Reedy fibrant,
	\item $X_G \to \map(\segalcore[G], X)$ is a weak equivalence of simplicial sets for all $G$, and 
	\item $X_\exceptionaledge \to \map(\varnothing, X) = \Delta[0]$ is a weak equivalence.
\end{itemize}
Thus $X$ is fibrant if and only if it is a Segal modular operad.
\end{proof}

\begin{lemma}\label{lemma: qe reedy projective}
	Suppose that $\Rr$ is a generalized Reedy category and $\modelcat$ is a cofibrantly generated model category. 
	Write $\modelcat^{\Rr}_{\reedysub}$ for the diagram category with the Berger--Moerdijk Reedy model structure \cite{bm_reedy} and $\modelcat^{\Rr}_{\projsub}$ for the same category with the projective model structure \cite[Theorem 11.6.1]{hirschhorn}.
	Then the identity functor
	\begin{equation}\label{equation unlocalized qe}
	\begin{tikzcd}
		\modelcat^{\Rr}_{\projsub} \arrow[r, shift left="1.5"] & \arrow[l, shift left="1.5", "\scriptscriptstyle\perp" swap] \modelcat^{\Rr}_{\reedysub}
	\end{tikzcd}\end{equation}
	is a Quillen equivalence.

	Assume further that $\modelcat$ is left proper and cellular. 
	If $S$ is any set of maps in $\modelcat^{\Rr}$ and $\mathscr{L}_S$ denotes left Bousfield localization at $S$ \cite[Definition 3.3.1]{hirschhorn}, then $\mathscr{L}_S\modelcat^{\Rr}_{\reedysub}$  and $\mathscr{L}_S\modelcat^{\Rr}_{\projsub}$ have the same class of weak equivalences.
	In particular, \eqref{equation unlocalized qe} remains a Quillen equivalence after left Bousfield localization at $S$.
\end{lemma}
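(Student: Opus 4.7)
The plan is to treat the two assertions separately, both leveraging the fact that the projective and Reedy model structures on $\modelcat^{\Rr}$ share the same class of weak equivalences — namely, the levelwise weak equivalences of $\modelcat$. (This is immediate in the projective case, and is built into Berger--Moerdijk's definition of the Reedy model structure, since the weak equivalences of $\modelcat^{\aut(r)}$ are created by the forgetful functor to $\modelcat$.) Given this, to conclude that the identity adjunction is a Quillen equivalence it suffices to show that the identity $\modelcat^{\Rr}_{\projsub} \to \modelcat^{\Rr}_{\reedysub}$ is left Quillen: its derived adjunction will then be the identity on the common homotopy category.

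To see that every projective cofibration is a Reedy cofibration, I will equivalently show that every Reedy acyclic fibration is levelwise an acyclic fibration; the key step is an induction on $\deg(r)$ showing that any Reedy fibration $g \colon X \to Y$ has each component $g_r \colon X_r \to Y_r$ a fibration in $\modelcat$. The base case, when $\Rr^-(r)$ contains no non-invertible maps (so $M_r X$ and $M_r Y$ are terminal), is immediate from the Reedy fibration condition on the relative matching map. For the inductive step, the matching objects $M_r X$, $M_r Y$ are limits over $\Rr^-(r)$, whose objects are non-invertible maps $r \to s$ with $\deg(s) < \deg(r)$ by the Reedy axioms; the inductive hypothesis then gives that $M_r X \to M_r Y$ is a fibration, and hence the composition $X_r \to M_r X \times_{M_r Y} Y_r \to Y_r$ is a fibration. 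Intersecting with the levelwise weak equivalences, Reedy acyclic fibrations are levelwise acyclic fibrations, so by lifting, projective cofibrations are Reedy cofibrations.

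For the second part, I use that left Bousfield localization leaves the class of cofibrations unchanged, so $\mathscr{L}_S \modelcat^{\Rr}_{\projsub}$ still has projective cofibrations and $\mathscr{L}_S \modelcat^{\Rr}_{\reedysub}$ still has Reedy cofibrations, with the same inclusion as established above. It remains to verify that the two localizations have identical weak equivalences, i.e.\ identical $S$-local equivalences. Recall that $f \colon A \to B$ is an $S$-local equivalence iff $\map^h(f, X)$ is a weak equivalence for every $S$-local object $X$, where $X$ is $S$-local iff it is fibrant in the relevant unlocalized structure and $\map^h(s, X)$ is a weak equivalence for each $s \in S$. The classes of $S$-local objects differ between the two structures, but any projective fibrant $X$ admits a Reedy fibrant replacement $X \to \hat X$ which is a levelwise weak equivalence; homotopy invariance of $\map^h$ in its second argument then gives $\map^h(s, X) \simeq \map^h(s, \hat X)$, so $X$ is projective $S$-local iff $\hat X$ is Reedy $S$-local. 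Consequently the two notions of $S$-local equivalence coincide, and the identity adjunction between the localizations is a Quillen equivalence for the same reason as before.

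The main technical obstacle is the inductive argument in the second paragraph, which requires careful bookkeeping of the matching categories together with the $\aut(r)$-equivariance of the relative matching maps. Because the degree function strictly decreases along non-invertible maps of $\Rr^-$, the induction is well-founded, and the remainder is a standard manipulation with pullbacks and compositions of fibrations.
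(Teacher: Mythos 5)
Your proof is correct and follows essentially the same route as the paper: the paper likewise treats the unlocalized Quillen adjunction as immediate from the containment of the cofibration and fibration classes, and establishes the coincidence of the $S$-local equivalences exactly as you do, via a Reedy fibrant replacement $W \to \widehat{W}$ and homotopy invariance of $\map^h$ in the second variable. The one spot where you are looser than you should be is the inductive step asserting that $M_rX \to M_rY$ is a fibration --- this does not follow merely from the lower-degree components $X_s \to Y_s$ being fibrations (limits do not preserve fibrations), but requires the standard degree filtration of the matching category; since that is the textbook Reedy argument and the paper dismisses the entire first assertion as immediate, this is not a genuine gap.
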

\begin{proof}
	It is immediate that \eqref{equation unlocalized qe} is a Quillen adjunction since each cofibration in the projective model structure is a cofibration in the Reedy model structure, and each fibration in the Reedy model structure is a fibration in the projective model structure.
	Since the two model structures have the same class of weak equivalences, \eqref{equation unlocalized qe} is a Quillen equivalence.

It remains to show that the localized model structures have the same class of weak equivalences. 
Suppose that $W$ is any object, $W\to \widehat{W}$ is a Reedy fibrant replacement of $W$, and $f : A \to B$ is any morphism in $\modelcat^{\Rr}$.
We then have the following commutative diagram of homotopy function complexes.
\begin{equation}\label{maph comm diagram}
\begin{tikzcd}
\map^h(B,W) \rar \dar{\simeq} & \map^h(A,W) \dar{\simeq} \\
\map^h(B,\widehat{W}) \rar & \map^h(A,\widehat{W})
\end{tikzcd}
\end{equation}
The vertical maps in this diagram are weak equivalences using \cite[17.6.3]{hirschhorn}.

If $W$ is a projective $S$-local object, then $\widehat{W}$ is a Reedy $S$-local object.
To see this, notice that if $f : A\to B$ is any element of $S$, then the top map of \eqref{maph comm diagram} is a weak equivalence by assumption, which implies that the bottom map is as well.

Now suppose that $f : A \to B$ is a Reedy local equivalence and $W$ is a projective $S$-local object.
Since we know that $\widehat{W}$ is a Reedy $S$-local object, we have that the bottom map of \eqref{maph comm diagram} is an equivalence, hence the top map is as well.
Since $W$ was an arbitrary projective $S$-local object, this implies that $f$ is a projective $S$-local equivalence.

On the other hand, suppose that $f : A \to B$ is a projective $S$-local equivalence.
Any Reedy $S$-local object $W$ is automatically a projective $S$-local object (since every Reedy fibrant object is also projectively fibrant).
Hence $\map^h(A, W) \leftarrow \map^h(B,W)$ is an equivalence.
Since the Reedy $S$-local object $W$ was arbitrary, this implies that $f$ is a Reedy $S$-local equivalence.
\end{proof}

\begin{proposition}\label{prop localized projective model structure}
There exists a model category structure on $\sSet^{\graphicalcat^{\oprm}}$ so that an object $X$ is fibrant if and only if 
\begin{itemize} 
\item $X_G$ is fibrant for all graphs $G$, 
\item $X_{\exceptionaledge} \simeq *$, and 
\item for all graphs $G$, the Segal map \[ X_G = \map(\rgc[G], X) \simeq \map^h(\rgc[G], X) \rightarrow \map^h(\segalcore[G],X)\] is a weak equivalence of simplicial sets.
\end{itemize} 
Furthermore, this model structure is Quillen equivalent (via the identity functor) to the model structure from Theorem~\ref{theorem localized reedy model structure}.
\end{proposition}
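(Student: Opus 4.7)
The plan is to apply Lemma~\ref{lemma: qe reedy projective} directly, taking $\modelcat = \sSet$ (with the Kan--Quillen model structure, which is cofibrantly generated, left proper, and cellular), taking $\Rr = \graphicalcat^{\oprm}$ (a generalized Reedy category, thanks to the dualizability established in Theorem~\ref{theorem reedy}), and taking $S$ to be the same set of maps used in the proof of Theorem~\ref{theorem localized reedy model structure}: the Segal core inclusions $\segalcore[G] \hookrightarrow \rgc[G]$ together with the map $\varnothing \to \rgc[\exceptionaledge]$. The lemma simultaneously produces a localized projective model structure $\mathscr{L}_S \sSet^{\graphicalcat^{\oprm}}_{\projsub}$ and, via the identity functor, a Quillen equivalence between it and the Reedy localization $\mathscr{L}_S \sSet^{\graphicalcat^{\oprm}}_{\reedysub}$ of Theorem~\ref{theorem localized reedy model structure}. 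This immediately delivers the final assertion of the proposition.

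What remains is to characterize the fibrant objects of $\mathscr{L}_S \sSet^{\graphicalcat^{\oprm}}_{\projsub}$. By the general theory of left Bousfield localization, $X$ is fibrant precisely when it is projectively fibrant and $S$-local, that is, when $X_G$ is fibrant for every graph $G$ and when $\map^h(s, X)$ is a weak equivalence of simplicial sets for every $s \in S$. Unpacking $S$-locality for $\varnothing \to \rgc[\exceptionaledge]$ gives that $\map^h(\rgc[\exceptionaledge], X) \to \map^h(\varnothing, X) \simeq \ast$ is a weak equivalence; using that $\rgc[\exceptionaledge]$ is projectively cofibrant and invoking \cite[Corollary 4.7]{DwyerKan:FCHA} as in the proof of Theorem~\ref{theorem localized reedy model structure}, the source is identified with $X_{\exceptionaledge}$ and the condition becomes $X_{\exceptionaledge} \simeq \ast$. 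Similarly, $S$-locality for each $\segalcore[G] \hookrightarrow \rgc[G]$ becomes the homotopy Segal condition as stated, where the identification $X_G = \map(\rgc[G], X) \simeq \map^h(\rgc[G], X)$ again comes from projective cofibrancy of $\rgc[G]$ together with projective fibrancy of $X$.

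The only delicate point is confirming that every representable $\rgc[G]$ is projectively cofibrant. This is the standard observation that the generating projective cofibrations of $\sSet^{\graphicalcat^{\oprm}}$ are the maps $\rgc[G] \otimes (\partial\Delta[n] \hookrightarrow \Delta[n])$; taking $n=0$ (so that $\partial\Delta[0] = \varnothing$ and $\Delta[0]$ is the terminal object) yields $\varnothing \to \rgc[G]$ as a cofibration. Once this is in hand, the proposition follows without further work from Lemma~\ref{lemma: qe reedy projective}, which packages both the existence of the localized projective model structure and its Quillen equivalence with the localized Reedy structure.
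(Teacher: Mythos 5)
Your argument is correct and follows essentially the same route as the paper: the paper likewise obtains the first part by rerunning the proof of Theorem~\ref{theorem localized reedy model structure} starting from the projective model structure, and deduces the Quillen equivalence as a direct application of Lemma~\ref{lemma: qe reedy projective}. Your extra care in using $\map^h(\segalcore[G],X)$ (since the Segal cores are only Reedy, not projectively, cofibrant) while identifying $\map^h(\rgc[G],X)$ with $X_G$ via projective cofibrancy of representables is exactly the point the paper intends.
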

\begin{proof}
	The proof of the first part is the same as in Theorem~\ref{theorem localized reedy model structure}, except we start with the projective model structure on $\sSet^{\graphicalcat^{\oprm}}$ instead of the Reedy model structure.
	The second statement is a direct application of Lemma~\ref{lemma: qe reedy projective}.
\end{proof}

\section{Variations on the modular graphical category}\label{section: variations}
We now discuss two variations on the graphical category $\graphicalcat$.
The first of these essentially just adds in a single object, the nodeless loop.
We've postponed the introduction of the nodeless loop until now partly because it allows us to use a cleaner definition of graph in the early parts of the paper, and because we could avoid addressing many special cases throughout.
Further, from the point of view of Segal modular operads, the value of a presheaf at the nodeless loop should be indistinguishible (up to homotopy) from the value at the exceptional edge.
That said, the extended graphical category comes in handy in \cite{modular_paper_two} when proving that each Segal presheaf has an associated modular operad. 

The second variation we address is related to the original definition \cite{MR1601666} of modular operads, where the underlying collections had an additional genus grading.
Modular operads in this sense satisfied a geometric condition called stablity.
In Section~\ref{section: stable graphs} we modify $\graphicalcat$ to have objects graphs which have a genus labeling on each vertex which satisfies a stability condition.

\subsection{The extended graphical category}\label{subsection: egc}

Inspired by Remark~\ref{remark nondetermination boundary}, we drop the assumption that graphs have boundary exactly equal to $A\setminus D$.
The following extension allows us to express the nodeless loop from Example~\ref{examples combinatorial}.

\begin{definition}\label{definition unsafe graphs}
A \emph{graph} $G$ consists of 
\begin{itemize}
	\item a diagram of finite sets
\[
\begin{tikzcd}
	A \arrow[loop left, "i"] & \lar[swap]{s} D \rar{t} & V
\end{tikzcd}
\]
where $i$ is a fixedpoint-free involution and $s$ is a monomorphism, and
\item a subset $\eth(G) \subseteq A$ so that 
\begin{enumerate}
	\item $iD \setminus D \subseteq \eth(G) \subseteq A \setminus D$, and
	\item $\eth(G) \setminus iD$ is an $i$-closed subset of $A$.
\end{enumerate}
\end{itemize}
The subset $\eth(G)$ is called the \emph{boundary} of $G$.
If the boundary $\eth(G)$ is maximal, that is, if $\eth(G) = A \setminus D$, then we say that $G$ is \emph{safe}. 
\end{definition}

For the rest of this subsection, the graphs from Definition~\ref{definition jk graphs} are the safe graphs, while other graphs may be referred to as \emph{unsafe}.
But what are these unsafe graphs? 
Before answering this question fully, let us give an example that we could not quite include in Example~\ref{examples combinatorial}. 
\begin{definition}[Nodeless loop]\label{def C zero}
The \emph{loop with zero vertices} is the graph with $A=2\{0\} = \{0,0^\dagger\}$ and $D = V = \eth = \varnothing$.
Any graph isomorphic to this one will be called a \emph{nodeless loop}.
\end{definition}

\begin{remark}
Nodeless loops are not Feynman graphs in the sense of Definition~\ref{definition jk graphs}. 
As a result, the monad for compact symmetric multicategories in \cite{JOYAL2011105} is not well-defined at level $n=0$. 
We investigate this issue in more depth in the companion paper \cite{modular_paper_two}, and Sophie Raynor gives another approach in \cite{Raynor:DLCSM}.
Nodeless loops are not necessary for non-unital variations of modular operads, which explains their omission from \cite{MR1601666}.
\end{remark}

Let us return to the question at hand.
Given two graphs $G$ and $H$, we can form a new graph $G \amalg H$ by taking the coproduct of the underlying functors in $\finset^{\mathscr{I}}$ (see Definition~\ref{definition I and natural trans}) and declaring that $\eth(G \amalg H) = \eth(G) \amalg \eth(H)$.
A graph will be called \emph{connected} if it is nonempty and cannot be decomposed nontrivially via $\amalg$ (equivalently, if the underlying object in $\finset^{\mathscr{I}}$ is connected).
A graph is safe if, and only if, all of its connected components are safe.
In other words, unsafe graphs are precisely those graphs that have at least one unsafe connected component.
The conditions in Definition~\ref{definition unsafe graphs} imply that $A \setminus (\eth(G) \amalg D)$ is $i$-closed for any graph $G$.
If $G$ is unsafe, then $A \setminus (\eth(G) \amalg D)$ contains at least one element $x$, whence it also contains $ix$.
Thus $G$ contains the nodeless loop with arc set $\{ x, ix \}$ as a summand.

\begin{remark}
The only unsafe, connected graphs are nodeless loops.
A graph is unsafe precisely when it contains at least one nodeless loop as a summand.
\end{remark}

\begin{proposition}
Isomorphism classes of graphs from Definition~\ref{definition unsafe graphs} are in one-to-one correspondence with Yau--Johnson graphs \cite{yj15}. \qed
\end{proposition}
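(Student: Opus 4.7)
The plan is to construct mutually inverse assignments between isomorphism classes of graphs in the sense of Definition~\ref{definition unsafe graphs} and isomorphism classes of Yau--Johnson graphs, and verify they respect isomorphisms. The key observation is that the extra freedom allowed by Definition~\ref{definition unsafe graphs} over Definition~\ref{definition jk graphs} is precisely what is needed to encode the nodeless-loop components that Feynman graphs cannot express (cf.\ Remark~\ref{remark nondetermination boundary} and Remark~\ref{remark jk not closed under graph sub}).

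First, given an unsafe graph $G = (A \leftarrow D \to V, i, \eth(G))$, I would partition $A$ into three $i$-closed pieces: the set $D \cup iD$ of arcs touching at least one vertex, the set $\eth(G) \setminus iD$ of arcs comprising the free-floating exceptional-edge components, and the complement $A \setminus (D \cup iD \cup \eth(G))$ of arcs comprising the nodeless-loop components. The two defining conditions on $\eth(G)$ in Definition~\ref{definition unsafe graphs} guarantee both that $\eth(G)\setminus iD$ and $A\setminus(D\cup iD\cup \eth(G))$ are $i$-closed and that they are disjoint. From this data I would assemble a Yau--Johnson graph in the sense of \cite[Ch.~5]{yj15}: take $V$ as the vertex set, take $t^{-1}(v)\subseteq D$ as the set of flags at each $v$, use the restriction of $i$ to $D\cup iD$ to record ordinary edges and legs (with legs corresponding to the subset $iD\setminus D$), and record the $i$-orbits in the remaining two pieces as the exceptional-edge and nodeless-loop summands.

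In the reverse direction, given a Yau--Johnson graph I would set $V$ to be the vertex set, define $A$ to be the disjoint union of all flag sets (those attached to vertices together with those of free exceptional edges and nodeless loops), let $D\subseteq A$ be the subset of flags attached to some vertex with $s$ the inclusion and $t$ the vertex-attachment map, and take $i$ to be the edge-pairing involution (extended by the respective involutions on exceptional-edge and nodeless-loop flag pairs). Finally, I would set $\eth(G) := (iD\setminus D) \amalg E$, where $E$ denotes the flag set of the exceptional-edge summands. A direct verification shows the two conditions on $\eth(G)$ hold and that the two constructions are strict inverses of one another.

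Since morphisms on either side are bijections of the combinatorial data preserving all the structure, the correspondence is equivariant with respect to isomorphisms and therefore descends to a bijection on isomorphism classes. I expect the main technical issue to be purely notational: matching the Yau--Johnson encoding of legs, exceptional cells, and nodeless loops with the $\eth(G)$-based decomposition above. Once this dictionary is in place the remaining verifications are routine, and the equivalence of the safe case with the other combinatorial formalisms (Propositions 15.2, 15.6, 15.8 of \cite{batanin-berger}) handles everything away from the nodeless-loop summands.
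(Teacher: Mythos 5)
Your argument is correct, but it takes a genuinely different route from the paper's. The paper dispatches this proposition in one sentence, by coupling \cite[Proposition 15.6]{batanin-berger} with a minor variation of \cite[Proposition 15.2]{batanin-berger}: those results already provide the dictionary between the Joyal--Kock-style formalism and Yau--Johnson graphs in the safe case, and the only new content is the bookkeeping for the extra boundary datum, which the ``minor variation'' supplies. You instead build the correspondence by hand, and your three-way $i$-closed partition of $A$ into $D\cup iD$, $\eth(G)\setminus iD$, and $A\setminus(D\cup iD\cup\eth(G))$ is exactly the right invariant: the two axioms on $\eth(G)$ do force each piece to be $i$-closed, they force every edge disjoint from $D\cup iD$ to be either wholly in $\eth(G)$ (an exceptional edge) or wholly outside it (a nodeless loop), and the identity $\eth(G)\cap iD=iD\setminus D$ shows that $\eth(G)$ is recovered as $(iD\setminus D)\amalg E$ on the return trip. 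Two caveats: the constructions are inverse only up to canonical isomorphism rather than strictly, since the Yau--Johnson encoding of legs and exceptional cells does not literally agree with the arc-pair encoding here --- the ``purely notational'' matching you defer is precisely the content of the cited Batanin--Berger propositions --- and your closing appeal to those same propositions for the safe summands means your argument is best read as an explicit extension of their dictionary to the unsafe summands rather than a fully independent proof. What your approach buys is a concrete description of where each nodeless loop and free exceptional edge sits inside the arc set, which the citation-based proof leaves implicit; what the paper's approach buys is brevity and reuse of an already-verified comparison.
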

\begin{proof}
Couple \cite[Proposition 15.6]{batanin-berger} with a minor variation of \cite[Proposition 15.2]{batanin-berger}.
\end{proof}

We now adapt \'etale maps (Definition~\ref{definition etale}) and embeddings (Definition~\ref{def: embedding}) to the present context.
\begin{definition}
Suppose that $G$ and $G'$ are (possibly unsafe) graphs.
\begin{itemize}
	\item An \emph{\'etale map} $G\to G'$ is a morphism of underlying objects in $\finset^{\mathscr{I}}$ 
\begin{equation*}
\begin{tikzcd}
	A \arrow[loop left, "i"]\dar & \lar[swap]{s} D\dar \rar{t} & V\dar \\
	A' \arrow[loop left, "i'"] & \lar[swap]{s'} D' \rar{t'} & V'
\end{tikzcd}
\end{equation*}
so that 
\begin{itemize}
	\item the right-hand square is a pullback, and
	\item the set $A\setminus (\eth(G) \amalg D)$ maps into $A' \setminus (\eth(G') \amalg D')$.
\end{itemize}
\item An \emph{embedding} $G\to G'$ is an \'etale map where $V\to V'$ is a monomorphism.
\end{itemize}
\end{definition}

If $G$ is safe, then $\eth(G) \amalg D = A$, so the second condition for \'etale maps is automatically satisfied.
We have not added too many embeddings:
\begin{itemize}
	\item If $G$ is a nodeless loop and $G \to G'$ is an embedding, then $G'$ is also a nodeless loop.
	\item If $G'$ is a nodeless loop and $G\to G'$ is an embedding, then $G$ is either an exceptional edge or a nodeless loop.
\end{itemize}

We now adapt Definition~\ref{def: graphical map} to our more general class of connected graphs (that includes nodeless loops).
A more hands-on description follows in Remark~\ref{egc defn explicit}.
\begin{definition}\label{egc defn}
The extended graphical category $\egc$\index{$\egc$} has objects the connected graphs from Definition~\ref{definition unsafe graphs}.
A morphism $\varphi : G\to G'$ consists of 
	\begin{itemize}
		\item A map of involutive sets $\varphi_0 : A \to A'$ 
		\item A function $\varphi_1 : V \to \embeddings(G')$
	\end{itemize}
satisfying \eqref{graphical map defn: no double vertex covering}, \eqref{graphical map defn: boundary compatibility}, and 
\begin{enumerate}[label={({\roman*}')},ref={\thetheorem.\roman*'},start=3]
	\item If the boundary of $G$ is empty and $\varphi_1(v)$ is an edge for every $v$, then $G'$ is a nodeless loop. \label{egc defn: collapse condition}
\end{enumerate}
\end{definition}
Composition is defined essentially as in Definition~\ref{graphical map composition}.

Condition \eqref{egc defn: collapse condition} implies that if $G$ is a nodeless loop and $G\to G'$ is a map, then $G'$ is also a nodeless loop.
On the other hand, if $G'$ is a nodeless loop then the set $\embeddings(G')$ has precisely two elements. 
In this case, a map $\varphi: G \to G'$ is entirely determined by $\varphi_0$.
Associativity of composition in $\egc$ then follows from Theorem~\ref{theorem graphicalcat is a category} and associativity of composition in the category of involutive sets.

\begin{remark}\label{egc defn explicit}
By comparing \eqref{graphical map defn: collapse condition} and \eqref{egc defn: collapse condition}, we see that $\graphicalcat$ is a full subcategory of $\egc$.
Further, if $G \to G'$ is a map and $G' \in \graphicalcat$, then $G\in \graphicalcat$, i.e. $\graphicalcat$ is a sieve on $\egc$ (as in Proposition~\ref{proposition sieve}).
Let $K$ denote a nodeless loop.
We then have
\begin{align*}
	|\egc (K, G)| &= \begin{cases}
		0 & \text{if } G \in \graphicalcat \\
		2 & \text{if } G \text{ is a nodeless loop}
	\end{cases} \\
	|\egc(G, K)| &=\begin{cases}
		2 & \text{if each vertex of $G$ has valence two,} \\
		1 & \text{if $A(G)$ is empty,} \\
		0 & \text{if $G$ contains a vertex of valence different from $0$ or $2$.}
	\end{cases}
\end{align*}
In the cases where these sets are nonempty, they are identified with $\hom(A(K),A(G))$, respectively $\hom(A(G),A(K))$.
Essentially only the linear graphs $L_n$, the isolated vertex $\medstar_0$, and the loops with $n$ vertices (including nodeless loops) admit maps to a nodeless loop.
\end{remark}

\begin{theorem}
\label{egc factorization theorem}
The category $\egc$ admits a factorization system extending that on $\graphicalcat$ from Theorem~\ref{theorem orthogonal factorization system}.
\end{theorem}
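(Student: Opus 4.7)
The plan is to define the left and right classes in $\egc$ as the obvious extensions of those on $\graphicalcat$: a map $\varphi$ is \emph{active} if $\varphi_0$ restricts to a bijection $\eth(G) \to \eth(G')$, and the right class consists of the embeddings (already defined in $\egc$ above Definition~\ref{egc defn}). Both classes visibly contain all isomorphisms and are closed under composition. Since $\graphicalcat \subseteq \egc$ is a sieve (any map whose source is a nodeless loop must land in a nodeless loop, by condition~\eqref{egc defn: collapse condition}), any morphism $\varphi : G \to G'$ with $G' \in \graphicalcat$ automatically has $G \in \graphicalcat$ and its factorization is provided directly by Theorem~\ref{theorem orthogonal factorization system}. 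The only new case is when $G'$ is a nodeless loop $K$.

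For the nodeless-loop case, I would apply the image construction underlying Lemma~\ref{lemma: image factorization}: choose embedding representatives $\varphi_v : H_v \to K$ for each $\varphi_1(v)$ and set $\widetilde{G} := G\{H_v\}$. The worry flagged in Remark~\ref{remark jk not closed under graph sub}, namely that such a substitution can produce a nodeless loop (e.g.\ $C_n\{\exceptionaledge\}_v \cong K$, where $C_n$ is the loop with $n\geq 1$ vertices), is precisely what $\egc$ is designed to absorb, so $\widetilde{G}$ is well-defined in $\egc$. Using the explicit classification of $\egc(G,K)$ from Remark~\ref{egc defn explicit}, one finds $\widetilde{G} \cong G$ when $G$ is a nodeless loop or the exceptional edge, $\widetilde{G} \cong K$ when $G \in \{\medstar_0\} \cup \{C_n\}_{n\geq 1}$, and $\widetilde{G} \cong \exceptionaledge$ when $G = L_n$ for $n \geq 1$. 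In each instance the embedding $\widetilde{G} \to K$ is either an isomorphism or one of the two embeddings $\exceptionaledge \hookrightarrow K$, and the induced $G \to \widetilde{G}$ is a bijection on boundaries by a direct check, hence active.

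For uniqueness up to unique isomorphism, in the $\graphicalcat$ case I would appeal to Theorem~\ref{theorem orthogonal factorization system} verbatim; in the nodeless-loop case it becomes a finite bookkeeping exercise, since embeddings into $K$ are classified by the two source graphs $\exceptionaledge$ and $K$, each with an automorphism group of order two. The main obstacle is verifying the analogue of Lemma~\ref{lemma mono-like property active} in the extended setting, so that active-followed-by-embedding factorizations are orthogonal in $\egc$ and not merely in $\graphicalcat$. This reduces to two small observations: an embedding $\exceptionaledge \to K$ is injective on arcs, so cancels on the right; and an active map into $\exceptionaledge$ (which forces the source to have two boundary arcs and only bivalent vertices) is determined by its restriction to $\eth(G)$. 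Both observations follow by direct inspection from the enumeration in Remark~\ref{egc defn explicit}, so no genuinely new technology is required beyond what has already been developed for $\graphicalcat$.
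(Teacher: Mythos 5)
Your proposal is correct and follows essentially the same route as the paper: define the left class as the boundary-bijective (active) maps and the right class as the embeddings, use the sieve property of $\graphicalcat \subseteq \egc$ to reduce existence and uniqueness of factorizations to the finitely many isomorphism classes of maps with codomain a nodeless loop $K$, and dispatch those by direct inspection. The paper's proof is itself only a sketch declaring this case analysis routine, so your fleshed-out enumeration (via the image construction and Remark~\ref{egc defn explicit}) is if anything slightly more explicit than what appears in print.
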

\begin{proof}[Sketch of Proof]
Let $K$ be a fixed nodeless loop.
The right class $\egc_{\embrm}$ consists of embeddings.
It contains two maps $\exceptionaledge \to K$, two maps $K\to K$, as well as all maps isomorphic to these and all maps in $\graphicalcat_{\embrm}$.
The left class $\egc_{\actrm}$ is obtained from $\graphicalcat_{\actrm}$ by adding in the unique map $\medstar_0 \to K$, the maps from loops with $n$ vertices to $K$, and all maps isomorphic to these.
Since $\graphicalcat$ is a sieve, we only need to check factorizations and uniqueness of such on maps whose codomain is $K$. 
There are only a few such cases and this is routine.
\end{proof}

Likewise, a version of Theorem~\ref{theorem reedy} is true for $\egc$.

\begin{theorem}
The category $\egc$ admits the structure of a dualizable generalized Reedy category.
\end{theorem}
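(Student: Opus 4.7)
The plan is to mirror the proof of Theorem~\ref{theorem reedy}. One defines wide subcategories $\egc^+$ and $\egc^-$ of $\egc$ that extend $\graphicalcat^+$ and $\graphicalcat^-$, using the orthogonal factorization system of Theorem~\ref{egc factorization theorem}; and extends the degree function $\deg(G) = |V(G)| + |E_i(G)|$ from $\graphicalcat$ to $\egc$ by specifying $\deg(K)$ for nodeless loops $K$. The five axioms of a dualizable generalized Reedy category---the factorization (analogue of Proposition~\ref{prop reedy three}), the strict degree-change for non-invertible morphisms (analogue of Proposition~\ref{prop reedy one}), the rigidity of isomorphisms (analogue of Proposition~\ref{prop reedy four}), the intersection condition $\egc^+ \cap \egc^- = \mathrm{Iso}(\egc)$, and the dualizability---then have to be verified.

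Since $\graphicalcat$ is a sieve in $\egc$ by Remark~\ref{egc defn explicit}, the only new morphisms to analyze are those with codomain a nodeless loop $K$. These comprise the two embeddings $\exceptionaledge \to K$, the collapse $\medstar_0 \to K$, the active maps from loops with $n \geq 1$ vertices to $K$, the automorphisms of $K$, and composites of these with morphisms in $\graphicalcat$. Each of the axioms reduces, via the sieve property, to the case already handled for $\graphicalcat$ in Theorem~\ref{theorem reedy} together with a direct case analysis of these finitely many new types of morphism. Dualizability will once again follow from the symmetry under $\egc \leftrightarrow \egc^{\mathrm{op}}$ interchanging $\egc^+$ and $\egc^-$, mirroring the end of the proof of Theorem~\ref{theorem reedy}.

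The main obstacle is the careful allocation of the new morphisms between $\egc^+$ and $\egc^-$ together with the choice of $\deg(K)$ so that non-invertible morphisms strictly change degree in the expected direction, the two classes intersect only in the isomorphisms, and the canonical factorizations of composite morphisms (such as the map from a loop with several vertices to $K$, which could \emph{a priori} factor either directly through $K$ or through smaller loops) remain unique up to isomorphism. In particular, maps such as $\medstar_0 \to K$ and the bijective-on-arcs map from the loop with one vertex to $K$ have naively ambiguous placement between $\egc^+$ and $\egc^-$; resolving this ambiguity is what forces a specific value of $\deg(K)$ and fixes the precise definitions of the two wide subcategories. Once these definitions are pinned down, each verification of the Reedy axioms amounts to revisiting the proofs of Proposition~\ref{prop reedy one}, Proposition~\ref{prop reedy three}, and Proposition~\ref{prop reedy four} with the short list of new cases tacked on.
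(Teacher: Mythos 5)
Your overall strategy — use the sieve property to reduce everything to the finitely many new morphisms with codomain a nodeless loop $K$, and then rerun Propositions~\ref{prop reedy one}, \ref{prop reedy three}, and \ref{prop reedy four} with these cases added — is exactly the paper's, and your inventory of the new morphisms is correct. But there is a genuine gap in the way you propose to handle the degree: you cannot keep $\deg(G)=|V|+|E_i|$ on $\graphicalcat$ and merely choose a value $\deg(K)$; no such value exists. On one side, the unique map $\medstar_0\to K$ is active with $\varphi_0$ (vacuously) injective, it is not invertible, and it cannot be factored nontrivially as an inverse map followed by a direct map (there are no maps from $\medstar_0$ to any graph of smaller degree), so it must lie in $\egc^+$ and strictly raise degree; this forces $\deg(K)>\deg(\medstar_0)=1$. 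On the other side, the collapse from the loop with one vertex to $K$ (active, all vertices sent to edges) must lie in $\egc^-$: if you instead place it in $\egc^+$, then the map from the loop with two vertices to $K$ acquires two genuinely different $(-,+)$-factorizations through the two codegeneracies onto the loop with one vertex, and these are not conjugate by an automorphism of the one-vertex loop over $K$ (this is the same phenomenon as Example~\ref{example: active map}), so the uniqueness clause of Proposition~\ref{prop reedy three} fails. Hence $\deg(K)<\deg(\text{loop with one vertex})=1+1=2$ under the unmodified degree. The two constraints $1<\deg(K)<2$ are incompatible.

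The paper's resolution is therefore not an extension of the degree function of Definition~\ref{defn graph cat reedy} but a replacement of it: following \cite{hry_factorizations}, one sets $\deg(\exceptionaledge)=0$, $\deg(\medstar_0)=1$, $\deg(K)=2$, and $\deg(G)=|V|+|E_i|+1$ for all other graphs. This shift by one is invisible for morphisms between graphs in the ``generic'' class (so Proposition~\ref{prop reedy one} survives unchanged there) and it opens exactly the gap needed between $\medstar_0$ and the loops: the loop with $n\geq 1$ vertices now has degree $2n+1>2$, so the maps from such loops to $K$ (which, with $\graphicalcat^-$, make up $\egc^-$) lower degree, while $\exceptionaledge\to K$, $K\to K$, and $\medstar_0\to K$ (which, with $\graphicalcat^+$, make up $\egc^+$) raise or preserve it appropriately. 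With that degree function and allocation in place, the remainder of your argument — the case analysis for maps with codomain $K$ and the dualizability — goes through as you describe.
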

\begin{proof}[Sketch of Proof]
The degree function must be modified from that in Definition~\ref{defn graph cat reedy}, and is essentially given in \cite[Definition 3.2]{hry_factorizations}.
The exceptional edge $\exceptionaledge$ has degree $0$, while the isolated vertex $\medstar_0$ has degree $1$.
For all other graphs, the degree is given by the formula $|V| + |E_i| + 1$, i.e., an increase of one from the usual degree.
We emphasize that the nodeless loop has degree $2$.

Let $K$ be a nodeless loop.
We describe $\egc^-$ and $\egc^+$ up to isomorphisms.
The inverse category $\egc^-$ consists of maps in $\graphicalcat^-$ and maps from loops with $n$ vertices to $K$.
The direct category $\egc^+$ consists of maps in $\graphicalcat^+$, $\exceptionaledge \to K$, $K \to K$, and $\medstar_0 \to K$. 
As in Theorem~\ref{egc factorization theorem}, the analogue of Proposition~\ref{prop reedy three} may be proved by factoring only those maps with codomain $K$.
\end{proof}

\begin{definition}
Let $\iota : \graphicalcat \to \egc$ denote the inclusion functor.
\begin{itemize}
\item If $G$ is a safe graph, then the \emph{Segal core inclusion} is just the left Kan extension
\[
	\iota_! (\segalcore[G] \hookrightarrow \rgc[G])
\]
of the usual Segal core inclusion (Section~\ref{section segal core}).
We use the same notation for the domain, writing this as $\segalcore[G] \to \regc[G]$.
\item The Segal core inclusion for a nodeless loop $K$ is
\[
	\regc[\exceptionaledge] = \segalcore[K] \hookrightarrow \regc[K].
\]
\item A $\egc$-presheaf $X$ is said to \emph{satisfy the strict Segal condition} if $\hom(-,X)$ sends every Segal core inclusion to a bijection of sets.
\end{itemize}
\end{definition}

\begin{theorem}
\label{theorem segal to segal}
If $X\in \Set^{\graphicalcat^{\oprm}}$ is Segal, then its right Kan extension $\iota_* X \in \Set^{\egc^{\oprm}}$ is also Segal.
\end{theorem}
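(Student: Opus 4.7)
Since $\graphicalcat$ is a sieve in $\egc$, the right Kan extension $\iota_*$ is fully faithful: the comma category computing $(\iota_* X)(G)$ at a safe $G$ has a terminal object $(G,\id)$, giving $(\iota_* X)(G) = X(G)$. Moreover, by construction the Segal core inclusion $\segalcore[G] \hookrightarrow \regc[G]$ in $\egc$ is $\iota_!$ of the original inclusion in $\graphicalcat$, so the adjunction $\iota_! \dashv \iota^*$ together with $\iota^*\iota_* X = X$ identifies the Segal condition for $\iota_*X$ at $G$ with the Segal condition for $X$ at $G$, which holds by hypothesis. The entire content of the theorem therefore lies in verifying Segal at a nodeless loop $K$: one must show that the projection $(\iota_*X)(K) \to X(\exceptionaledge)$ induced by a chosen embedding $\exceptionaledge \hookrightarrow K$ is a bijection.

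The plan is to analyze the indexing category $\mathcal{I} = (K \downarrow \iota^{\oprm})$ explicitly and to construct an inverse to this projection. By Remark~\ref{egc defn explicit}, an object $(c, \alpha \colon c \to K)$ of $\mathcal{I}$ has $c$ isomorphic to one of $\exceptionaledge$, $\medstar_0$, a linear graph $L_n$, or a loop graph with $n \geq 1$ vertices from Example~\ref{examples combinatorial}; all other connected objects of $\graphicalcat$ admit no morphism to $K$. Theorem~\ref{egc factorization theorem} factors each $\alpha$ as $\alpha = k \circ \alpha'$ with $k$ an embedding into $K$ (either one of the two boundary embeddings $\exceptionaledge \hookrightarrow K$ or an automorphism of $K$) and $\alpha'$ active. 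Given $e \in X(\exceptionaledge)$, I would define a compatible system by setting $x_{(c,\alpha)} = X(\alpha')(e')$ whenever $k$ is an embedding $\exceptionaledge \hookrightarrow K$, with $e'$ equal to $e$ or to its $z$-twist (where $z$ is the nontrivial automorphism of $\exceptionaledge$) according to which boundary embedding $k$ is. In the remaining cases, where $k$ is an automorphism of $K$ and $c$ is $\medstar_0$ or a loop graph, the Segal condition on a loop graph is used to produce the required element of $X(c)$.

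The hard part will be the consistency of this construction on $(\medstar_0, \alpha_0)$ and on loop-graph objects, where $c$ admits no active map to $\exceptionaledge$ and hence $e$ cannot be transported along a morphism of $\graphicalcat$. The key observation is that the unique map $\medstar_0 \to K$ factors through any loop graph $L$ via some $\gamma \colon \medstar_0 \to L$ in $\graphicalcat$, so the desired value at $\medstar_0$ is pinned down by its value at $L$ under $X(\gamma)$; in turn, the Segal condition on $L$ identifies $X(L)$ with an equalizer built from $X(\medstar_2)$ and $X(\exceptionaledge)$, and the compatibility of the embeddings $\exceptionaledge \hookrightarrow L$ with $\alpha$ produces the required element from $e$ and $X(z)(e)$. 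Showing that these assignments are independent of the chosen loop graph and genuinely compatible with every morphism of $\mathcal{I}$ is the main technical work and invokes the uniqueness-up-to-unique-isomorphism part of Proposition~\ref{proposition: embedding uniqueness} together with Lemma~\ref{lemma_factorization_good}. Injectivity of the projection is then immediate from the same determination, while surjectivity follows by checking that the system so constructed satisfies all the pullback compatibilities in $\mathcal{I}$.
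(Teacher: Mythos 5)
Your proposal is correct and follows essentially the same route as the paper: both reduce the statement to identifying the pointwise right Kan extension at the nodeless loop $K$ --- computed as a limit over the comma category of safe graphs over $K$, whose objects are exactly $\exceptionaledge$, $\medstar_0$, the linear graphs, and the loops --- with $X_{\exceptionaledge}$, by building a compatible family from $e$ using the Segal condition at linear and loop graphs and checking the rotation/reflection and $\medstar_0$ compatibilities. The only difference is organizational: the paper first replaces the comma category by an initial subcategory $\mathcal{B}$ of \emph{oriented} maps (which eliminates $\medstar_0$ and most of the automorphism bookkeeping before taking the limit), whereas you verify the compatibilities over the full comma category directly.
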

\begin{proof}
As $\graphicalcat$ is a full subcategory of $\egc$, we have that $(\iota_* X)_G = X_G$ for every safe graph $G$, so the Segal condition holds at safe graphs $G$.
Thus we must show that $(\iota_* X)_K \to (\iota_* X)_{\exceptionaledge} = X_{\exceptionaledge}$ is a bijection when $K$ is a nodeless loop.

Write $C_n$ for the loop with $n$ vertices ($n\geq 1$) from Example~\ref{examples combinatorial}, all of which are safe graphs.
We also write $C_0$ for the loop with zero vertices from Definition~\ref{def C zero}. 
We restrict $\graphicalcat$ and $\egc$ to skeletal full subcategories $\mathcal{A} \subseteq \graphicalcat$ and $\widetilde{\mathcal{A}} \subseteq \egc$ whose objects are $\medstar_0$, $L_n$ for $n\geq 0$, and $C_m$ for $m\geq 1$ (resp. $m\geq 0$).
Every map in $\egc$ with source or target $C_0$ is isomorphic to a map in $\widetilde{\mathcal{A}}$, so it is sufficient to restrict $X$ to $\mathcal{A}^{\oprm}$ and examine its right Kan extension along $\iota : \mathcal{A}^{\oprm} \to \widetilde{\mathcal{A}}^{\oprm}$.

The arc set of every object in $\widetilde{\mathcal{A}}$ is of the form $2\{0,\dots, n\}$ or $2\{1,\dots, m\}$, and we say that a morphism $\varphi \in \widetilde{\mathcal{A}}$ is \emph{oriented} if $\varphi_0$ is of the form $2f$.
That is, a map $\varphi$ is oriented if it satisfies the condition that if $j$ is an integer then $\varphi_0(j)$ is not of the form $k^\dagger$ for an integer $k$.
Let $\mathcal{B} \subseteq \mathcal{A}$ denote the category with objects $L_n$ ($n\geq 0$) and $C_m$ ($m\geq 1$) with maps the oriented maps.
Each object in $\mathcal{B}$ admits a unique oriented map to $C_0$ and there is a functor \[ F: \mathcal{B}^{\oprm} \to C_0 \downarrow (\iota: \mathcal{A}^{\oprm} \to \widetilde{\mathcal{A}}^{\oprm}) \]
taking a graph $G$ to the opposite of the oriented map $G \to C_0$.
One can check that the functor $F$ is initial, so the pointwise formula for right Kan extension (Theorem 1 of \cite[X.3]{maclane}) gives
\[
	(\iota_* X)_{C_0} \cong \lim_{C_0 \downarrow \iota} X \cong \lim_{\mathcal{B}^{\oprm}} X.
\]

It remains to show that $\lim_{\mathcal{B}^{\oprm}} X$ is isomorphic to $X_{L_0}$.
Let $p : L_0 \to L_1$ be defined by $p(0) = 1$ and $q: L_0 \to L_1$ be defined by $q(0) = 0$.
For $n\geq 1$ we have a diagram
\begin{equation} \label{eq Lzero to Ln} \begin{tikzcd}
X_{L_0} \rar \dar \arrow[dr] & X_{L_1} \arrow[dr, "\text{diagonal}"] \\
X_{L_n} \rar{\cong} & X_{L_1} {{}_{p^*}\kern-4pt\times\kern-3pt{}_{q^*}} X_{L_1} \cdots {{}_{p^*}\kern-4pt\times\kern-3pt{}_{q^*}} X_{L_1} \rar[hook] & X_{L_1}^{\times n}
\end{tikzcd} \end{equation}
coming from the unique oriented maps $L_1 \to L_0$ and $L_n \to L_0$ and the $n$ oriented embeddings $L_1 \to L_n$.
The bottom left map is an isomorphism by the Segal condition, that is, elements in $X_{L_n}$ are lists $(x_1, \dots, x_n)$ with $p^*x_j = q^* x_{j+1}$ for $1\leq j < n$.

There is no map in $\mathcal{B}$ from $C_m$ to $L_0$.
However, the Segal condition implies the oriented embedding $L_m \to C_m$ which is the identity on vertices induces an inclusion $X_{C_m} \to X_{L_m}$.
That is, $X_{C_m}$ may be regarded as the subset of $X_{L_m}$ consisting of those lists $(x_1, \dots, x_m)$ satisfying the additional condition $q^*x_1 = p^*x_m$.
The oriented rotation $r_m : C_m \to C_m$ acts on $X_{C_m}$ by rotating these lists.
We see that the map $X_{L_0} \to X_{L_m}$ from \eqref{eq Lzero to Ln}, which lands in the diagonal, actually factors through $X_{C_m}$; write $\kappa_m : X_{L_0} \to X_{C_m}$ for this special function not coming from $\mathcal{B}$.
As $\kappa_m$ lands in a diagonal, we have $r_m^* \kappa_m = \kappa_m$. 

One now checks that the special functions $\kappa_m : X_{L_0} \to X_{C_m}$ and the natural maps $X_{L_0} \to X_{L_n}$ determine a function $X_{L_0} \to \lim_{\mathcal{B}^{\oprm}} X$ which is both left and right inverse to the projection $\lim_{\mathcal{B}^{\oprm}} X \to X_{L_0}$.
This is tedious but straightforward.
\end{proof}

\subsection{Genus grading and stable maps}\label{section: stable graphs} 

The original definition \cite{MR1601666} of modular operad had an additional `genus' grading.
In this case, the underlying objects satisfy a stability condition.
One can certainly import these notions directly into the setting of colored modular operads studied in \cite{modular_paper_two}.
In this section, we discuss the presheaf side, and propose, in Theorem~\ref{theorem stable segal modular operads}, a stable version of the Segal modular operads of Definition~\ref{defn mon segal obj}.

\begin{definition}
Let $G$ be a graph.
\begin{itemize}
	\item A \emph{genus function} for $G$ is a function $g : V(G) \to \mathbb{N}$. 
	\item The \emph{total genus} of a pair $(G,g : V \to \mathbb{N})$ is given by
	\[
		g(G) = \beta_1(G) + \sum_{v\in V} g(v)
	\]
	where $\beta_1(G)$ is the first Betti number of $G$.
	More generally, if $f : H \to G$ is an embedding, then we can define
	\[
		g(f) = \beta_1(H) + \sum_{v\in V(H)} g(f(v))
	\]
	which descends to a function $g : \embeddings(G) \to \mathbb{N}$\index{$g : \embeddings(G) \to \mathbb{N}$}.
	\item A pair $(G,g)$ is called \emph{stable} if $G$ is connected and for every vertex $v$, 
	\[
		2g(v) + |\nbhd(v)| - 2 > 0.
	\]
\end{itemize}
\end{definition}

If $G\neq\exceptionaledge$, then the first Betti number of $G$ is given by $\beta_1(G)=|E_i|-|V|+1$.
Using this fact, or the long exact sequence for relative homology, one sees that $\beta_1(G\{H_v\}) = \beta_1(G) + \sum_v \beta_1(H_v)$ (which should be proved working one vertex at a time) whenever $G$ and all of the $H_v$ are connected.

\begin{itemize}
\item The exceptional edge admits only one genus function $g$, and $g(\exceptionaledge) = \beta_1(\exceptionaledge) = 0$.
This graph trivially satisfies the stability condition.
\item Note that if $(G,g)$ is a stable graph, then $G$ has no bivalent vertices with genus $0$. 
Moreover, if $g(v)=1$, then $|\nbhd(v)|>0$. 
\item The function $g : \embeddings(G) \to \mathbb{N}$ sends $\iota_v : \medstar_v \to G$ to $g(v)$ and $\id_G : G \to G$ to the total genus $g(G)$.
\end{itemize}

Suppose that $\varphi : H \to G$ is any graphical map and $g$ is a genus function on $G$.
Then the composition 
\[
	g_{\varphi}: V(H) \xrightarrow{\varphi_1} \embeddings(G) \xrightarrow{g} \mathbb{N}
\]
is a genus function for $H$.
If $(G,g)$ happens to be stable, it is not necessarily true that $(H,g_\varphi)$ is also stable.
However, if $\varphi$ is an embedding then $(H, g_{\varphi})$ is stable since stability is just checked at each vertex.

\begin{example}
\label{never edges stable}
Suppose that $(G,g)$ is stable and $\varphi : H \to G$ is a graphical map.
If there is a vertex $v$ with $\varphi_1(v)$ an edge, then $(H,g_{\varphi})$ is not stable.
This is because $g_\varphi(v) = g(\exceptionaledge \to G) = 0$, so $2g_\varphi(v) + |\nbhd(v)| - 2 = |\nbhd(v)| - 2 = 0$.
\end{example}

\begin{definition}[Stable graphical category]
The stable graphical category $\graphicalcat_{\strm}$\index{$\graphicalcat_{\strm}, R: \graphicalcat_{\strm} \to \graphicalcat$} has:
\begin{itemize}
\item Objects those pairs $(G,g)$ where $G\in \graphicalcat$ is a graph and $g$ is a genus function so that $(G,g)$ is stable.
\item Morphisms $(G,g) \to (G',g')$ are precisely those graphical maps $\varphi : G \to G'$ so that the diagram
\[\tcdtriangle{V(G)}{g}{\varphi_1}{\embeddings(G')}{g'}{\mathbb{N}}\]
commutes. One has such a morphism just when $g = g'_\varphi$.
\end{itemize}
One defines composition using the composition in $\graphicalcat$.
We let $R: \graphicalcat_{\strm} \to \graphicalcat$ be the functor which forgets the genus function.
\end{definition} 

\begin{proposition}
The morphisms defined above for $\graphicalcat_{\strm}$ are closed under composition.
\end{proposition}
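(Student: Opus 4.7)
The plan is to show that if $\varphi : (G,g) \to (G',g')$ and $\psi : (G',g') \to (G'',g'')$ are composable morphisms in $\graphicalcat_{\strm}$, then the composite $\psi\circ\varphi$ (computed in $\graphicalcat$ via Definition~\ref{graphical map composition}) satisfies $g = g''_{\psi\circ\varphi}$. Fixing a vertex $v\in V(G)$, this reduces to verifying the single identity $g''(\image(\psi|_{\varphi_1(v)})) = g'(\varphi_1(v))$, because the hypothesis on $\varphi$ then supplies $g'(\varphi_1(v)) = g(v)$.

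To unpack the identity, I would pick a representative $\varphi_v : H_v \to G'$ for $\varphi_1(v)$ and, for each $u\in V(H_v)$, a representative $\alpha_u : J_u \to G''$ for $\psi_1(\varphi_v(u))$. Proposition~\ref{proposition: image} applied to $\psi|_{\varphi_v}$ then produces an embedding $k : H_v\{J_u\}_{u\in V(H_v)} \to G''$ representing $\image(\psi|_{\varphi_v})$, whose restriction along the canonical inclusion $J_u \hookrightarrow H_v\{J_u\}$ is precisely $\alpha_u$.

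The computation now combines three ingredients: the defining formula $g''(k) = \beta_1(H_v\{J_u\}) + \sum_{w\in V(H_v\{J_u\})} g''(k(w))$; the additivity $\beta_1(H_v\{J_u\}) = \beta_1(H_v) + \sum_{u} \beta_1(J_u)$ recorded in the excerpt just after the definition of total genus; and the identification $V(H_v\{J_u\}) = \coprod_u V(J_u)$ with $k|_{J_u} = \alpha_u$. Regrouping yields
\[
g''(k) \;=\; \beta_1(H_v) \;+\; \sum_{u\in V(H_v)}\Bigl(\beta_1(J_u) + \sum_{w\in V(J_u)} g''(\alpha_u(w))\Bigr).
\]
The inner bracket is exactly $g''(\alpha_u) = g''(\psi_1(\varphi_v(u)))$, which equals $g'(\varphi_v(u))$ by the hypothesis $g' = g''_\psi$. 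The $\beta_1(J_u)$ contributions therefore cancel, and the right-hand side collapses to $\beta_1(H_v) + \sum_{u} g'(\varphi_v(u)) = g'(\varphi_v) = g'(\varphi_1(v))$, as required.

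I do not anticipate a serious obstacle: the argument is essentially bookkeeping around the definitions of $\image$ and graph substitution. The one nontrivial input is the additivity of $\beta_1$ under graph substitution, but this is explicitly asserted in the excerpt and can in any case be read off from the coequalizer description in Construction~\ref{construction graph sub}, once one uses that connectedness of $G$ and of each $H_v$ guarantees connectedness of $G\{H_v\}$.
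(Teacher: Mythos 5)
Your proof is correct and follows essentially the same route as the paper's: both arguments reduce to the single vertex identity, represent $\image(\psi|_{\varphi_v})$ via graph substitution using Proposition~\ref{proposition: image}, and combine the additivity of $\beta_1$ under substitution with the identification of vertex sets to regroup the sum. The only difference is cosmetic — you expand from $g''(\image(\psi|_{\varphi_v}))$ and collapse, while the paper expands from $g^{(1)}(v)$ outward — so no further comment is needed.
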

\begin{proof}
Consider two morphisms $\varphi : G^{(1)} \to G^{(2)}$ and $\psi : G^{(2)} \to G^{(3)}$ that define maps of stable graphs
\[
	 (G^{(1)}, g^{(1)}) \xrightarrow{\varphi} (G^{(2)}, g^{(2)}) \xrightarrow{\psi} (G^{(3)}, g^{(3)}).
\]
We wish to show that $\psi \circ \varphi$ is in $\graphicalcat_{\strm}$.
In other words, assuming that $g^{(1)} = g^{(2)}_{\varphi}$ and $g^{(2)} = g^{(3)}_{\psi}$, we want to show that $g^{(1)} = g^{(3)}_{\psi\circ\varphi}$. 
Let us compute: since $g^{(1)} = g^{(2)}_{\varphi}$, we have for each vertex $v$ of $G^{(1)}$ that $g^{(1)}(v) = g^{(2)}(\varphi_v : H_v \to G^{(2)})$.
Writing $\psi_{v,w} : K_{v,w} \hookrightarrow G^{(3)}$ for a representative of $\psi_1(\varphi_v(w))$ and using $g^{(2)} = g^{(3)}_{\psi}$, we have
\begin{align*}
	g^{(1)}(v) = g^{(2)}(\varphi_v) &= \beta_1(H_v) + \sum_{w\in H_v} g^{(2)} (\varphi_v(w)) \\
	& = \beta_1(H_v) + \sum_{w\in H_v} g^{(3)} (\psi_1(\varphi_v(w))) \\
	& = \beta_1(H_v) + \sum_{w\in H_v} g^{(3)} (\psi_{v,w} : K_{v,w} \to G^{(3)})
\end{align*}
The summand for a given $w$ is $\beta_1(K_{v,w}) + \sum_{u\in K_{v,w}} g^{(3)}(\psi_{v,w}(u))$, so rearranging we have
\begin{align*}
g^{(1)}(v) &= \beta_1(H_v) + \sum_{w\in H_v} \beta_1(K_{v,w}) + \sum_{w\in H_v} \sum_{u\in K_{v,w}} g^{(3)}(\psi_{v,w}(u)) \\
&= \beta_1(H_v\{K_{v,w}\}) + \sum_{w\in H_v} \sum_{u\in K_{v,w}} g^{(3)}(\psi_{v,w}(u)),
\end{align*}
which is exactly
\[
	g^{(3)} (\image(\psi|_{\varphi_v}) : H_v\{K_{v,w}\} \to G^{(3)}).
\]
Thus $g^{(1)}(v) = g^{(3)}((\psi\circ\varphi)_1(v)),$ so $g^{(1)} = g^{(3)}_{\psi\circ\varphi}$. 
\end{proof}

\begin{remark}
Example~\ref{never edges stable} tells us that the functor $R: \graphicalcat_{\strm} \to \graphicalcat$ factors through $\graphicalcat^+$.
In particular, \eqref{graphical map defn: collapse condition} is \emph{automatic} for a map between stable graphs, and if we were starting from scratch in this section we would omit this condition entirely.
In any case, combining $R$ with the degree function from Definition~\ref{defn graph cat reedy} yields a generalized Reedy structure on $\graphicalcat_{\strm}$ where $\graphicalcat_{\strm}^+ = \graphicalcat_{\strm}$ and $\graphicalcat_{\strm}^- = \Iso(\graphicalcat_{\strm})$.
Further, there is an orthogonal factorization system $(R^{-1}\graphicalcat_{\actrm}, R^{-1}\graphicalcat_{\embrm})$ on $\graphicalcat_{\strm}$ coming from Theorem~\ref{theorem orthogonal factorization system}.
\end{remark}

If $(G,g)$ is a stable graph, then one can define the Segal core $\segalcore_{\strm}[G,g]$ as a subobject of the representable presheaf $\rgc_{\strm}[G,g]$ just as we did in the previous section.
Imitating the other proofs from Section~\ref{section simplicial presheaves on U} yields the following analogue of Theorem~\ref{theorem localized reedy model structure}.

\begin{theorem}
\label{theorem stable segal modular operads}
The category $\sSet^{\graphicalcat_{\strm}^{\oprm}}$ admits a cofibrantly generated model structure whose fibrant objects are are those presheaves $X$ satisfying the following three conditions:
\begin{itemize}
	\item $X$ is Reedy fibrant,
	\item $X_{\exceptionaledge}$ is weakly contractible, and
	\item for each $(G,g)\in \graphicalcat_{\strm}$, the Segal map
\[ X_{G,g} = \map(\rgc_{\strm}[G,g], X) \rightarrow \map(\segalcore_{\strm}[G,g],X)\] 
is a weak equivalence of simplicial sets.\qed 
\end{itemize} 
\end{theorem}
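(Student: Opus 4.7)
The plan is to follow the template set by Proposition~\ref{proposition segal core is cofibrant} and Theorem~\ref{theorem localized reedy model structure}, adapted to the dualizable generalized Reedy structure on $\graphicalcat_{\strm}$ described in the remark preceding the theorem statement. First, I would verify that the Berger--Moerdijk Reedy model structure on $\sSet^{\graphicalcat_{\strm}^{\oprm}}$ is left proper and cellular, via the analogue of \cite[Theorem 7.2 and Proposition 7.4]{hry_cyclic}; this is what allows left Bousfield localization to be applied later.

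Next, for any stable pair $(G,g)$ with at least one vertex, I would observe that the stability condition $2g(v) + |\nbhd(v)| - 2 > 0$ at each vertex $v \in V(G)$ is exactly the assertion that the vertex star $(\medstar_v, g(v))$ is a well-defined object of $\graphicalcat_{\strm}$, and that the exceptional edge is trivially stable. The Segal core
\[
\segalcore_{\strm}[G,g] \;=\; \colim\Bigl( \coprod_{e \in E_i(G)} \rgc_{\strm}[\exceptionaledge] \rightrightarrows \coprod_{v \in V(G)} \rgc_{\strm}[(\medstar_v, g(v))] \Bigr)
\]
is then a well-defined object of $\Set^{\graphicalcat_{\strm}^{\oprm}}$ with a canonical map to $\rgc_{\strm}[(G,g)]$, and an analogue of Proposition~\ref{proposition segal core is cofibrant} shows it is Reedy cofibrant. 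The proof of Lemma~\ref{cofibration edges to star lemma} transports directly, since the only degree-zero object of $\graphicalcat_{\strm}$ is $\exceptionaledge$ and the matching object at a stable star $(\medstar_n, g_0)$ is again a product of $n$ copies of $X_{\exceptionaledge}$.

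Finally, I would apply left Bousfield localization at the set $S$ consisting of the Segal core inclusions $\segalcore_{\strm}[G,g] \hookrightarrow \rgc_{\strm}[(G,g)]$ together with $\varnothing \hookrightarrow \rgc_{\strm}[\exceptionaledge]$. As in the proof of Theorem~\ref{theorem localized reedy model structure}, fibrancy in the localization amounts to Reedy fibrancy together with $\map^h(s, X)$ being a weak equivalence for each $s \in S$. Since representables, $\varnothing$, and Segal cores are all Reedy cofibrant, for Reedy fibrant $X$ one has $\map^h(A,X) \simeq \map(A,X)$, which yields precisely the three stated conditions.

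The main obstacle will be verifying the analogue of Lemma~\ref{cofibration edges to star lemma} in the stable setting: one must confirm that the Reedy matching object at a stable vertex star behaves exactly as in $\graphicalcat$, despite the additional genus data and despite the fact that $\graphicalcat_{\strm}^- = \Iso(\graphicalcat_{\strm})$ (so the inverse/direct structure differs from $\graphicalcat$). The argument hinges on the fact that the forgetful functor $R: \graphicalcat_{\strm} \to \graphicalcat$ is compatible with the degree function by definition and that the relevant matching category at $(\medstar_n, g_0)$ reduces, after identifying inverse-type morphisms, to the one for $\medstar_n$ in $\graphicalcat$ decorated by the unique compatible genus; so the cofibrancy proof transfers without essential change.
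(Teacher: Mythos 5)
Your proposal is correct and matches the paper's intended argument: the paper itself gives no separate proof, stating only that ``imitating the other proofs from Section~\ref{section simplicial presheaves on U}'' yields the result, and your outline — stable Segal cores built from the stable vertex stars $(\medstar_v, g(v))$, the transported cofibrancy argument of Lemma~\ref{cofibration edges to star lemma} and Proposition~\ref{proposition segal core is cofibrant}, and left Bousfield localization at the Segal core inclusions together with $\varnothing \to \rgc_{\strm}[\exceptionaledge]$ — is precisely that imitation. Your attention to the matching category at a stable star (degree~$1$, with $\exceptionaledge$ the unique degree-$0$ object, so the matching object is again $\prod X_{\exceptionaledge}$) correctly identifies and resolves the only point where the stable Reedy structure could have caused trouble.
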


As in the unstable setting, there is a nerve functor landing in $\graphicalcat_{\strm}$-presheaves.
The analogue of Theorem~\ref{nerve theorem paper one intro} also holds in the stable setting.
This produces a large collection of fibrant objects in the model structure from Theorem~\ref{theorem stable segal modular operads}, in a similar manner to Remark~\ref{remark on nerve of modular operads}.
We now propose an example, related to surfaces, that is not of this form.
Our example is written by considering $\graphicalcat_{\strm}$-presheaves in the category of groupoids.
It can be transferred to simplicial sets by using the classifying space functor that goes from the category of groupoids to the category of simplicial sets.

\begin{example}\label{example tillmann}
We consider compact, orientable surfaces $S$ where the set of boundary components is given an ordering and each boundary component is equipped with a specified collar.
Define a collection of groupoids $\{\mathcal{S}_{g,n}\}$ where $\mathcal{S}_{g,n}$ has objects $S$ where $S$ is a surface of genus $g$ with $n$ boundary components constructed by gluing atomic surfaces as in \cite[2.2]{Tillmann:HigherGenus}.
Morphisms in $\mathcal{S}_{g,n}$ are given by isotopy classes of homeomorphisms which fix the boundary components pointwise and preserves the orderings (modulo the identifications imposed in \cite{Tillmann:HigherGenus}). 
Notice that the automorphism group of an object $S$ in $\mathcal{S}_{g,n}$ is the mapping class group $\Gamma_{g,n}$.
Tillmann \cite[2.1]{Tillmann:HigherGenus}, and later Giansiracusa--Salvatore \cite{GiansiracusaSalvatore:Formality}, show that by gluing along boundaries the collection $\{\mathcal{S}_{g,n}\}$ constitutes a modular operad. 
In forthcoming work of the second author, there is a need to not just understand how surfaces are built up from atomic pieces, but how these atomic surfaces are assembled. 
She will show that there is a groupoid-valued $\graphicalcat_{\strm}$-presheaf $X$ which satisfies a weak Segal condition and is related to the nerve of the above modular operad.
If $(G,g)$ is stable, then an object of $X_{G,g}$ consists of one surface for each vertex of $G$ as well as gluing data for the collars connected by the edges.
\end{example}

\section{Simply-connected graphs}\label{section simply connected} 

We now introduce two full subcategories of $\graphicalcat$ which are related to notions of \emph{cyclic operad} \cite{GK98}; see \cite[\TWOcyclicnerve]{modular_paper_two}. 
The objects of these subcategories are unrooted trees. 

\begin{definition}
	Denote by $\graphicalcat_{\cycrm} \subseteq \graphicalcat_0 \subseteq \graphicalcat$\index{$\graphicalcat_{\cycrm}, \graphicalcat_0$} the following full subcategories:
	\begin{itemize}
		\item $\ob(\graphicalcat_0)$ is the set of connected \emph{acyclic} graphs.
		\item Graphs in $\graphicalcat_{\cycrm}$ additionally have \emph{nonempty boundary}.
	\end{itemize}
\end{definition}

In the language of \cite{hinich-vaintrob}, $\graphicalcat_{\cycrm}$ is related to cyclic operads while $\graphicalcat_0$ is related to \emph{augmented} cyclic operads.

\begin{proposition}\label{proposition sieve}
	The full subcategories $\graphicalcat_0$ and $\graphicalcat_{\cycrm}$ are \emph{sieves} (in the sense of \cite[Definition 6.2.2.1]{htt}) on $\graphicalcat$. That is, if $\varphi : G \to T$ is a morphism in $\graphicalcat$ with $T\in \graphicalcat_0$, then $G\in \graphicalcat_0$ (and similarly for $\graphicalcat_{\cycrm}$).
\end{proposition}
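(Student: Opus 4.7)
The plan is to reduce both sieve claims to properties of the image embedding obtained from Proposition~\ref{proposition: image}. Given a graphical map $\varphi : G \to T$ with $T \in \graphicalcat_0$, let $\varphi_v : H_v \hookrightarrow T$ be representatives of $\varphi_1(v)$; Proposition~\ref{proposition: image} then produces an embedding $k : G\{H_v\} \hookrightarrow T$. Since every object of $\graphicalcat$ is connected, it only remains to verify acyclicity of $G$.

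The key auxiliary fact I would establish is that any embedding $f : H \hookrightarrow T$ between connected graphs with $T$ acyclic must have acyclic domain. The étale condition together with injectivity of $V(H) \to V(T)$ gives that $D(H) \to D(T)$ is also injective, and an internal edge $[d, id]$ of $H$ is sent to the internal edge $[f(d), if(d)]$ of $T$. A short check using injectivity on $D$ shows the induced map $E_i(H) \to E_i(T)$ is injective, so any cycle in $H$ --- a sequence of distinct vertices joined by distinct internal edges --- would transport to a cycle in $T$, contradicting acyclicity. Applied to $k$, this yields $\beta_1(G\{H_v\}) = 0$; the additivity formula $\beta_1(G\{H_v\}) = \beta_1(G) + \sum_v \beta_1(H_v)$ recorded in Section~\ref{section: stable graphs} then forces $\beta_1(G) = 0$, so $G$ is acyclic and lies in $\graphicalcat_0$.

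For the $\graphicalcat_{\cycrm}$ case, we additionally need $\eth(G) \neq \varnothing$ when $\eth(T) \neq \varnothing$. Suppose for contradiction that $\eth(G) = \varnothing$. Then $G \neq \exceptionaledge$, so $V(G) \neq \varnothing$, and condition~\eqref{graphical map defn: collapse condition} guarantees that some $H_v$ is not an exceptional edge, contributing a vertex to $G\{H_v\}$. By Lemma~\ref{lemma boundaries}, $\eth(G\{H_v\}) \cong \eth(G) = \varnothing$, so Lemma~\ref{lemma: empty boundary} forces $k$ to be an isomorphism; but then $\eth(T) = \eth(G\{H_v\}) = \varnothing$, contradicting $T \in \graphicalcat_{\cycrm}$. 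The main obstacle in this plan is the auxiliary lemma that embeddings reflect acyclicity, but this reduces to the routine observation that embeddings are injective on both vertices and internal edges, so no real subtlety is hidden here.
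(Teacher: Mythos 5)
Your proposal is correct and follows essentially the same route as the paper: factor $\varphi$ through the image embedding $k : G\{H_v\} \hookrightarrow T$, observe that embeddings into acyclic graphs have acyclic domains, and handle the boundary condition via Lemma~\ref{lemma boundaries} and Lemma~\ref{lemma: empty boundary}. The only cosmetic difference is in passing from acyclicity of $G\{H_v\}$ to that of $G$: the paper extends a putative loop of $G$ to a loop of $G\{H_v\}$ using connectedness of each $H_v$, while you invoke the additivity $\beta_1(G\{H_v\}) = \beta_1(G) + \sum_v \beta_1(H_v)$ together with nonnegativity of $\beta_1$ --- both rest on the same fact about graph substitution, and both are valid.
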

\begin{proof}
As in Proposition~\ref{proposition: image}, the map $\varphi$ factors as
\[
\begin{tikzcd}[column sep=small]
G \arrow[rr,"\varphi"] \arrow[dr] & & T \\
& G\{H_v\} \arrow[ur, hook, "k" swap]
\end{tikzcd}
\]
with $k$ an embedding.
Embeddings into simply-connected graphs must have simply-connected sources, hence $G\{H_v\} \in \graphicalcat_0$.
On the other hand, any loop in $G$ (that is, a path with no repeated entries except for the ends) may be extended to a loop in $G\{H_v\}$ since each $H_v$ is connected.
Since such a loop cannot exist in $G\{H_v\}$, we see that there was no loop in $G$. 

For the second statement, note that any map $\varphi : G \to G'$ in $\graphicalcat$ with $\eth(G) = \varnothing$ is automatically active, which implies that $\eth(G') = \varnothing$.
This implies that $\graphicalcat_{\cycrm}$ is a sieve on $\graphicalcat_0$, hence on $\graphicalcat$.
\end{proof}

\begin{remark}\label{remark comparison sc}
The category $\graphicalcat_{\cycrm}$ is related to other categories in the literature.
\begin{enumerate}
\item Tashi Walde's category $\Omega_{\cycrm}$ from \cite{Walde:2SSIIO} can be considered as a nonsymmetric version of $\graphicalcat_{\cycrm}$.
To be precise, $\graphicalcat_{\cycrm}$ is equivalent to a category $\graphicalcat_{\cycrm}'$ whose objects have cyclic orderings on each set $\nbhd(v)$.
Then $\Omega_{\cycrm}$ is the wide subcategory of $\graphicalcat_{\cycrm}'$ consisting of maps which preserve the cyclic orderings. 
\item \label{remark comparison sc item two}
In \cite{hry_cyclic}, the authors developed a category $\Xi$ with objects the unrooted trees with nonempty boundary.
Let $\graphicalcat_{\cycrm}''$ denote the category (equivalent to $\graphicalcat_{\cycrm}$) where the boundary and each $\nbhd(v)$ comes equipped with a total ordering and where morphisms can disregard those orderings.
There is a functor $\graphicalcat_{\cycrm}'' \to \Xi$ which is the identity on objects. For a nonlinear tree $T$, we have
\[
	\graphicalcat_{\cycrm}''(T,S) = \Xi(T,S),
\]
while for linear trees $T$ this map is just surjective.
\end{enumerate}
\end{remark}

\begin{example}\label{example difference between tree cats}
The primary difference between $\graphicalcat_{\cycrm}''$ and $\Xi$ is that maps in the latter category are based on edges rather than on arcs.
If $L_n$ is linear and $S$ is an arbitrary unrooted tree, then the sets of morphisms $L_n\to S$ which do not factor through $\exceptionaledge$ are the same in both categories $\graphicalcat_{\cycrm}''$ and $\Xi$.
On the other hand, the set of morphisms $L_n \to S$ which factor through $\exceptionaledge$ is in bijection with $A(S)$ in $\graphicalcat_{\cycrm}''$, while in $\Xi$ it is in bijection with $E(S)$.
In particular, we have
\begin{align*}
\graphicalcat_{\cycrm}''({\exceptionaledge}, S) &\cong A(S) & \Xi({\exceptionaledge}, S) &\cong E(S) \\
|\graphicalcat_{\cycrm}''(L_n, {\exceptionaledge})| & = 2 &|\Xi(L_n, {\exceptionaledge})| & = 1. 
\end{align*}
\end{example}

\begin{proposition}
The categories $\graphicalcat_{\cycrm}$ and $\graphicalcat_0$ are both dualizable generalized Reedy categories, with structure induced from the ambient category $\graphicalcat$.
\end{proposition}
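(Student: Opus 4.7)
The plan is to define $\mathcal{S}^+ := \graphicalcat^+ \cap \mathcal{S}$ and $\mathcal{S}^- := \graphicalcat^- \cap \mathcal{S}$ for $\mathcal{S} \in \{\graphicalcat_0, \graphicalcat_{\cycrm}\}$, equip $\mathcal{S}$ with the restriction of the degree function from Definition~\ref{defn graph cat reedy}, and then verify the axioms of a dualizable generalized Reedy structure from \cite[Definition 1.1]{bm_reedy} by reducing everything to the corresponding assertions already proved for $\graphicalcat$ in Theorem~\ref{theorem reedy}. Since each map in $\mathcal{S}^{\pm}$ is in particular a map in $\graphicalcat^{\pm}$, the degree-raising/lowering conditions of Proposition~\ref{prop reedy one} and the iso-rigidity conditions of Proposition~\ref{prop reedy four} restrict with no further work.

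The genuine content to check is the factorization axiom. Given $\varphi : G \to T$ in $\mathcal{S}$, take its $\graphicalcat$-factorization $\varphi = \varphi^+ \varphi^-$ with $\varphi^- : G \to H$ in $\graphicalcat^-$ and $\varphi^+ : H \to T$ in $\graphicalcat^+$, as provided by Proposition~\ref{prop reedy three}. Because $\mathcal{S}$ is a sieve on $\graphicalcat$ by Proposition~\ref{proposition sieve}, and $\varphi^+$ is a morphism whose codomain $T$ lies in $\mathcal{S}$, the intermediate graph $H$ lies in $\mathcal{S}$ as well. Thus $\varphi^{\pm}$ are morphisms in $\mathcal{S}$ and therefore land in $\mathcal{S}^{\pm}$. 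The uniqueness up to isomorphism also transfers from the ambient category, since any comparison isomorphism has domain and codomain in $\mathcal{S}$ and $\mathcal{S}$ is full.

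It remains to note that $\mathcal{S}^+ \cap \mathcal{S}^- \subseteq \graphicalcat^+ \cap \graphicalcat^- = \Iso(\graphicalcat)$; because $\mathcal{S}$ is a full subcategory, the inverse of such a map, being a morphism of $\graphicalcat$ between two objects of $\mathcal{S}$, also lies in $\mathcal{S}$, so $\mathcal{S}^+ \cap \mathcal{S}^- = \Iso(\mathcal{S})$. I do not anticipate a substantial obstacle in this argument: the proof is essentially formal once one observes that the single nontrivial ingredient—the need for the intermediate object $H$ in the Reedy factorization to belong to the subcategory—is supplied precisely by the sieve property established in Proposition~\ref{proposition sieve}. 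The same argument applies verbatim to $\graphicalcat_0$ and to $\graphicalcat_{\cycrm}$.
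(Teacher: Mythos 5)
Your proof is correct and follows exactly the route the paper takes: the paper reduces the proposition to a general lemma (sieves in a generalized Reedy category inherit a generalized Reedy structure with $\Ss^\pm = \Rr^\pm \cap \Ss$ and the restricted degree), whose proof likewise isolates the factorization axiom as the only place the sieve property is needed. Your verification of the remaining axioms and of $\mathcal{S}^+ \cap \mathcal{S}^- = \Iso(\mathcal{S})$ matches the paper's "most of the axioms are immediate" step.
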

\begin{proof}
This follows from the fact that these are sieves on $\graphicalcat$ (Proposition~\ref{proposition sieve}), the fact that $\graphicalcat$ is Reedy (Theorem~\ref{theorem reedy}), and the fact that sieves in a Reedy category are again Reedy categories (Lemma~\ref{lemma sieve} below). 
\end{proof}

\begin{lemma}\label{lemma sieve}
Suppose that $\Rr$ is a (dualizable) generalized Reedy category, and $\Ss \subseteq \Rr$ is a full subcategory.
If $\Ss$ is a sieve on $\Rr$, then $\Ss$ is also a (dualizable) generalized Reedy category with $\Ss^\pm = \Rr^\pm \cap \Ss$ and $\deg_{\Ss} = \deg_{\Rr}|_{\ob(\Ss)}$.
\end{lemma}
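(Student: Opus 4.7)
The plan is to verify each axiom of a (dualizable) generalized Reedy category for $\Ss$ by transferring the corresponding axiom from $\Rr$, using two facts throughout: fullness of the inclusion $\Ss \hookrightarrow \Rr$ (so that $\Iso(\Ss) = \Iso(\Rr) \cap \Ss$ and so that morphisms in $\Rr$ between $\Ss$-objects are automatically in $\Ss$), and the sieve property (every $\Rr$-morphism whose codomain lies in $\Ss$ lies entirely in $\Ss$).

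First I would dispatch the bookkeeping. The subcategories $\Ss^\pm = \Rr^\pm \cap \Ss$ are closed under composition, contain all identities (since identities are isomorphisms, hence in $\Rr^\pm$), and are thus wide in $\Ss$. The degree axiom is inherited verbatim: an isomorphism in $\Ss$ is by fullness an isomorphism in $\Rr$, hence preserves $\deg_\Rr$; a non-invertible map in $\Ss^+ \subseteq \Rr^+$ is non-invertible in $\Rr^+$ (again by fullness, since the inverse would live in $\Ss$), hence strictly raises degree, and dually for $\Ss^-$. Likewise, $\Ss^+ \cap \Ss^- = \Rr^+ \cap \Rr^- \cap \Ss = \Iso(\Rr) \cap \Ss = \Iso(\Ss)$.

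The one place where the sieve hypothesis is genuinely used is the factorization axiom. Given $f : r \to s$ in $\Ss$, factor it in $\Rr$ as $r \xrightarrow{f^-} t \xrightarrow{f^+} s$ with $f^\pm \in \Rr^\pm$. Since $f^+$ has codomain $s \in \Ss$ and $\Ss$ is a sieve, the intermediate object $t$ lies in $\Ss$. Fullness then places $f^+$ and $f^-$ in $\Ss$, hence in $\Ss^+$ and $\Ss^-$ respectively, giving the required $\Ss$-factorization. For uniqueness up to iso, any second $\Ss$-factorization is in particular an $\Rr$-factorization, so there is a (unique) comparison isomorphism $\theta$ in $\Rr$ between the two intermediate objects; both intermediate objects are in $\Ss$, and $\theta$ is a morphism in $\Rr$ between $\Ss$-objects, so fullness places $\theta$ in $\Ss$, as desired.

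Finally, for the $\Rr^-$-automorphism axiom: if $\theta \in \Iso(\Ss)$ satisfies $\theta f = f$ for some $f \in \Ss^-$, then the same equation holds in $\Rr$ with $\theta \in \Iso(\Rr)$ and $f \in \Rr^-$, forcing $\theta = \id$. The dualizability hypothesis on $\Rr$ gives the symmetric statement for $\Ss^+$ by the same argument. Since all axioms are verified by these direct transfers, no nontrivial obstacle arises; the lemma is essentially a formal consequence of the sieve condition, which is exactly strong enough to keep $\Rr$-factorizations inside $\Ss$.
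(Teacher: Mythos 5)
Your proof is correct and takes the same route as the paper: the paper likewise notes that all axioms except the factorization axiom are immediate, and uses the sieve property exactly as you do, to place the intermediate object of the $\Rr$-factorization inside $\Ss$ via the map $f^+$ whose codomain lies in $\Ss$. Your more explicit verification of the remaining axioms is a harmless elaboration of what the paper leaves implicit.
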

\begin{proof}
Most of the axioms are immediate.
The only place we use the sieve property is in verifying that the factorizations in \cite[Definition 1.1(iii)]{bm_reedy} for $\Rr$ are actually factorizations in $\Ss$:
given a diagram 
\[
\begin{tikzcd}[column sep=small]
s \arrow[rr,"f"] \arrow[dr, "f^-" swap] & & s' \\
& r \arrow[ur, "f^+" swap]
\end{tikzcd}
\]
in $\Rr$ with $s,s'\in \Ss$, the existence of the arrow $f^+ : r \to s'$ guarantees that $r$ is an object of $\Ss$ also.
\end{proof}

\begin{remark}[Reedy structure on $\Xi$]
In Section 3 of \cite{hry_cyclic}, it was shown that $\Xi$ carries the following generalized Reedy structure:
\begin{itemize}
\item Morphisms in $\Xi^+$ are those which are injective on edges.
\item Morphisms in $\Xi^-$ are those which are surjective on edges and active.
\item The degree of an unrooted tree is given by the number of vertices.
\end{itemize}
Alternatively, one may describe $\Xi^+$ as the subcategory of monomorphisms and $\Xi^-$ as the subcategory of split epimorphisms \cite[Theorem 4.9]{hry_cyclic}.
\end{remark}

\subsection{Comparison of Reedy model structures}

The functor $J : \graphicalcat_{\cycrm}'' \to \Xi$ from Remark~\ref{remark comparison sc}\eqref{remark comparison sc item two} preserves the Reedy factorization system, but it does not commute with the degree function. 
However, we have the following:
\begin{theorem}
\label{reedy theorem xi}
The functor
\[
	J^* : \sSet^{\Xi^{\oprm}} \to \sSet^{(\graphicalcat_{\cycrm}'')^{\oprm}}
\]
preserves and reflects fibrations and weak equivalences.
It detects cofibrations but it does not preserve them. 
\end{theorem}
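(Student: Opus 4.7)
The plan is to treat each of the four claims separately, exploiting that $J$ is the identity on objects and surjective on hom-sets but not faithful.

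Preservation and reflection of weak equivalences is immediate: $(J^* X)_T = X_T$ for every $T$ and Reedy weak equivalences are defined levelwise. For fibrations, the relative matching map in the generalized Reedy framework must be a projective fibration in $\sSet^{\aut(T)}$, which amounts to being a Kan fibration on the underlying simplicial sets, so the equivariance plays no role here. It therefore suffices to show, for each tree $T$, that the matching object $M_T(X)$ computed in $\sSet^{\Xi^{\oprm}}$ is canonically isomorphic to $M_T(J^* X)$ computed in $\sSet^{(\graphicalcat_{\cycrm}'')^{\oprm}}$. This would follow from the induced functor $J_T^- \colon (\graphicalcat_{\cycrm}'')^-(T) \to \Xi^-(T)$ being initial. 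Nonemptiness of the comma categories $J_T^- \downarrow \psi$ is immediate from surjectivity of $J$ on morphisms, while verifying connectedness requires a case analysis showing that any two lifts of a fixed $\Xi^-$-morphism can be related through codomain automorphisms that $J$ collapses to identities. This connectedness verification is the main obstacle; it essentially reduces to understanding what happens for linear trees and the exceptional edge, where $J$ fails to be faithful.

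For the detection of cofibrations, the functor $J^*$ is fully faithful as a functor between presheaf categories: given any natural transformation $\mu \colon J^* A \to J^* B$, its levelwise components $\mu_T$ are automatically $\Xi^{\oprm}$-natural, because every morphism of $\Xi$ lifts along $J$ to some morphism of $\graphicalcat_{\cycrm}''$ and $J^* B$, $J^* A$ agree with $B$, $A$ on morphisms coming from $\graphicalcat_{\cycrm}''$. Combined with preservation of trivial fibrations (from the previous two claims), this shows that a lifting problem for $f$ against a trivial fibration $g$ in $\sSet^{\Xi^{\oprm}}$ admits a solution if and only if the $J^*$-image does; hence $J^* f$ being a cofibration forces $f$ to be one.

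For the failure of preservation, the key observation is that $\aut_\Xi(\exceptionaledge)$ is trivial, since $|\Xi(\exceptionaledge,\exceptionaledge)| = |E(\exceptionaledge)| = 1$, whereas $\aut_{\graphicalcat_{\cycrm}''}(\exceptionaledge) \cong \mathbb{Z}/2$ is generated by the orientation flip, which $J$ sends to the identity of $\Xi$. Consequently, for any $X \in \sSet^{\Xi^{\oprm}}$, the induced $\mathbb{Z}/2$-action on $(J^* X)_\exceptionaledge = X_\exceptionaledge$ is trivial. Since cofibrations in the projective model structure on $\sSet^{\mathbb{Z}/2}$ require the group to act freely on cells outside the image, any Reedy cofibration $f$ in $\sSet^{\Xi^{\oprm}}$ whose value $f_\exceptionaledge$ is a nontrivial inclusion will have $J^* f$ failing to be a Reedy cofibration at $\exceptionaledge$ (the latching category at $\exceptionaledge$ is empty, so the relative latching map there is precisely $f_\exceptionaledge$ carrying the trivial action). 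Producing such an $f$ is a separate construction obtained by attaching a free orbit at $\exceptionaledge$ and then freely extending to higher-degree trees to satisfy the remaining latching conditions.
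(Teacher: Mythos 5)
Your overall architecture is sound, and your argument for \emph{detection} of cofibrations is a genuinely different route from the paper's: since $J$ is bijective on objects and full, $J^*$ is fully faithful, so a lifting problem in $\sSet^{\Xi^{\oprm}}$ against a trivial fibration can be solved by solving its $J^*$-image and transporting the lift back; combined with the fact that $J^*$ preserves trivial fibrations (once the first two claims are in place), this gives detection via the lifting-property characterization of cofibrations. The paper instead compares relative latching maps directly (Lemma~\ref{lemma for latching object sc} together with Proposition~\ref{prop cofibration behavior}); your route trades that bookkeeping for a dependence on the fibration statement, and it is correct.

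However, two substantive verifications are deferred rather than carried out, and they are where the real content of the theorem lies. First, for fibrations you reduce everything to the comparison functor between matching categories being initial, and you explicitly flag connectedness of the comma categories as ``the main obstacle'' without resolving it. That verification is exactly the paper's Lemma~\ref{lemma for matching object sc} (proved there in the stronger form that $\graphicalcat_{\cycrm}^+(T) \to \Xi^+(T)$ is an equivalence of categories): injectivity on hom-sets uses Proposition~\ref{proposition: embedding uniqueness} applied to maps into a simply-connected target, and fullness needs a case split according to whether the source has a vertex or is $\exceptionaledge$, where one must choose the correct lift of an edge $[x,ix]$ by hand. Without this, neither preservation nor reflection of fibrations is established. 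Second, for the failure of preservation you correctly isolate the mechanism --- $\aut_{\graphicalcat_{\cycrm}}(\exceptionaledge)\cong C_2$ acts trivially on $(J^*X)_{\exceptionaledge}$, so the relative latching map at $\exceptionaledge$ is a projective cofibration in $\sSet^{C_2}$ only if it is an isomorphism --- but you never actually exhibit a Reedy cofibration in $\sSet^{\Xi^{\oprm}}$ that is a non-invertible monomorphism at $\exceptionaledge$; ``attaching a free orbit and freely extending'' is a sketch, not a construction, and verifying the higher latching conditions for a hand-built example is not trivial. The paper sidesteps this entirely: for any nonempty cofibrant $Z\in\sSet^{\graphicalcat_{\cycrm}^{\oprm}}$, the map $\varnothing\to J_!Z$ is a cofibration because $J_!$ is left Quillen, $J_!Z$ is nonempty by an adjunction argument (Remark~\ref{what the left kan extension does}), and $(J^*J_!Z)_{\exceptionaledge}$ is nonempty because every tree has an edge --- so $J^*J_!Z$ carries a nonempty trivial $C_2$-set at $\exceptionaledge$ and cannot be cofibrant. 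You would strengthen your write-up considerably by adopting this existence argument or by proving your claimed example exists.
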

To avoid clutter, we will omit the $''$ from $\graphicalcat_{\cycrm}''$ and just write $\graphicalcat_{\cycrm}$ for the remainder of the paper.

We first show that the matching objects for $X$ and for $J^*X$ coincide.
\begin{lemma}\label{lemma for matching object sc}
The functor
\[
	F: \graphicalcat_{\cycrm}^+(T) \to \Xi^+(T)
\]
is an equivalence of categories.
Therefore, if $T \in \sSet^{\Xi^{\oprm}}$, then the $T$-th matching map $X_T \to M_TX$ is isomorphic to the $T$-th matching map $(J^*X)_T \to M_T(J^*X)$.
\end{lemma}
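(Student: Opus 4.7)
The plan is to establish that $F$ is an equivalence of categories and then deduce the matching map isomorphism by transport of limits. Once $F$ is an equivalence, the functor $X|_{\Xi^+(T)} : \Xi^+(T) \to \sSet$ precomposes with $F$ to give precisely the functor whose limit computes $M_T(J^*X)$ (since $(J^*X)_S = X_{JS} = X_S$ as $J$ is the identity on objects), so the universal property produces the claimed isomorphism of matching maps. The rest of the plan is devoted to proving $F$ is an equivalence.

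First I would verify essential surjectivity. Given a non-iso $\phi : S \to T$ in $\Xi^+$, I regard $S$ as an object of $\graphicalcat_{\cycrm}$ (both categories have the same objects). If $S \neq \exceptionaledge$, Example~\ref{example difference between tree cats} tells us the morphism sets $\graphicalcat_{\cycrm}(S,T) \to \Xi(S,T)$ agree on morphisms not factoring through $\exceptionaledge$, and any map into $T$ in $\Xi^+$ cannot factor through $\exceptionaledge$ (as this would fail edge-injectivity once $S$ has more than one edge); thus there is a canonical lift, which is readily checked to lie in $\graphicalcat_{\cycrm}^+$. If $S = \exceptionaledge$, then $\phi$ selects an edge $[a,ia]$ of $T$, and either of the two arcs yields a lift in $\graphicalcat_{\cycrm}^+(T)$ (both lifts are canonically isomorphic in the slice via the nontrivial automorphism of $\exceptionaledge$).

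Next I would show $F$ is fully faithful. The key observation is that whenever $S \neq \exceptionaledge$ and $\phi : S \to T$ is a non-iso in $\graphicalcat_{\cycrm}^+$, the arc-level map $\phi_0$ is injective: factoring $\phi = f \varphi'$ with $\varphi' \in \graphicalcat^+_{\actrm}$ (so $\varphi'_0$ is injective) and $f$ an embedding, we note that an embedding of a non-exceptional connected tree into a tree must be injective on arcs, since the only failure mode (Lemma~\ref{lemma embedding not monomorphism}) would snip an internal edge and disconnect the domain. Using this, I would run a case analysis on whether $S_1, S_2$ equal $\exceptionaledge$. For $S_1, S_2 \neq \exceptionaledge$ both morphism sets coincide under $F$ by Example~\ref{example difference between tree cats} combined with Theorem~\ref{theorem: injective determination at zero} applied to $\phi_2$. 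For $S_1 = \exceptionaledge$ and $S_2 \neq \exceptionaledge$, the slice morphism in either category is determined by which arc/edge of $S_2$ maps (under $\phi_2$) to the arc/edge selected by $\phi_1$, and injectivity of $(\phi_2)_0$ identifies these two conditions. For $S_2 = \exceptionaledge$, one checks the only non-iso maps $S_1 \to \exceptionaledge$ in $\graphicalcat_{\cycrm}^+$ (resp.~$\Xi^+$) have $S_1 = \exceptionaledge$, reducing to a direct comparison on $\Aut(\exceptionaledge)$ constrained by $\phi_2 g = \phi_1$.

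The hard part will be the bookkeeping in the $\exceptionaledge$-cases: the $\mathbb{Z}/2$-ambiguity between arcs and edges needs to be shown to be absorbed into the slice data, so that the fiber of $F$ over any object or morphism is either a singleton or a torsor pinned down by the codomain constraint. Once this is done, the equivalence of categories follows, and the matching-map isomorphism is then formal.
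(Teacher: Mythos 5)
Your proposal is correct and follows essentially the same route as the paper's proof: establish that $F$ is surjective on objects and fully faithful, using that simple-connectivity of $T$ forces maps in $\graphicalcat_{\cycrm}^+$ into $T$ to be injective on arcs (via the active/embedding factorization and Lemma~\ref{lemma embedding not monomorphism}), treating the exceptional edge as a separate case, and then deducing the matching-map statement formally from $(\graphicalcat_{\cycrm}^{\oprm})^-(T) \cong [\graphicalcat_{\cycrm}^+(T)]^{\oprm}$. The only cosmetic differences are that the paper invokes Proposition~\ref{proposition: embedding uniqueness} for faithfulness where you use Theorem~\ref{theorem: injective determination at zero}, and it packages fullness via the commutative square comparing composition in $\graphicalcat_{\cycrm}^+$ and $\Xi^+$ rather than your case analysis.
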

\begin{proof}
The functor $F$ is surjective on objects.
We would like to show that $F$ is fully-faithful, which will imply that $F$ is an equivalence of categories.
Then so is 
\[  
(\graphicalcat_{\cycrm}^{\oprm})^-(T) \cong [\graphicalcat_{\cycrm}^+(T)]^{\oprm} \xrightarrow{F^{\oprm}} [\Xi^+(T)]^{\oprm} \cong (\Xi^{\oprm})^-(T)
\]
and the coincidence of the matching maps (see Definition~\ref{definition reedy model structure}) follows.

Suppose that $\alpha : G \to T$ and $\beta : H \to T$ are two objects of $\graphicalcat_{\cycrm}^+(T)$, that is, $\alpha, \beta \in \graphicalcat_{\cycrm}^+ \setminus \Iso(\graphicalcat_{\cycrm}^+)$.
Since $T$ is simply-connected, $\beta$ must be a monomorphism on arcs (this essentially follow from Lemma~\ref{lemma embedding not monomorphism} after factoring $\beta$ as in Proposition~\ref{prop_factors_image_followed_by_outer_coface}).
If $\gamma, \gamma' \in \graphicalcat_{\cycrm}^+(G,H)$ are two morphisms from $\alpha$ to $\beta$, that is, if $\beta \gamma = \alpha = \beta \gamma'$, then we have $\gamma_0 = \gamma'_0$.
For each vertex $v$ of $G$, we know that $\gamma_1(v)$ and $\gamma'_1(v)$ are not edges.
Thus, by Proposition~\ref{proposition: embedding uniqueness}, $\gamma_1(v) = \gamma'_1(v)$ since they have the same boundaries.
We've shown that $\hom(\alpha, \beta)$ has at most one element, so 
\begin{equation}\label{map to show iso}
\hom(\alpha, \beta) \to \hom(F\alpha, F\beta)
\end{equation} 
is injective.

Now suppose we have a map $F\alpha \to F\beta$ in $\Xi^+(T)$, that is, suppose we have a commutative diagram 
\[ \begin{tikzcd}[column sep=small]
G \arrow[rr, "\gamma"]  \arrow[dr, "F\alpha" swap] & & H \arrow[dl, "F\beta"] \\
& T
\end{tikzcd} \]
in $\Xi^+$.
We have a commutative square
\begin{equation}\label{map in xi plus T}
\begin{tikzcd}
\graphicalcat_{\cycrm}^+(H,T) \times \graphicalcat_{\cycrm}^+(G,H) \rar{\circ} \dar & \graphicalcat_{\cycrm}^+(G,T) \dar \\
\Xi^+(H,T) \times \Xi^+(G,H) \rar{\circ} & \Xi^+(G,T) 
\end{tikzcd} 
\end{equation}
whose vertical maps are isomorphisms as long as $G$ has at least one vertex.
So if $G$ has at least one vertex we know there is a $\tilde \gamma$ so that $\beta \tilde \gamma = \alpha$ and $J(\tilde \gamma) = \gamma$.
Hence, in this case, \eqref{map to show iso} is surjective.
On the other hand, if $G$ is the exceptional edge, then $\gamma$ hits a single edge $[x,ix]$ in $H$.
Since \eqref{map in xi plus T} commutes, we have (using the notation for the arcs of $\exceptionaledge$ from Example~\ref{examples combinatorial})
\[
	[\alpha_0(\edgemajor), \alpha_0(\edgeminor)] = (F\alpha)_0 [\edgemajor, \edgeminor] = (F\beta)_0 \gamma_0 [\edgemajor, \edgeminor] = (F\beta)_0 [x,ix] = [\beta_0(x), \beta_0(ix)].
\]
If $\alpha_0(\edgemajor) = \beta_0(x)$, define $\tilde \gamma$ by $\tilde \gamma(\edgemajor) = x$, while if $\alpha_0(\edgemajor) = \beta_0(ix)$, let $\tilde \gamma(\edgemajor) = ix$. 
We've thus established that \eqref{map to show iso} is also surjective when $G$ is the exceptional edge.
\end{proof}

We also have the following lemma.
\begin{lemma}\label{lemma for latching object sc}
The functor
\[
	F: \graphicalcat_{\cycrm}^-(T) \to \Xi^-(T)
\]
is an equivalence of categories.
Therefore, if $T \in \sSet^{\Xi^{\oprm}}$, then the $T$-th latching map $L_TX \to X_T$ is isomorphic to the $T$-th latching map $L_T(J^*X) \to (J^*X)_T$.
\end{lemma}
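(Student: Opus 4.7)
The plan is to mirror the proof of Lemma~\ref{lemma for matching object sc}: first establish that $F : \graphicalcat_{\cycrm}^-(T) \to \Xi^-(T)$ is an equivalence of categories, then deduce the latching claim from the chain of isomorphisms
\[ (\graphicalcat_{\cycrm}^{\oprm})^+(T) \cong [\graphicalcat_{\cycrm}^-(T)]^{\oprm} \xrightarrow[\cong]{F^{\oprm}} [\Xi^-(T)]^{\oprm} \cong (\Xi^{\oprm})^+(T), \]
which identify the indexing diagrams for the colimits defining $L_T(J^*X)$ and $L_T X$. Since $J$ is the identity on objects by Remark~\ref{remark comparison sc}(\ref{remark comparison sc item two}), essential surjectivity of $F$ reduces to lifting a $\Xi^-$-map $\psi : T \to G$ to a $\graphicalcat_{\cycrm}^-$-map; the surjectivity half of Remark~\ref{remark comparison sc}(\ref{remark comparison sc item two}) supplies a lift $\tilde\psi$ in $\graphicalcat_{\cycrm}$, and I will verify $\tilde\psi \in \graphicalcat_{\cycrm}^-$ by noting that activity is preserved under $J$ (being phrased at the arc level) while arc-surjectivity of $\tilde\psi_0$ follows from edge-surjectivity of $\psi_0$ together with involution-equivariance.

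For fully faithfulness, fix objects $\alpha : T \to G$ and $\beta : T \to H$ in $\graphicalcat_{\cycrm}^-(T)$; both $G$ and $H$ inherit nonempty boundary from $T$ via the active maps $\alpha$ and $\beta$. Injectivity of $F$ on hom-sets is essentially automatic: if $\gamma, \gamma' : G \to H$ are both morphisms from $\alpha$ to $\beta$, then $\gamma_0 \alpha_0 = \beta_0 = \gamma'_0 \alpha_0$ and surjectivity of $\alpha_0$ forces $\gamma_0 = \gamma'_0$, after which Proposition~\ref{proposition valence not two}, applied using the nonempty boundary of $G$, upgrades this to $\gamma = \gamma'$. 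For surjectivity on hom-sets, given $\delta : G \to H$ in $\Xi^-$ with $\delta F\alpha = F\beta$, I will lift $\delta$ to a $\graphicalcat_{\cycrm}$-map $\gamma : G \to H$ via Remark~\ref{remark comparison sc}(\ref{remark comparison sc item two}); as in the essential surjectivity step, $\gamma$ automatically lies in $\graphicalcat_{\cycrm}^-$, and it remains to arrange $\gamma \alpha = \beta$.

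The main obstacle is the possible non-uniqueness of the lift $\gamma$ of $\delta$. Because $\delta$ is edge-surjective, a factorization of $\delta$ through $\exceptionaledge$ forces $H = \exceptionaledge$, so for $H \neq \exceptionaledge$ the lift $\gamma$ is already unique; moreover, arc-surjectivity of $\beta_0$ precludes $\beta$ itself from factoring through $\exceptionaledge$ (which would force $|A(H)| \leq 2$ and hence $H = \exceptionaledge$), so the fiber of $\graphicalcat_{\cycrm}(T,H) \to \Xi(T,H)$ over $F\beta$ is the singleton $\{\beta\}$, and $\gamma\alpha$, which maps to $F\beta$, must equal $\beta$. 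When $H = \exceptionaledge$, the two lifts of $\delta$ are $\gamma$ and $z\gamma$, where $z$ is the swap automorphism of $\exceptionaledge$; their compositions $\gamma\alpha$ and $z(\gamma\alpha)$ exhaust the two-element fiber of $\graphicalcat_{\cycrm}(T,\exceptionaledge) \to \Xi(T,\exceptionaledge)$ above $F\beta$, so exactly one choice of $\gamma$ realizes $\gamma\alpha = \beta$.
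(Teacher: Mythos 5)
Your proof is correct and follows essentially the same route as the paper, which simply dualizes the proof of Lemma~\ref{lemma for matching object sc} "with strictly formal changes" and, exactly as you do, replaces the appeal to Proposition~\ref{proposition: embedding uniqueness} in the faithfulness step by Proposition~\ref{proposition valence not two}, using that morphisms of $\graphicalcat_{\cycrm}^-(T)$ are active and that objects of $\graphicalcat_{\cycrm}$ have nonempty boundary. You spell out the dualized fullness/essential-surjectivity steps (in particular the $H=\exceptionaledge$ fiber analysis) in more detail than the paper does, but the underlying argument is the same.
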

\begin{proof}
For the most part, the proof follows that of Lemma~\ref{lemma for matching object sc} with strictly formal changes.
The one exception is in the second paragraph, where we applied Proposition~\ref{proposition: embedding uniqueness} to show that if the two maps $\gamma, \gamma'$ are the same on arcs, then they are the same.
In the present situation, this follows from Proposition~\ref{proposition valence not two} using that all maps in $\graphicalcat_{\cycrm}^-$ are active and that all objects in $\graphicalcat_{\cycrm}$ have nonempty boundary.
\end{proof}

In light of the preceding lemma, it may seem strange that Theorem~\ref{reedy theorem xi} asserts that $J^*$ does not preserve cofibrations.
After all, $X$ and $J^*X$ have the same latching maps!
The following proposition addresses the underlying reason, while Remark~\ref{what the left kan extension does} allows us to produce concrete examples of cofibrations which are not preserved by $J^*$.

\begin{proposition}
\label{prop cofibration behavior}
Suppose that $f: X \to Y$ is a morphism in $\sSet^{\Xi^{\oprm}}$, with $J^*f : J^*X \to J^*Y$ its image in $\sSet^{\graphicalcat_{\cycrm}^{\oprm}}$.
\begin{itemize}
	\item 
	Suppose that $f$ is a Reedy cofibration.
	Then $J^*f$ is a Reedy cofibration if and only if $f$ is an isomorphism when evaluated at $\exceptionaledge$.
	\item
	If $J^*f$ is a Reedy cofibration, then $f$ is a Reedy cofibration.
\end{itemize}
\end{proposition}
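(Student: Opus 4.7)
The plan is to reduce the entire comparison to what happens at the single object $\exceptionaledge \in \graphicalcat_{\cycrm}$ and then exploit the discrepancy in automorphism groups there. By Lemma~\ref{lemma for latching object sc}, the underlying simplicial-set morphism of the $T$-th relative latching map is literally the same for $f$ and for $J^*f$, so the two Reedy cofibration conditions can diverge only through equivariance with respect to $\aut(T)$. First I would check that $J$ induces an isomorphism $\aut_{\graphicalcat_{\cycrm}}(T) \xrightarrow{\cong} \aut_{\Xi}(T)$ for every tree $T$ with at least one vertex; this follows from Remark~\ref{remark comparison sc} together with the observation that an automorphism of a tree with vertices cannot factor through $\exceptionaledge$. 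Consequently the two conditions agree at every such $T$, and everything reduces to the object $\exceptionaledge$.

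At $\exceptionaledge$ the latching object is empty in both Reedy structures (the object has minimal degree in each), so the relative latching map is simply $f_{\exceptionaledge}: X_{\exceptionaledge} \to Y_{\exceptionaledge}$. Example~\ref{example difference between tree cats} gives $\aut_{\Xi}(\exceptionaledge) = 1$ while $\aut_{\graphicalcat_{\cycrm}}(\exceptionaledge) = \mathbb{Z}/2$, generated by the arc-swap $\tau$. The pivotal step, which I expect to be the main obstacle, is recognizing that the induced $\mathbb{Z}/2$-action on $(J^*X)_{\exceptionaledge}$ and $(J^*Y)_{\exceptionaledge}$ is in fact \emph{trivial}: since $\aut_{\Xi}(\exceptionaledge)$ is trivial, $J\tau$ must equal $\id$, so $\tau$ acts on $X_{\exceptionaledge}$ through $X(J\tau) = X(\id) = \id$. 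This unexpected collapse of the action is what makes the $\sSet^{\mathbb{Z}/2}$-cofibration condition rigid and drives the entire proposition.

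With that recognized, both bullets follow from the structure of the projective model category $\sSet^{\mathbb{Z}/2}$, whose cofibrations are built cellularly from $\mathbb{Z}/2 \times \partial\Delta^n \to \mathbb{Z}/2 \times \Delta^n$; in particular they are monomorphisms of underlying simplicial sets whose complements carry a free $\mathbb{Z}/2$-action. For the second bullet, the monomorphism clause shows that if $(J^*f)_{\exceptionaledge}$ is a cofibration then $f_{\exceptionaledge}$ is a monomorphism of simplicial sets, which is precisely the remaining requirement for $f$ to be a Reedy cofibration at $\exceptionaledge$. For the first bullet, the free-action clause forces the complement of $f_{\exceptionaledge}$ to admit both the trivial action (inherited from source and target) and a free $\mathbb{Z}/2$-action, so the complement must be empty --- i.e., $f_{\exceptionaledge}$ must be an isomorphism. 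Conversely, if $f_{\exceptionaledge}$ is an isomorphism, then the relative latching map at $\exceptionaledge$ for $J^*f$ is an iso in $\sSet^{\mathbb{Z}/2}$, which together with the already-verified conditions at $T \neq \exceptionaledge$ makes $J^*f$ a Reedy cofibration.
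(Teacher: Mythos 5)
Your proposal is correct and follows essentially the same route as the paper: reduce to the object $\exceptionaledge$ via the latching-object comparison and the identification of automorphism groups at trees with vertices, observe that the $C_2$-action on $(J^*X)_{\exceptionaledge}$ is trivial because it is pulled back along the trivial group $\aut_\Xi(\exceptionaledge)$, and then invoke the characterization of cofibrations in $\sSet^{C_2}$ as monomorphisms with free action on the complement (the paper cites \cite[Ch.~V, Lemma 2.4]{GoerssJardine:SHT} for the resulting rigidity). No gaps.
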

\begin{proof}
If $T$ contains at least one vertex, then $J$ induces an identity between the two automorphism groups $\aut_{\graphicalcat}(T)$ and $\aut_{\Xi}(T)$ of $T$.
By Lemma~\ref{lemma for latching object sc}, it follows that the relative latching map 
\[
	X_T \cup_{L_TX} L_TY \to Y_T
\]
is a cofibration in $\sSet^{\aut_{\Xi}(T)^{\oprm}}$ if and only if 
\[
	(J^*X)_T \cup_{L_T(J^*X)} L_T(J^*Y) \to (J^*Y)_T
\]
is a cofibration in $\sSet^{\aut_{\graphicalcat}(T)^{\oprm}}$.

On the other hand, if $T$ is the exceptional edge, then $\aut_{\graphicalcat}(\exceptionaledge)^{\oprm}$ is the cyclic group of order two, $C_2$, while $\aut_{\Xi}(\exceptionaledge)^{\oprm}$ is the trivial group (see Example~\ref{example difference between tree cats}). 
Further, $L_\exceptionaledge Z = \varnothing$.
Thus the relative latching map just becomes $X_\exceptionaledge \to Y_\exceptionaledge$.
Now $\aut_{\graphicalcat}(\exceptionaledge)^{\oprm}$ acts trivially on $(J^*X)_{\exceptionaledge}$ and $(J^*Y)_{\exceptionaledge}$.
So if $J^*(f)$ is a Reedy cofibration, then $(J^*X)_{\exceptionaledge} \to (J^*Y)_{\exceptionaledge}$ is a cofibration in $\sSet^{C_2}$, hence in $\sSet$.
By the previous paragraph, we know that $f$ is then a Reedy cofibration.
If $f$ is a Reedy cofibration, then $J^*(f)$ is a Reedy cofibration if and only if $X_{\exceptionaledge} \to Y_{\exceptionaledge}$ is a cofibration in $\sSet^{C_2}$.
Since both sides have trivial $C_2$ actions, this happens if and only if $X_{\exceptionaledge} \to Y_{\exceptionaledge}$ is an isomorphism (Lemma 2.4 of \cite[Chapter V]{GoerssJardine:SHT}).
\end{proof}

\begin{remark}
\label{what the left kan extension does}
Suppose that $Z$ is an object of $\sSet^{\graphicalcat_{\cycrm}^{\oprm}}$ and let $J_!Z$ in $\sSet^{\Xi^{\oprm}}$ be its left Kan extension along $J^{\oprm} : \graphicalcat_{\cycrm}^{\oprm} \to \Xi^{\oprm}$.
The following three sets of morphisms coincide
\[
	\hom(Z, \varnothing) = \hom(Z, J^*\varnothing) \cong \hom(J_!Z, \varnothing),
\]
so $Z$ is nonempty if and only if $J_!Z$ is nonempty.
\end{remark}

\begin{proof}[Proof of Theorem~\ref{reedy theorem xi}]
As Reedy weak equivalences are levelwise and $J$ is the identity on objects, a map $f$ in  $\sSet^{\Xi^{\oprm}}$ is a weak equivalence if and only if $J^*f$ is a weak equivalence.
By Lemma~\ref{lemma for matching object sc}, $f$ is a fibration if and only if $J^*f$ is a fibration.
The statement about detecting cofibrations follows from Proposition~\ref{prop cofibration behavior}.

Finally, let us show that $J^*$ does not preserve cofibrations.
Since $J_!$ is left Quillen, it preserves cofibrations.
Both $J^*$ and $J_!$ are left adjoints, and, hence, they preserve initial objects. 
Suppose that $Z$ is any cofibrant object in $\sSet^{\graphicalcat_{\cycrm}^{\oprm}}$ other than the initial object.
Then $J_!Z$ is nonempty by Remark~\ref{what the left kan extension does} and is also cofibrant.
We will show that $J^*J_!Z$ is not cofibrant.

Let us observe that $(J^*J_!Z)_\exceptionaledge$ is nonempty.
We know that $(J^*J_!Z)_T = (J_!Z)_T$ is nonempty for some tree $T$.
The edge set of $T$ is not empty, so there is at least one map $\exceptionaledge \to T$, which gives a function $\varnothing \neq (J^*J_!Z)_T \to (J^*J_!Z)_\exceptionaledge$.
Now $C_2$ acts trivially on $(J^*J_!Z)_{\exceptionaledge} \neq \varnothing$, and we conclude that $J^*J_!Z$ is not cofibrant.
\end{proof}

\begin{remark}
Barwick, in \cite[Theorem 3.22]{Barwick:OLRMCLRBL}, gave a characterization of those functors $F$ between strict Reedy categories so that restriction $F^*$ is left or right Quillen for every model category $\modelcat$; see \cite{hv15} for an alternate presentation of this theorem.
It follows from Theorem~\ref{reedy theorem xi} that the na\"ive generalization of this characterization does not hold for functors between generalized Reedy categories, as $J^{\oprm}:\graphicalcat_{\cycrm}^{\oprm} \to \Xi^{\oprm}$ satisfies a strong analogue of the appropriate condition (called `cofibering' in \cite{hv15}) that would imply $J^*$ is left Quillen (which is false). 
For simplicity of notation, we study whether or not $J : \graphicalcat_{\cycrm} \to \Xi$ is some kind of `fibering' functor using \cite[Proposition 8, p.~32]{hv15}.

Suppose that $\sigma : S \to T$ is in $\Xi^-$,
and write $\mathbf{C}_\sigma$ for the category that would be the evident analogue of $\mathrm{Fact}_{\Xi^-}(S,\sigma)$ in \cite[Definition 7, p.~27]{hv15} and $\partial(S / (J^\leftarrow / T))$ in \cite[Theorem 3.22]{Barwick:OLRMCLRBL}.
Concretely, the category $\mathbf{C}_\sigma$ has as objects those pairs $(\nu, \mu)$ where $\nu\in \graphicalcat_{\cycrm}^- \setminus \Iso(\graphicalcat_{\cycrm})$, $\mu \in \Xi^-$, and $\mu J(\nu) = \sigma$. 
A morphism $(\nu, \mu) \to (\nu', \mu')$ consists of a morphism $\tau \in \graphicalcat_{\cycrm}^-$ making the diagrams 
\begin{equation}\label{two triangles}
\begin{tikzcd}[column sep=small]
& S \arrow[dl,"\nu" swap] \arrow[dr,"\nu'"]& & \bullet \arrow[rr,"J\tau"] \arrow[dr, "\mu"] & & \bullet \arrow[dl, "\mu'" swap] \\
\bullet \arrow[rr, "\tau"] & & \bullet & & T
\end{tikzcd} \end{equation}
commute.
If $\sigma$ is an isomorphism, then $\mathbf{C}_\sigma$ is empty.
If $T$ contains a vertex, then $\graphicalcat_{\cycrm}^-(S,T) = \Xi^-(S,T)$, so there is a unique lift $\tilde \sigma \in \graphicalcat_{\cycrm}^-$ of $\sigma$ and the object $(\tilde \sigma, \id_T)$ is a terminal object of $\mathbf{C}_\sigma$.
If $T$ is isomorphic to the exceptional edge, then $S$ must be isomorphic to a linear graph $L_n$; we suppose that $\sigma$ is not an isomorphism.
Let $\tilde \sigma \in \graphicalcat_{\cycrm}^-$ be either of the two lifts of $\sigma$.
Then $(\tilde \sigma, \id_T)$ is again a terminal object of $\mathbf{C}_\sigma$.
Indeed, if $\nu : S \to R$ is any map in $\graphicalcat_{\cycrm}$, then there is a unique map $\tau : R \to T$ so that $\tau \nu = \tilde \sigma$.
Further, any diagram that looks like the right hand triangle of \eqref{two triangles} commutes when $T \cong \exceptionaledge$.

Thus we have shown that $\mathbf{C}_\sigma$ is either empty or \emph{contractible} for any $\sigma \in \Xi^-$. 
In the strict case, the conditions of Barwick and Hirschhorn--Voli\'c simply request that each $\mathbf{C}_\sigma$ is either empty or connected in order to infer that 
\[
	J^* : \sSet^{\Xi^{\oprm}} \to \sSet^{\graphicalcat_{\cycrm}^{\oprm}}
\]
is left Quillen.
We presume that there is a much simpler counterexample to $F^*$ being left Quillen than this one.
Further, we anticipate that the characterization for when $F^*$ is right Quillen should be similar to that in the strict case.
\end{remark}

\bibliographystyle{amsalpha}
\bibliography{modular}

\end{document}